\author{Matthew D. Kvalheim}
\address{Department of Mathematics and Statistics, University of Maryland, Baltimore County, MD, USA}
\email{kvalheim@umbc.edu}
\title[Asymptotically stable vector fields and Lyapunov functions]
{Differential topology of the spaces of asymptotically stable vector fields and Lyapunov functions}
\newcommand{\concept}[1]{\textbf{#1}}
\newcommand{\ct}{\setminus}
\newcommand{\R}{\mathbb{R}}
\newcommand{\sph}{S}
\newcommand{\N}{\mathbb{N}}
\newcommand{\coloneqq}{:=}
\newcommand{\id}{\textnormal{id}}
\newcommand{\vf}{\mathfrak{X}}
\newcommand{\rel}[1]{\textnormal{rel}\,#1}
\newcommand{\vl}{\textnormal{vl}}
\newcommand{\I}{\mathcal{I}}
\newcommand{\U}{\textnormal{Gr}}
\newcommand{\fun}{\mathcal{L}}
\newcommand{\stab}{\mathcal{S}}
\newcommand{\AN}{\mathcal{AN}}
\newcommand{\F}{\mathcal{F}}
\newcommand{\G}{\mathcal{G}}
\newcommand{\morsfun}{\mathcal{M}\fun}
\newcommand{\hypstab}{\mathcal{H}\stab}
\newcommand{\Diff}{\textnormal{Diff}}
\newcommand{\Emb}{\textnormal{Emb}}
\newcommand{\stabc}{\prescript{c}{}{\stab}}
\newcommand{\hypstabc}{\prescript{c}{}{\mathcal{H}\stab}}
\newcommand{\ANc}{\prescript{c}{}{\AN}}
\newcommand{\vfc}{\prescript{c}{}{\vf}}
\newcommand{\interior}[1]{\textnormal{int}(#1)}
\newcommand{\cl}[1]{\textnormal{cl}(#1)}
\newcommand{\GL}{GL}
\newcommand{\GS}{GS}
\newcommand{\PDS}{PDS}
\newcommand{\Hur}{\textnormal{Hur}}
\newcommand{\Fr}{\textnormal{Fr}}
\newcommand{\att}{A}
\newcommand{\vo}{F}
\newcommand{\vt}{G}
\newcommand{\bas}{B}
\newcommand{\lyap}{V}
\newcommand{\lyapt}{W}
\newcommand{\lyapth}{Y}
\newcommand{\param}{P}
\newcommand{\dom}{\textnormal{dom}}
\newcommand{\pr}{\textnormal{pr}}
\newcommand{\ip}[2]{\langle #1, #2 \rangle}
\DeclarePairedDelimiter\norm{\lVert}{\rVert}
\DeclareMathOperator\supp{supp}
\theoremstyle{definition}
\newtheorem{Lem}{Lemma}
\newtheorem*{Lem-non}{Lemma}
\newtheorem{Th}{Theorem}
\newtheorem{Co}{Corollary}
\newtheorem*{Co-non}{Corollary}
\newtheorem{Prop}{Proposition}
\newtheorem*{Prop-non}{Proposition}
\newtheorem*{Quest-non}{Question}
\newtheorem*{Obs-non}{Observation}
\newtheorem*{Th-non}{Theorem}
\newcommand{\thistheoremname}{}
\newtheorem*{genericthm}{\thistheoremname}
{\renewcommand{\thistheoremname}{Theorem~\ref{#1}$'$}%
	\begin{genericthm}}
	{\end{genericthm}}
\newtheorem{Def}{Definition}
\newtheorem*{Def*}{Definition}
\newtheorem*{Ex-non}{Example}
\newtheorem{Rem}{Remark}
\begin{document}
	
	\begin{abstract}
	We study the topology of the space of all smooth asymptotically stable vector fields on $\mathbb{R}^n$, as well as the space of all proper smooth Lyapunov functions for such vector fields.
	We prove that both spaces are path-connected and simply connected when $n\neq 4,5$ and  weakly contractible when $n\leq 3$.
	Moreover, both spaces have the weak homotopy type of the nonlinear Grassmannian of submanifolds of $\mathbb{R}^n$ diffeomorphic to the $n$-disc.
	
	The proofs rely on Lyapunov theory and  differential topology, such as the work of Smale and Perelman on the generalized Poincar\'{e} conjecture and results of Smale, Cerf, and Hatcher on the topology of diffeomorphism groups of discs.
    Applications include a partial answer to a question of Conley,  a parametric Hartman-Grobman theorem for nonhyperbolic but asymptotically stable equilibria, and a parametric Morse lemma for degenerate minima. 
    We also study the related topics of hyperbolic equilibria, Morse minima, and relative homotopy groups of the space of asymptotically stable vector fields inside the space of those vanishing at a single point.
	\end{abstract}
	
	\maketitle

\setcounter{tocdepth}{1}
	\tableofcontents

\section{Introduction}\label{sec:intro}

Lyapunov discovered that an equilibrium for a vector field is asymptotically stable if there exists what we now call a Lyapunov function \cite{lyapunov1892general}. 
More than half a century later, Kurzweil and Massera proved converse theorems guaranteeing that $C^\infty$ Lyapunov functions always exist for asymptotically stable equilibria of reasonable vector fields  \cite{kurzweil1956inversion,massera1956contributions}.
Subsequently, Wilson studied the topology of level sets of such Lyapunov functions \cite{wilson1967structure}. 
In this paper, we turn this idea on its head by using (sub)level sets to study the topology of the \emph{space} of all Lyapunov functions, and in turn the topology of the space of all asymptotically stable vector fields.

To state our results precisely, let $\stab^r(\R^n)$ be the space of $C^r$  vector fields on $\R^n$ having a globally asymptotically stable equilibrium, where $1\leq r \leq \infty$.
Let $\fun^r(\R^n)$ be the space of surjective proper $C^r$ functions $\R^n\to [0,\infty)$ having a unique minimum and critical point.
Here both spaces are equipped with the compact-open $C^r$ topology.
One of our main results 
 is
\begin{Th}\label{th:intro-1-conn-contract}
	$\stab^r(\R^n)$ and $\fun^r(\R^n)$ are both path-connected and simply connected if $n\neq 4,5$ and weakly contractible if $n\leq 3$.
\end{Th}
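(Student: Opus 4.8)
The plan is to reduce the theorem to a computation of the weak homotopy type of the nonlinear Grassmannian $\U=\U^r(D^n,\R^n)$ of compact codimension‑zero $C^r$ submanifolds of $\R^n$ diffeomorphic to the closed $n$‑disc, and then to feed in classical Lyapunov theory together with known differential topology. Concretely, I would produce a zig‑zag of weak homotopy equivalences
\[
\stab^r(\R^n)\;\xleftarrow{\ \sim\ }\;\P^r\;\xrightarrow{\ \sim\ }\;\fun^r(\R^n)\;\xrightarrow{\ \sim\ }\;\U,
\]
where $\P^r$ is the space of compatible pairs $(\vo,\lyap)$ with $\vo\in\stab^r(\R^n)$ and $\lyap\in\fun^r(\R^n)$ a proper strict Lyapunov function for $\vo$. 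The glue is Lyapunov theory: the converse theorems of Kurzweil and Massera (adapted to the $C^r$ category) make the fibers of $\P^r\to\stab^r(\R^n)$ nonempty, the Euclidean gradient $-\nabla\lyap$ does the same for $\P^r\to\fun^r(\R^n)$, and, following Wilson's analysis of level sets, a regular sublevel set of $\lyap\in\fun^r(\R^n)$ is a smoothly embedded $n$‑disc, which is how one lands in $\U$.

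\textbf{The Lyapunov reduction (the maps $\stab^r\simeq\fun^r\simeq\U$).}
For the two projections out of $\P^r$ and for the sublevel‑set map $\Phi\colon\fun^r(\R^n)\to\U$, $\Phi(\lyap)=\lyap^{-1}([0,1])$ (legitimate because every positive value is regular, the unique critical point being the minimum, of value $0$), the decisive point is that all fibers are \emph{convex}, hence contractible: the defining conditions — the Lyapunov inequality $\ip{d\lyap}{\vo}<0$ off the equilibrium $x_0$, properness, surjectivity onto $[0,\infty)$, uniqueness of the critical point (which survives a convex combination because $\ip{d(t\lyap_1+(1-t)\lyap_2)}{\vo}<0$ off $x_0$ already forces the differential to be nonzero there), and, for $\Phi$, a prescribed unit sublevel set — are each stable under convex combinations in the ambient affine space of $C^r$ functions or of $C^r$ vector fields vanishing at $x_0$; surjectivity of $\Phi$ up to homotopy comes from extending a defining function on a collar of $\partial D$ outward along the normal exhaustion of $\R^n\setminus\interior{D}$. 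To upgrade ``contractible fibers'' to ``weak homotopy equivalence'' I would verify that each of these maps is a (micro)fibration in the compact‑open $C^r$ topology — local sections arise by interpolating a given datum against a continuously varying reference datum and cutting off — and then invoke the standard principle that a (micro)fibration with weakly contractible fibers is a weak homotopy equivalence. The first genuine restriction to $n\neq 4,5$ enters here: $\lyap^{-1}([0,1])$ is a priori only a compact contractible smooth $n$‑manifold whose boundary is (by Wilson) a homotopy $(n-1)$‑sphere, and its being diffeomorphic to the standard $D^n$ uses the smooth generalized Poincar\'e conjecture and Schoenflies theorem — classical for $n\le2$, Perelman for $n=3$, Smale and the $h$‑cobordism theorem for $n\ge6$ — which is exactly what is unavailable in dimensions $4$ and $5$.

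\textbf{The homotopy type of $\U$, and conclusion.}
It remains to compute $\U$. Evaluation exhibits a principal $\Diff(D^n)$‑bundle $\Diff(D^n)\to\Emb^r(D^n,\R^n)\to\U$; the total space deformation retracts onto affine embeddings via $\varphi\mapsto\bigl(x\mapsto t^{-1}(\varphi(tx)-\varphi(0))+\varphi(0)\bigr)$ as $t\to 0^{+}$, so $\Emb^r(D^n,\R^n)\simeq \GL_n(\R)\simeq O(n)$, and the structure map restricts to the identity on the subgroup $O(n)\subset\Diff(D^n)$. Plugging this into the long exact sequence of the bundle collapses it to $\pi_k(\U)\cong\ker\bigl(\pi_{k-1}\Diff(D^n)\to\pi_{k-1}O(n)\bigr)$; in particular $\pi_0(\U)=0$ unconditionally and $\pi_1(\U)\cong\pi_0\Diff^{+}(D^n)$. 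Now the differential topology of disc diffeomorphism groups finishes the argument: $\Diff(D^n)\simeq O(n)$ for $n\le3$ (trivially for $n=1$, Smale for $n=2$, Hatcher's Smale conjecture for $n=3$) makes every $\pi_k(\U)$ vanish, so $\U$ — hence $\stab^r(\R^n)$ and $\fun^r(\R^n)$ — is weakly contractible for $n\le3$; and for $n\ge6$ the theorems of Cerf, together with the fibration $\Diff(D^n,\partial D^n)\to\Diff(D^n)\to\Diff(\sph^{n-1})$ and known low‑degree information on $\Diff(\sph^{n-1})$, give $\pi_0\Diff^{+}(D^n)=0$ (and the vanishing of the next relevant group), hence $\pi_0(\U)=\pi_1(\U)=0$, i.e.\ path‑connectedness and simple connectedness. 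Composing with the zig‑zag yields the theorem.

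\textbf{The main obstacle.}
I expect the hard part not to be the differential topology — invoked here as a black box (Smale and Perelman on the generalized Poincar\'e conjecture; Smale, Cerf, Hatcher on $\Diff(D^n)$) — but the soft‑analysis core of the Lyapunov reduction: upgrading the Lyapunov‑theoretic correspondences to honest (micro)fibrations of these infinite‑dimensional spaces in the compact‑open $C^r$ topology, i.e.\ making the converse‑Lyapunov construction, the collar/radial extension, and the convex interpolations depend continuously and locally trivially on their data, uniformly in $r$ (and at $r=\infty$). And it is precisely the failure of the differential‑topological inputs — smooth $4$‑dimensional Poincar\'e/Schoenflies, and the unknown low‑degree homotopy of $\Diff(D^4)$, $\Diff(D^5)$, and $\Diff(\sph^4)$ — that confines the sharp conclusions to $n\neq4,5$.
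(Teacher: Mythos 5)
Your overall plan---reduce both spaces to the weak homotopy type of the nonlinear Grassmannian $\U(D^n,\R^n)$, then compute $\U$ from the principal $\Diff(D^n)$-bundle $\Emb(D^n,\R^n)\to\U(D^n,\R^n)$ using the zoom-in retraction $\Emb(D^n,\R^n)\simeq\GL(n)$ and the theorems of Smale, Cerf, Hatcher---matches the paper's architecture (Theorem~\ref{th:funnel-space-fiber-bundle-over-U}, Lemmas~\ref{lem:deriv-eplus0-gl-n-htpy-equiv}--\ref{lem:diffplus-eplus0-inclusion-htpy-epi-iso}, Proposition~\ref{prop:nonlinear-grass-trivial-htpy-groups}). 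The collapse of the long exact sequence to $\pi_k\U\cong\ker\bigl(\pi_{k-1}\Diff(D^n)\to\pi_{k-1}\GL(n)\bigr)$ is correct and is exactly what the paper exploits.

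However, there is a genuine gap in the Lyapunov reduction: the claim that the fibers of the sublevel-set map $\Phi\colon\fun^r(\R^n)\to\U$ are convex is false, and the justification given does not apply. First, two functions $\lyap_1,\lyap_2\in\fun^r(\R^n)$ with $\lyap_1^{-1}([0,1])=\lyap_2^{-1}([0,1])=M$ may have their unique minima at distinct points of $\interior{M}$, in which case $\tfrac{1}{2}(\lyap_1+\lyap_2)$ is strictly positive everywhere and so fails surjectivity onto $[0,\infty)$---it is not even in $\fun^r(\R^n)$. Second, even after pinning the minimum at $0$, the argument you offer for uniqueness of the critical point, namely that $\ip{d(t\lyap_1+(1-t)\lyap_2)}{\vo}<0$ off $x_0$ forces the differential to be nonzero, presupposes a \emph{fixed} vector field $\vo$, which is precisely the datum absent from a fiber of $\Phi$. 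Without such a common supporting half-space, $\nabla\lyap_1(x)$ and $\nabla\lyap_2(x)$ can be negative multiples of each other at some $x\neq 0$ (this occurs when the sublevel sets of $\lyap_2$ fail to be star-shaped about $0$), so the interpolation can acquire a spurious critical point. The paper's proof that this fiber is weakly contractible (Lemma~\ref{lem:contractible-fibers}) is correspondingly much harder: a parametric tubular-neighborhood normalization near $\partial D^n$, a smoothing lemma (Lemma~\ref{lem:smoothing-funnels}), and a two-sided Alexander-type trick built from gradient-flow reparametrization (Lemma~\ref{lem:funnel-alexander}). Relatedly, your deduction of $\pi_0\Diff^+(D^n)=0$ for $n\geq 6$ from the fibration $\Diff_\partial(D^n)\to\Diff(D^n)\to\Diff(\sph^{n-1})$ is shaky---restriction to $\sph^{n-1}$ is not surjective on $\pi_0$ in general---whereas the paper cites Cerf's result for $\pi_0\Diff^+(D^n)$ directly.

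Your route from $\stab^r$ to $\fun^r$ through the space $\P^r$ of compatible pairs is a genuinely different decomposition from the paper's, and the convexity you claim for the two projections out of $\P^r$ is correct (here the Lyapunov pairing does supply a common half-space at each point). But upgrading these projections to (micro)fibrations---which you flag as the main obstacle---is exactly where the real Lyapunov-theoretic work lives: the converse Lyapunov construction must be made to depend continuously on the vector field, including the drifting equilibrium. The paper avoids $\P^r$ entirely and instead proves the negative-gradient embedding $\fun_0^{r+1}(\R^n)\hookrightarrow\stab_0^r(\R^n)$ is a weak homotopy equivalence via a smoothing-to-gradient lemma (Lemma~\ref{lem:smoothing-vf}) built on Wilson's converse Lyapunov theorem; this one-sided approach localizes the converse-Lyapunov continuity issues and simultaneously yields the $C^s$ extension statements of Theorem~\ref{th:intro-cr-extensions}, which a bare microfibration argument would not.
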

This result concerns homotopies of continuous families of $C^r$ functions.
Or, by obstruction theory, it concerns the existence of continuous families $\param\to \stab^r(\R^n)$, $\param \to \fun^r(\R^n)$ restricting to given families on the boundary $\partial \param$ of a $C^\infty$ manifold $\param$.

Now, a continuous family of $C^r$ functions is not the same as a $C^r$ family.
However, we do obtain fully $C^r$ statements
like the following, which can be interpreted as an existence theorem for a boundary value problem.
\begin{Th}\label{th:intro-cr-extensions}
Let $\param$ be a compact $C^\infty$ manifold with boundary.
Let $\vo_\partial \colon \partial \param \times \R^n\to \R^n$ and $\lyap_\partial\colon \partial \param \times \R^n\to [0,\infty)$ be $C^r$ maps satisfying $\vo_\partial(p,\cdot)\in \stab^r(\R^n)$ and $\lyap_\partial(p,\cdot)\in \fun^r(\R^n)$ for all $p\in \partial \param$.
Assume either (i) $n\neq 4,5$ and $\dim \param \leq 2$ or (ii) $n\leq 3$.
Then there are $C^r$ maps $\vo\colon \param \times \R^n\to \R^n$ and $\lyap\colon \param \times \R^n\to [0,\infty)$ that extend $\vo_\partial$ and $\lyap_\partial$ and satisfy $\vo(p,\cdot)\in \stab^r(\R^n)$ and $\lyap(p,\cdot)\in \fun^r(\R^n)$ for all $p\in \param$.
\end{Th}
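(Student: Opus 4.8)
The plan is to deduce Theorem~\ref{th:intro-cr-extensions} from Theorem~\ref{th:intro-1-conn-contract} (equivalently, from the weak-homotopy-type statements referenced in Corollary~\ref{co:varying-eq-min-trivial-homotopy-groups}) by an obstruction-theory argument, combined with a smoothing step that converts a continuous family of $C^r$ objects into a genuine $C^r$ family over the parameter manifold $\param$. First I would reformulate the problem: a $C^r$ map $\vo\colon \param\times\R^n\to\R^n$ with $\vo(p,\cdot)\in\stab^r(\R^n)$ for all $p$ is, up to the smoothing issue, the same as a continuous map $\param\to\stab^r(\R^n)$ (using the compact-open $C^r$ topology and compactness of $\param$), and similarly for $\lyap$ and $\fun^r(\R^n)$. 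So the boundary data $(\vo_\partial,\lyap_\partial)$ gives a continuous map $\partial\param\to \stab^r(\R^n)\times\fun^r(\R^n)$, and I want to extend it continuously over $\param$.

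The extension step is pure obstruction theory. Under hypothesis~(i), $\dim\param\le 2$, so the obstructions to extending a map $\partial\param\to Z$ to $\param\to Z$ — where $Z=\stab^r(\R^n)$ or $\fun^r(\R^n)$ — live in $H^{k}(\param,\partial\param;\pi_{k-1}(Z))$ for $k\le 2$, hence only involve $\pi_0(Z)$ and $\pi_1(Z)$, both trivial by Theorem~\ref{th:intro-1-conn-contract} since $n\ne 4,5$. Under hypothesis~(ii), $n\le 3$, $Z$ is weakly contractible, so all obstructions vanish regardless of $\dim\param$; indeed the map extends and is unique up to homotopy. (One should note the two factors $\stab^r$ and $\fun^r$ can be handled independently, since the theorem does not ask for the extended $\lyap$ to be a Lyapunov function for the extended $\vo$ — if one did want that coupling, one would instead work with the "paired" space and invoke whichever weak-equivalence statement from the body covers it, e.g.\ the common weak homotopy type with the nonlinear Grassmannian.)

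The step I expect to be the main obstacle is \emph{smoothing}: obstruction theory produces a \emph{continuous} extension $\param\to Z$, i.e.\ a map $\vo\colon\param\times\R^n\to\R^n$ that is $C^r$ in the $\R^n$-variable and continuous (but perhaps not $C^r$) in $p$, and I must upgrade it to a map that is jointly $C^r$ while (a) not destroying the open condition "$\vo(p,\cdot)\in\stab^r(\R^n)$" and (b) not moving the boundary values. For~(a) I would use that $\stab^r(\R^n)$ and $\fun^r(\R^n)$ are open in the relevant spaces of vector fields/functions — or more precisely that having a globally asymptotically stable equilibrium together with a compatible proper Lyapunov function is a robust, $C^1$-open property with quantitative margins on compact sets — so a sufficiently fine $C^r$ approximation of $\vo$ in the $p$-direction stays inside $\stab^r$. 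Concretely: cover $\param$ by finitely many charts, mollify in the $p$-variable with a small parameter using a partition of unity, and use a collar of $\partial\param$ together with a cutoff so the approximation is exact on $\partial\param$; compactness of $\param$ gives a uniform approximation scale that preserves the open condition on each compact exhaustion piece of $\R^n$. For $\lyap$ the same mollification works, with the extra point that $\fun^r(\R^n)$ (surjective, proper, unique critical point which is the minimum) is again $C^1$-robust on compacta together with a properness/coercivity estimate that survives small $C^r$ perturbations. Assembling these — obstruction-theoretic extension, then collar-respecting mollification, then the openness/robustness estimates — yields the desired $\vo$ and $\lyap$, completing the proof.
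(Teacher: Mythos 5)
Your high-level decomposition — reduce to a continuous extension $\param\to Z$ by obstruction theory (using $\pi_0(Z)=\pi_1(Z)=\{*\}$ from Theorem~\ref{th:intro-1-conn-contract} when $\dim\param\le 2$, or weak contractibility when $n\le 3$), then upgrade to a jointly $C^r$ map that restricts to the given boundary data — is exactly the architecture of the paper's proof (see Corollaries~\ref{co:funnels-trivial-homotopy-groups}, \ref{co:vf-trivial-homotopy-groups}, \ref{co:varying-eq-min-trivial-homotopy-groups}). The obstruction-theory step and the observation that $\stab^r$ and $\fun^r$ can be treated independently are both fine.

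The smoothing step, however, has a genuine gap, and it is where all the actual work in the paper lives. Your argument rests on the claim that $\stab^r(\R^n)$ and $\fun^r(\R^n)$ are ``open'' so that a fine $p$-mollification stays inside them, ``with quantitative margins on compact sets.'' This is false in the compact-open $C^r$ topology: if $\vo(x)=-x$, for any compact $K$ and $\varepsilon>0$ one can construct $\vt$ equal to $\vo$ on $K$ but with an extra equilibrium (or a trajectory escaping to infinity) outside $K$, so $\vt\notin\stab^r(\R^n)$ while $\|\vo-\vt\|_{C^r(K)}=0$. Global asymptotic stability, properness, and uniqueness of critical points are all global conditions that compact-open closeness does not control. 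Moreover, mollification in $p$ is an \emph{average}, not a small perturbation: an average of GAS vector fields, or of functions in $\fun^r$, need not again have the property, because the different parameter values do not share a Lyapunov function and their gradients can cancel. In short, there is no openness argument that makes a bare partition-of-unity mollification in $p$ preserve membership in $\stab^r$ or $\fun^r$.

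The paper circumvents this with ideas your proposal does not contain. For $\stab^r$ (Lemma~\ref{lem:smoothing-vf}), it does \emph{not} mollify the vector field at all; instead, it applies Wilson's converse Lyapunov theorem to the lifted vector field $(p,x)\mapsto(0,\vo(p)(x))$ on $\param\times\R^n$ to produce a single proper $C^\infty$ Lyapunov function $\lyapth\colon\param\to\fun^\infty(\R^n)$ for the entire family, and then interpolates linearly from $\vo(p)$ to $-\nabla\lyapth(p)$ with a cutoff near the boundary. The Lyapunov inequality $\ip{\nabla\lyapth(p)(x)}{\vo(p)(x)}<0$ persists under this convex interpolation, which is what keeps the homotopy inside $\stab^r$; no openness is used. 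For $\fun^{r}$ (Lemma~\ref{lem:smoothing-funnels}), mollification is used but must be in the strong (Whitney) topology, must leave the critical-point set $\tilde Z$ exactly fixed (which the mollification is specifically built to do), must then be corrected near $\tilde Z$ by the Fathi--Pageault composition trick to regain smoothness there, and must enforce the inner-product condition $\ip{\nabla\lyapt(p)(x)}{\nabla\tilde\lyap(p)(x)}>0$ off $\tilde Z$ so that the subsequent linear interpolation does not introduce spurious critical points. Your collar/cutoff device for matching boundary values is in the right spirit (the paper's retraction $\rho$ and cutoff $\psi$ do exactly this), but the core ``approximation preserves the class'' step needs the Lyapunov-function argument for vector fields and the $\tilde Z$-preserving, Fathi--Pageault-corrected approximation for functions, neither of which follows from openness.
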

Both theorems remain true for the subspace $\stabc^r(\R^n)\subset \stab^r(\R^n)$ of vector fields that are (backward) complete, as well as the subspaces $\stab_0^r(\R^n)$, $\stabc_0^r(\R^n)$, $\fun_0^r(\R^n)$ for which the equilibria or minima are located at the origin.  
On the other hand, some restrictions on $n$ are necessary, and validity of the path-connectedness portion of Theorem~\ref{th:intro-1-conn-contract} for $n=5$ would imply that the $4$-dimensional smooth Poincar\'{e} conjecture is true.  

Applications include a partial answer to a question of Conley, 
a parametric Hartman-Grobman theorem for nonhyperbolic but asymptotically stable equilibria, 
and a parametric Morse lemma for degenerate minima.  
These three applications respectively extend results of Jongeneel \cite{jongeneel2024while},  Coleman \cite{coleman1965local,coleman1966addendum} (see also  \cite{kvalheim2025global}), and  Gr\"{u}ne, Sontag, and Wirth \cite{gruene1999twist}.

We also consider the space $\AN^r(\R^n)$ of $C^r$ vector fields on $\R^n$ vanishing at exactly one point and its subspace $\AN_0^r(\R^n)$ of vector fields vanishing at the origin.
We prove that the inclusion $\stab_0^r(\R^n)\hookrightarrow \AN_0^r(\R^n)$ is nullhomotopic.  
This furnishes information about the relative homotopy groups $\pi_k(\AN^r_0(\R^n), \stab^r_0(\R^n))$, and we deduce similar information for the variants of vector fields that are complete and have unrestricted equilibrium location.
As an application, we illustrate how this information can be used to detect the presence of an unstable vector field within a parametric family of vector fields. 

In the context of asymptotic stability, similar nullhomotopy arguments go back at least to Krasnosel'ski\u{\i} and Zabre\u{\i}ko  \cite{krasnoselskii1984geometrical}, and our nullhomotopy formula is adapted from one of Sontag \cite{sontag1998mathematical}.
In our language, these authors were concerned with proving that  $\stab_0^r(\R^n)$ is contained in a path component of $\AN_0^r(\R^n)$, and here we observe that the same argument works parametrically to prove more.
A similar argument in another context appeared in work of Eliashberg and Mishashev \cite{eliashberg2000wrinklingII}, who attribute the technique to Douady and Laudenbach \cite{laudenbach1976formes} and independently Igusa.

Finally, for completeness, in an appendix we consider the subspaces $\morsfun^r(\R^n)\subset \fun^r(\R^n)$ of Morse functions and $\hypstab^r(\R^n)\subset \stab^r(\R^n)$ of vector fields for which the equilibrium is hyperbolic.
In contrast to Theorem~\ref{th:intro-1-conn-contract}, these spaces are both weakly contractible for any $n$, as are their variants with complete vector fields and constrained locations of minima and equilibria.

\subsection{Outline of the proof of Theorems~\ref{th:intro-1-conn-contract} and \ref{th:intro-cr-extensions}}

We first prove Theorem~\ref{th:intro-1-conn-contract} for $\fun_0^\infty(\R^n)$.
To do so we observe that, for any $\lyap \in \fun^\infty(\R^n)$, the sublevel set $\lyap^{-1}([0,1])$ is diffeomorphic to the $n$-disc $D^n\subset \R^n$ if $n\neq 4,5$.
This follows from Smale's $h$-cobordism theorem for $n\geq 6$ \cite{smale1962structure}, Perelman's solution to the Poincar\'{e} conjecture for $n=3$ \cite{perelman2002entropy,perelman2003ricci,perelman2003finite}, and classical facts for $n\leq 2$.
We then consider the space $$\U(D^n,\R^n)= \Emb(D^n,\R^n)/\Diff(D^n)$$ of  submanifolds of $\R^n$ diffeomorphic to $D^n$,  known as a \emph{nonlinear Grassmannian} \cite{gaybalmaz2014principal}, and its open subset $\U_0(D^n,\R^n)$ of submanifolds whose interiors contain the origin.
Here $\U(D^n,\R^n)$ is topologized as the quotient of the space $\Emb(D^n,\R^n)$ of $C^\infty$ embeddings $D^n\hookrightarrow \R^n$ by the natural right action of the group $\Diff(D^n)$ of diffeomorphisms of $D^n$, where the latter spaces have the $C^\infty$ topology.

One of our main results 
is
\begin{Th}\label{th:intro-grassmannian}
	For $n\neq 4,5$ the sublevel set map $$p\colon \fun_0^\infty(\R^n)\to \U_0(D^n,\R^n), \quad p(\lyap)\coloneqq\lyap^{-1}([0,1])$$ is a topological fiber bundle with weakly contractible fibers.
    In particular, $p$ is a weak homotopy equivalence.
\end{Th}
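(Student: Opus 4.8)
The strategy is to prove three things about $p$: that it is well defined, that it is locally trivial, and that its fibers are weakly contractible. The last assertion of the theorem then follows for free, since a topological fiber bundle is a Serre fibration and the long exact sequence of a (surjective) Serre fibration with weakly contractible fibers exhibits the total-space map as a weak homotopy equivalence. \emph{Well-definedness:} for $\lyap\in\fun_0^\infty(\R^n)$ the origin is the only critical point, so $1$ is a regular value, $\lyap^{-1}([0,1])$ is a compact smooth $n$-manifold with boundary $\lyap^{-1}(1)$ whose interior $\lyap^{-1}([0,1))$ is an open subset of $\R^n$ containing $0$; by the observation recorded above (Smale for $n\ge 6$, Perelman for $n=3$, classical for $n\le 2$) it is diffeomorphic to $D^n$ when $n\neq 4,5$, so $p(\lyap)\in\U_0(D^n,\R^n)$.

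For local triviality, fix $M_0\in\U_0(D^n,\R^n)$. Since $\partial M_0$ is compact and $0\notin\partial M_0$, choose a tubular neighborhood $T$ of $\partial M_0$ in $\R^n$ with $0\notin T$. For every $M$ in a small enough neighborhood $\mathcal V\ni M_0$ one has $0\in\interior M$, and $\partial M$ is the graph of a section of the normal bundle of $\partial M_0$ depending continuously on $M$ in the $C^\infty$ topology (with $M_0$ corresponding to the zero section); a bump-function construction then yields a continuous family $M\mapsto\phi_M\in\Diff(\R^n)$ with $\phi_{M_0}=\id$, $\supp\phi_M\subset T$ (hence $\phi_M(0)=0$), and $\phi_M(\partial M_0)=\partial M$. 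As $\phi_M$ is isotopic to the identity and equals the identity near infinity, it carries the bounded component of $\R^n\ct\partial M_0$ onto that of $\R^n\ct\partial M$, so $\phi_M(M_0)=M$. The map
\[
\mathcal V\times p^{-1}(M_0)\longrightarrow p^{-1}(\mathcal V),\qquad (M,\lyap)\longmapsto\lyap\circ\phi_M^{-1},
\]
is then a homeomorphism over $\mathcal V$ with inverse $\mu\mapsto\bigl(p(\mu),\,\mu\circ\phi_{p(\mu)}\bigr)$: the function $\lyap\circ\phi_M^{-1}$ is again proper and surjective, has unique critical point the minimum at $0$, and has sublevel set $\phi_M(M_0)=M$, while composition with the compactly supported, $C^\infty$-continuously varying $\phi_M^{\pm 1}$ is continuous in the $C^\infty$ topology. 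In particular $p$ is onto (compose $\lyap(x)=\norm{x}^2$ with a diffeomorphism carrying $D^n$ to a prescribed $M$), so $p$ is a topological fiber bundle.

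The remaining and, I expect, hardest step is that each fiber is weakly contractible. Fix $M_0$. I would stratify $p^{-1}(M_0)$ by the restriction-to-the-germ-at-the-origin map $\lyap\mapsto[\lyap]_0$, with values in the space $\G$ of germs at $0$ of $C^\infty$ maps $(\R^n,0)\to([0,\infty),0)$ having $0$ as their only critical point (necessarily a local minimum, since the target is $[0,\infty)$ and the value at $0$ is $0$). This map should be a Serre fibration, as restriction-to-a-germ maps typically are. Its base $\G$ is weakly contractible: the homotopy $[\lyap]_0\mapsto[\lyap+\norm{\cdot}^2]_0$ stays inside $\G$---near $0$ the differential of $\lyap+\norm{\cdot}^2$ is $(\mathrm{Hess}_0\lyap+2\,\id)x+O(\norm{x}^2)$ with $\mathrm{Hess}_0\lyap+2\,\id$ positive definite, hence nonzero for small $x\neq 0$---and deforms $\G$ into the subspace of germs of \emph{nondegenerate} minima, which is weakly contractible by the classical parametric Morse lemma; composing these two homotopies null-homotopes $\id_\G$.

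It then remains to show that a fiber of $\lyap\mapsto[\lyap]_0$---the set of $\lyap\in p^{-1}(M_0)$ with a prescribed germ $\lyap_0$ at the origin---is weakly contractible. Choosing $\epsilon>0$ with $\overline{B_\epsilon}\subset\interior M_0$ and $\lyap=\lyap_0$ on $B_\epsilon$, such a function varies freely only on $W:=\R^n\ct\interior B_\epsilon\cong\sph^{n-1}\times[0,\infty)$, where it is a proper function with no critical points, with prescribed $\infty$-jet along $\partial W$, with value tending to $\infty$ at infinity, and subject to the single constraint that the regular level set $\lyap^{-1}(1)$ equal the prescribed hypersurface $\partial M_0$. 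I would contract this space by using a gradient-like flow to straighten such a function to a product form outside a compact set, and then pushing the remaining data off toward infinity onto a single standard model. The delicate points---and the reason this step is the crux---are that the space of functions without critical points on a manifold with boundary is genuinely subtle (pseudoisotopy theory), so the argument must exploit both the noncompact end and the index-$0$ behaviour supplied by the minimum, and that the prescribed level set $\partial M_0$ must be respected throughout the homotopy. Granting this, both base and fiber of $\lyap\mapsto[\lyap]_0$ are weakly contractible, hence so is $p^{-1}(M_0)$, completing the proof.
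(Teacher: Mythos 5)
Your well-definedness argument and your local trivialization are essentially the paper's. The paper realizes the family of diffeomorphisms carrying $M_0$ to nearby $M$ as the time-$1$ flow of a compactly supported vertical-lift vector field on a tubular neighborhood of $\partial M_0$, but the resulting trivialization formula $(M,\lyap)\mapsto\lyap\circ\phi_M^{-1}$ (and its inverse $\mu\mapsto(p(\mu),\mu\circ\phi_{p(\mu)})$) is the same as yours, so there is nothing to fix there.

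The genuine gap is exactly where you flag it. Your plan for contracting $p^{-1}(M_0)$ --- fiber it over the space $\G$ of germs at the origin via $\lyap\mapsto[\lyap]_0$, contract $\G$, and then contract the fibers --- leaves two claims unverified, and the second is the serious one. You never argue that $\lyap\mapsto[\lyap]_0$ is a Serre fibration onto its image (extending a homotopy of germs to a homotopy of global functions that keep $\lyap^{-1}(1)=\partial M_0$ is not a standard restriction-is-a-fibration situation because of the level-set constraint), and, more importantly, the final step --- contracting the space of proper critical-point-free functions on $\sph^{n-1}\times[0,\infty)$ with prescribed boundary jet and prescribed regular level set --- is precisely where the pseudoisotopy-theoretic difficulties you acknowledge live. ``Granting this'' is the theorem; you have pushed the content of the proof into that clause. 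The paper avoids germs entirely. Its Lemma~\ref{lem:contractible-fibers} first uses parametric isotopy-uniqueness of tubular neighborhoods of $\sph^{n-1}$ to deform a given compact family $\lyap\colon\param\to p^{-1}(D^n)$ so that $\lyap'(p)(x)=\norm{x}^2$ on a neighborhood of $\param\times\sph^{n-1}$, smooths via Lemma~\ref{lem:smoothing-funnels}, replaces the result by $\norm{x}^2$ outside $\param\times D^n$ to produce an auxiliary family $J$, and then invokes the Alexander-type trick of Lemma~\ref{lem:funnel-alexander}: using the flow of $\nabla\lyap/\norm{\nabla\lyap}^2$, along which $\lyap$ increases at unit speed, it defines $\alpha_t(p)(x)=\tfrac1t J(p)\circ\Phi(p)^{t\lyap(p)(x)-\lyap(p)(x)}(x)$, which squeezes the data of $J$ onto a shrinking neighborhood of the origin exactly as Alexander's coning trick squeezes a homeomorphism of the disc to the identity. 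This exploits the unique-minimum structure of $\fun_0^\infty(\R^n)$ to collapse the fiber directly, without ever confronting spaces of critical-point-free functions; that trick is the ingredient your sketch is missing.
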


Next, since the inclusion 
\begin{equation*}
	\U_0(D^n,\R^n) \hookrightarrow \U(D^n,\R^n)
\end{equation*}
is a weak homotopy equivalence (Lemma~\ref{lem:gr-0-gr-w-h-e}),
 Theorem~\ref{th:intro-grassmannian} leads us to investigate the topology of $\U(D^n,\R^n)$.
Generalizing a theorem of Binz and Fischer \cite{binz1981manifold} based on an idea implicit in work of Weinstein \cite{weinstein1971symplectic}, Gay-Balmaz and Vizman \cite{gaybalmaz2014principal} proved that the natural quotient map
$$\Emb(D^n,\R^n)\to \U(D^n,\R^n), \quad f\mapsto f(D^n)$$
is a principal $\Diff(D^n)$-bundle, hence also a Serre fibration.
Using the long exact sequence of homotopy groups of this fibration together with results of Smale \cite{smale1959diffeomorphisms}, Cerf \cite{cerf1970stratification}, and Hatcher \cite{hatcher1983proof} related to the homotopy groups of $\Diff(D^n)$, we  deduce that $\U(D^n,\R^n)$ is path-connected and simply connected if $n\neq 4,5$ and weakly contractible if $n\leq 3$.
This and Theorem~\ref{th:intro-grassmannian} establish Theorem~\ref{th:intro-1-conn-contract} for $\fun_0^\infty(\R^n)$.

To finish the proof of Theorem~\ref{th:intro-1-conn-contract} for the other spaces, we prove that the inclusions
\begin{equation*}
	\begin{tikzcd}
		\fun_0^\infty(\R^n) \arrow[hookrightarrow]{r} & \fun_0^r(\R^n) \arrow[hookrightarrow]{r} & \fun^r(\R^n)
	\end{tikzcd}
\end{equation*}
are weak homotopy equivalences for $1\leq r\leq \infty$, as are all arrows in the diagram
\begin{equation*}
	\begin{tikzcd}
		\fun_0^{r+1}(\R^n) \arrow[hookrightarrow, "-\nabla"]{r} \arrow[hookrightarrow]{d}& \stab_0^r(\R^n) \arrow[hookleftarrow]{r} \arrow[hookrightarrow]{d} & \stabc_0^r(\R^n) \arrow[hookrightarrow]{d}\\
		\fun^{r+1}(\R^n)\arrow[hookrightarrow,, "-\nabla"]{r} & \stab^r(\R^n) \arrow[hookleftarrow]{r} & \stabc^r(\R^n)
	\end{tikzcd}.
\end{equation*}
Here the arrows are embeddings given by ``take the negative gradient''  and subspace inclusion.
This and Theorem~\ref{th:intro-grassmannian} imply
\begin{Th}\label{th:intro-weak-htpy-vfs-etc}
If $n\neq 4,5$, then $\fun_0^r(\R^n)$, $\fun^r(\R^n)$, $\stab_0^r(\R^n)$, $\stab^r(\R^n)$, $\stabc_0^r(\R^n)$, and $\stabc^r(\R^n)$ all have the weak homotopy type of $\U(D^n,\R^n)$.
\end{Th}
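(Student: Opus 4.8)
The plan is to assemble this theorem from three ingredients already in place: Theorem~\ref{th:intro-grassmannian}, the fact that $\U_0(D^n,\R^n)\hookrightarrow \U(D^n,\R^n)$ is a weak homotopy equivalence (Lemma~\ref{lem:gr-0-gr-w-h-e}), and the claim that all the inclusions and negative-gradient maps in the two displayed diagrams are weak homotopy equivalences. Granting those, the argument is a diagram chase: Theorem~\ref{th:intro-grassmannian} gives a weak equivalence $p\colon \fun_0^\infty(\R^n)\to \U_0(D^n,\R^n)$, which composes with $\U_0(D^n,\R^n)\hookrightarrow \U(D^n,\R^n)$ to show $\fun_0^\infty(\R^n)$ has the weak homotopy type of $\U(D^n,\R^n)$. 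Then the chain $\fun_0^\infty(\R^n)\hookrightarrow \fun_0^r(\R^n)\hookrightarrow \fun^r(\R^n)$ being weak equivalences propagates this to $\fun_0^r$ and $\fun^r$ for all $1\le r\le\infty$, and the commuting square of six spaces (with the $-\nabla$ and inclusion arrows weak equivalences) propagates it further to $\stab_0^r$, $\stab^r$, $\stabc_0^r$, $\stabc^r$. Since ``weakly homotopy equivalent'' is an equivalence relation on spaces (via the existence of zig-zags of weak equivalences, or directly since all maps here are genuine maps inducing isomorphisms on all $\pi_k$), all six spaces in the statement share the weak homotopy type of $\U(D^n,\R^n)$.

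The real content, and the main obstacle, is verifying that the arrows in the two diagrams are weak homotopy equivalences; this is where I would spend the bulk of the work, though in the structure of the paper it is presumably carried out in the lemmas cited just before the theorem statement. For the inclusions $\fun_0^\infty\hookrightarrow\fun_0^r\hookrightarrow\fun^r$: the first inclusion should be handled by a smoothing argument — given a continuous family $\param\to\fun_0^r(\R^n)$ of $C^r$ Lyapunov-type functions parametrized by a compact manifold, one convolves (parameter-wise, using a fixed family of mollifiers together with a partition of unity) to produce a nearby family of $C^\infty$ functions, checking that surjectivity, properness, and the unique-minimum/unique-critical-point conditions are preserved under small $C^1$ perturbations (properness needs care and is typically arranged by not perturbing outside a large ball, or by a controlled cutoff); the same scheme smooths homotopies, giving surjectivity and injectivity on all $\pi_k$. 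For $\fun_0^r\hookrightarrow\fun^r$ one must show the minimum can be moved to the origin in a parametrized way: compose each $\lyap(p,\cdot)$ with a translation by its (unique, hence continuously varying) minimizer, which gives a deformation retraction of $\fun^r$ onto $\fun_0^r$. For the $-\nabla$ arrows: the negative gradient of $\lyap\in\fun^{r+1}$ is a $C^r$ vector field with a globally asymptotically stable equilibrium (with $\lyap$ itself serving as a strict Lyapunov function after checking $\dot\lyap<0$ off the critical point, using properness for global attraction), so $-\nabla$ lands in $\stab^r$; that it is a weak equivalence requires the converse Lyapunov-theoretic input — roughly, the space of Lyapunov functions for a given asymptotically stable field is convex (or at least contractible), so the total space of ``pairs (vector field, Lyapunov function)'' fibers over $\stab^r$ with contractible fibers and also maps to $\fun^{r+1}$ compatibly with $-\nabla$. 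For the inclusions $\stabc^r\hookrightarrow\stab^r$: one shows every asymptotically stable field can be made backward complete by multiplying by a positive function that decays suitably at infinity (a ``reparametrization of time'' that does not change the phase portrait or the stability), and that this can be done continuously in families, yielding a deformation retraction.

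A subtle point worth flagging is that these constructions must all be carried out \emph{parametrically} over an arbitrary compact manifold $\param$ (possibly with boundary, to get the relative statements), and with enough control — uniform on compact sets in $\R^n$, and respecting properness — that the perturbed/deformed families genuinely stay inside the relevant function spaces with their compact-open $C^r$ topologies; the weak (as opposed to strong) homotopy equivalence assertion is exactly what lets us restrict attention to such compact parameter spaces and sidesteps any pathology of these infinite-dimensional spaces at the ``strong'' level. Once all the cited lemmas are in hand, though, the deduction of Theorem~\ref{th:intro-weak-htpy-vfs-etc} itself is immediate, as sketched in the first paragraph.
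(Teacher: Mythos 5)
Your diagram chase is exactly the paper's argument at the top level: Theorem~\ref{th:funnel-space-fiber-bundle-over-U} plus Lemma~\ref{lem:gr-0-gr-w-h-e} settles $\fun_0^\infty(\R^n)$, and then weak equivalences along the two displayed diagrams propagate to the other five spaces (this is carried out in Corollaries~\ref{co:funnels-homotopy-groups}, \ref{co:vf-homotopy-groups}, \ref{co:complete-vf-homotopy-groups}, \ref{co:varying-equilibria-minima-htpy-grps}, using Corollaries~\ref{co:whe-funnels}, \ref{co:whe-vfs-funnels}, \ref{co:whe-vfs-complete} and Lemma~\ref{lem:varying-minima-equilibria}). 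Your description of the translation retraction $\fun^r\to\fun_0^r$ also matches Lemma~\ref{lem:varying-minima-equilibria}.

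Where you diverge from the paper is in the justification of the $-\nabla$ arrows. You propose a ``pairs $(F,V)$'' space fibering over $\stab^r$ with convex (hence contractible) fibers and over $\fun^{r+1}$ compatibly with $-\nabla$. The convexity of both kinds of fiber is correct, but the claim that either projection is a Serre fibration is asserted, not proved, and it is not obvious for these infinite-dimensional function spaces. The paper's Lemma~\ref{lem:smoothing-vf} avoids the issue entirely: given a compact parametrized family $\vo\colon\param\to\stab^r$, it observes that the union of equilibria is a compact asymptotically stable set for the joint flow on $\param\times\R^n$, invokes Wilson's converse Lyapunov theorem to get a single $C^\infty$ parametric Lyapunov function $\lyapth\colon\param\to\fun^\infty(\R^n)$, and then runs the \emph{linear} homotopy $h_t=-t\psi\nabla\lyapth+(1-t\psi)\tilde\vo$; the Lyapunov inequality is preserved termwise, so each $h_t$ stays in $\stab^r$. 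This gives surjectivity and injectivity on every $\pi_k$ directly, with no fibration claim. Your fibration idea could perhaps be made to work, but as written it is a gap.

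Two smaller points. First, your mollification sketch for $\fun_0^\infty\hookrightarrow\fun_0^r$ asserts the unique-critical-point condition survives small $C^1$ perturbations; near the minimum the gradient vanishes, so this is exactly where mollification can create spurious critical points. The paper's Lemma~\ref{lem:smoothing-funnels} therefore leaves the function unchanged near its zero set, mollifies only away from it, and then restores global $C^r$ regularity by post-composing with a carefully chosen reparametrization $\varphi$ (the Fathi--Pageault trick, \cite[Lem.~6.3]{fathi2019smoothing}); a final convex interpolation with a cutoff glues the two. Second, the passage $\stabc^r\hookrightarrow\stab^r$ is not naturally a deformation retraction, since the time-change factor $\frac{1}{1+\varphi\norm{\vo(x)}^2}$ already alters a vector field that is complete to begin with; the paper (Lemma~\ref{lem:completing-vfs} and Corollary~\ref{co:whe-vfs-complete}) only proves a weak homotopy equivalence by choosing $\varphi$ to vanish on the relevant subset of the compact parameter space, which is all that the theorem needs.
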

Thus, all homotopy groups of these spaces are isomorphic to those of $\U(D^n,\R^n)$.
This and the aforementioned result on the topology of $\U(D^n,\R^n)$ finishes the proof of Theorem~\ref{th:intro-1-conn-contract} for all cases.

To prove that the maps in the diagrams above are weak homotopy equivalences, we rely on smooth approximations \cite{hirsch1976differential},  Wilson's converse Lyapunov theorem for asymptotically stable compact invariant sets  \cite{wilson1969smooth}, and a smoothing technique of Fathi and Pageault \cite{fathi2019smoothing}.
The same techniques are used to prove Theorem~\ref{th:intro-cr-extensions}.

\subsection{Organization}

The rest of the paper is organized as follows. 
In section \ref{sec:prelim} we explain our conventions, definitions, and notation.
Section~\ref{sec:fiber-bundle-weak-htpy-type} is the  heart of the paper, containing the proof of Theorem~\ref{th:intro-grassmannian} and subsequently Theorems~\ref{th:intro-1-conn-contract}, ~\ref{th:intro-cr-extensions} for $\fun_0^r(\R^n)$.
Section~\ref{sec:htpy-groups-gas-vf} uses Lyapunov theory to complete the proof of Theorems~\ref{th:intro-1-conn-contract}, ~\ref{th:intro-cr-extensions} for $\stab_0^r(\R^n)$. 
(These and other sections also contain more refined results.)
In section~\ref{sec:path-conn-poincare} we explain why validity of the path-connectedness portion of Theorem~\ref{th:intro-1-conn-contract} for $n=5$ would imply that the $4$-dimensional smooth Poincar\'{e} conjecture is true.
Section~\ref{sec:relative-htpy-groups} contains the mentioned nullhomotopy result and consequences for relative homotopy groups.
In section~\ref{sec:varying-min-eq} we extend the previous results by removing the constraint on the location of the minima or equilibria.
Section~\ref{sec:applications} contains the mentioned applications.
In the appendix we consider the cases of Morse minima and hyperbolic equilibria.

\subsection*{Acknowledgments}
This material is based upon work supported by the Air Force Office of Scientific Research under award number FA9550-24-1-0299.
The author would like to thank Ryan Budney, Wouter Jongeneel, Alexander Kupers, Mike Miller Eismeier,  and \'Alvaro del Pino G\'omez for helpful conversations. 

\section{Preliminaries}\label{sec:prelim}

\subsection{Subsets of $\R^n$ and $\R^{n\times n}$}
We let $I$ denote the unit interval $[0,1]\subset \R$ and, when $n$ is clear from context, $B_r(y)$ denote the open ball $\{x\in \R^n\colon \norm{x-y}< r\}$ of radius $r>0$ centered at $y\in \R^n$.
We denote by $D^n=\{x\in \R^n\colon \norm{x}\leq 1\}$ the standard unit $n$-disc and $\sph^{n-1}=\partial D^n = \{x\in \R^n\colon \norm{x}=1\}$ the standard $(n-1)$-sphere.
Throughout this paper we assume $n>0$.

We denote by $\GL(n)\subset \R^{n\times n}$ the general linear group of invertible $n\times n$ real matrices,  $O(n)\subset \GL(n)$ the group of orthogonal matrices, and $\GL^+(n)\subset \GL(n)$ and $SO(n)\subset O(n)$ those matrices with positive determinant.


\subsection{Function spaces and nonlinear Grassmannians}\label{subsec:func-spaces-non-grass}

\subsubsection{Generalities}\label{subsubsection:generalities}
Given (finite-dimensional) $C^\infty$ manifolds with (possibly empty) corners $M$, $N$ and $r\in \N_{\geq 0}\cup \{\infty\}$, we let $C^r(M,N)$ denote the set of $C^r$ maps $M\to N$.
Let $\G$ be any subset of $C^r(M,N)$.
Unless stated otherwise, we equip any such $\G$ with the subspace topology induced by the compact-open $C^r$ topology on $C^r(M,N)$.
This is the topology of uniform convergence  on compact sets of not only the functions but also all derivatives up to order $r$ \cite[sec.~2.1]{hirsch1976differential}.

Thus, if $\param$ is a topological space, a map $g\colon \param \to \G$ is continuous if and only if the $x$-partial $r$-jet map 
\begin{equation*}
\param \times M\to J^r(M,N), \quad (p,x)\mapsto J^r_{g(p)}(x)	
\end{equation*}
is continuous (\cite{fox1945topologies}, \cite[p.~62]{hirsch1976differential}).
More concretely, $g\colon \param \to \G$ is continuous if and only if for any $q\in \param$, $y\in M$, and local coordinates centered at $y\in M$ and $g(q)(y)\in N$, the map $(p,x)\mapsto g(p)(x)$ and its first $r$ partial derivatives with respect to $x$ are continuous in $(p,x)$.

We next define several specific $\G$.

\subsubsection{Lyapunov-like functions}
Given $r\in \N_{\geq 1}\cup\{\infty\}$,  let $\fun^r(\R^n)\subset C^r(\R^n,[0,\infty))$ be the space of all surjective proper $C^r$ functions $\R^n\to [0,\infty)$ having a unique minimum and critical point with value $0$.
Given $y\in \R^n$, we let $\fun_y^r(\R^n)\subset \fun^r(\R^n)$ denote those functions having their unique minimum at $y$.
Clearly  $\fun_y^r(\R^n)$ is the same as $\fun_0^r(\R^n)$ up to translation, so we mainly consider the latter space for simplicity.

\subsubsection{Vector fields and flows}

A locally Lipschitz vector field $\vo$ on a $C^\infty$ manifold $M$ generates a unique maximal local flow $\Phi\colon \dom(\Phi)\subset \R \times M\to M$ that is locally Lipschitz and $C^r$ if $\vo$ is $C^r$ for $r\in \N_{\geq 1}\cup \{\infty\}$.
The \concept{trajectory} of $\vo$ through $x\in M$ is the maximally defined curve $t\mapsto \Phi^t(x)$. 
The vector field is \concept{forward complete} if $\dom(\Phi)\supset [0,\infty)\times \R^n$, \concept{backward complete} if $\dom(\Phi)\supset (-\infty,0] \times \R^n$, and \concept{complete} if it is forward and backward complete.

A subset $\att\subset M$ is \concept{invariant} for $\vo$ (or $\Phi$) if all trajectories through points in $\att$ are contained in $\att$.
A compact invariant set $\att\subset M$ is \concept{(locally) asymptotically stable} for $\vo$ (or $\Phi$) if for every open set $U_1\supset \att$ there is a smaller open set $U_2\supset \att$ such that $U_2\times [0,\infty) \subset \dom(\Phi)$ and $\Phi^t(z)\to \att$ as $t\to\infty$ for all $z\in U_2$.
The \concept{basin of attraction} $\bas\subset M$ is the open set defined to be the set of all points $x\in M$ such that $[0,\infty)\times \{x\}\in \dom(\Phi)$ and $\Phi^t(x)\to \att$ as $t\to\infty$.
We say that an asymptotically stable compact invariant set $\att$ is \concept{globally asymptotically stable} if $\bas = M$ (in particular, the vector field is forward complete in this case).
This is possible only when the topologies of $\att$ and $M$ are suitably compatible, such as the case that $M=\R^n$ and $\att$ is a single point.
In the latter case $\att$ is an \concept{equilibrium}, that is, a point $y\in M$ such that $\vo(y)=0$.

Given $r\in \N_{\geq 0}\cup \{\infty\}$, let $\vf^r(M)$ be the space of $C^r$ vector fields on $M$.
When $M=\R^n$, there is no essential difference between $\vf^r(\R^n)$ and $C^r(\R^n,\R^n)$

We say that a vector field is \concept{almost nonsingular} if it has exactly one equilibrium.
Let $\AN^r(\R^n)\subset \vf^r(\R^n)$ be the space of $C^r$ almost nonsingular vector fields  on $\R^n$, and $\AN_y^r(\R^n)\subset \AN^r(\R^n)$ be those with equilibrium at $y\in \R^n$.

Given $r\in \N_{\geq 1}\cup \{\infty\}$, we let $\stab^r(\R^n)\subset \AN^r(\R^n)$ denote the space of $C^r$ globally asymptotically stable vector fields on $\R^n$, and $\stabc^r(\R^n)\subset \stab^r(\R^n)$ denote those that are additionally backward complete (recall that forward completeness is automatic).
Given $y\in \R^n$, Let $\stab_y^r(\R^n)\subset \stab^r(\R^n)$,  $\stabc_y^r(\R^n)\subset \stabc^r(\R^n)$ be those vector fields having their unique equilibrium at $y$.
Clearly  $\stab_y^r(\R^n)$, $\stabc_y^r(\R^n)$ are the same as $\stab_0^r(\R^n)$, $\stabc_0^r(\R^n)$ up to translation, so we focus on the latter for simplicity.


\subsubsection{Diffeomorphisms and embeddings}\label{subsubsection:diffeos-embeddings}
Given a $C^\infty$ manifold with (possibly empty) boundary $M$ and a $C^\infty$ manifold $N$, we let $\Diff(M)\subset C^\infty(M,M)$ be the space of $C^\infty$ diffeomorphisms of $M$ and $\Emb(M,N)\subset C^\infty(M,N)$ be the space of $C^\infty$ embeddings of $M$ into $N$.
If $M$ and $N$ are oriented and $\dim M = \dim N$, then $\Diff^+(M)\subset \Diff(M)$ and $\Emb^+(M,N)$ denote the spaces of orientation-preserving diffeomorphisms and embeddings, respectively.
When $M=D^n$ and $N=\R^n$, we let $\Emb_0(D^n,\R^n)$ denote those embeddings $f$ mapping $\interior{D^n}$ onto a set containing the origin, that is, $0\in f(\interior{D^n})$.
We define $$\Emb_{0}^{+}(D^n,\R^n)\coloneqq \Emb_{0}(D^n,\R^n)\cap \Emb^{+}(D^n,\R^n).$$

\subsubsection{Nonlinear Grassmannians}\label{subsubsection:nonlinear-grass}
We follow the presentation in \cite[p.~296]{gaybalmaz2014principal}.
Let $M$ be a $C^\infty$ manifold with boundary and $N$ be a $C^\infty$ manifold.
The \concept{nonlinear Grassmannian} of submanifolds of $N$ of type $M$ is the set $\U(M,N)$ defined by 
\begin{equation*}
	\U(M,N)=\{S\subset N\colon S \text{ is a $C^\infty$ embedded submanifold of } N \textnormal{ diffeomorphic to } M\}.
\end{equation*}
Given $S\in \U(M,N)$, by definition there exists $f\in \Emb(M,N)$ such that $S=f(M)$.
The embedding $f$ is unique up to composition on the right by a diffeomorphism in $\Diff(M)$.
Thus, there is a bijection
\begin{equation*}
	\Emb(M,N)/\Diff(M) \to \U(M,N), \quad [f]\mapsto f(M).
\end{equation*}
Using this identification, we equip $\U(M,N)$ with the subspace topology induced by the quotient topology on $\Emb(M,N)/\Diff(M)$.

When $M$ is compact, it is known that $\Diff(M)$, $\Emb(M,N)$, and $\U(M,N)$ are $C^\infty$ Fr\'{e}chet manifolds and that the quotient map $\Emb(M,N)\to \U(M,N)$ is a $C^\infty$ Fr\'{e}chet principal $\Diff(M)$-bundle \cite[Thm~2.2]{gaybalmaz2014principal} (this was proved earlier for other classes of $M$; see \cite[sec.~44]{kriegl1997convenient} and references therein).
However, we shall use only that the quotient map is a topological fiber bundle with fiber $\Diff(M)$, hence a Serre fibration in particular.

We define $\U_0(D^n,\R^n)\subset \U(D^n,\R^n)$ to be the open subset of submanifolds of $\R^n$ whose interiors contain the origin.
Since $\Emb_0^+(D^n,\R^n)$ is the preimage of $\U_0(D^n,\R^n)$ under the quotient map, it follows that the restriction of the quotient map to $\Emb_0^+(D^n,\R^n)$ is a topological fiber bundle over $\U_0(D^n,\R^n)$ with fiber $\Diff^+(D^n)$.

\subsection{Maps into function spaces}\label{subsection:maps-into-function-spaces}
Given $C^\infty$ manifolds with corners $M$, $N$ and $r\in \N_{\geq 1}\cup \{\infty\}$, let $\G$ be any subset of $C^r(M,N)$.
Recall that $\G$ is equipped with the topology inherited from the compact-open $C^r$ topology on $C^r(M,N)$ unless stated otherwise.
The following definition is expedient for our purposes.

If $s\in \N_{\geq 0} \cup \{\infty\}$ satisfies $s\leq r$ and  $\param$ is a $C^\infty$ manifold with corners, we say that $g\colon \param\to \G$ is $C^s$ (or $g\in C^s$) if  the induced map $\param \times M \to N$  given by $(p,x)\mapsto g(p)(x)$ is a $C^s$ map in the usual sense (is continuous and has continuous partial derivatives up to order $s$ in local coordinates).

\textbf{Warning:}
\begin{itemize}
	\item If $g\colon \param\to \G$ is $C^r$, then $g$ is automatically continuous (with respect to the compact-open $C^r$ topology on $\G$), but this is not  the case if $g$ is $C^s$ with $s < r$.
	Thus, we need to specify that a map $g\colon \param \to \G$ is both continuous and $C^s$ if we want to indicate that it is continuous (with respect to the compact-open $C^r$ topology on $\G$) and also $C^s$ in the above sense. 
	\item In particular, when we say that a map $g\colon \param \to \G$ is $C^0$, we mean that the map $\param \times \R^n \to \R^n$ given by $(p,x)\mapsto g(p)(x)$ is $C^0$ (continuous), which is weaker than saying that $g\colon \param \to \G$ is continuous.
	\item On the other hand, a continuous map $g\colon \param \to \G$ is automatically $C^0$, but we will sometimes use redundant conditions like ``continuous and $C^0$'' to streamline the statements of some results (e.g. Lemma~\ref{lem:smoothing-funnels}).
\end{itemize}

\section{Topology of spaces of Lyapunov-like functions}\label{sec:fiber-bundle-weak-htpy-type}

This section contains the proof of  Theorems~\ref{th:intro-1-conn-contract}, \ref{th:intro-cr-extensions}, \ref{th:intro-grassmannian} for $\fun_0^r(\R^n)$.
Section~\ref{subsec:alexander} contains a lemma resembling a two-sided version of Alexander's trick \cite{alexander1923deformation}, but for  $C^\infty$ real-valued functions rather than homeomorphisms. 
Section~\ref{subsec:smoothing-funnels} contains a smoothing lemma implying that the inclusions $\fun_0^r(\R^n)\hookrightarrow \fun_0^s(\R^n)$, $\fun^r(\R^n)\hookrightarrow \fun^s(\R^n)$ are weak homotopy equivalences for $r>s$.
Section~\ref{subsec:contractibility-path-conn} contains further preparatory lemmas needed for the first main result in section~\ref{subsec:fiber-bundle-structure-weak-homotopy-type}. 
In section~\ref{subsec:fiber-bundle-structure-weak-homotopy-type} we show that for $n\neq 4,5$ the sublevel set map $\fun_0^\infty(\R^n)\to \U_0(D^n,\R^n)$ is well-defined and in fact a fiber bundle with weakly contractible fibers.
The proof of weak contractibility relies on a lemma of section~\ref{subsec:contractibility-path-conn}, which in turn relies on lemmas of sections~\ref{subsec:alexander} and \ref{subsec:smoothing-funnels}.
In section~\ref{subsec:vanishing-htpy-groups-funnels} we study the topology of $\U_0(D^n,\R^n)$ to complete the proof of Theorem~\ref{th:intro-1-conn-contract} and, using the smoothing lemma of section~\ref{subsec:smoothing-funnels}, the proof of Theorem~\ref{th:intro-cr-extensions}.

\subsection{Alexander-type trick}\label{subsec:alexander}

\begin{Lem}\label{lem:funnel-alexander}
	Let $\param$ be a $C^\infty$ manifold and $\lyap, \lyapt, J\colon \param \to \fun_0^\infty(\R^n)$ be $C^\infty$ maps such that $J(p)(x)=\lyap(p)(x)$ for all $(p,x)\in \{\lyap(p)(x)\leq a\}$ and $J(p)(x)=\lyapt(p)(x)$ for all $(p,x)\in \{b\leq \lyapt(p)(x)\}$ for some $b>a>0$.
	Then there is a $C^\infty$ homotopy from $\lyap$ to $\lyapt$.
	In fact,  there are $C^\infty$ homotopies $\alpha \colon I\times \param \to \fun_0^\infty(\R^n)$ from $\lyap$ to $J$ and $\beta \colon I\times \param \to \fun_0^\infty(\R^n)$ from $\lyapt$ to $J$ such that $\alpha_t(p)(x)=\lyap(p)(x)$ for all $(t,p,x)\in I\times \{\lyap(p)(x)\leq a\}$ and $\beta_t(p)(x)=\lyapt(p)(x)$ for all $(t,p,x)\in I \times \{b\leq \lyapt(p)(x)\}$. 
\end{Lem}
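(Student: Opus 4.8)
The plan is to construct $\alpha$ and $\beta$ by a homotopy that ``slides'' the sublevel-set decomposition, using convex combinations in a region where $\lyap$, $\lyapt$, and $J$ all interact in a controlled way. The key geometric fact is that for each $p$, the sets $\{\lyap(p)\leq a\}$, $\{J(p)\leq c\}$ for intermediate $c$, and $\{\lyapt(p)\geq b\}$ are nested discs/complements of discs, so we have room to interpolate.

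\textbf{Step 1: reduce to showing each of $\lyap$ and $\lyapt$ is $C^\infty$-homotopic to $J$.} Since the hypotheses on $(\lyap, J, a)$ and on $(\lyapt, J, b)$ are symmetric in form (one controls a sublevel set, the other a superlevel set), it suffices to build $\alpha$; the construction of $\beta$ is identical after replacing ``$\leq a$'' by ``$\geq b$'' and swapping roles, and then the desired homotopy from $\lyap$ to $\lyapt$ is the concatenation $\alpha * \bar\beta$ (which is $C^\infty$ after the usual reparametrization by a function that is constant near the endpoints of $I$).

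\textbf{Step 2: construct $\alpha$ from $\lyap$ to $J$.} Fix a $C^\infty$ function $\rho\colon \R \to [0,1]$ with $\rho\equiv 0$ on $(-\infty,a']$ and $\rho\equiv 1$ on $[a'',\infty)$ for some $a<a'<a''<b$, and set
\begin{equation*}
	\alpha_t(p)(x) \coloneqq \bigl(1 - t\,\rho(\lyap(p)(x))\bigr)\lyap(p)(x) + t\,\rho(\lyap(p)(x))\,J(p)(x).
\end{equation*}
I would then check: (a) $\alpha_0 = \lyap$, and $\alpha_1(p)(x) = \lyap(p)(x)$ where $\lyap(p)(x)\leq a'$ and $= J(p)(x)$ where $\lyap(p)(x)\geq a''$, so $\alpha_1$ agrees with $J$ on a neighborhood of the region where $J$ is ``built from'' $\lyapt$; a second short homotopy of the same type (now interpolating between $\alpha_1$ and $J$ in the band $a'\le J(p)(x)\le b$, which is safe since there $\alpha_1$ is a convex combination of $\lyap$ and $J$ both of whose sublevel sets are controlled) completes the passage to $J$. (b) $\alpha_t(p)(x) = \lyap(p)(x)$ whenever $\lyap(p)(x)\leq a$, since $\rho$ vanishes there — giving the asserted rel-condition. (c) For each $(t,p)$, the function $x\mapsto \alpha_t(p)(x)$ lies in $\fun_0^\infty(\R^n)$: it is $C^\infty$, nonnegative, equals $\lyap(p)$ near $0$ so has the same unique minimum $0$ attained only at the origin and no other critical point there; outside the origin one shows it has no critical points and is proper and surjective. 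Properness and surjectivity follow because $\alpha_t(p)(x) \to \infty$ as $\norm{x}\to\infty$ (it equals $J(p)(x)$ for $\lyap(p)(x)$ large, and $J(p)\in\fun_0^\infty$), and surjectivity onto $[0,\infty)$ then follows from the intermediate value theorem together with the value $0$ at the origin. The critical-point count away from the origin is the delicate point addressed next.

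\textbf{Step 3 (the main obstacle): no spurious critical points.} The genuine difficulty is verifying that $\alpha_t(p)$ has no critical point other than the origin. On $\{\lyap(p)(x)\leq a'\}$ we have $\alpha_t(p)=\lyap(p)$, which is fine; on $\{\lyap(p)(x)\geq a''\}$ we have $\alpha_t(p) = (1-t)\lyap(p) + tJ(p)$, a convex combination of two functions each of which has the \emph{same} regular sublevel set structure there — but a convex combination of two functions without critical points can a priori acquire one, so this must be handled carefully. The standard fix, which I would adopt, is to not interpolate the \emph{values} linearly but instead to interpolate along the ``vertical slices'': since $\lyap(p)$ and $J(p)$ (hence $\lyapt(p)$) on the overlap band have diffeomorphic regular level sets, one can first arrange (using Lemma~\ref{lem:funnel-alexander}'s intended downstream role is moot here) by an isotopy of $\R^n$ that the level sets coincide, reducing to the case where $\lyap(p)$ and $J(p)$ differ only by a fiberwise reparametrization of a collar $\{a\leq \lyap(p)(x)\leq b\}\cong \sph^{n-1}\times[a,b]$; then the convex combination, computed fiberwise in the $[a,b]$ factor, is monotone in that factor and so has no critical points in the collar's interior, all done smoothly and continuously in $(t,p)$. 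Packaging this collar construction to depend $C^\infty$-ly on $p$ — using a $C^\infty$ family of tubular-neighborhood/collar identifications, which exists because the relevant level sets form a $C^\infty$ bundle over $\param$ — is the technical heart of the argument, and is where I expect to spend the most care.

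Finally, the $\beta$ half is verbatim the above with sublevel sets replaced by superlevel sets (equivalently, apply the $\alpha$-construction to $1/\lyapt$ and $1/J$ away from the origin, or reflect $\R\to\R$), and smoothness of the concatenated homotopy in $I\times\param$ is routine after reparametrizing $t$ by a $C^\infty$ function $I\to I$ that is $0$ near $0$ and $1$ near $1$.
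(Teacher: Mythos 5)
Your approach (interpolate linearly with a cutoff, then fix the critical-point issue by collar coordinates) is genuinely different from the paper's, which uses an Alexander-trick style reparametrization: defining $\alpha_t(p)(x) = \tfrac{1}{t}J(p)\circ\Phi(p)^{t\lyap(p)(x)-\lyap(p)(x)}(x)$ where $\Phi(p)$ is the flow of $\nabla\lyap(p)/\norm{\nabla\lyap(p)}^2$, so that level sets are diffeomorphically permuted at nonzero speed and no critical-point issue arises in the first place. You have correctly identified the central difficulty — a convex combination of two critical-point-free functions can acquire critical points — but your proposal does not actually resolve it.

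Concretely, there are two gaps. First, your Step~2 formula fails on its own: writing $V=\lyap(p)(x)$, one has $\alpha_t = V + t\rho(V)(J-V)$, so
\begin{equation*}
\nabla_x\alpha_t = \bigl[1 - t\rho(V) + t\rho'(V)(J-V)\bigr]\nabla_x V + t\rho(V)\nabla_x J,
\end{equation*}
and the coefficient of $\nabla_x V$ can become negative (wherever $\rho'>0$, $J<V$, and $|J-V|$ is large), after which cancellation against the $\nabla_x J$ term is entirely possible; you do not compute this derivative or rule out the cancellation. Second, your Step~3 fix — pass to collar/fiberwise coordinates so that $\lyap(p)$ and $J(p)$ share level sets and the interpolation becomes fiberwise monotone — is the right idea, but you explicitly defer it (``is the technical heart of the argument, and is where I expect to spend the most care''), and it is precisely the nontrivial content of the lemma: you need a $C^\infty$-in-$(t,p)$ family of isotopies of $\R^n\setminus\{0\}$ matching the two level-set foliations, which does not come for free from ``the level sets form a bundle over $\param$'' since the two foliations are a priori unrelated bundles. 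The paper's gradient-flow construction is essentially a direct, closed-form implementation of the collar idea you gesture at (with the flow $\Phi(p)$ supplying the parametrized collar), so until you carry out Step~3 in detail your argument is an outline of the same strategy rather than a complete alternative proof.
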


\begin{proof}
	Let $\Phi(p)$ and $\Psi(p)$  be the $C^\infty$ maximal local flows of $\nabla \lyap(p)/\norm{\nabla \lyap(p)}^2$ and $\nabla \lyapt(p)/\norm{\nabla \lyapt(p)}^2$, respectively, on $\R^n \setminus \{0\}$.
	Then
	\begin{equation}\label{eq:comp-shift}
		\lyap(p) \circ \Phi(p)^t = \lyap(p) + t \quad \textnormal{and} \quad \lyapt(p) \circ \Psi(p)^t = \lyapt(p) + t,
	\end{equation}
	so their domains are 
	\begin{equation}\label{eq:Phi-Psi-domains}
		\begin{split}
			\dom(\Phi(p))&=\{(t,x)\colon x\neq 0, \lyap(p)(x) > - t\} \qquad \text{and}\\
			\dom(\Psi(p))&=\{(t,x)\colon x\neq 0, \lyapt(p)(x) > -t\}.
		\end{split}
	\end{equation}
	
 By \eqref{eq:Phi-Psi-domains} we may define $\alpha, \beta^0\colon I\times \param \to C^\infty(\R^n, [0,\infty))$ by
	\begin{equation*}
		\alpha_t(p)(x)= 
		\begin{cases}
			\frac{1}{t}J(p)\circ \Phi(p)^{t\lyap(p)(x)-\lyap(p)(x)}(x),  & 0< t \leq 1 \textnormal{ and } x\neq 0\\
			\lyap(p)(x), & t = 0	\textnormal{ or } x = 0
		\end{cases}
	\end{equation*}
	and
	\begin{equation*}
		\beta^0_t(p)(x)= 
		\begin{cases}
			tJ(p)\circ \Psi(p)^{\frac{1}{t}\lyapt(p)(x)-\lyapt(p)(x)}(x),  & 0< t \leq 1 \textnormal{ and } x\neq 0\\
			\lyapt(p)(x), & t = 0	\textnormal{ or } x=0
		\end{cases}.
	\end{equation*}	
	
   Note that $\alpha_t(p)(x)=\lyap(p)(x)$ for all $(t,p,x)$ in $I\times \{\lyap(p)(x)\leq a\}$ and also all $(t,p,x)$ in a neighborhood of $\{t=0\}$ by \eqref{eq:comp-shift}, so $\alpha$ is $C^\infty$.
   And $\nabla(\alpha_t(p))(x), \nabla(\beta^0_t(p)(x))\neq 0$ for all $x\neq 0$, since for each fixed $t\in (0,1]$ the  level sets of $\lyap$ and $\lyapt$ are diffeomorphically permuted by
   \begin{equation}\label{eq:alpha-beta-phi-psi}
   	(p,x)\mapsto \Phi^{t\lyap(p,x)-\lyap(p,x)}(x) \quad \textnormal{and} \quad (p,x)\mapsto \Psi^{\frac{1}{t}\lyapt(p,x)-\lyapt(p,x)}(x)
   \end{equation}
   at nonzero speed, so the maps \eqref{eq:alpha-beta-phi-psi} define diffeomorphisms of $P\times (\R^n\setminus \{0\})$.
   Thus, $\alpha \colon I\times \param \to \fun_0^\infty(\R^n)$ is the desired $C^\infty$ homotopy from $\alpha_0 = \lyap$ to $\alpha_1=J$.
   
  By \eqref{eq:comp-shift}, $\beta^0_t(p)(x)=\lyapt(p)(x)$ for (i) all $(t,p,x) \in  I \times \{b\leq \lyapt(p)(x)\}$  and also (ii) all $(t,p,x)$ in a neighborhood of $\{0\}\times \param \times (\R^n\setminus \{0\})$.  
   By (ii) the map $(t,p,x)\mapsto \beta^0_t(p)(x)$ is $C^\infty$ on $I\times \param \times (\R^n\setminus \{0\})$, and by \eqref{eq:comp-shift} the same map is continuous on all of $I\times \param \times \R^n$.
   
Let $\psi\colon I\to I$ be $C^\infty$, $0$ near $0$, and $1$ near $1$.
Then $\beta^1\colon I\times \param \to C^0(\R^n,[0,\infty))$ defined by $\beta^1_t(p)(x)=\beta^0_{\psi(t)}(p)(x)$ has the same properties as $\beta^0$, but $\beta^1_t$ coincides with $\lyapt$ for $t$ in an open neighborhood $N_0\subset I$ of $0$ and with $J$ for $t$ in an open neighborhood $N_1\subset I$ of $1$.
Let $\varphi\colon [0,\infty)\to [0,\infty)$ be a $C^\infty$ increasing homeomorphism such that (i) $\varphi^{-1}(0)=\{0\}$, (ii) $\varphi'(s)>0$ for $s > 0$, (iii) $\varphi(s)=s$ for $s\geq b$, and (iv) $\beta^2_t(p)(x)\coloneqq \varphi(\beta^1_t(p)(x))$ is $C^\infty$ and hence also $\fun_0^\infty(\R^n)$-valued \cite[Lem.~6.3]{fathi2019smoothing}.
Note that (iii) and the corresponding properties of $\beta^0, \beta^1$ imply that $\beta^2_t(p)(x)=\lyapt(p)(x)$ for all $(t,p,x) \in  I \times \{b\leq \lyapt(p)(x)\}$.

  Let $\rho\colon I\to I$ be a $C^\infty$ function equal to $1$ on a neighborhood of $I\setminus (N_0\cup N_1)$ and $0$ on a neighborhood of $\partial I$.
  Then $\beta\colon I\times \param \to C^\infty(\R^n,[0,\infty))$ defined by   \begin{equation}\label{eq:beta-def}
   	\beta_t = \rho(t)\beta^2_t+(1-\rho(t))\beta^1_t
   \end{equation}
   is a $C^\infty$ homotopy from $\lyapt$ to $J$.
   Moreover, $\beta$ is $\fun_0^\infty(\R^n)$-valued since
   \begin{align*}
   	\nabla(\beta_t(p))(x)&=\rho(t)\nabla(\beta^2_t(p))(x)+(1-\rho(t))\nabla(\beta^1_t(p))(x)\\
   	&= \left(\rho(t)\varphi'(\beta^1_t(p)(x))+1-\rho(t)\right)\nabla(\beta^1_t(p))(x)
   \end{align*}
   is well-defined and nonzero for $x\neq 0$.
   And $\beta_t(p)(x)=\lyapt(p)(x)$ for all $(t,p,x) \in  I \times \{b\leq \lyapt(p)(x)\}$ since each $\beta_t$ is a convex combination of maps with the same property. 
   Thus, $\beta\colon I\times \param \to \fun_0^\infty(\R^n)$ is the desired $C^\infty$ homotopy from $\beta_0=\lyapt$ to $\beta_1=J$. 

   A $C^\infty$ homotopy $\gamma\colon I\times \param \to \fun_0^\infty(\R^n)$ from $\gamma_0 = \lyap$ to $\gamma_1 = \lyapt$ is then given by smoothly concatenating $\alpha$ followed by $\beta$, namely, 
   \begin{equation}\label{eq:gamma-def}
   	\gamma_t = 
   	\begin{cases}
   		\alpha_{\psi(2t)} & 0\leq t \leq \frac{1}{2}\\
   		\beta_{\psi(2-2t)} & \frac{1}{2} \leq t \leq 1
   	\end{cases}.
   \end{equation}
\end{proof}

\subsection{Smoothing lemma}\label{subsec:smoothing-funnels}

Recall that a $C^1$ embedded submanifold $S$ of a manifold with boundary $\param$ is \concept{neat} if $\partial S=S\cap \partial \param$ and $T_p S \not \subset T_p(\partial \param)$ for all $p\in \partial S$ \cite[pp.~30--31]{hirsch1976differential}. 

\begin{Lem}\label{lem:smoothing-funnels}
Fix $r,u\in \N_{\geq 1}\cup\{\infty\}$ and $s,v\in \N_{\geq 0}\cup \{\infty\}$ satisfying $r\geq s \geq v$ and $r\geq u \geq v$.
Let $\param$ be a compact $C^\infty$ manifold with boundary, $S\subset \param$ be a closed subset and $C^\infty$ neatly embedded submanifold of either $\param$ or $\partial \param$, and $Q\subset \param$ be an open neighborhood of $S$.
Let $\lyap\colon \param\to \fun^u(\R^n)$ be a continuous and $C^v$ map such that
\begin{itemize}
	\item $\lyap$ restricts to a $C^s$ map $S \to \fun^r(\R^n)$ on $S$ and
	\item $(p,x)\mapsto \lyap(p)(x)\in \R^n$ restricts to a $C^s$ map with $r$ continuous partial $x$-derivatives on a neighborhood of a closed subset $R\subset \param \times \R^n$ disjoint from both $Q\times \R^n$ and $Z \coloneqq \{(p,x)\in \param \times \R^n \colon \lyap(p)(x)=0\}$.
\end{itemize}
Then for any $\varepsilon > 0$, $u'<u+1$, and compact set  $K \subset (\param \times \R^n) \setminus Z$, there is a continuous and $C^v$ map $H\colon I\times \param \to \fun^u(\R^n)$ that is $\fun^u_0(\R^n)$-valued if $\lyap$ is and satisfies
\begin{itemize}
	\item $\max_{0\leq i \leq u'} \max_{(p,x)\in K}\norm{\partial_x^iH_t(p)(x)-\partial_x^i \lyap(p)(x)} < \varepsilon$ for all $t\in I$,
    \item $H$ is a homotopy $\rel{S}$ from $H_0 = \lyap$ to a continuous and $C^s$ map $H_1\colon \param \to \fun^r(\R^n)$, 
    \item   $H_t(p)(x)=\lyap(p)(x)$ for all $t\in I$ and $(p,x)\in  R$, and
    \item the restrictions of $(p,x)\mapsto H_t(p)(x), \lyap(p)(x)$ to $(\param \times \R^n) \setminus Z$ are arbitrarily close in the compact-open $C^v$ topology for all $t\in I$.
\end{itemize}
\end{Lem}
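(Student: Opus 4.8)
The plan is to obtain $H$ by smoothly concatenating two homotopies, in the spirit of Lemma~\ref{lem:funnel-alexander} and the smoothing technique of Fathi and Pageault: one that repairs the family in a neighborhood of the zero set $Z$, where each $\nabla\lyap(p)$ is small and naive mollification would move the minimum and change its value, and one that mollifies the family on the region where $\lyap$ is bounded below by a positive constant. First I would record preliminaries. The set $Z$ is closed, since $(p,x)\mapsto\lyap(p)(x)$ is continuous; writing $y(p)$ for the unique minimum of $\lyap(p)$, one has $y(p)\equiv 0$ when $\lyap$ is $\fun_0^u(\R^n)$-valued, and in general $p\mapsto y(p)$ is continuous. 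Using compactness of $\param$, continuity of $\lyap$, and properness of each $\lyap(p)$, one checks that $\lyap$ is uniformly proper over $\param$ and hence that $\mu\coloneqq\inf_R\lyap>0$, while $\nu\coloneqq\min_K\lyap>0$ when $K\neq\emptyset$. Fix $\delta\in\bigl(0,\tfrac12\min\{\mu,\nu\}\bigr)$, so that the open neighborhood $\{\lyap<2\delta\}$ of $Z$ is disjoint from $R$ and from $K$; the first homotopy will be supported in $\{\lyap<2\delta\}$ and the second in $\{\lyap>\tfrac{\delta}{2}\}$, so that choosing $\delta$ small handles the $\rel{R}$ requirement and the closeness estimates along $K$ and along compact subsets of $(\param\times\R^n)\setminus Z$.

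\emph{Stage one, near $Z$.} On $\{\lyap<2\delta\}$ the family $\lyap$ need only be $C^u$ in $x$ and $C^v$ jointly, although along $S$ it is already $C^s$ into $\fun^r(\R^n)$. I would first extend the smooth germ of $\lyap|_S$ along $Z\cap(S\times\R^n)$, by a partition of unity, to a $C^r$-in-$x$ and $C^s$-jointly family $g$ on a neighborhood of $Z$ in $\param\times\R^n$ which agrees with $\lyap$ along $S$, is close to $\lyap$, and has $y(p)$ as the unique critical point of $g(p,\cdot)$ with $g(p,y(p))=0$. Then I would set $\lyapt\coloneqq\chi\,g+(1-\chi)\lyap$, where $\chi=\chi(\lyap(p)(x))$ is $1$ on $\{\lyap\leq\delta\}$ and $0$ on $\{\lyap\geq 2\delta\}$, so that $\lyapt=\lyap$ outside $\{\lyap<2\delta\}$ and along $S$, while $\lyapt=g$ near $Z$; the Fathi--Pageault flattening lemma \cite[Lem.~6.3]{fathi2019smoothing}, after first composing $\lyap$ with an increasing $C^\infty$ homeomorphism of $[0,\infty)$ equal to the identity on $[\delta,\infty)$ if needed, is what ensures that $\lyapt(p,\cdot)\in\fun^u(\R^n)$ (no spurious critical point, correct minimum value) and has the claimed regularity near $Z$. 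The passage from $\lyap$ to $\lyapt$ is through a homotopy supported in $\{\lyap<2\delta\}$ and constructed --- in the manner of Lemma~\ref{lem:funnel-alexander}, by composing with gradient-flow maps rather than by naive convex combinations --- so as to permute the level sets of $\lyap$ diffeomorphically and therefore remain in $\fun^u(\R^n)$ (resp.\ $\fun_0^u(\R^n)$) throughout; this is exactly where one must not use convexity, since near $Z$ the gradient is too small for it to control the critical set.

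\emph{Stage two, away from $Z$.} On $\{\lyap>\tfrac{\delta}{2}\}$ the family $\lyapt$ produced by Stage one need only be $C^u$ in $x$ and $C^v$ jointly, except that it is already $C^r$-in-$x$ and $C^s$-jointly on a neighborhood of $R$, along $S$, and on the part of a neighborhood of $Z$ meeting this region. Here I would mollify $(p,x)\mapsto\lyapt(p)(x)$ jointly against a $C^\infty$ kernel with a position-dependent scale $\eta(p,x)>0$ that vanishes along $S$, vanishes near $R$ and near $\{\lyap=\tfrac{\delta}{2}\}$, and decays as $\norm{x}\to\infty$ fast enough that the resulting perturbation changes $\lyapt(p)(x)$ by less than $\tfrac12\lyapt(p)(x)$ --- keeping the mollified family proper, surjective onto $[0,\infty)$, and nonnegative with minimum value $0$ at $y(p)$ --- and changes $\nabla_x\lyapt(p)(x)$ by less than $\tfrac12\norm{\nabla_x\lyapt(p)(x)}$ --- keeping $y(p)$ the only critical point. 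The same scale can be taken small enough to make the perturbation $<\varepsilon$ in $C^{u'}$ on $K$ (using $u'\leq u$ and smoothness of the kernel) and arbitrarily small in the compact-open $C^v$ topology on $(\param\times\R^n)\setminus Z$. Along $S$, where $\eta=0$, one instead interpolates, via a $\param$-cutoff supported near $S$, against the extension of $\lyap|_S$ that is constant in a tubular normal coordinate, so that the whole construction is genuinely $\rel{S}$ even though $\lyap$ is only $C^v$ near $S$ off $S$. Here the straight-line homotopy between $\lyapt$ and the mollified family stays in $\fun^u(\R^n)$ by the same two estimates applied to each convex combination; its endpoint $H_1$ is $C^r$ in $x$ and $C^s$ jointly on $\{\lyap>\tfrac{\delta}{2}\}$ and, by Stage one, on a neighborhood of $Z$, hence everywhere, so $H_1$ is the required continuous and $C^s$ map $\param\to\fun^r(\R^n)$. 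Smoothly concatenating the two homotopies as in \eqref{eq:gamma-def} yields $H$, and the remaining bulleted properties are read off from the support conditions and the estimates above.

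The step I expect to be the main obstacle is keeping the homotopy inside $\fun^u(\R^n)$ (resp.\ $\fun_0^u(\R^n)$) for every $t\in I$, i.e.\ never creating a spurious critical point and never disturbing properness, surjectivity, or the unique minimum with value $0$. This is delicate precisely in the two regions treated above: near $Z$, where $\nabla\lyap(p)$ is small --- which forces the local-model plus flattening construction rather than mollification, and a level-set-preserving homotopy rather than a convex one --- and near spatial infinity, where $\nabla\lyap(p)$ may decay --- which forces the mollification scale to decay and the perturbation to be controlled relative to $\lyap$ and to $\nabla\lyap$ rather than absolutely; this relative control is also why closeness on $K$ is only demanded in $C^{u'}$ with $u'<u+1$ and why $R$ is assumed disjoint from $Z$. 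A secondary difficulty, responsible for much of the bookkeeping, is reconciling the $\rel{S}$ requirement with the demand that $H_1$ be globally $C^s$ into $\fun^r(\R^n)$: because $\lyap$ is in general only $C^v$ on a neighborhood of $S$ off $S$, one cannot freeze a neighborhood of $S$ and must smooth relatively, interpolating against the normal-coordinate-constant extension of the already-smooth family $\lyap|_S$.
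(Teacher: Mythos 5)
Your high-level outline is in the right territory — you correctly identify the two obstacles (the $\rel{S}$ constraint versus global $C^s$-ness of $H_1$, and the danger of mollifying near $Z$), and you correctly see that the Fathi–Pageault flattening lemma and a position-dependent mollification scale must enter. But the central construction of Stage~one does not hold up as written, and the paper's proof is organized differently and more economically. Your local model $g$ is supposed to be a $C^r$-in-$x$, $C^s$-jointly family on a neighborhood of $Z$ that agrees with $\lyap$ along $S$, is close to $\lyap$, and for \emph{every} $p$ has $y(p)$ as unique critical point with value $0$. You propose to get it by ``extend the smooth germ of $\lyap|_S$ along $Z\cap(S\times\R^n)$, by a partition of unity.'' This only helps for $p$ near $S$: for $p$ far from $S$, $\lyap(p)$ is merely $C^u$ near its minimum, and there is no smooth germ of $\lyap|_S$ to extend that would remain close to $\lyap(p)$. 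A partition-of-unity blend of arbitrary $C^r$ local pieces does not preserve uniqueness of the critical point or the minimum value; and flattening $\lyap$ by post-composition does not improve its regularity in $x$ either. The paper avoids constructing any local model near $Z$: it mollifies $\tilde\lyap=\lyap\circ\rho_\delta$ on the \emph{complement} of $\tilde Z$ (where the function is strictly positive and mollification is harmless), requiring the result $\lyapt$ to agree with $\tilde\lyap$ on $\tilde Z\cup R\cup(U_{\delta/2}\times\R^n)$, and then applies the flattening lemma \emph{to $\lyapt$} — a function that is already $C^r$ off $\tilde Z$ — to obtain a globally $C^r$-in-$x$ map $\tilde\lyapt=\varphi\circ\lyapt$. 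This is the elementary route you were reaching for, and it makes the local model $g$ unnecessary.

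Two of your secondary claims also go against how the paper actually argues. First, you say that near $Z$ ``one must not use convexity, since near $Z$ the gradient is too small for it to control the critical set.'' In fact the paper's final homotopy \emph{is} a cut-off convex combination, $h_t(p)=t\psi(p)\tilde\lyapt(p)+(1-t\psi(p))\tilde\lyap(p)$, and it stays in $\fun^u$ because the mollification is arranged so that $\lyapt$ (hence $\tilde\lyapt$) vanishes exactly on $\tilde Z$ and satisfies the inner-product condition $\ip{\nabla\lyapt}{\nabla\tilde\lyap}>0$ off $\tilde Z$; smallness of the gradients is irrelevant, only their common vanishing locus and the sign of their inner product matter. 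Second, the paper's handling of the $\rel{S}$ issue is different in shape from yours: rather than interpolating against a normal-coordinate-constant extension inside the mollification step, the paper first performs a separate homotopy $g_t=\lyap\circ\rho_{\delta t}$ that \emph{pulls $\lyap$ back along a tubular retraction}, so that $\tilde\lyap=\lyap\circ\rho_\delta$ is already $C^r$-in-$x$, $C^s$-jointly on a collar $U_\delta\times\R^n$; the mollification is then told to leave that collar (and $R$ and $\tilde Z$) untouched. This cleanly decouples the $S$-constraint from the smoothing and avoids the delicate scale-decay issues your position-dependent kernel near $\{\lyap=\delta/2\}$ and near spatial infinity would have to confront. (Also, the reason closeness is only asked for in $C^{u'}$ with $u'<u+1$, and the reason $R$ is assumed disjoint from $Z$, is simply that mollification of a $C^u$ family is $C^u$-close only compactly, and that $R$ is a region of pre-existing smoothness that the mollification and flattening must both leave alone; it is not really about relative control near infinity.) To summarize: keep the flattening lemma, keep the pullback-along-a-retraction idea for $S$, drop the local model $g$ and the level-set-preserving homotopy, and replace them with ``mollify off $\tilde Z$ with the inner-product safeguard, then flatten.''
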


\begin{Rem}\label{rem:fiberwise-compact-open-strong-topologies}
Equivalently, the first conclusion says that each $H_t|_{(\param \times \R^n) \setminus Z}$ is arbitrarily close to $\lyap|_{(\param \times \R^n) \setminus Z}$  in the fiberwise compact-open $C^u$ topology,  which can be defined using the space of fiberwise $u$-jets for the submersion $(\param \times \R^n) \setminus Z\to \param$ in a standard way \cite[p.~41]{cem2024hprinciple}, as can the fiberwise strong $C^u$ topology (cf. \cite[p.~59]{hirsch1976differential}).
\end{Rem}

\begin{proof}
Using a closed tubular neighborhood $U\subset Q$ of $S$ \cite[p.~53]{kosinski1993differential},  a $C^\infty$ homotopy $\rho\colon I\times \param \to \param$ can be constructed so that  $\rho_0=\id_\param$, $\rho_1(U)=S$, $\rho_t|_{S\cup (\param \setminus Q)} = \id_{S\cup (\param \setminus Q)}$ for all $t\in I$,  $U_\delta\coloneqq \rho_\delta^{-1}(S)\subset Q$ is a neighborhood of $S$ for each $\delta \in (0,1]$, and $U_\delta$ is increasing in $\delta$. 

Given $\delta \in (0,\frac{1}{2}]$, define a continuous and $C^v$ homotopy $g\colon I\times \param \to \fun^u(\R^n)$ $\rel{[S\cup (\param \setminus Q)]}$  from $\lyap$ to $\tilde{\lyap}\coloneqq \lyap\circ \rho_\delta$  by $g_t=\lyap\circ \rho_{\delta t}$.
Since $\rho_0=\id_\param$ and $\rho$ is $C^\infty$, by taking $\delta \in (0,\frac{1}{2}]$ sufficiently small we may assume that $g_t$ is arbitrarily close to $\lyap$ in the compact-open $C^v$ and fiberwise compact-open $C^u$ topologies for all $t\in I$ (cf. \cite[Ex.~2.4.10]{hirsch1976differential}).
Moreover,  $R$ is disjoint  from 
$$\tilde{Z}\coloneqq \{(p,x)\in \param \times \R^n \colon \tilde{\lyap}(p)(x)=0\}$$
by construction and, since $\param$ and hence also $Z$ are compact, by taking $\delta $ sufficiently small we may also assume that  $K \subset (\param \times \R^n)\setminus Z$ is disjoint from $\tilde{Z}$.
Note that the restriction of $g_1=\tilde{\lyap}$ to $U_{\delta}$ is a continuous and $C^s$ map $U_\delta \to \fun^r(\R^n)$.
Further note that $\tilde{Z}=Z=\param \times (\R^n\setminus \{0\})$ if $\lyap$ is $\fun_0^u(\R^n)$-valued.
For later use, fix a $C^\infty$ function $\psi\colon \param\to I$ that is $0$ on $S$ and $1$ on a neighborhood of  $\param \setminus U_{\delta}$.
 
Using the standard method of convolution with a mollifier \cite[sec.~2.2--2.3]{hirsch1976differential}, we construct a continuous map $\lyapt\colon \param \to C^0(\R^n, [0,\infty))$ such that 
\begin{itemize}
	\item  $\lyapt(p)(x)=\tilde{\lyap}(p)(x)$ for  $(p,x)\in R\cup \tilde{Z}\cup (U_\frac{\delta}{2} \times \R^n)$,
	\item  the restriction of $(p,x)\mapsto \lyapt(p)(x)$ to $(\param \times \R^n) \setminus \tilde{Z}$ is $C^s$ with $r$ continuous partial $x$-derivatives, and
	\item the restrictions of $(p,x)\mapsto \tilde{\lyap}(p)(x), \lyapt(p)(x)$ to $(\param \times \R^n) \setminus \tilde{Z}$ are arbitrarily close in the strong $C^v$ and fiberwise strong  $C^u$ topologies. 
\end{itemize}
 In particular, we can assume that
\begin{equation}\label{eq:inner-product-condition}
	\ip{\nabla \lyapt(p)(x)}{\nabla \tilde{\lyap}(p)(x)} > 0
\end{equation}
for all $(p,x)\in (\param \times \R^n) \setminus \tilde{Z}$ and that $\lyapt(p)$ is proper for all $p\in \param$.

Since $\tilde{Z}$ is compact, there is $a > 0$ such that
\begin{equation}\label{eq:k-r-subset}
K, R \subset \{(p,x)\in \param \times \R^n\colon \lyapt(p)(x) > a\}.	
\end{equation}
Given $c \in (0,\frac{a}{2})$, there is a $C^\infty$ increasing homeomorphism $\varphi\colon [0,\infty)\to [0,\infty)$ such that $\varphi^{-1}(0)=\{0\}$, $\varphi'(z)>0$ for $z > 0$,  $\varphi(z)=z$ for $z > 2c$, and  $\tilde{\lyapt}\colon \param \to C^r(\R^n, [0,\infty))$ given by $\tilde{\lyapt}(p)(x)=\varphi(\lyapt(p)(x))$ is  a well-defined continuous and $C^s$ map.\footnote{Repeat the proof of \cite[Lem.~6.3]{fathi2019smoothing} for $C^s$ functions $\param \times \R^n \to \R^n$ having $r$ continuous $x$-partial derivatives, rather than $C^\infty$ functions.}
In particular, $\tilde{\lyapt}(p)(x) = \lyapt(p)(x)$ for $(p,x)\in \{\lyap > 2c\}$, so by taking $c \in (0,\frac{a}{2})$ sufficiently small we can assume that the restrictions to $(\param \times \R^n) \setminus \tilde{Z}$ of $(p,x)\mapsto \lyapt(p)(x), \tilde{\lyapt}(p)(x)$ are arbitrarily close in the compact-open $C^s$ and fiberwise compact-open $C^r$ topologies.
Note that $\lyapt(p)(x)=\tilde{\lyapt}(p)(x)$ for all $(p,x)\in K\cup R$ by \eqref{eq:k-r-subset}, and that $\tilde{\lyapt}(p)$ is proper for each $p\in \param$ since $\lyapt(p)$ is.
Moreover, since $$\nabla \tilde{\lyapt}(p)(x)=\varphi'(\lyapt(p)(x))\nabla \lyapt(p)(x)$$ and $\varphi'(z)>0$ for $z>0$, \eqref{eq:inner-product-condition} implies that $\tilde{\lyapt}$ is $\fun^r(\R^n)$-valued.

Using the function $\psi\colon \param\to I$ fixed above, define a continuous and $C^v$ homotopy $h\colon I \times \param \to C^u(\R^n,[0,\infty))$ by
\begin{equation*}
h_t(p)= t\psi(p)\tilde{\lyapt}(p) + (1-t\psi(p))\tilde{\lyap}(p).
\end{equation*}
Note that  $h_1$ is a continuous and $C^s$ map $h_1\colon \param \to C^r(\R^n,[0,\infty))$ since $\tilde{\lyapt}$ and $\tilde{\lyap}|_{U_\delta}\colon U_\delta \to \fun^r(\R^n)$ are continuous and $C^s$ maps and $\psi$ is $1$ on a neighborhood of $\param \setminus U_{\delta}$.
And properness of each $\tilde{\lyap}(p), \tilde{\lyapt}(p)$ together with the properties of $\psi$, \eqref{eq:inner-product-condition}, and linearity imply that $h$ is in fact an $\fun^u(\R^n)$-valued map $h\colon \param \to \fun^u(\R^n)$, so $h_1$ is a continuous and $C^s$ map $h_1\colon \param \to \fun^r(\R^n)$.
Moreover, $h$ is a homotopy $\rel{S}$ since $\psi$ is $0$ on $S$, and $h_t(p)(x)=\lyap(p)(x)$ for all $t\in I$ and $(p,x)\in R$ since $$\tilde{\lyapt}(p)(x)= \tilde{\lyap}(p)(x) = \lyap(p)(x)$$ for $(p,x)\in R$.
Furthermore, $h$ is $\fun_0^u(\R^n)$-valued if $\lyap$ is since then, as noted above,  $\tilde{Z}=Z=\param \times (\R^n\setminus \{0\})$.
Finally, by the approximation properties of $\tilde{\lyapt}$ and $\lyapt$, we may assume that the restrictions to $(\param \times \R^n) \setminus \tilde{Z}$ of $(p,x)\mapsto h_t(p)(x), \tilde{\lyap}(p)(x)$ are arbitrarily close in the compact-open $C^v$ and fiberwise compact-open $C^u$ topologies for all $t\in I$ (cf. \cite[pp.~47--48]{hirsch1976differential}).

Thus, smoothly concatenating (as in \eqref{eq:gamma-def}) the homotopies $g$ from $\lyap$ to $\tilde{\lyap}$ and $h$ from $\tilde{\lyap}$ to $h_1$ yields a continuous and $C^v$  homotopy $H\colon I\times \param \to \fun^u(\R^n)$ from $\lyap$ to the continuous and $C^s$ map $H_1=h_1\colon \param \to \fun^r(\R^n)$ having the desired properties.
\end{proof}

Recall that a continuous map is a \concept{weak homotopy equivalence} \cite[Def.~VII.11.11]{bredon1993topology} if it induces a bijection on path components and isomorphisms on all homotopy groups \cite[Sec.~VII.4]{bredon1993topology}.

\begin{Co}\label{co:whe-funnels}
	Fix $r,s\in \N_{\geq 1}\cup\{\infty\}$ with $r > s$.
	The inclusion maps $\fun_0^r(\R^n) \hookrightarrow \fun_0^s(\R^n)$ and $\fun^r(\R^n) \hookrightarrow \fun^s(\R^n)$ are weak homotopy equivalences.
\end{Co}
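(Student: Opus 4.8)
The plan is to apply the smoothing lemma (Lemma~\ref{lem:smoothing-funnels}) with a judicious choice of parameters to verify the two standard criteria for a continuous map to be a weak homotopy equivalence: surjectivity on path components, and injectivity-plus-surjectivity on $\pi_k$ for all $k\geq 1$. Equivalently (using the compression criterion, cf.\ \cite[Sec.~VII.4]{bredon1993topology}), it suffices to show that for every compact $C^\infty$ manifold with boundary $\param$ and every continuous map $f\colon \param \to \fun^s_0(\R^n)$ (resp.\ $\fun^s(\R^n)$) whose restriction to $\partial\param$ already lands in $\fun^r_0(\R^n)$ (resp.\ $\fun^r(\R^n)$), there is a homotopy $\rel{\partial\param}$ to a map landing in $\fun^r_0(\R^n)$; taking $\param = D^k$ handles $\pi_k$ for $k\geq 1$ and $\param = D^0$ (or $\param=I$ with the appropriate boundary data) handles $\pi_0$.

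The key steps, in order, are as follows. First, reduce the continuous map $f\colon \param \to \fun^s(\R^n)$ to a \emph{continuous and $C^0$} map in the sense of section~\ref{subsection:maps-into-function-spaces}: by definition a continuous map into the compact-open $C^s$ topology is automatically $C^0$ (indeed its induced map $\param\times\R^n\to[0,\infty)$ has $s\geq 0$ continuous $x$-partials and is jointly continuous), so this is immediate. Second, apply Lemma~\ref{lem:smoothing-funnels} with the parameter choices $u=s$, $v=0$, the target regularity ``$r$'' in the lemma set to our $r$, the neat submanifold $S=\partial\param$ sitting inside $\partial\param$ itself (it is closed and $C^\infty$ neatly embedded in $\partial\param$), $Q$ a collar neighborhood of $\partial\param$, and $R=\emptyset$ (so the disjointness hypotheses on $R$ are vacuous). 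The hypothesis ``$\lyap$ restricts to a $C^s$ map $S\to \fun^r(\R^n)$'' is where we use that $f|_{\partial\param}$ already lands in $\fun^r(\R^n)$ and is continuous there, hence (being continuous into the $C^r$ topology) is in particular $C^0$, and more to the point $C^s$ with $r$ continuous $x$-derivatives along $\partial\param$. The conclusion of the lemma then supplies a homotopy $\rel{\partial\param}$ from $f$ to a continuous and $C^s$ map $H_1\colon \param \to \fun^r(\R^n)$ which is $\fun^r_0(\R^n)$-valued whenever $f$ is. Third, invoke again the standard fact that a continuous-and-$C^s$ map into a function space that actually takes values in functions of class $C^r$ and depends continuously on the parameter in the relevant jets is continuous into the compact-open $C^r$ topology; combined with the homotopy just produced, this exhibits $f$ as homotopic $\rel{\partial\param}$ to a genuine continuous map into $\fun^r(\R^n)$, which is exactly the compression needed. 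Finally, run this for $\param=D^k$ ($k\geq 0$) for both the ``$0$'' variant and the unrestricted variant to conclude that $\fun^r_0(\R^n)\hookrightarrow\fun^s_0(\R^n)$ and $\fun^r(\R^n)\hookrightarrow\fun^s(\R^n)$ induce isomorphisms on all $\pi_k$ and bijections on $\pi_0$.

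I expect the main obstacle to be purely bookkeeping rather than conceptual: one must check carefully that the ``continuous and $C^v$'' versus genuinely continuous distinction flagged in the Warning of section~\ref{subsection:maps-into-function-spaces} does not cause the argument to produce only a $C^0$-homotopy of $C^0$-families when a continuous homotopy of continuous families is required. The resolution is that Lemma~\ref{lem:smoothing-funnels} is stated precisely to deliver a map $H_1$ that is continuous \emph{and} $C^s$ into $\fun^r(\R^n)$, and a continuous-and-$C^s$ map valued in $\fun^r(\R^n)$ is automatically continuous into the compact-open $C^r$ topology (since $s\leq r$ forces the relevant $x$-jets to depend continuously on the parameter, cf.\ \cite{fox1945topologies} and \cite[p.~62]{hirsch1976differential}); likewise the homotopy $H$ itself is continuous into $\fun^s(\R^n)$ as needed. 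A second minor point is that the lemma's $\varepsilon$-closeness and $C^v$-closeness conclusions are not needed here at all—only the ``homotopy $\rel{S}$ from $\lyap$ to a continuous and $C^s$ map into $\fun^r(\R^n)$'' clause is used—so one should simply discard the extraneous conclusions and set $R=\emptyset$, $K$ arbitrary (or empty), to streamline the citation.
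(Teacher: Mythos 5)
Your proposal is correct and takes essentially the same route as the paper: both invoke Lemma~\ref{lem:smoothing-funnels} with $R=\varnothing$ and with the regularity quadruple $(r,s,u,v)$ of the lemma set to $(r,0,s,0)$, the only cosmetic difference being that you phrase the conclusion via the compression criterion (vanishing of $\pi_k(\fun^s_0,\fun^r_0)$ using $\param=D^k$, $S=\partial D^k$) while the paper separates surjectivity ($\param=S^k$, $S=\{*\}$) from injectivity ($\param=D^{k+1}$, $S=S^k$), which amounts to the same thing. Two small points: you never pin down the lemma's parameter ``$s$'' explicitly — it must be $0$, since $f|_{\partial\param}$ is only known to be continuous into the $C^r$ topology (hence $C^0$ with $r$ continuous $x$-derivatives), not $C^s$ in the parameter for $s\geq 1$; and your third step is superfluous, because per the convention in section~\ref{subsection:maps-into-function-spaces} the word ``continuous'' in the lemma's conclusion ``continuous and $C^s$ map $H_1\colon\param\to\fun^r(\R^n)$'' already means continuous with respect to the compact-open $C^r$ topology, so no further jet-continuity argument is required.
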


\begin{proof}
For each $k\in \N_{\geq 0}$,  the homomorphisms of $k$-th homotopy groups induced by the inclusions are surjective by Lemma~\ref{lem:smoothing-funnels} with $(\param,R,S)=(\sph^k,\varnothing, \{*\})$ and $(r,s,u,v)$ replaced by $(r,0,s,0)$, and injective by Lemma~\ref{lem:smoothing-funnels} with $(\param, R, S)=(D^{k+1}, \varnothing, \sph^k)$ and $(r,s,u,v)$ replaced by $(r,0,s,0)$.

\end{proof}

\subsection{Contractibility and path-connectedness lemmas}\label{subsec:contractibility-path-conn}

We will use the following consequence of Lemmas~\ref{lem:funnel-alexander}, \ref{lem:smoothing-funnels} to prove weak contractibility in Theorem~\ref{th:funnel-space-fiber-bundle-over-U}.
Recall that a topological space is \concept{weakly contractible} if it is path-connected and all of its homotopy groups are trivial.

\begin{Lem}\label{lem:contractible-fibers}
	The subspace $\{\lyap \in \fun_0^\infty(\R^n) \colon \lyap^{-1}([0,1])=D^n\}$ is weakly contractible.	
\end{Lem}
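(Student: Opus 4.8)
The plan is to fix the model $\lyap_0\coloneqq\norm{\cdot}^2\in\Sigma$ (where $\Sigma$ denotes the subspace in question) and to show that for every $k\ge 0$ each continuous map $g\colon\sph^k\to\Sigma$ is homotopic, through maps into $\Sigma$, to the constant map $\lyap_0$. Taking $k=0$ gives path-connectedness, and since a nonempty space in which every sphere is freely nullhomotopic is weakly contractible, this is enough (the relative statements needed to be pedantic about based homotopy groups come from running the same argument with parameter pair $(D^{k+1},\sph^k)$). The first stage is to reduce to the case that $g$ is $C^\infty$ as a map $\sph^k\times\R^n\to\R$: apply Lemma~\ref{lem:smoothing-funnels} with parameter $\sph^k$, empty $S$ and $R$, $v=0$, and $r=s=u=\infty$, to get a homotopy $H$ in $\fun_0^\infty(\R^n)$ from $g$ to a $C^\infty$ family $\hat g$ with each $H_t(p)$ as $C^\infty$-close to $g(p)$ (uniformly in $(t,p)$) as desired. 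Since $1$ is a regular value of $g(p)$ and $g(p)$ has no critical point near $\sph^{n-1}$, for $H$ close enough each $H_t(p)^{-1}([0,1])$ is a smooth $n$-disc $C^\infty$-close to $D^n$ with the origin in its interior, so by the parametrized isotopy extension theorem there are $\phi_{t,p}\in\Diff(\R^n)$, continuous in $(t,p)$, with $\phi_{t,p}(D^n)=H_t(p)^{-1}([0,1])$, $\phi_{t,p}(0)=0$, $\phi_{0,p}=\id$, and $\phi_{1,p}$ depending $C^\infty$ on $p$. Then $t\mapsto H_t(p)\circ\phi_{t,p}$ is a homotopy in $\Sigma$ from $g$ to the $C^\infty$ family $\hat g(p)\circ\phi_{1,p}$, so we may assume $g$ is $C^\infty$.

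The second stage is to homotope the $C^\infty$ family $g$ within $\Sigma$, rel the locus where $g(p)=\lyap_0$ already, to a family agreeing with $\lyap_0$ on a neighborhood of $\sph^{n-1}$. In a thin radial collar $\{1-\eta\le\norm{x}\le 1+\eta\}\cong\sph^{n-1}\times(-1,1)$, $x=(1+s\eta)\omega$, both $g(p)$ and $\lyap_0$ restrict to elements of the convex set of functions equal to $1$ on $\{s=0\}$ with positive $s$-derivative (positivity for $g(p)$ because $\sph^{n-1}$ is a regular level set and $g(p)$ increases outward). The straight-line homotopy between them, damped by a cutoff $\chi$ supported in the collar and equal to $1$ on an inner sub-collar, extended by $g(p)$ outside, then connects $g(p)$ to a function equal to $\lyap_0$ near $\sph^{n-1}$; for $\eta$ small one checks each stage is critical-point-free off the origin and still has $\{\le 1\}$-sublevel set precisely $D^n$, hence lies in $\Sigma$.

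For the third stage, assume now that $g(p)=\lyap_0$ on $\{1-\delta\le\norm{x}\le 1+\delta\}$. Define $J(p)\coloneqq g(p)$ on $D^n$ and $J(p)\coloneqq\lyap_0$ on $\R^n\ct\interior{D^n}$; these agree to infinite order along $\sph^{n-1}$, so $J\colon\param\to\fun_0^\infty(\R^n)$ is a $C^\infty$ family with $J(p)^{-1}([0,1])=D^n$, i.e.\ $J(p)\in\Sigma$. Apply Lemma~\ref{lem:funnel-alexander} to the triple $(g,J,J)$ with $a=\tfrac12$, $b=\tfrac34$: the homotopy $\alpha$ of that lemma runs from $g$ to $J$, and since $g(p)(\Phi(p)^s(x))=g(p)(x)+s$ and $J(p)=g(p)$ on $D^n=\{g(p)\le 1\}$ while $J(p)>1$ off $D^n$, a direct computation gives $\alpha_t(p)^{-1}([0,1])=D^n$ for all $t$; thus $g\simeq J$ in $\Sigma$. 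Apply the lemma again to $(J,\lyap_0,J)$ with $a=\tfrac12(1-\delta)^2$, $b=(1-\delta)^2$: here $J(p)=\lyap_0$ on $\{\lyap_0\ge b\}$ and the gradient flow of $\lyap_0$ is radial, so near the minimum the homotopy $\beta$ is $\beta^0_t(p)(x)=t\,J(p)(x/\sqrt t)$, and—using that the auxiliary functions $\psi,\varphi$ in the proof of Lemma~\ref{lem:funnel-alexander} may be chosen to fix $1$, as $b<1$—one checks $\beta_t(p)^{-1}([0,1])=D^n$ throughout. Hence $J\simeq\lyap_0$ in $\Sigma$, and concatenating the homotopies of the three stages yields $g\simeq\lyap_0$ in $\Sigma$.

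I expect the main obstacle to be the second stage: the modification near $\sph^{n-1}$ must at once keep the function in $\Sigma$ (critical-point-free away from the origin, with $\{\le 1\}$-sublevel set still exactly $D^n$) and leave the function unchanged away from a neighborhood of $\sph^{n-1}$, which forces the interpolation to be damped by a cutoff whose contribution to the gradient must be dominated; taking the collar thin and exploiting the convexity of the relevant space of collar functions is what makes the estimate go through. By contrast, the first stage is routine smoothing and the third is essentially bookkeeping on top of Lemmas~\ref{lem:smoothing-funnels} and \ref{lem:funnel-alexander}, the only point of care being the verifications that the flow-reparametrization homotopies from those lemmas preserve the sublevel set $\{\le 1\}=D^n$.
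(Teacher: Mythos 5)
Your proof has the same overall skeleton as the paper's---reduce to a $C^\infty$ family, normalize near $\sph^{n-1}$ so the family equals $\norm{\cdot}^2$ there, then apply Lemma~\ref{lem:funnel-alexander} after gluing in the quadratic outside $D^n$---and stages one and three are sound (stage one is more roundabout than the paper's, but the isotopy-extension repair you describe does work). The genuine problem is stage two, which you flag yourself as the main obstacle but do not actually close.

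Write the damped interpolation as $g_t(p)=g(p)+t\chi\,(\lyap_0-g(p))$, so that
\begin{equation*}
\nabla g_t(p) \;=\; \bigl[(1-t\chi)\nabla g(p)+t\chi\,\nabla\lyap_0\bigr] \;+\; t\,(\lyap_0-g(p))\,\nabla\chi.
\end{equation*}
The bracketed ``good'' term is a convex combination of outward-pointing gradients and is bounded below on the collar; the convexity you invoke controls exactly this term and nothing more. But the cutoff term does \emph{not} become small as the collar shrinks: on a collar of radial width $\eta$ one has $\lvert\lyap_0-g(p)\rvert=O(\eta)$ (both functions equal $1$ on $\sph^{n-1}$) while $\lvert\nabla\chi\rvert=O(1/\eta)$, so the product is $O(1)$. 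Worse, its sign is governed by $2-\partial_r g(p)$ on $\sph^{n-1}$, which is not under your control. When $\partial_r g(p)<2$ near some point of $\sph^{n-1}$, the cutoff term opposes the good term, and with a standard cutoff profile one can make $\partial_r g_t(p)$ vanish. Concretely, take $n=1$ and a $g$ with $g(1)=1$, $g'(1)=1$, and a cutoff transitioning over $[1+\eta/2,\,1+\eta]$ with slope $\approx -2/\eta$: at $x=1+\eta$ you get $g_t'(1+\eta)\approx 1-2t<0$ for $t>\tfrac12$, a critical point away from the origin. Thinning $\eta$ does not help because both scales cancel.

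The paper's proof circumvents the estimate entirely by conjugating rather than interpolating. Using the gradient flow of $\lyap(p)$ to produce (parametrically, in the compact-open $C^\infty$ topology) a tubular neighborhood of $\sph^{n-1}$, and the parametric version of isotopy uniqueness of tubular neighborhoods, one gets a continuous family of compactly supported diffeotopies $\Phi(p)$ of $\R^n$ that are stationary on $\sph^{n-1}$ and away from a fixed compact neighborhood of it, with $\Phi(p)_1(\partial B_{\sqrt{c}}(0))=\lyap(p)^{-1}(c)$ for $c$ near $1$. Then $\lyap(p)\circ\Phi(p)_t$ automatically stays in $\{\lyap:\lyap^{-1}([0,1])=D^n\}$ (each $\Phi(p)_t$ is a diffeomorphism fixing $\sph^{n-1}$ and the regions near $0$ and $\infty$, so it preserves $D^n$), and at $t=1$ equals $\norm{\cdot}^2$ near $\sph^{n-1}$. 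No pointwise gradient bound is required. The paper also smooths \emph{after} this conjugation, using the $R$-preservation clause of Lemma~\ref{lem:smoothing-funnels} (with $R$ a compact neighborhood of $\param\times\sph^{n-1}$) to keep the normalization during the smoothing, which sidesteps the isotopy-extension step in your stage one. If you replace your cutoff interpolation with this conjugation step (or first match the radial derivative on $\sph^{n-1}$ by a radial reparametrization before interpolating, which is essentially the same idea), the rest of your argument carries through.
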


\begin{proof}
	Denote by $\F= \{\lyap \in \fun_0^\infty(\R^n) \colon \lyap^{-1}([0,1])=D^n\}$ those functions with 1-sublevel set the standard $n$-disc.	
	
	Let $\param$ be a compact $C^\infty$ manifold and $\lyap\colon \param \to \F$ be a continuous map.
	It suffices to show that $\lyap$ is nullhomotopic.

	Let $\I\subset C^\infty(I\times \R^n, \R^n)$ be the space of isotopies of $\R^n$ starting at the identity that are stationary on $\sph^{n-1}$ and on the complement of a fixed compact neighborhood $U\subset \R^n\setminus \{0\}$  of $\sph^{n-1}$. 
	Since $\param$ is compact there is $\varepsilon > 0$ such that, for all $p\in \param$, the gradient flow of $\lyap(p)$ induces a closed tubular neighborhood $\sph^{n-1}\times [-\varepsilon,\varepsilon]\hookrightarrow \interior{U}$ of $\sph^{n-1}$ which, by a standard fact from ordinary differential equations \cite[Thm~B.3]{duistermaat2000lie}, depends continuously on $p$ in the compact-open $C^\infty$ topology.
	Since the standard proof of isotopy uniqueness of tubular neighborhoods \cite[Thm~III.3.5]{kosinski1993differential} works parametrically by simply inserting everywhere a parameter ``$p$'', there is a continuous map $\Phi\colon \param \to \I$ such that $\Phi(p)_0=\id_{\R^n}$ and $\Phi(p)_1(\partial B_{\sqrt{c}}(0))=\lyap(p)^{-1}(c)$ for all $c>0$ sufficiently close to $1$.

	Thus, $(t,p)\mapsto \lyap(p)\circ \Phi(p)_t$ defines a homotopy $I\times \param \to \F$ from $\lyap$ to a continuous map $\lyap'\colon \param \to \F$ such that $\lyap'(p)(x)=\norm{x}^2$ for all $(p,x)$ in an open neighborhood of $\param \times \sph^{n-1}$.
	Let $R\subset \param \times (\R^n\setminus \{0\})$ be a smaller compact neighborhood of $\param \times \sph^{n-1}$.
	The first conclusion of Lemma~\ref{lem:smoothing-funnels} (with $S=\varnothing$) then produces a continuous homotopy $I\times \param \to \F$ from $\lyap'$ to a $C^\infty$ map $\lyap''\colon \param \to \F$ such that $\lyap''(p)(x)=\norm{x}^2$ for all $(p,x)$ in $R$. 
	
	Replacing $\lyap''(p)(x)$ with $\norm{x}^2$ outside $\param \times D^n$ then yields a $C^\infty$ map $J\colon \param \to \F$ such that $J(p)(x)=\lyap''(p)(x)$ for all $(p,x)$ in a neighborhood of $\param \times \{0\}$ and $J(p)(x)= \norm{x}^2$ for all $(p,x)$ outside a neighborhood of $\param \times \{0\}$.
	Lemma~\ref{lem:funnel-alexander} then furnishes a $C^\infty$ homotopy $I\times \param \to \F$ from $\lyap''$ to the constant map sending all of $\param$ to $x\mapsto \norm{x}^2$.
	Finally, concatenating the above homotopies $I\times \param \to \F$ from $\lyap$ to $\lyap'$, $\lyap'$ to $\lyap''$, and $\lyap''$ to the constant map $p\mapsto (x\mapsto \norm{x}^2)$ gives the desired nullhomotopy of $\lyap\colon \param \to \F$.
\end{proof}

The following is a version of a result of Palais \cite{palais1960extending} (see also Cerf \cite{cerf1961topologie}) on isotopy uniqueness of $C^\infty$ embeddings of discs \cite[Thm~III.3.6]{kosinski1993differential}.

\begin{Lem}\label{lem:emb-isotopy-pres-0}
	Let $f,g\colon D^n\to \R^n$ be orientation-preserving $C^\infty$ embeddings satisfying $f(0)=0=g(0)$.
	Then there is a $C^\infty$ diffeotopy $h\colon I\times \R^n\to \R^n$ satisfying $h_0=\id_{\R^n}$, $h_1\circ f = g$, and $h_t(0)=0$ for all $t\in I$.
\end{Lem}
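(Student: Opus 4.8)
The plan is to deduce Lemma~\ref{lem:emb-isotopy-pres-0} from the standard isotopy extension machinery, taking care to keep the origin fixed throughout. The basic idea is: first isotope $f$ to $g$ through embeddings of $D^n$ that keep $0$ in the image, then convert this into an ambient diffeotopy fixing $0$.

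First I would apply the classical Palais--Cerf isotopy uniqueness theorem for $C^\infty$ embeddings of discs \cite[Thm~III.3.6]{kosinski1993differential}: since $f$ and $g$ are both orientation-preserving $C^\infty$ embeddings of $D^n$ into the connected oriented manifold $\R^n$, there is a $C^\infty$ diffeotopy $k\colon I\times \R^n\to \R^n$ with $k_0=\id_{\R^n}$ and $k_1\circ f = g$. The only defect is that $k$ need not fix the origin. However, $k_1\circ f = g$ and $f(0)=g(0)=0$ force $k_1(0)=0$; the trouble is only that the intermediate points $c(t) := k_t(0)$ may wander away from $0$, describing a loop $c\colon I\to\R^n$ with $c(0)=c(1)=0$. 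The second, and main, step is to correct this loop away. Since $\R^n$ is simply connected (indeed contractible), the loop $c$ is nullhomotopic, and in fact one can choose a $C^\infty$ map $\ell\colon I\times I\to\R^n$ with $\ell(0,\cdot)=c$, $\ell(1,\cdot)\equiv 0$, and $\ell(\cdot,0)=\ell(\cdot,1)=0$; concretely, after a smooth reparametrization making $c$ constant near the endpoints, one may take a straight-line contraction $\ell(s,t) = (1-\beta(s))\,c(t)$ with a suitable cutoff $\beta$. Translating by $-\ell(1,t)$, i.e.\ replacing $k_t$ by $x\mapsto k_t(x) - c(t)$, gives a diffeotopy $\tilde k$ with $\tilde k_0 = \id_{\R^n}$, $\tilde k_t(0)=0$ for all $t$, but now $\tilde k_1 = k_1(\cdot) - c(1) = k_1$, so we still have $\tilde k_1\circ f = g$ and $\tilde k_t(0)=0$ for all $t\in I$. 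Setting $h := \tilde k$ finishes the proof.

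The main obstacle I anticipate is not conceptual but bookkeeping: one must ensure that the correction $x\mapsto k_t(x)-c(t)$ is genuinely a $C^\infty$ diffeotopy (it is, being the composition of $k_t$ with a translation, which is a diffeomorphism of $\R^n$ depending smoothly on $t$), that it starts at the identity (it does, since $c(0)=0$), and that it still carries $f$ to $g$ (it does, since $c(1)=0$). A subtlety worth flagging: $c(t)=k_t(0)$ is automatically $C^\infty$ in $t$ because $k$ is a $C^\infty$ map on $I\times\R^n$, so no smoothing of $c$ is needed beyond possibly reparametrizing $t$ to make things stationary near $\partial I$ if one wants $h$ stationary near the endpoints (which is the usual convention for diffeotopies and costs nothing). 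Thus the whole argument reduces to ``cite Palais--Cerf, then subtract off the trajectory of the origin,'' and the only real content is the trivial observation that translations of $\R^n$ form a contractible group acting by diffeomorphisms, so fixing the one distinguished point $0$ is free.
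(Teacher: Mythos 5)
Your proof is correct, and it is essentially what the paper (implicitly) intends: the paper gives no proof of this lemma, just cites the Palais--Cerf disc theorem \cite[Thm~III.3.6]{kosinski1993differential}, and your argument is the natural way to upgrade that theorem to the pointed version. Two small remarks. First, the nullhomotopy discussion is a red herring: you never actually use that the loop $c(t)=k_t(0)$ is nullhomotopic, only that $c(0)=c(1)=0$. That is exactly what makes the translation correction $\tilde k_t(x) = k_t(x)-c(t)$ start and end where you want, and it is all you need. Second, the phrase ``translating by $-\ell(1,t)$'' is a slip --- $\ell(1,\cdot)\equiv 0$ by your own choice, so that translation is the identity; you mean $-\ell(0,t)=-c(t)$, which is what your displayed formula does. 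With those two cosmetic points corrected, the argument is airtight: $\tilde k$ is jointly $C^\infty$ because $k$ is and $c(t)=k_t(0)$ is; each $\tilde k_t$ is a diffeomorphism as a composite of $k_t$ with a translation; $\tilde k_0=\id$ since $c(0)=0$; $\tilde k_1\circ f=k_1\circ f=g$ since $c(1)=0$; and $\tilde k_t(0)=0$ by construction.
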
	

In particular, the following corollary will be used in the proof of Theorem~\ref{th:funnel-space-fiber-bundle-over-U}.
(See sections~\ref{subsubsection:diffeos-embeddings}, \ref{subsubsection:nonlinear-grass} for the definitions of $\U_0(D^n,\R^n)$ and $\Emb_0^+(D^n,\R^n)$.)

\begin{Co}\label{co:nonlinear-grassmannian-path-conn}
$\U_0(D^n,\R^n)$ is path-connected for any $n$.
\end{Co}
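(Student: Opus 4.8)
The plan is to lift paths from $\Emb_0^+(D^n,\R^n)$ to $\U_0(D^n,\R^n)$ along the quotient map $q\colon \Emb_0^+(D^n,\R^n)\to\U_0(D^n,\R^n)$, $q(f)=f(D^n)$ (which is continuous and surjective, being the restriction of the nonlinear Grassmannian bundle projection as in \S\ref{subsubsection:nonlinear-grass}), using Lemma~\ref{lem:emb-isotopy-pres-0} as the essential input. Since $\U_0(D^n,\R^n)\neq\varnothing$, it suffices to connect an arbitrary pair of points $S_0,S_1\in\U_0(D^n,\R^n)$ by a path.

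First I would arrange that each $S_i$ is the image of an orientation-preserving embedding sending $0$ to $0$, so that Lemma~\ref{lem:emb-isotopy-pres-0} applies. Write $S_i=f_i(D^n)$ for some $f_i\in\Emb(D^n,\R^n)$. Since $0\in\interior{S_i}=f_i(\interior{D^n})$, there is $x_i\in\interior{D^n}$ with $f_i(x_i)=0$. Because $\Diff(D^n)$ acts transitively on $\interior{D^n}$ — one may move $0$ to $x_i$ by the time-one map of a compactly supported vector field on $\interior{D^n}$, extended by $\id$ near $\sph^{n-1}$ — there is $\phi_i\in\Diff(D^n)$ with $\phi_i(0)=x_i$; then $f_i\circ\phi_i$ has image $S_i$ and sends $0$ to $0$, and after composing further with a linear reflection of $D^n$ fixing $0$ if necessary we may assume $f_i\in\Emb_0^+(D^n,\R^n)$ with $f_i(0)=0$.

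Now apply Lemma~\ref{lem:emb-isotopy-pres-0} to $f_0,f_1$: there is a $C^\infty$ diffeotopy $h\colon I\times\R^n\to\R^n$ with $h_0=\id_{\R^n}$, $h_1\circ f_0=f_1$, and $h_t(0)=0$ for all $t\in I$. Set $f_t\coloneqq h_t\circ f_0$. Since $h|_{[0,t]\times\R^n}$ is a path in $\Diff(\R^n)$ from $\id_{\R^n}$ to $h_t$, each $h_t$ is orientation-preserving, so each $f_t$ is an orientation-preserving embedding; moreover $f_t(0)=h_t(0)=0$ lies in $f_t(\interior{D^n})=\interior{f_t(D^n)}$, whence $f_t\in\Emb_0^+(D^n,\R^n)$. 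As $h$ and $f_0$ are $C^\infty$, the map $(t,x)\mapsto h_t(f_0(x))$ is $C^\infty$, so $t\mapsto f_t$ is continuous from $I$ into $\Emb_0^+(D^n,\R^n)$ (cf. \S\ref{subsubsection:generalities}); composing with $q$ yields a continuous path $t\mapsto f_t(D^n)$ in $\U_0(D^n,\R^n)$ from $S_0$ to $S_1$. The one substantial ingredient is the quoted Lemma~\ref{lem:emb-isotopy-pres-0}; the remaining points — keeping track of orientations, transitivity of $\Diff(D^n)$ on $\interior{D^n}$, and continuity of composition in the $C^\infty$ topology — are routine, the only one deserving care being the reduction of the preceding paragraph, which is what makes the hypotheses of Lemma~\ref{lem:emb-isotopy-pres-0} available.
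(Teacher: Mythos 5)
Your proof is correct and takes essentially the same route as the paper's: fix preimages $f_i\in\Emb_0^+(D^n,\R^n)$ with $f_i(0)=0$, apply Lemma~\ref{lem:emb-isotopy-pres-0} to produce the diffeotopy $h$, and push the path $t\mapsto h_t\circ f_0$ in $\Emb_0^+(D^n,\R^n)$ down through the quotient map to $\U_0(D^n,\R^n)$. You spell out two points the paper treats as immediate --- that such $f_i$ exist (via transitivity of $\Diff(D^n)$ on $\interior{D^n}$ and a possible reflection to fix orientation) and that $f_t=h_t\circ f_0$ stays in $\Emb_0^+(D^n,\R^n)$ because $h_t$ is isotopic to the identity and fixes $0$ --- but the argument is the same.
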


\begin{proof}
Fix any $M, N\in \U_0(D^n,\R^n)$.
There are $f,g\in \Emb_{0}^{+}(D^n,\R^n)$ satisfying $f(D^n)=M$, $g(D^n)=N$ and additionally $f(0)=0=g(0)$.
A diffeotopy as in Lemma~\ref{lem:emb-isotopy-pres-0} yields in particular a path from $f$ to $g$ in $\Emb_{0}^{+}(D^n,\R^n)$, which the quotient map  $\Emb_{0}^{+}(D^n,\R^n)\to \U_0(D^n,\R^n)$ sends to a path from $M$ to $N$. 
\end{proof}	

\subsection{Fiber bundle structure and weak homotopy type}\label{subsec:fiber-bundle-structure-weak-homotopy-type}

The following lemma justifies the definition below.

\begin{Lem}\label{lem:h-cobordism}
	Let $n\neq 4,5$ and $\lyap,\lyapt\in \fun_0^\infty(\R^n)$.
	Fix  $a, b >0$ be such that $\{\lyap \leq a\} \subset \{\lyapt < b\}$. 
	Then $\{\lyap \leq a\}$ is diffeomorphic to $D^n$ and $\{a \leq \lyap\} \cap \{\lyapt \leq b\}$ is diffeomorphic to $\sph^{n-1}\times I$.
\end{Lem}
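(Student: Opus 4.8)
The plan is to establish the two diffeomorphism claims separately: the first reduces to standard low-dimensional topology together with the $h$-cobordism theorem, and the second reduces to the first together with uniqueness of smoothly embedded discs.

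For the first claim I would start by noting that, since the only critical point of $\lyap$ is the minimum at $0$ with value $0$, every $a>0$ is a regular value, so $W\coloneqq\{\lyap\leq a\}$ is a compact connected smooth $n$-manifold with boundary $\partial W=\{\lyap=a\}$; flowing down $\nabla\lyap/\norm{\nabla\lyap}^2$ (which on $\R^n\setminus\{0\}$ shifts level sets by the elapsed time, and whose trajectory through a point of $\{\lyap=a\}$ is defined exactly for $t\in(-a,\infty)$ by properness) deformation retracts $W$ onto $\{0\}$, so $W$ is contractible. The crucial structural input is that $\partial W$ is a homotopy $(n-1)$-sphere: the same flow gives a diffeomorphism $\{\lyap=a\}\times\R\cong\R^n\setminus\{0\}$, and since $\R^n\setminus\{0\}\cong\sph^{n-1}\times\R$ one gets $\{\lyap=a\}\simeq\sph^{n-1}$, hence $\partial W$ is simply connected once $n\geq 3$. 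Then I would argue by cases. For $n\leq 2$, $W$ is a compact connected contractible $1$- or $2$-manifold with boundary, so $W\cong D^1$ or $D^2$ by classification. For $n=3$, $\partial W$ is a closed orientable homology $2$-sphere and hence $\cong\sph^2$, so capping $W$ off with a $3$-ball yields a simply connected homology (hence homotopy) $3$-sphere, which is $\sph^3$ by Perelman \cite{perelman2002entropy,perelman2003ricci,perelman2003finite}; removing the ball again, via uniqueness of smoothly embedded $3$-balls, gives $W\cong D^3$. For $n\geq 6$, I would delete an open ball from $\interior W$ to get a compact $W_0$ with $\partial W_0=\partial W\sqcup\sph^{n-1}$, check that $W_0\simeq W\setminus\{\mathrm{pt}\}$ is a simply connected homology $(n-1)$-sphere, and then verify --- using that $\partial W$ is simply connected together with Lefschetz duality --- that both inclusions $\partial W\hookrightarrow W_0\hookleftarrow\sph^{n-1}$ are homotopy equivalences; thus $W_0$ is a simply connected $h$-cobordism of dimension $\geq 6$, so Smale's theorem \cite{smale1962structure} gives $W_0\cong\sph^{n-1}\times I$, and regluing the ball (absorbing the external collar) yields $W\cong D^n$.

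For the second claim I would apply the first claim to both $\lyap$ and $\lyapt$, so that $\{\lyap\leq a\}$ and $\{\lyapt\leq b\}$ are each diffeomorphic to $D^n$. The hypothesis $\{\lyap\leq a\}\subset\{\lyapt<b\}$ places $\{\lyap\leq a\}$ in the interior of $\{\lyapt\leq b\}$, and passing to complements also gives $\{\lyapt=b\}\subset\{\lyap>a\}$, so $\{a\leq\lyap\text{ and }\lyapt\leq b\}=\{\lyapt\leq b\}\setminus\interior\{\lyap\leq a\}$ is a smooth $n$-disc with the interior of a smoothly embedded sub-$n$-disc deleted. By uniqueness of smoothly embedded discs up to ambient isotopy (cf. Lemma~\ref{lem:emb-isotopy-pres-0} and \cite{palais1960extending}), that subdisc can be ambiently isotoped --- by an isotopy supported away from the outer boundary --- to a standard concentric subdisc of $D^n$, whose complement is visibly $\sph^{n-1}\times I$; this step needs no restriction on $n$ beyond the one already forced by the first claim.

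The hard part will be the case $n\geq 6$ of the first claim, specifically checking that $W_0$ really is an $h$-cobordism. Contractibility of $W$ by itself does \emph{not} force $\partial W$ to be simply connected (for every $n\geq 6$ there are compact contractible smooth $n$-manifolds with non-simply-connected boundary), so it is essential to use the Lyapunov structure through the homotopy-sphere description of the level sets of $\lyap$; granting simple connectivity of $\partial W$ and $W_0$, the fact that both boundary inclusions are homotopy equivalences is a routine Mayer--Vietoris and Lefschetz-duality computation. Everything else --- the low-dimensional classifications, Perelman's theorem, and the disc-uniqueness argument --- is standard.
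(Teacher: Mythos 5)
Your proof is correct, and it is worth noting where it tracks the paper's argument and where it diverges.

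For the first claim ($W\coloneqq\{\lyap\leq a\}\cong D^n$) you essentially unpack Milnor's Proposition~A (delete an interior ball, verify the resulting cobordism is an $h$-cobordism, apply Smale, reglue), whereas the paper simply cites that proposition after recording the same two hypotheses it needs: $W$ is compact contractible, and $\partial W\simeq\sph^{n-1}$ via the gradient flow. So the underlying idea is identical; you have just written out the derivation. Your warning that contractibility of $W$ alone does not force $\partial W$ to be simply connected is well-taken --- this is exactly why the homotopy-sphere description of the level sets, and not merely contractibility of the sublevel set, has to enter. Your low-dimensional and $n=3$ cases match the paper's (classification for $n\leq 2$; cap off, apply Perelman, uncap via disc uniqueness for $n=3$).

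For the second claim the approaches genuinely diverge. The paper verifies directly that $\{a\leq\lyap\text{ and }\lyapt\leq b\}$ is an $h$-cobordism: each of $\{\lyap=a\}$, $\{\lyapt=b\}$, and the annular region is a deformation retract of $\R^n\setminus\{0\}$ via the respective gradient flows, hence both boundary inclusions are homotopy equivalences, and it then invokes the $h$-cobordism theorem a second time (with Perelman and the surface/curve classifications handling $n\leq 3$). You instead derive the annulus claim from the disc claim: apply the first claim to both $\lyap$ and $\lyapt$ to see that $\{\lyap\leq a\}$ and $\{\lyapt\leq b\}$ are nested discs, then use uniqueness of smoothly embedded discs up to ambient isotopy (Palais/Cerf, valid in all dimensions) to reduce to the concentric-disc picture, whose complementary region is the standard annulus. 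Both are valid. Your route localizes the Smale/Perelman input to a single application and handles the annulus by elementary differential topology; the paper's route is more symmetric in $\lyap$ and $\lyapt$ and treats both conclusions uniformly through the same deformation-retraction observation, without needing the disc theorem beyond what it already uses elsewhere.
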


\begin{proof}
	Note that $\R^n\setminus \{0\}$ deformation retracts onto each of $\{\lyap = a\}$, $\{\lyapt =b\}$, and $\{a \leq \lyap\} \cap \{\lyapt \leq b\}$ by moving along trajectories of $\nabla \lyap$ and $\nabla \lyapt$ (cf. \cite[p.~325]{wilson1967structure}).
	Thus, the inclusion maps of $\{\lyap = a\}$, $\{\lyapt =b\}$, and $\{a \leq \lyap\} \cap \{\lyapt \leq b\}$ into $\R^n\setminus \{0\}$ are each homotopy equivalences.
	This implies that the inclusion maps of $\{\lyap = a\}$ and $\{\lyapt =b\}$ into $\{a \leq \lyap\} \cap  \{\lyapt \leq b\}$ are also homotopy equivalences, so $\{a \leq \lyap\}\cap  \{ \lyapt \leq b\}$ is an h-cobordism.
	Similarly, by moving along trajectories of $\nabla \lyap$ we see that $\{\lyap \leq a\}$ is a compact contractible manifold with boundary homotopy equivalent to $\sph^{n-1}\simeq \R^n\setminus \{0\}$.
	
	Since $n\neq 4,5$, Smale's h-cobordism theorem (\cite[Cor.~3.2, Thm~5.1]{smale1962structure}, \cite[Thm~9.1, Prop.~A]{milnor1965hcobordism}), Perelman's proof of the Poincar\'{e} conjecture \cite{perelman2002entropy,perelman2003ricci,perelman2003finite} (see \cite[Cor.~0.2]{morgan2007ricci}), and the classifications of manifolds of dimension $2$  \cite[Thm~9.3.11]{hirsch1976differential} and $1$ \cite[Appendix]{milnor1965topology} imply that $\{\lyap \leq a\}$ is diffeomorphic to $D^n$ and  $\{a\leq \lyap\} \cap \{\lyapt \leq b\}$ is diffeomorphic to $\{V=a\}\times I\approx \partial D^n \times I=\sph^{n-1}\times I$.
\end{proof}

For $n\neq 4,5$ and any $\lyap\in \fun_0^\infty(\R^n)$, the sublevel set $\lyap^{-1}([0,1])$ is diffeomorphic to $D^n$ by Lemma~\ref{lem:h-cobordism}.
Thus, the following definition makes sense.

\begin{Def}\label{def:sublevel-set-map}
	For $n\neq 4,5$, define the \concept{sublevel set map} by $$p\colon \fun_0^\infty(\R^n)\to \U_0(D^n,\R^n), \quad p(\lyap)=\lyap^{-1}([0,1]).$$
\end{Def}

A theorem of Abraham on smoothness of evaluation maps \cite[Thm~10.3]{abraham1967transversal} and the implicit function theorem of Hildebrandt and Graves \cite[Thm~4]{hildebrandt1927implicit} imply that the sublevel set map is continuous, and it is also surjective by the next lemma.

\begin{Lem}\label{lem:surjectivity-sublevel-set-map}
	For $n\neq 4,5$, the sublevel set map is surjective.
\end{Lem}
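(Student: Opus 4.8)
The plan is to show that every $S \in \U_0(D^n,\R^n)$ arises as the $1$-sublevel set $\lyap^{-1}([0,1])$ of some $\lyap \in \fun_0^\infty(\R^n)$. First I would fix such an $S$; by definition there is an orientation-preserving embedding $f \in \Emb_0^+(D^n, \R^n)$ with $f(D^n) = S$ and, after composing with a diffeomorphism of $D^n$ if necessary, I may also arrange $f(0) = 0$ (this is possible precisely because $0 \in \interior{S}$). Now $f$ extends to an open embedding of a slightly larger disc, so I can push forward the standard function $x \mapsto \norm{x}^2$ on a neighborhood of $D^n$ to obtain a $C^\infty$ function on a neighborhood $\open$ of $S$ in $\R^n$ that equals $1$ exactly on $\partial S$, is $<1$ on $\interior{S}$, is $>1$ on $\open \setminus S$, has $0$ as its unique critical point (with value $0$, at the origin, since $f(0)=0$), and whose gradient is nonvanishing away from $0$.

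The remaining task is to extend this locally defined function to a global $\lyap \in \fun_0^\infty(\R^n)$ without creating new critical points or destroying properness. The key step is to choose a smaller closed tubular-type collar of $\partial S$ inside $\open$, say the image under $f$ of an annular region $\{1-\delta \le \norm{x}^2 \le 1+\delta\}$, on which the function already has nonvanishing gradient and level sets diffeomorphic to $\sph^{n-1}$. Outside this collar I extend radially: on the ``outside'' I glue on a function increasing to $+\infty$ (e.g. transported along the gradient flow of the collar function, reparametrized so it becomes proper — concretely, flow $\partial S$ outward and assign value $1 + t$ along the time-$t$ flow line, then modify for properness as in \cite[Lem.~6.3]{fathi2019smoothing}), and on the ``inside'' I am already done since the pushed-forward $\norm{x}^2$ is defined on all of $\interior{S} \cup (\text{collar})$ and has its only critical point at $0$. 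Patching with a partition of unity subordinate to the collar region works because any convex combination of two functions with parallel, nonvanishing, coherently-oriented gradients again has nonvanishing gradient there — exactly the inner-product argument used in the proofs of Lemmas~\ref{lem:funnel-alexander} and~\ref{lem:smoothing-funnels}. This produces a surjective proper $C^\infty$ function $\R^n \to [0,\infty)$ with unique critical point the origin and $\lyap^{-1}([0,1]) = S$, so $p(\lyap) = S$.

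I expect the main obstacle to be the bookkeeping at the gluing region: one must simultaneously guarantee (i) no spurious critical points appear where the local and global pieces are blended, (ii) the resulting function is globally proper, and (iii) the $1$-sublevel set is exactly $S$ and not something larger or smaller. All three follow from arranging that the blending happens strictly inside the annular collar $f(\{1-\delta \le \norm{x}^2 \le 1+\delta\})$, where the gradient is controlled and the level $1$ is not touched; the inner-product/convexity trick then handles (i), the outward reparametrization handles (ii), and (iii) is automatic since nothing is changed on $S$ itself. An alternative, slightly cleaner route would be to first replace $S$ by the standard disc via a diffeotopy of $\R^n$ fixing the origin (Lemma~\ref{lem:emb-isotopy-pres-0}), reducing to the trivial case $S = D^n$ witnessed by $\lyap(x) = \norm{x}^2$, and then pull back; this shifts all the work into verifying that the pullback of $\norm{x}^2$ by the inverse diffeotopy still lies in $\fun_0^\infty(\R^n)$, which is immediate since diffeomorphisms fixing $0$ preserve the defining properties of $\fun_0^\infty(\R^n)$.
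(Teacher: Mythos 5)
Your closing ``alternative, slightly cleaner route'' is exactly the paper's proof: fix an orientation-preserving embedding $f$ with $f(D^n)=S$ and $f(0)=0$, apply Lemma~\ref{lem:emb-isotopy-pres-0} to obtain a $C^\infty$ diffeotopy $h$ of $\R^n$ with $h_0=\id_{\R^n}$ and $h_1|_{D^n}=f$, and set $\lyap=\lyapt\circ h_1^{-1}$ where $\lyapt(x)=\norm{x}^2$. Since $h_1$ is a diffeomorphism of $\R^n$ fixing the origin, $\lyap$ is automatically in $\fun_0^\infty(\R^n)$ (properness, surjectivity, uniqueness and location of the critical point are all preserved by composing with a diffeomorphism fixing $0$), and $p(\lyap)=h_1(\lyapt^{-1}([0,1]))=h_1(D^n)=f(D^n)=S$. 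You should lead with this; it is a two-line argument and carries none of the bookkeeping burden you identify.

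Your primary, more hands-on construction has a genuine gap in the outward extension, and the diffeotopy is quietly the thing that fixes it. The gradient flow of the pushed-forward collar function $y\mapsto\norm{f^{-1}(y)}^2$ is defined only on the collar $f(\{1-\delta\leq\norm{x}^2\leq 1+\delta\})$, so ``flow $\partial S$ outward along it'' only reaches the outer boundary of that collar, not infinity; the recipe as written does not produce a function on all of $\R^n\setminus S$. To continue outward you would need a complete vector field on $\R^n\setminus\interior{S}$, transverse to $\partial S$, whose flow realizes a diffeomorphism $\partial S\times[0,\infty)\to\R^n\setminus\interior{S}$. The existence of such a field is tantamount to knowing $\R^n\setminus\interior{S}$ is diffeomorphic to $\sph^{n-1}\times[0,\infty)$, and that is precisely the nontrivial topological input that Lemma~\ref{lem:emb-isotopy-pres-0} (isotopy uniqueness of embedded discs) supplies. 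Blending the collar function with $\norm{y}^2$ far out via a partition of unity does not avoid this: the inner-product trick requires the two gradients to be coherently oriented across the gluing annulus, which is not automatic for a general embedded disc $S$. So the hands-on route secretly uses the same ingredient; the ``cleaner'' version is not merely cleaner, it is what makes the outward extension go through.
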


\begin{proof}
Fix any $M\in \U_0(D^n,\R^n)$ and $C^\infty$ orientation-preserving embedding $f\colon D^n \hookrightarrow \R^n$ satisfying $f(D^n)=M$ and $f(0)=0$.
Lemma~\ref{lem:emb-isotopy-pres-0} furnishes a $C^\infty$ diffeotopy $h\colon I\times \R^n\to \R^n$ satisfying $h_0=\id_{\R^n}$ and $h_1|_{D^n}=f$.
Defining $\lyap, \lyapt\in \fun_0^\infty(\R^n)$ by $\lyapt(x)=\norm{x}^2$ and $\lyap=\lyapt\circ h_1^{-1}$, we see that $p(\lyap)=h_1(\lyapt^{-1}([0,1]))=h_1(D^n)=f(D^n)=M$.
\end{proof}

The following is one of our main results, and it completes the proof of Theorem~\ref{th:intro-grassmannian}.

\begin{Th}\label{th:funnel-space-fiber-bundle-over-U}
	For $n\neq 4,5$ the sublevel set map $p\colon \fun_0^\infty(\R^n)\to \U_0(D^n,\R^n)$ is a topological fiber bundle with weakly contractible fibers.
	In particular, $p$ is a weak homotopy equivalence.
\end{Th}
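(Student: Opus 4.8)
The plan is to establish two things: that $p$ is a topological fiber bundle, and that its fibers are weakly contractible; the ``in particular'' then follows from the long exact sequence of homotopy groups of a Serre fibration together with Corollary~\ref{co:nonlinear-grassmannian-path-conn}, which gives path-connectedness of the base. The identification of the fiber is the starting point: for the basepoint $D^n\in\U_0(D^n,\R^n)$, the fiber $p^{-1}(D^n)$ is exactly the space $\F=\{\lyap\in\fun_0^\infty(\R^n):\lyap^{-1}([0,1])=D^n\}$, which is weakly contractible by Lemma~\ref{lem:contractible-fibers}. Since $p$ will be a fiber bundle, all fibers are homeomorphic, hence all fibers are weakly contractible.

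The main work is therefore to prove that $p$ is locally trivial. Here is the approach I would take. Fix $M_0\in\U_0(D^n,\R^n)$. I want a neighborhood $\mathcal{O}$ of $M_0$ and a homeomorphism $p^{-1}(\mathcal{O})\cong \mathcal{O}\times \F$ over $\mathcal{O}$. The natural way to get this is to use the principal $\Diff^+(D^n)$-bundle structure on $\Emb_0^+(D^n,\R^n)\to\U_0(D^n,\R^n)$ recorded in section~\ref{subsubsection:nonlinear-grass}: over a suitable neighborhood $\mathcal{O}$ of $M_0$ there is a continuous local section $\sigma\colon\mathcal{O}\to\Emb_0^+(D^n,\R^n)$ with $\sigma(M)(D^n)=M$. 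Actually it is cleaner to ask for a section into diffeomorphisms: using Lemma~\ref{lem:emb-isotopy-pres-0} (or rather a parametric version, obtained by the same parametric-tubular-neighborhood argument used in Lemma~\ref{lem:contractible-fibers}), one produces a continuous map $\mathcal{O}\to\Diff(\R^n)$, $M\mapsto \Theta_M$, with $\Theta_{M_0}=\id$, $\Theta_M(0)=0$, and $\Theta_M(D^n)=M$; one should also arrange $\Theta_M$ to equal a fixed diffeomorphism outside a compact set so that composition stays within $\fun_0^\infty$. Then define
\begin{equation*}
\mathcal{O}\times\F \to p^{-1}(\mathcal{O}),\qquad (M,\lyap)\mapsto \lyap\circ\Theta_M^{-1},
\end{equation*}
with inverse $\lyapt\mapsto (p(\lyapt),\,\lyapt\circ\Theta_{p(\lyapt)})$. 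One checks that $\lyap\circ\Theta_M^{-1}\in\fun_0^\infty(\R^n)$ (properness, surjectivity, unique critical point at $0$ are all preserved under postcomposition by a diffeomorphism fixing $0$), that its $1$-sublevel set is $\Theta_M(D^n)=M$ so it lies in $p^{-1}(\mathcal{O})$, and that both maps are continuous in the compact-open $C^\infty$ topology — the latter using Abraham's smoothness-of-composition result \cite[Thm~10.3]{abraham1967transversal}, exactly as invoked before Lemma~\ref{lem:surjectivity-sublevel-set-map}. This gives the local trivialization, so $p$ is a topological fiber bundle; fiber bundles over paracompact bases are Serre fibrations, so the homotopy exact sequence applies.

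The step I expect to be the main obstacle is producing the continuous family of trivializing diffeomorphisms $M\mapsto\Theta_M$ with all the required normalizations — fixing the origin, being compactly supported modulo a fixed model, and depending continuously on $M$ in the $C^\infty$ topology. This is essentially a parametric version of the isotopy-extension/Palais theorem behind Lemma~\ref{lem:emb-isotopy-pres-0} combined with the openness of the section of the principal bundle; the delicate point is keeping everything inside $\fun_0^\infty(\R^n)$ (so that postcomposition does not destroy surjectivity onto $[0,\infty)$ or create spurious critical points), which forces the ``equal to a fixed diffeomorphism near infinity'' requirement and means one cannot simply quote an off-the-shelf statement. Once that family is in hand the remaining verifications are routine: preservation of the defining properties of $\fun_0^\infty$ under the two maps, mutual inverseness, bicontinuity via Abraham's theorem, and then the concluding three-line argument from the long exact sequence of the fibration using weak contractibility of the fiber $\F$ (Lemma~\ref{lem:contractible-fibers}) and path-connectedness of $\U_0(D^n,\R^n)$ (Corollary~\ref{co:nonlinear-grassmannian-path-conn}) to conclude that $p$ is a weak homotopy equivalence.
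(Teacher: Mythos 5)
Your outline matches the paper's: identify the fiber over $D^n$ as $\F=\{\lyap:\lyap^{-1}([0,1])=D^n\}$ (weakly contractible by Lemma~\ref{lem:contractible-fibers}), prove local triviality by composing with a continuously varying family of diffeomorphisms of $\R^n$ carrying $D^n$ to nearby discs while fixing the origin and being the identity near infinity, then conclude via path-connectedness of the base (Corollary~\ref{co:nonlinear-grassmannian-path-conn}) and the long exact sequence of the fibration. Where you diverge is precisely at the step you flag as the main obstacle — producing the family $M\mapsto\Theta_M$. You propose invoking a parametric form of Lemma~\ref{lem:emb-isotopy-pres-0} (Palais/Cerf isotopy uniqueness of disc embeddings) plus a section of the principal bundle; the paper instead sidesteps any appeal to a parametric Palais theorem by a direct construction: fix a single tubular neighborhood $\pi\colon E\to\partial M_0$ inside $\R^n\setminus\{0\}$, take the neighborhood $\mathcal{O}$ of $M_0$ consisting of those $N$ whose boundary is the graph of a unique section $\sigma_N$ of $\pi$ (with $N\mapsto\sigma_N$ continuous), build a compactly supported vector field $\vo(N)$ on $\R^n$ from $\sigma_N$ via the vertical lift in $E$, and take $\Theta_N=\Psi(N)$ to be its time-$1$ flow. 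Because $\vo(N)$ has fixed compact support in $\R^n\setminus\{0\}$, the normalizations you worry about (fixing $0$, being trivial near infinity, hence preserving membership in $\fun_0^\infty(\R^n)$ under composition) come for free, and continuity in $N$ follows from smooth dependence of flows on parameters rather than from an abstract isotopy-extension theorem. So your proposal is correct and lands in essentially the same place, but the paper's tubular-neighborhood-plus-flow construction is the concrete realization of the step you left as a sketch, and it is considerably lighter than a full parametric Palais argument would be.
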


\begin{Rem}\label{rem:prelim-path-conn}
	Theorem~\ref{th:funnel-space-fiber-bundle-over-U} and Corollary~\ref{co:nonlinear-grassmannian-path-conn} directly imply that $\fun_0^\infty(\R^n)$ is path-connected.
	Alternatively, Lemmas~\ref{lem:funnel-alexander}, \ref{lem:h-cobordism} more directly imply path-connectedness and moreover that there is a $C^\infty$ path connecting any $\lyap, \lyapt\in  \fun_0^\infty(\R^n)$.
    These results are later stated as part of a stronger one (Corollary~\ref{co:funnels-trivial-homotopy-groups}).
\end{Rem}

\begin{proof}
The sublevel set map is surjective by Lemma~\ref{lem:surjectivity-sublevel-set-map}.
Fix any $M\in \U_0(D^n,\R^n)$ and set $\F\coloneqq p^{-1}(M)\subset \fun_0^\infty(\R^n)$.
Since $\U_0(D^n,\R^n)$ is path-connected (Corollary~\ref{co:nonlinear-grassmannian-path-conn}), it suffices to find an open neighborhood  $U\subset \U_0(D^n,\R^n)$ of $M$ and a continuous map $f\colon p^{-1}(U)\to \F$ such that $(p,f)\colon p^{-1}(U)\to U \times \F$ is a homeomorphism.

Let $\pi\colon E\subset \R^n\setminus \{0\}\to M$ be a tubular neighborhood of $\partial M$, so $\pi\colon E\to M$ is a vector bundle with zero section identified with $\partial M$.
Fix nested open neighborhoods $E_1,E_2, E_3 \subset E$ of  $\partial M$ satisfying $\cl{E_1}\subset E_2$ and $\cl{E_2}\subset E_3$.
Fix an open neighborhood $U\subset \U_0(D^n,\R^n)$ of $M$ such that the boundary $\partial N$ of every $N\in U$ is the image of a unique $C^\infty$ section $\sigma_N\colon \partial M\to E_1\subset E$ of $\pi$ that depends continuously on $N$ with respect to the compact-open $C^\infty$ topology.

Let $\varphi\in C^\infty(E,I)$ be $1$ on $E_2$ and $0$ on a neighborhood of $E\ct E_3$. 
Define a continuous map $\vo\colon U\to \vf^\infty(\R^n)$ by $\vo(N)(x)=0$ for $x\not \in E_3$, and for $x\in E_3$ by 
\begin{equation*}
\vo(N)(x)= \varphi(x)\vl_x(\sigma_N(\pi(x))),
\end{equation*}
where $\vl_{x_m}\colon E_m\to V_{x_m} E$ is the vertical lift of $E_m=\pi^{-1}(m)$ to the vertical space $V_{x_m} E \subset T_{x_m}E$ tangent to the fiber $E_m$ \cite[p.~107]{michor2008topics}.

Since $\vo(N)$ vanishes outside of a compact subset of $\R^n\setminus \{0\}$ for each $N\in U$, there is a continuous map $\Psi\colon U \to \Diff_0(\R^n)$ with $\Psi(N)$ given by the time-$1$ map of the flow of $\vo(N)$ \cite[Thm~B.3]{duistermaat2000lie}, where $\Diff_0(\R^n)\subset \Diff(\R^n)$ denotes the diffeomorphisms fixing the origin.
By construction, $\Psi(N)(\partial M)=\partial N$ and hence also 
\begin{equation*}
\Psi(N)(M)=N \quad 	\textnormal{and} \quad M=\Psi(N)^{-1}(N) 
\end{equation*}
for all $N\in U$.

Finally, define a continuous map $f\colon p^{-1}(U)\to \F\subset \fun_0^\infty(\R^n)$ by $f(\lyap)=\lyap \circ \Psi(p(\lyap))$.
Since 
\begin{align*}
f(\lyap)^{-1}([0,1])&=\Psi(p(\lyap))^{-1}(\lyap^{-1}([0,1])) =	\Psi(p(\lyap))^{-1}(p(\lyap)) \\
&= M,
\end{align*}
we see that $f$ is indeed $\F$-valued.
And $(p,f)\colon p^{-1}(U)\to U \times \F$ is a homeomorphism with continuous inverse $g\colon U\times \F \to p^{-1}(U)$ given by $g(N,\lyapt)=\lyapt \circ \Psi(N)^{-1}$, since
\begin{align*}
	g\circ (p,f)(\lyap)&= g(p(\lyap),f(\lyap))= f(\lyap)\circ \Psi(p(\lyap))^{-1}= \lyap \circ \Psi(p(\lyap)) \circ \Psi(p(\lyap))^{-1}  \\
	&=\lyap,
\end{align*}
\begin{align*}
p\circ g(N,\lyapt)&=g(N,W)^{-1}([0,1])=\Psi(N)(\lyapt^{-1}([0,1]))=\Psi(N)(M)\\
&= N,	
\end{align*}
and hence
\begin{align*}
	f\circ g(N,\lyapt)&= g(N,\lyapt)\circ \Psi(p(g(N,\lyapt))) = g(N,\lyapt)\circ \Psi(N)\\
	&= \lyapt \circ \Psi(N)^{-1}\circ \Psi(N)\\
	&= \lyapt.
\end{align*}
Weak contractibility of the fibers of $p$ then follows from Lemma~\ref{lem:contractible-fibers} and the fact that all fibers of a topological fiber bundle are homeomorphic.
Since any fiber bundle is a Serre fibration, $p$ is a weak homotopy equivalence by the corresponding long exact sequence of homotopy groups \cite[Thm~VII.6.7]{bredon1993topology}.
\end{proof}

\subsection{Homotopy groups and smooth extensions}\label{subsec:vanishing-htpy-groups-funnels}

With Theorem~\ref{th:funnel-space-fiber-bundle-over-U} in hand, we begin an investigation of the topologies of $\U_0(D^n,\R^n)$ and $\U(D^n,\R^n)$  culminating in Proposition~\ref{prop:nonlinear-grass-trivial-htpy-groups}.
The corollaries that follow then complete the proof of Theorems~\ref{th:intro-1-conn-contract}, \ref{th:intro-cr-extensions} for $\fun_0^r(\R^n)$.

\begin{Lem}\label{lem:deriv-eplus0-gl-n-htpy-equiv}
For any $n$ and $y\in D^n$, the ``evaluate derivative at $y$'' map 
\begin{equation}\label{eq:lem:deriv-eplus0-gl-n-htpy-equiv}
	\Emb_{0}^{+}(D^n,\R^n)\to \GL^+(n), \qquad f\mapsto D_y f
\end{equation}
is the time-1 map of a strong deformation retraction, and the inclusion
\begin{equation}\label{eq:lem:deriv-eplus0-gl-n-htpy-equiv-2}
	\Emb_{0}^{+}(D^n,\R^n)\hookrightarrow \Emb^{+}(D^n,\R^n) 
\end{equation}
is a homotopy equivalence.
\end{Lem}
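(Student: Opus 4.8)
The plan is to construct the strong deformation retraction of $\Emb_0^+(D^n,\R^n)$ onto $\GL^+(n)$ explicitly by a two-stage ``zoom in'' procedure, and then to deduce the second claim from the first together with a standard fibration argument. First I would reduce to the case $y=0$: translating by $-y$ inside $D^n$ is not available since $y$ may lie on the boundary, but composing on the source with a fixed diffeomorphism of $D^n$ carrying $0$ to $y$ identifies the two ``evaluate at $y$'' maps up to homotopy equivalence, so it suffices to treat $f\mapsto D_0 f$ on $\Emb_0^+(D^n,\R^n)$, where now $0\in\interior{D^n}$ and $f(0)=0$. For such $f$, define the rescaled embeddings $f_t(x) \coloneqq \tfrac1t f(tx)$ for $t\in(0,1]$ and $f_0(x)\coloneqq D_0 f(x)$. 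One checks that $f_t\in\Emb_0^+(D^n,\R^n)$ for all $t\in[0,1]$ (it is an embedding of $D^n$ because $f$ is one on the larger disc $tD^n\supset$ a neighborhood of $0$ — wait, $tD^n\subset D^n$, so $f_t$ is genuinely defined and is an orientation-preserving embedding fixing $0$), that $(t,f)\mapsto f_t$ is continuous in the $C^\infty$ topology (the only issue is at $t=0$, where a Taylor-with-remainder estimate $f(tx)=tD_0f(x)+O(t^2)$ and its analogues for higher derivatives give $C^\infty$ convergence $f_t\to f_0$ uniformly on $D^n$, with all of this continuous in $f$), and that $f_1=f$, $f_0=D_0f\in\GL^+(n)$, and $f_t=f$ for all $t$ whenever $f\in\GL^+(n)$ is linear. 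This exhibits $f\mapsto D_0f$ as the time-$0$ end of a strong deformation retraction; after the harmless reparametrization $t\mapsto 1-t$ it is the time-$1$ map of one, proving the first assertion.

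For the second assertion, I would invoke the principal bundle / fibration structure already recorded in the excerpt: restriction of the quotient map $\Emb^+(D^n,\R^n)\to \U(D^n,\R^n)$ and of $\Emb_0^+(D^n,\R^n)\to\U_0(D^n,\R^n)$ are topological fiber bundles with fiber $\Diff^+(D^n)$, hence Serre fibrations, and these fit into a commuting square with the inclusions $\Emb_0^+(D^n,\R^n)\hookrightarrow\Emb^+(D^n,\R^n)$ on total spaces and $\U_0(D^n,\R^n)\hookrightarrow\U(D^n,\R^n)$ on bases, with the identity on the common fiber $\Diff^+(D^n)$. Comparing long exact sequences of homotopy groups via the five lemma, it suffices to show $\U_0(D^n,\R^n)\hookrightarrow\U(D^n,\R^n)$ is a weak homotopy equivalence — but the excerpt states exactly this (Lemma~\ref{lem:gr-0-gr-w-h-e}), so the inclusion of total spaces is a weak homotopy equivalence. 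To upgrade ``weak homotopy equivalence'' to ``homotopy equivalence'' as claimed, one uses that $\Emb^+(D^n,\R^n)$ and $\Emb_0^+(D^n,\R^n)$ are (open subsets of) Fréchet manifolds modelled on separable Fréchet spaces, hence have the homotopy type of CW complexes (Palais, Milnor), and Whitehead's theorem applies. Alternatively — and more in the spirit of the first half — one can bypass the fibration machinery entirely: the deformation retraction already shows $\Emb_0^+(D^n,\R^n)\simeq\GL^+(n)\simeq SO(n)$, and an analogous but slightly more elaborate ``zoom in near an interior point, then straighten'' argument shows $\Emb^+(D^n,\R^n)$ also deformation retracts to $\GL^+(n)$, so the inclusion is a homotopy equivalence because both spaces retract compatibly onto the same subspace.

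The main obstacle is verifying that the rescaling homotopy $(t,f)\mapsto f_t$ is continuous \emph{into} $\Emb_0^+(D^n,\R^n)$ with its $C^\infty$ topology, uniformly in $f$, across the degenerate endpoint $t=0$. This requires care: one must produce, from the definition of the compact-open $C^\infty$ topology, uniform-on-$D^n$ estimates on $\partial_x^\alpha(f_t-f_0)$ that are controlled by $t$ and depend continuously (locally uniformly) on $f$ in a $C^{|\alpha|+1}$-neighborhood; the relevant input is the integral form of Taylor's theorem, $f(tx)-tD_0f(x)=t^2\int_0^1(1-s)\,D^2f(stx)[x,x]\,ds$, differentiated in $x$ under the integral sign. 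The remaining points — that $f_t$ remains an embedding and orientation-preserving for all $t\in[0,1]$ (for $t>0$ it is a rescaling of a restriction of $f$; at $t=0$ it is $D_0f\in\GL^+(n)$), that the origin is fixed throughout, and that linear maps are fixed by the homotopy — are routine.
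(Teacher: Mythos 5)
Your proposal has a genuine gap in the first part and a circularity in the primary route you offer for the second part.

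\textbf{The gap in the first part.} The space $\Emb_0^+(D^n,\R^n)$ is defined in the paper as $\{f\in\Emb^+(D^n,\R^n) : 0\in f(\interior{D^n})\}$, i.e.\ the origin lies in the image of the interior, \emph{not} that $f(0)=0$. Your construction $f_t(x)=\tfrac1t f(tx)$ silently assumes $f(0)=0$ — you even write ``where now $0\in\interior{D^n}$ and $f(0)=0$,'' but this second condition is not a consequence of your reduction and is false for a general $f\in\Emb_0^+(D^n,\R^n)$. For $f$ with $z(f)\coloneqq f^{-1}(0)\neq 0$, the quantity $\tfrac1t f(tx)$ diverges as $t\to 0$ because $f(0)\neq 0$. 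The paper's proof addresses exactly this: it first zooms in at the variable basepoint $z(f)$, defining
\begin{equation*}
h_t(f)(x)=\tfrac{1}{1-2t}\,f\bigl(z(f)+(x-z(f))(1-2t)\bigr)\quad\text{for }0\le t<\tfrac12,
\end{equation*}
which uses $f(z(f))=0$ to get a finite limit $D_{z(f)}f\,(x-z(f))$, and then for $t\in[\tfrac12,1]$ slides the affine translation and the derivative-evaluation point linearly to arrive at $D_y f$. Your approach can be repaired by zooming in at $z(f)$ rather than at $0$, but as written the homotopy is not defined on the whole space. (Also, your reduction to $y=0$ via ``a diffeomorphism of $D^n$ carrying $0$ to $y$'' fails when $y\in\partial D^n$, since diffeomorphisms of $D^n$ preserve the boundary; the cheap fix is a path homotopy $t\mapsto D_{\gamma(t)}$, which restricts to the identity on $\GL^+(n)$ since $D_y A=A$ for linear $A$ regardless of $y$. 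But this is a minor point next to the definitional issue above.)

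\textbf{Circularity in the second part.} Your primary route invokes Lemma~\ref{lem:gr-0-gr-w-h-e} to conclude that $\Emb_0^+(D^n,\R^n)\hookrightarrow\Emb^+(D^n,\R^n)$ is a weak homotopy equivalence, then upgrades via CW structure and Whitehead. But in the paper, Lemma~\ref{lem:gr-0-gr-w-h-e} is proved \emph{using} the present lemma (it cites the top horizontal arrow being a homotopy equivalence ``by Lemma~\ref{lem:deriv-eplus0-gl-n-htpy-equiv}''), so this argument is circular. Your alternative route — deformation retract both $\Emb_0^+(D^n,\R^n)$ and $\Emb^+(D^n,\R^n)$ onto $\GL^+(n)$ compatibly via ``evaluate derivative at a point,'' and conclude by two-out-of-three — is correct and is exactly what the paper does via the triangle with vertex $\GL^+(n)$, using the ``zoom in at $0$'' retraction for the larger space. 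You should make that the main argument, not the footnote.
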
	

\begin{proof}
 The map $z\colon \Emb_{0}^{+}(D^n,\R^n)\to \interior{D^n}$ defined by $z(f)=f^{-1}(0)$ is clearly continuous, so $h\colon I\times \Emb_{0}^{+}(D^n,\R^n) \to \Emb_{0}^{+}(D^n,\R^n)$ defined by  
\begin{equation*}
h_t(f)(x)=	
    \begin{cases}
	 \frac{1}{1-2t}f(z(f) + (x-z(f))(1-2t)),& 0\leq t < \frac{1}{2}\\
	 D_{(2-2t)z(f)+(2t-1)y}f(x+(2t-2)z(f)), & \frac{1}{2}\leq t \leq 1
	\end{cases}
\end{equation*}
is  continuous by Hadamard's lemma.
And $h$ is indeed $\Emb_{0}^{+}(D^n,\R^n)$-valued since, for each $t\in I$ and $f\in \Emb_{0}^{+}(D^n,\R^n)$, (i) $h_t(f)$ is either an orientation-preserving invertible affine map or a composition with $f$ thereof, and (ii)  $h_t(f)^{-1}(0)$ lies on the line segment in $\interior{D^n}$ with endpoints $0$ and $z(f)$.
Since $h_0(f)=f$,  $h_1(f)=D_y f\in \GL^+(n)$, and $h_t|_{\GL^+(n)}=\id_{\GL^+(n)}$ for all $t\in I$, $h$ is the desired strong deformation retraction of $\Emb_{0}^{+}(D^n,\R^n)$ onto $\GL(n)$.

Next, in the commutative diagram
\begin{equation}\label{eq:embedding-triangle}
	\begin{tikzcd}
		\Emb_0^+(D^n,\R^n) \arrow[hookrightarrow]{rr} \arrow["D_0",swap]{dr} && \Emb^+(D^n,\R^n) \arrow["D_0"]{dl}\\
		& \GL^+(n) & 
	\end{tikzcd} ,
\end{equation}
the diagonal arrows are homotopy equivalences by the above for the left side, and by a standard and simpler ``zoom in at $0$'' argument for the right side  \cite[Thm~9.1.1, Cor.~9.1.4]{kupers2019lectures}.
Thus, the horizontal arrow \eqref{eq:lem:deriv-eplus0-gl-n-htpy-equiv-2} in \eqref{eq:embedding-triangle} is also a homotopy equivalence.
\end{proof}

\begin{Lem}\label{lem:gr-0-gr-w-h-e}
	The inclusion
	\begin{equation}\label{eq:lem:gr-0-gr-w-h-e}
	\U_0(D^n,\R^n)\hookrightarrow \U(D^n,\R^n)	
	\end{equation}
	is a weak homotopy equivalence for any $n$.
\end{Lem}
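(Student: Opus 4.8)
The plan is to exhibit the inclusion \eqref{eq:lem:gr-0-gr-w-h-e} as the total-space inclusion of a map of Serre fibrations over a common base and then invoke the five lemma on the long exact sequences of homotopy groups. The natural base is $\GL^+(n)$, reached via the ``evaluate derivative at $0$'' map $D_0$, and the natural fibrations are the principal bundles $\Emb_0^+(D^n,\R^n)\to \U_0(D^n,\R^n)$ and $\Emb^+(D^n,\R^n)\to \U(D^n,\R^n)$ described in \S\ref{subsubsection:nonlinear-grass}. First I would recall that these quotient maps are topological fiber bundles (hence Serre fibrations) with fibers $\Diff^+(D^n)$. Next I would observe that $D_0$ descends to well-defined maps on the quotients: the fiber $\Diff^+(D^n)$ acts on an embedding $f$ by right composition with a diffeomorphism $\phi$ fixing $0$, so $D_0(f\circ\phi)=D_0 f\cdot D_0\phi$, and since $D_0\phi\in\GL^+(n)$ this does not preserve $D_0 f$ — so $D_0$ itself does not descend. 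This is the first real obstacle, and the fix is to use the \emph{Grassmannian} itself as the base, not $\GL^+(n)$.

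So instead I would set up a commutative square whose horizontal arrows are the inclusions $\Emb_0^+(D^n,\R^n)\hookrightarrow\Emb^+(D^n,\R^n)$ (on total spaces) and $\U_0(D^n,\R^n)\hookrightarrow\U(D^n,\R^n)$ (on bases), with vertical arrows the quotient maps, noting that the left quotient map is exactly the restriction of the right one and that the fibers on both sides are canonically identified with $\Diff^+(D^n)$ (since the fiber over $S$ consists of all $f$ with $f(D^n)=S$, and these are permuted simply transitively by $\Diff^+(D^n)$ regardless of whether $0\in\interior S$). Thus the inclusion restricts to the \emph{identity} on fibers. The long exact sequences of the two fibrations then fit into a ladder
\begin{equation*}
\begin{tikzcd}
\cdots \arrow{r} & \pi_k(\Diff^+(D^n)) \arrow{r}\arrow[equal]{d} & \pi_k(\Emb_0^+) \arrow{r}\arrow{d} & \pi_k(\U_0) \arrow{r}\arrow{d} & \pi_{k-1}(\Diff^+(D^n)) \arrow[equal]{d}\arrow{r} & \cdots\\
\cdots \arrow{r} & \pi_k(\Diff^+(D^n)) \arrow{r} & \pi_k(\Emb^+) \arrow{r} & \pi_k(\U) \arrow{r} & \pi_{k-1}(\Diff^+(D^n)) \arrow{r} & \cdots
\end{tikzcd}
\end{equation*}
(with basepoints chosen compatibly, say the standard disc in both and a fixed embedding extending it; on $\pi_0$ replace groups by pointed sets and ``iso'' by ``bijection''). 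The middle vertical map $\pi_k(\Emb_0^+(D^n,\R^n))\to\pi_k(\Emb^+(D^n,\R^n))$ is an isomorphism for all $k$ by Lemma~\ref{lem:deriv-eplus0-gl-n-htpy-equiv} (the inclusion \eqref{eq:lem:deriv-eplus0-gl-n-htpy-equiv-2} is a homotopy equivalence), and the outer vertical maps are identities. By the five lemma applied to each length-five segment of the ladder (and its evident analogue for the $\pi_0$/$\pi_1$ tail, using that $\Emb^+(D^n,\R^n)$ is connected so the relevant sets are pointed and the five lemma argument still forces bijectivity), the map $\pi_k(\U_0(D^n,\R^n))\to\pi_k(\U(D^n,\R^n))$ is an isomorphism for every $k\geq 1$ and a bijection on $\pi_0$. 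Hence \eqref{eq:lem:gr-0-gr-w-h-e} is a weak homotopy equivalence.

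The main thing to be careful about — and what I'd expect to be the one genuinely delicate point — is basepoint bookkeeping at the bottom of the sequences: the five lemma for exact sequences of pointed sets only yields injectivity/surjectivity of the relevant map when the neighboring groups act appropriately, so I would either quote the standard fact that a map of fibrations inducing a weak equivalence on fibers and on total spaces induces one on bases (e.g. via the comparison theorem for fibrations, \cite[Thm~VII.6.7]{bredon1993topology} applied twice together with the ladder), or spell out the low-degree cases by hand using connectivity of $\Emb^+(D^n,\R^n)$. Everything else — that the quotient maps are Serre fibrations, that the fibers are canonically $\Diff^+(D^n)$, that the inclusion is fiber-preserving and fiberwise the identity — is immediate from \S\ref{subsubsection:nonlinear-grass} and Lemma~\ref{lem:deriv-eplus0-gl-n-htpy-equiv}.
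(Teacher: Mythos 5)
Your proof is correct and follows essentially the same route as the paper: set up the commutative square of inclusions over the $\Diff^+(D^n)$-bundles $\Emb_0^+(D^n,\R^n)\to\U_0(D^n,\R^n)$ and $\Emb^+(D^n,\R^n)\to\U(D^n,\R^n)$, observe the map is the identity on fibers and a homotopy equivalence on total spaces by Lemma~\ref{lem:deriv-eplus0-gl-n-htpy-equiv}, and conclude via the comparison theorem for fibrations (which is exactly the citation the paper uses in place of your explicit five-lemma ladder). The preliminary detour through $\GL^+(n)$ as a putative base and the discussion of basepoint bookkeeping are sound cautions but not part of the argument the paper actually runs.
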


\begin{proof}
We have a commutative diagram
\begin{equation*}
	\begin{tikzcd}
		\Emb_0^+(D^n,\R^n) \arrow[hookrightarrow]{r} \arrow{d} & \Emb^+(D^n,\R^n) \arrow{d}\\
		\U_0(D^n,\R^n) \arrow[hookrightarrow]{r} & \U(D^n,\R^n)
	\end{tikzcd}
\end{equation*}
in which the horizontal arrows are inclusions and the vertical arrows are the fiber bundle projections  $f\mapsto f(D^n)$ \cite[Thm~2.2]{gaybalmaz2014principal}.
Since the induced map on the fibers $\Diff^+(D^n)$ over the standard $n$-disc is the identity and the top horizontal arrow is a homotopy equivalence by Lemma~\ref{lem:deriv-eplus0-gl-n-htpy-equiv}, it follows that \eqref{eq:lem:gr-0-gr-w-h-e} is a weak homotopy equivalence \cite[Prop.~4.7]{mitchell2001notes}.
\end{proof}

The injectivity portion of the next lemma is a known consequence of results of Smale \cite{smale1959diffeomorphisms} and Hatcher \cite{hatcher1983proof}. 
Here $\pi_k X$ denotes the $k$-th homotopy group of a topological space $X$ for $k\geq 1$ and the set of path components of $X$ for $k=0$ \cite[Sec.~VII.4]{bredon1993topology}.

\begin{Lem}\label{lem:smale-hatcher-eval-deriv-htpy-equiv}
	For each $k\in \N_{\geq 0}$ and $y\in D^n$, the ``evaluate derivative at $y$''-induced map $$\pi_k\Diff^+(D^n)\to \pi_k\GL^+(n)$$ is surjective for any $n$ and bijective if $n\leq 3$.
\end{Lem}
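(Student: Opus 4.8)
The plan is to establish the surjectivity and (for $n \leq 3$) bijectivity of the map $\pi_k\Diff^+(D^n)\to\pi_k\GL^+(n)$ induced by $f\mapsto D_y f$ by reducing, via standard fibration-theoretic arguments, to classically known facts about diffeomorphism groups of discs. First I would observe that the ``evaluate derivative at $y$'' map $\Diff^+(D^n)\to\GL^+(n)$ factors through $\Emb_0^+(D^n,\R^n)$: a diffeomorphism of $D^n$ fixing $y$ (after conjugation) is in particular an orientation-preserving embedding of $D^n$ into $\R^n$, and Lemma~\ref{lem:deriv-eplus0-gl-n-htpy-equiv} already tells us that $\Emb_0^+(D^n,\R^n)\to\GL^+(n)$ is a homotopy equivalence. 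So the content is really about the map $\Diff^+(D^n)\to\Emb_0^+(D^n,\R^n)$, and surjectivity on $\pi_k$ is equivalent to surjectivity of the composite onto $\pi_k\GL^+(n)$.

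For \emph{surjectivity} for all $n$: since $\GL^+(n)$ deformation retracts onto $SO(n)$, and since the standard inclusion $SO(n)\hookrightarrow\Diff^+(D^n)$ (acting linearly on $D^n$) composes with the derivative map to give the inclusion $SO(n)\hookrightarrow\GL^+(n)$, the induced map on $\pi_k$ is automatically surjective. More carefully, the composite $\pi_k SO(n)\to\pi_k\Diff^+(D^n)\to\pi_k\GL^+(n)\cong\pi_k SO(n)$ is an isomorphism, which already forces the second arrow to be surjective. This disposes of the surjectivity claim immediately and for every $n$.

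For \emph{bijectivity when $n\leq 3$}: here I would invoke the known contractibility (up to $O(n)$) of the diffeomorphism groups of low-dimensional discs. Precisely, by Smale's theorem $\Diff^+(D^2)$ is contractible \cite{smale1959diffeomorphisms} (equivalently $\Diff(D^2\ \mathrm{rel}\ \partial)$ is contractible), and by Hatcher's theorem $\Diff^+(D^3)$ is contractible \cite{hatcher1983proof}; the cases $n=1$ (interval) and $n=0$ are elementary. More useful for this formulation: the inclusion $O(n)\hookrightarrow\Diff(D^n)$ is a homotopy equivalence for $n\leq 3$, hence $SO(n)\hookrightarrow\Diff^+(D^n)$ is too. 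Combining this with the fact just used — that the derivative map restricted to $SO(n)$ induces an isomorphism onto $\pi_k\GL^+(n)$ — shows $\pi_k\Diff^+(D^n)\to\pi_k\GL^+(n)$ is bijective for $n\leq 3$. One small point requiring care is the passage between $\Diff^+(D^n)$ (diffeomorphisms not necessarily fixing the boundary pointwise) and $\Diff(D^n\ \mathrm{rel}\ \partial)$, and between fixing a chosen interior point $y$ versus not; these are handled by the evaluation fibrations $\Diff(D^n\ \mathrm{rel}\ \partial)\to\Diff^+(D^n)\to\Emb^+(S^{n-1},\mathring D^n)$ (or similar), whose base is easily understood, together with Lemma~\ref{lem:deriv-eplus0-gl-n-htpy-equiv} to normalize the interior point.

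The main obstacle is not any single deep step — Smale's and Hatcher's theorems are cited as black boxes — but rather bookkeeping: correctly assembling the several fibration sequences relating $\Diff(D^n\ \mathrm{rel}\ \partial)$, $\Diff^+(D^n)$, $\Emb_0^+(D^n,\R^n)$, and $\GL^+(n)$, and verifying that the relevant composites with $SO(n)$ behave as claimed so that surjectivity is automatic and bijectivity follows from the low-dimensional contractibility results. I expect this to be short once the diagram-chase is set up cleanly using Lemma~\ref{lem:deriv-eplus0-gl-n-htpy-equiv}.
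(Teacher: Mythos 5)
Your surjectivity argument is essentially the same as the paper's: both factor the inclusion $SO(n)\hookrightarrow\Diff^+(D^n)$ through $D_y$ and use that the resulting composite $SO(n)\to\GL^+(n)$ is the standard (homotopy-equivalence) inclusion, forcing $(D_y)_*$ to be surjective on every $\pi_k$. The paper phrases this with the Gram--Schmidt retraction $\GS$, but the content is identical to your commutative triangle.

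For bijectivity when $n\leq 3$, you take a shortcut the paper does not: you cite as a black box that $O(n)\hookrightarrow\Diff(D^n)$ is a homotopy equivalence for $n\leq 3$, hence $SO(n)\hookrightarrow\Diff^+(D^n)$ is too, and then conclude by two-out-of-three in the same triangle. The paper instead reconstructs this fact: it first factors $D_{e_1}$ up to homotopy as $\Diff^+(D^n)\xrightarrow{\rho}\Diff^+(\sph^{n-1})\xrightarrow{f}\GL^+(n)$, shows $\rho$ is a fibration with fiber $\Diff_\partial(D^n)$ (Cerf's first fibration theorem) that is weakly contractible by the convexity argument, Smale, and Hatcher, and then proves $f$ is a weak equivalence via a further chain of restriction fibrations and the ``zoom in at a point'' equivalence. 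Your route is shorter once the black box is granted; the paper's route is self-contained modulo the same three inputs (convexity, Smale, Hatcher) and makes explicit exactly which fibration sequences are being used, which matters because some of the intermediate equivalences (e.g.\ $\Diff^+(\sph^{n-1})\simeq\GL^+(n)$ via value-and-derivative) are the ones Lemma~\ref{lem:diffplus-eplus0-inclusion-htpy-epi-iso} and Proposition~\ref{prop:nonlinear-grass-trivial-htpy-groups} go on to exploit. Both approaches are valid.

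One genuine error to flag: you write that ``by Smale's theorem $\Diff^+(D^2)$ is contractible'' and ``by Hatcher's theorem $\Diff^+(D^3)$ is contractible,'' calling these equivalent to the contractibility of $\Diff(D^n\ \mathrm{rel}\ \partial)$. This is wrong. For $n=2,3$ the group $\Diff^+(D^n)$ is homotopy equivalent to $SO(n)$, not to a point; it is $\Diff_\partial(D^n)$ (boundary fixed pointwise) that is contractible by Smale and Hatcher, and passing from the latter to the former requires the boundary-restriction fibration $\Diff_\partial(D^n)\to\Diff^+(D^n)\to\Diff^+(\sph^{n-1})$ together with the identification $\Diff^+(\sph^{n-1})\simeq SO(n)$ -- precisely the fibration bookkeeping the paper carries out. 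You in fact pivot to the correct statement ``$O(n)\hookrightarrow\Diff(D^n)$ is a homotopy equivalence for $n\leq 3$'' in the next sentence, and your conclusion rests on that correct statement, so the slip does not propagate; but as written the sentence before it is false, and a reader following it literally (using contractibility of $\Diff^+(D^n)$) would deduce $\pi_k\GL^+(n)=0$, which is absurd.
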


\begin{proof}
Let $\GS\colon \GL(n)\to SO(n)$ be the result of applying the Gram-Schmidt process starting from the left column.
We have a commutative diagram
\begin{equation*}
	\begin{tikzcd}
		& \Diff^+(D^n) \arrow["D_y"]{d}\\
		& \GL^+(n)\arrow["\GS"]{d}\\
		SO(n) \arrow[hookrightarrow]{uur}\arrow[equal]{r}& SO(n)
	\end{tikzcd},
\end{equation*}	
so $\GS\circ D_y$ induces a surjection on all $\pi_k$.
Since $\GS$ is a homotopy equivalence, the ``evaluate derivative at $y$'' map $D_y$ also induces a surjection on all $\pi_k$. 	

Assume now that $n\leq 3$.
Let $(e_1,\ldots, e_n)$ be the standard orthonormal basis of $\R^n$.
Since the homotopy class of $D_y$  is independent of $y\in D^n$ we may assume that $y=e_1\in \partial D^n$, and  $D_{e_1}$ is clearly  homotopic to the composition
\begin{equation*}
	\begin{tikzcd}
		\Diff^+(D^n) \arrow["\rho"]{r}& \Diff^+(\sph^{n-1}) \arrow["f"]{r} & \GL^+(n)
	\end{tikzcd}
\end{equation*}
of the restriction map $\rho$ and the map $f$ given by adjoining the value and derivative at $e_1$.
Thus, it suffices to show that both $\rho$ and $f$ are weak homotopy equivalences. 

By Cerf's first fibration theorem, $\rho$ is a fiber bundle with fiber $\rho^{-1}(\id_{\sph^{n-1}})$ equal to those diffeomorphisms $\Diff_\partial(D^n)\subset \Diff^+(D^n)$ that restrict to the identity on $\partial D^n = \sph^{n-1}$ \cite[sec.~2.2.2, Cor.~2]{cerf1961topologie}.
This fiber is weakly contractible for (i) $n=1$ by a convexity argument \cite[Thm~3.2.1]{kupers2019lectures}, (ii) $n=2$ by a theorem of Smale (\cite[Thm~B]{smale1959diffeomorphisms}, \cite[sec.~7.1]{kupers2019lectures}), and (iii) $n=3$ by Hatcher's proof of the Smale conjecture \cite[p.~604]{hatcher1983proof}.
Thus, $\rho$ is a weak homotopy equivalence by the long exact sequence of homotopy groups for a fibration \cite[Thm~VII.6.7]{bredon1993topology}.

To complete the proof it remains only to show that $f$ is a weak homotopy equivalence. 
This is trivial for $n=1$, so it suffices to consider $n=2,3$.
Making the natural identification of $\GL^+(n)$ with the bundle $\Fr^{+}(T\sph^{n-1})$ of positively oriented $(n-1)$-frames of vectors tangent to $\sph^{n-1}$, we see that $f$ factors as the composition
\begin{equation}\label{eq:restriction-composition}
	\begin{tikzcd}
		\Diff^+(\sph^{n-1}) \arrow{r}
		& \Emb^+(D^{n-1}_+,\sph^{n-1}) \arrow["\simeq"]{r}
		\arrow[d, phantom, ""{coordinate, name=Z}]
		& \Emb^+(\interior{D^{n-1}_+}, \sph^{n-1}) \arrow[dll,
		rounded corners,
		"\simeq",
		to path={ -- ([xshift=2ex]\tikztostart.east)
			|- (Z) [near end]\tikztonodes
			-| ([xshift=-2ex]\tikztotarget.west)
			-- (\tikztotarget)}] \\
		\Fr^+(T \sph^{n-1})
		& ~
		& 
	\end{tikzcd}
\end{equation} 
in which $D^{n-1}_+$ denotes the hemisphere of $\sph^{n-1}$ centered at $e_{1}$, the first two arrows are restriction maps, and the long arrow is given by taking the value and derivative at $e_1$.
The long arrow is well-known to be a weak homotopy equivalence \cite[Thm~9.1.2]{kupers2019lectures}, and the second restriction is a homotopy equivalence since there are $C^\infty$ embeddings $D^{n-1}_+\hookrightarrow \interior{D^{n-1}_+}$ and $\interior{D^{n-1}_+}\hookrightarrow D^{n-1}_+ $ that are inverse up to isotopy \cite[Lem.~9.1.3]{kupers2019lectures}.
Thus, it suffices to prove that the first restriction is a weak homotopy equivalence.
That restriction is a fiber bundle by Cerf's first fibration theorem \cite[sec.~2.2.2, Cor.~2]{cerf1961topologie}, so by the long exact sequence of homotopy groups it suffices to observe that its fiber 
\begin{equation*}
	\Diff(\sph^{n-1} \text{ rel } D^{n-1}_+) \cong \Diff_\partial(D^{n-1})
\end{equation*}
over the inclusion map is weakly contractible by the aforementioned convexity argument for $n=2$ and theorem of Smale for $n=3$.
\end{proof}

\begin{Lem}\label{lem:diffplus-eplus0-inclusion-htpy-epi-iso}
	For each $k\in \N_{\geq 0}$, the inclusion-induced map $$\pi_k \Diff^+(D^n)\to \pi_k\Emb^{+}(D^n,\R^n)$$ is surjective for any $n$ and bijective if $n\leq 3$. 
\end{Lem}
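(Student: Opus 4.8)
The plan is to factor the inclusion $\Diff^+(D^n)\hookrightarrow\Emb^+(D^n,\R^n)$ through $\Emb_0^+(D^n,\R^n)$ and then chase a commutative triangle of ``evaluate derivative at $y$'' maps, feeding in Lemmas~\ref{lem:deriv-eplus0-gl-n-htpy-equiv} and \ref{lem:smale-hatcher-eval-deriv-htpy-equiv}.

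First I would observe that every $\phi\in\Diff^+(D^n)$, regarded as a map into $\R^n$, is an orientation-preserving embedding of $D^n$ with $\phi(\interior{D^n})=\interior{D^n}\ni 0$; hence the inclusion factors as $\Diff^+(D^n)\hookrightarrow\Emb_0^+(D^n,\R^n)\hookrightarrow\Emb^+(D^n,\R^n)$. Fixing $y\in D^n$ and noting that the derivative at $y$ of the embedding $\phi$ is precisely $D_y\phi\in\GL^+(n)$, the triangle
\begin{equation*}
	\begin{tikzcd}
		\Diff^+(D^n)\arrow[hookrightarrow]{r}\arrow["D_y",swap]{dr} & \Emb_0^+(D^n,\R^n)\arrow["D_y"]{d}\\
		& \GL^+(n)
	\end{tikzcd}
\end{equation*}
commutes on the nose.

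Next I would apply Lemma~\ref{lem:deriv-eplus0-gl-n-htpy-equiv}: the vertical leg $D_y\colon\Emb_0^+(D^n,\R^n)\to\GL^+(n)$ is a homotopy equivalence, and so is the inclusion $\Emb_0^+(D^n,\R^n)\hookrightarrow\Emb^+(D^n,\R^n)$. Passing to $\pi_k$, the diagonal $\pi_k\Diff^+(D^n)\to\pi_k\GL^+(n)$ is exactly the ``evaluate derivative at $y$''-induced map of Lemma~\ref{lem:smale-hatcher-eval-deriv-htpy-equiv}, which is surjective for all $n$ and bijective for $n\leq 3$; since the vertical leg induces an isomorphism, an elementary diagram chase forces $\pi_k\Diff^+(D^n)\to\pi_k\Emb_0^+(D^n,\R^n)$ to be surjective for all $n$ and bijective for $n\leq 3$. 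Composing with the isomorphism $\pi_k\Emb_0^+(D^n,\R^n)\xrightarrow{\ \cong\ }\pi_k\Emb^+(D^n,\R^n)$ induced by the inclusion then yields the lemma.

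The substantive input---the theorems of Smale, Cerf, and Hatcher on diffeomorphism groups of discs, together with the explicit deformation retraction behind Lemma~\ref{lem:deriv-eplus0-gl-n-htpy-equiv}---is already packaged into the two cited lemmas, so I expect no real obstacle beyond bookkeeping; the one point deserving a moment's care is the factorization through $\Emb_0^+(D^n,\R^n)$ and the strict commutativity of the triangle, both of which are immediate from the definitions.
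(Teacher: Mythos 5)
Your proof is correct and is essentially the paper's argument: both chase a commutative triangle over $\GL^+(n)$ using the ``evaluate derivative at $y$'' maps, with Lemma~\ref{lem:smale-hatcher-eval-deriv-htpy-equiv} supplying surjectivity/bijectivity of the diagonal leg from $\Diff^+(D^n)$ and Lemma~\ref{lem:deriv-eplus0-gl-n-htpy-equiv} (together with the ``zoom in at $0$'' argument) supplying the isomorphism on the other leg. The only cosmetic difference is that you insert $\Emb_0^+(D^n,\R^n)$ as an intermediate stage and then compose with the inclusion-induced isomorphism onto $\pi_k\Emb^+(D^n,\R^n)$, whereas the paper applies the derivative-evaluation map directly on $\Emb^+(D^n,\R^n)$; the underlying inputs are identical.
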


\begin{proof}
We have a commutative diagram
\begin{equation}\label{eq:lem:diffplus-eplus0-inclusion-htpy-epi-iso}
	\begin{tikzcd}
		\Diff^+(D^n) \arrow[hookrightarrow, "i"]{rr} \arrow["\text{surjective on }\pi_k", swap]{dr} & & \Emb^{+}(D^n,\R^n) \arrow["\simeq"]{dl}\\
		& \GL^+(n) &
	\end{tikzcd}
\end{equation}
where $i$ is the inclusion and the diagonal arrows are ``evaluate derivative at a point''.
The left arrow is surjective on all $\pi_k$ by Lemma \ref{lem:smale-hatcher-eval-deriv-htpy-equiv}, and the right arrow is bijective on all $\pi_k$ by the standard ``zoom in at $0$'' argument mentioned in the proof of Lemma~\ref{lem:deriv-eplus0-gl-n-htpy-equiv} \cite[Thm~9.1.1, Cor.~9.1.4]{kupers2019lectures}.
Lemma~\ref{lem:smale-hatcher-eval-deriv-htpy-equiv} also implies that, if $n\leq 3$, the left diagonal arrow is bijective on all $\pi_k$.
Since \eqref{eq:lem:diffplus-eplus0-inclusion-htpy-epi-iso} commutes, this implies that $i$ induces a surjection on all $\pi_k$ for any $n$ and a bijection if $n\leq 3$.
\end{proof}	

In addition to the results of Smale and Hatcher contained in Lemma~\ref{lem:smale-hatcher-eval-deriv-htpy-equiv}, the proof of the following proposition relies on a result of Cerf \cite{cerf1970stratification}. 
Recall that a topological space is \concept{simply connected} if it is path-connected and its first homotopy group is trivial.
(We choose to be explicit by redundantly stating that a simply-connected space is path-connected in various results.)

\begin{Prop}\label{prop:nonlinear-grass-trivial-htpy-groups}
$\U(D^n,\R^n)$ is path-connected for all $n$, simply connected if $n\neq 4,5$, and weakly contractible if $n\leq 3$.
\end{Prop}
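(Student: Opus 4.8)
The plan is to exploit the principal $\Diff(D^n)$-bundle $\Emb(D^n,\R^n)\to \U(D^n,\R^n)$ of \cite[Thm~2.2]{gaybalmaz2014principal} and run its long exact sequence of homotopy groups, feeding in the computations of $\pi_k\Diff^+(D^n)$ versus $\pi_k\GL^+(n)$ that the earlier lemmas have assembled. First I would reduce to the orientation-preserving versions: since $\Emb^+(D^n,\R^n)$ and $\U_0(D^n,\R^n)$ are visibly path-connected (Corollary~\ref{co:nonlinear-grassmannian-path-conn}), and $\U_0(D^n,\R^n)\hookrightarrow \U(D^n,\R^n)$ is a weak homotopy equivalence by Lemma~\ref{lem:gr-0-gr-w-h-e}, it suffices to analyze $\U_0(D^n,\R^n)$, or equivalently to understand the fibration $\Diff^+(D^n)\to \Emb_0^+(D^n,\R^n)\to \U_0(D^n,\R^n)$.

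Next I would extract the needed input on the total space: by Lemma~\ref{lem:deriv-eplus0-gl-n-htpy-equiv}, $\Emb_0^+(D^n,\R^n)$ deformation retracts onto $\GL^+(n)$, which is homotopy equivalent to $SO(n)$; in particular $\pi_0\Emb_0^+(D^n,\R^n)=0$, $\pi_1\Emb_0^+(D^n,\R^n)\cong \pi_1 SO(n)$ (which is $\Z$ for $n=2$ and $\Z/2$ for $n\geq 3$), and for $n\leq 3$ the whole space is weakly equivalent to $SO(n)$, hence weakly contractible for $n=1$ and with the homotopy type of a circle (resp.\ $SO(3)\simeq \R P^3$) for $n=2$ (resp.\ $n=3$). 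Then I would write the relevant portion of the long exact sequence
\begin{equation*}
\pi_k\Diff^+(D^n)\to \pi_k\Emb_0^+(D^n,\R^n)\to \pi_k\U_0(D^n,\R^n)\to \pi_{k-1}\Diff^+(D^n)\to \pi_{k-1}\Emb_0^+(D^n,\R^n)
\end{equation*}
and combine it with the key fact that the composite $\pi_k\Diff^+(D^n)\to \pi_k\Emb_0^+(D^n,\R^n)\xrightarrow{\ \simeq\ }\pi_k\GL^+(n)$ is exactly the ``evaluate derivative'' map, which is surjective for all $n$ (Lemma~\ref{lem:diffplus-eplus0-inclusion-htpy-epi-iso}, or equivalently Lemma~\ref{lem:smale-hatcher-eval-deriv-htpy-equiv}) and bijective when $n\leq 3$. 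Surjectivity of $\pi_k\Diff^+(D^n)\to\pi_k\Emb_0^+(D^n,\R^n)$ for all $k$ forces $\pi_0\U_0(D^n,\R^n)$ trivial (re-deriving path-connectedness) and, using that this holds in every degree, it kills $\pi_1\U_0(D^n,\R^n)$ once we also know $\pi_1\Diff^+(D^n)\to\pi_1\Emb_0^+(D^n,\R^n)$ is surjective and $\pi_0\Diff^+(D^n)=0$ — i.e.\ the exact piece $\pi_1\Emb_0^+\to \pi_1\U_0\to \pi_0\Diff^+$ has zero on the right and a surjection landing on the left, giving $\pi_1\U_0=0$; for $n\leq 3$, bijectivity in all degrees forces all $\pi_k\U_0(D^n,\R^n)=0$, i.e.\ weak contractibility.

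The one place needing genuine input beyond bookkeeping — and the step I expect to be the main obstacle — is showing $\pi_0\Diff^+(D^n)=0$ (needed to conclude $\pi_1\U(D^n,\R^n)=0$ for $n\neq 4,5$): for $n\leq 3$ this is classical (convexity, Smale, Hatcher), for $n\geq 6$ it follows from Cerf's theorem $\Gamma_{n+1}=\Theta_{n+1}$ / $\pi_0\Diff^+(D^n)=0$ \cite{cerf1970stratification} together with the $h$-cobordism machinery already invoked, and the cases $n=4,5$ are precisely excluded. So I would cite \cite{cerf1970stratification} (and \cite{smale1959diffeomorphisms,hatcher1983proof} for low dimensions) for $\pi_0\Diff^+(D^n)=0$ when $n\neq 4,5$, note that $\pi_1\Diff^+(D^n)\to \pi_1 SO(n)$ is onto (immediate from the factorization through $\GL^+(n)$), and then the long exact sequence closes up. Finally I would remark that the result for $\U(D^n,\R^n)$ follows from that for $\U_0(D^n,\R^n)$ by Lemma~\ref{lem:gr-0-gr-w-h-e}. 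The bulk of the argument is diagram-chasing the exact sequence; the only subtlety is tracking which degrees require the $n\leq 3$ hypothesis versus only $n\neq 4,5$, which the organization above makes transparent.
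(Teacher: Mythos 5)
Your proposal is correct and follows essentially the same route as the paper: run the long exact sequence of the principal $\Diff^+(D^n)$-bundle over the nonlinear Grassmannian, feed in the surjectivity (resp.\ bijectivity for $n\leq 3$) of the inclusion-induced maps $\pi_k\Diff^+(D^n)\to\pi_k\Emb^+(D^n,\R^n)$ from Lemma~\ref{lem:diffplus-eplus0-inclusion-htpy-epi-iso}, and close the diagram with Cerf's $\pi_0\Diff^+(D^n)=\{*\}$ for $n\geq 6$. The only cosmetic difference is that you work with the restricted bundle $\Emb_0^+(D^n,\R^n)\to\U_0(D^n,\R^n)$ and transfer back via Lemma~\ref{lem:gr-0-gr-w-h-e}, whereas the paper runs the sequence directly for $\Emb^+(D^n,\R^n)\to\U(D^n,\R^n)$; the exact-sequence bookkeeping and all the inputs are identical.
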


\begin{proof}
The path-connectedness statement follows from Corollary~\ref{co:nonlinear-grassmannian-path-conn} and Lemma~\ref{lem:gr-0-gr-w-h-e}.

Next, the quotient map $\Emb^{+}(D^n,\R^n)\to \U(D^n,\R^n)$ is a fiber bundle with fiber $\Diff^+(D^n)$ over $D^n\in \U(D^n,\R^n)$  \cite[Thm~2.2]{gaybalmaz2014principal}.
The long exact sequence of homotopy groups of this fibration \cite[Thm~VII.6.7]{bredon1993topology} contains the segment
\begin{equation}\label{eq:cd-les-snake}
 \begin{tikzcd}
 	\pi_k \Diff^+(D^n) \arrow[r,"\text{surjective}"]
 	& \pi_k \Emb^{+}(D^n,\R^n) \arrow[r,"0"]
 	\arrow[d, phantom, ""{coordinate, name=Z}]
 	& \pi_k \U(D^n, \R^n) \arrow[dll,
 	rounded corners,
 	"\text{injective}",
 	to path={ -- ([xshift=2ex]\tikztostart.east)
 		|- (Z) [near end]\tikztonodes
 		-| ([xshift=-2ex]\tikztotarget.west)
 		-- (\tikztotarget)}] \\
 		\pi_{k-1} \Diff^+(D^n) \arrow[r, "\text{surjective}"]
 	& \pi_{k-1} \Emb^{+}(D^n,\R^n)
 	& 
 \end{tikzcd}
\end{equation} 
in which the indicated arrows are surjective by Lemma~\ref{lem:diffplus-eplus0-inclusion-htpy-epi-iso}.
Exactness then implies that the remaining short arrow is zero and hence the long arrow is injective.
 
Cerf's pseudoisotopy theorem implies that $\pi_0 \Diff^+(D^n)=\{*\}$ for $n\geq 6$ \cite[p.~8, Cor.~1]{cerf1970stratification}, so exactness of \eqref{eq:cd-les-snake} yields $\pi_1 \U(D^n,\R^n)=\{*\}$ for $n\geq 6$.

If $n\leq 3$, the surjections in \eqref{eq:cd-les-snake} are bijections by Lemma~\ref{lem:diffplus-eplus0-inclusion-htpy-epi-iso}, so exactness of \eqref{eq:cd-les-snake} implies that the long injective arrow is zero.
Exactness then implies that $\pi_k \U(D^n,\R^n)=\{*\}$ for all $k\in \N_{\geq 1}$ if $n\leq 3$.
This completes the proof.
\end{proof}

The following pair of corollaries are two of our main results.
They complete the proof of Theorems~\ref{th:intro-1-conn-contract}, \ref{th:intro-cr-extensions} for $\fun_0^r(\R^n)$.

Recall that topological spaces $X$ and $Y$ have the same \concept{weak homotopy type} if they are equivalent under the equivalence relation generated by the non-symmetric relation of weak homotopy equivalence.

\begin{Co}\label{co:funnels-homotopy-groups}
	For any $r\in \N_{\geq 1}\cup \{\infty\}$ and $n\neq 4,5$, $\fun_0^r(\R^n)$ has the weak homotopy type of $\U(D^n,\R^n)$. In particular, for each $k\in \N_{\geq 0}$ there is an isomorphism $$\pi_k \fun_0^r(\R^n)\cong \pi_k \U(D^n,\R^n).$$ 	
\end{Co}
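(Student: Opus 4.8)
The plan is to assemble this corollary from three results already in hand: Theorem~\ref{th:funnel-space-fiber-bundle-over-U}, Lemma~\ref{lem:gr-0-gr-w-h-e}, and Corollary~\ref{co:whe-funnels}. First I would dispose of the case $r=\infty$. By Theorem~\ref{th:funnel-space-fiber-bundle-over-U}, for $n\neq 4,5$ the sublevel set map $p\colon \fun_0^\infty(\R^n)\to \U_0(D^n,\R^n)$ is a weak homotopy equivalence, and by Lemma~\ref{lem:gr-0-gr-w-h-e} the inclusion $\U_0(D^n,\R^n)\hookrightarrow \U(D^n,\R^n)$ is a weak homotopy equivalence. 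Composing, $\fun_0^\infty(\R^n)$ and $\U(D^n,\R^n)$ are joined by a chain of weak homotopy equivalences, so they have the same weak homotopy type.

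For finite $r\in \N_{\geq 1}$ I would prepend one more arrow to this chain. Corollary~\ref{co:whe-funnels}, applied with its ``$r$'' taken to be $\infty$ and its ``$s$'' taken to be our $r$ (so that the hypothesis $\infty > r$ holds), says that the inclusion $\fun_0^\infty(\R^n)\hookrightarrow \fun_0^r(\R^n)$ is a weak homotopy equivalence. Combining this with the chain from the previous paragraph exhibits $\fun_0^r(\R^n)$ as having the same weak homotopy type as $\U(D^n,\R^n)$ for every $r\in \N_{\geq 1}\cup\{\infty\}$ and $n\neq 4,5$.

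Finally, for the statement about homotopy groups I would recall that a weak homotopy equivalence induces a bijection on path components and isomorphisms on all homotopy groups, and that these properties are preserved along a zig-zag of weak homotopy equivalences; hence the chain above yields $\pi_k\fun_0^r(\R^n)\cong \pi_k\U(D^n,\R^n)$ for every $k\in \N_{\geq 0}$, where $\pi_0$ is read as the set of path components per the convention recalled before Lemma~\ref{lem:smale-hatcher-eval-deriv-htpy-equiv}. The only point requiring a word of care is the choice of basepoint in the homotopy-group isomorphism, but this is immaterial since all the spaces involved are path-connected (Corollary~\ref{co:nonlinear-grassmannian-path-conn} and Lemma~\ref{lem:gr-0-gr-w-h-e} for $\U_0(D^n,\R^n)$ and $\U(D^n,\R^n)$, Remark~\ref{rem:prelim-path-conn} for $\fun_0^\infty(\R^n)$, and then Corollary~\ref{co:whe-funnels} for $\fun_0^r(\R^n)$). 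In short, I do not anticipate any genuine obstacle here: all the substantive work was carried out in Theorem~\ref{th:funnel-space-fiber-bundle-over-U} and the lemmas feeding into it, and this corollary is a formal consequence.
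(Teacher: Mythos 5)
Your proposal is correct and uses exactly the three ingredients the paper cites (Theorem~\ref{th:funnel-space-fiber-bundle-over-U}, Corollary~\ref{co:whe-funnels}, Lemma~\ref{lem:gr-0-gr-w-h-e}) in the intended way, including the correct instantiation of Corollary~\ref{co:whe-funnels} with $r=\infty$, $s=r$. The paper treats this as immediate; your write-up simply spells out the same chain of weak homotopy equivalences.
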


\begin{proof}
	This is immediate from Theorem~\ref{th:funnel-space-fiber-bundle-over-U}, Corollary~\ref{co:whe-funnels}, and Lemma~\ref{lem:gr-0-gr-w-h-e}.
\end{proof}

\begin{Co}\label{co:funnels-trivial-homotopy-groups}
For any $r\in \N_{\geq 1} \cup \{\infty\}$,  $\fun_0^r(\R^n)$  is path-connected and simply connected if $n\neq 4,5$ and weakly contractible if $n\leq 3$.
In fact, if $\param$ is a compact $C^\infty$ manifold with boundary, $S\subset \param$ is a closed subset and $C^\infty$ neatly embedded submanifold of either $\param$ or $\partial \param$, and $\lyap_S\colon S \to \fun_0^r(\R^n)$ is continuous and $C^s$ with $0 \leq s\leq r$, then there is a continuous and $C^s$ map $\lyap\colon \param \to \fun_0^r(\R^n)$ extending $\lyap_S$ if either (i) $n\neq 4,5$ and $\dim \param \leq 2$ or (ii) $n\leq 3$.
\end{Co}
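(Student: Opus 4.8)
The plan is to treat the two assertions in turn. The first assertion I would derive at once: by Corollary~\ref{co:funnels-homotopy-groups} the space $\fun_0^r(\R^n)$ has the weak homotopy type of $\U(D^n,\R^n)$ for every $r\in\N_{\geq1}\cup\{\infty\}$, so Proposition~\ref{prop:nonlinear-grass-trivial-htpy-groups} transfers to $\fun_0^r(\R^n)$ the vanishing of $\pi_0$ and $\pi_1$ when $n\neq4,5$, and of all $\pi_k$ when $n\leq3$. The remaining work is the extension statement, which I would prove in two stages: first build a merely continuous extension by obstruction theory, then upgrade it to a continuous and $C^s$ extension using the smoothing lemma, keeping the map fixed on $S$ throughout.

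For the first stage, I would use that, since $S$ is a $C^\infty$ neat submanifold of $\param$ or of $\partial\param$, the pair $(\param,S)$ carries the structure of a finite relative CW complex whose relative cells have dimension at most $\dim\param$ (e.g.\ via a smooth triangulation of $\param$ with $S$ as a subcomplex). Viewing $\lyap_S$ as a continuous map into the topological space $\fun_0^r(\R^n)$ and extending it cell by cell over $\param\setminus S$, the obstruction to extending over a relative $j$-cell lies in $\pi_{j-1}\fun_0^r(\R^n)$. In case (ii) every such group vanishes by the first step, so one extends over cells of all dimensions; in case (i) only cells of dimension $\leq2$ occur, and $\pi_0\fun_0^r(\R^n)$, $\pi_1\fun_0^r(\R^n)$ both vanish, again by the first step. (On any component of $\param$ disjoint from $S$ one may simply start from the constant family $x\mapsto\norm{x}^2$.) This produces a continuous map $\lyap'\colon\param\to\fun_0^r(\R^n)$ with $\lyap'|_S=\lyap_S$.

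For the second stage, I would feed $\lyap'$ into Lemma~\ref{lem:smoothing-funnels} with $R=\varnothing$, with $Q$ an open tubular neighborhood of $S$, and with the lemma's parameters $(r,s,u,v)$ taken to be $(r,s,r,0)$. The hypotheses hold: $\lyap'$ is continuous and $C^0$ as a map into $\fun^r(\R^n)$, its restriction to $S$ is the continuous and $C^s$ map $\lyap_S$, and the condition on $R$ is vacuous. The lemma then furnishes a homotopy $\rel{S}$ from $\lyap'$ to a continuous and $C^s$ map $H_1\colon\param\to\fun^r(\R^n)$ that is $\fun_0^r(\R^n)$-valued because $\lyap'$ is; since the homotopy is $\rel{S}$ we have $H_1|_S=\lyap_S$, so $\lyap\coloneqq H_1$ is the required continuous and $C^s$ extension.

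The only substantive input here is the already-proved vanishing of the homotopy groups of $\fun_0^r(\R^n)$ (Corollary~\ref{co:funnels-homotopy-groups} together with Proposition~\ref{prop:nonlinear-grass-trivial-htpy-groups}); granting that, the extension statement is a routine marriage of obstruction theory with the smoothing lemma. I expect the main, though minor, point of care to be the bookkeeping in invoking Lemma~\ref{lem:smoothing-funnels}: choosing its parameters and confirming that its output is still valued in $\fun_0^r(\R^n)$ and still agrees with $\lyap_S$ on $S$. No new idea should be needed, and I anticipate no real obstacle.
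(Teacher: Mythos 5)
Your proof is correct and follows essentially the same route as the paper: derive the homotopy-group vanishing from Corollary~\ref{co:funnels-homotopy-groups} and Proposition~\ref{prop:nonlinear-grass-trivial-htpy-groups}, produce a continuous extension by obstruction theory, and then apply Lemma~\ref{lem:smoothing-funnels} with $(r,s,u,v)=(r,s,r,0)$ to upgrade it to a continuous and $C^s$ extension rel~$S$. The extra detail you supply about the relative CW structure and the choice of $Q$ and $R$ is implicit in the paper but matches its intent.
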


\begin{proof}
Corollary~\ref{co:funnels-homotopy-groups}	and Proposition~\ref{prop:nonlinear-grass-trivial-htpy-groups} directly imply the first statement.
The existence of a continuous map $\tilde{\lyap}\colon \param \to \fun_0^r(\R^n)$ equal to $\lyap_S$ on $S$ then follows from the first statement and obstruction theory \cite[Cor.~VII.13.13]{bredon1993topology}.
Lemma~\ref{lem:smoothing-funnels} with $(r,s,u,v)$ replaced by $(r,s,r,0)$  then furnishes a continuous and $C^s$  map $\lyap\colon \param \to \fun_0^r(\R^n)$ equal to $\lyap_S$ on $S$.
\end{proof}

\section{Topology of spaces of asymptotically stable vector fields}\label{sec:htpy-groups-gas-vf}

This section proves  Theorems~\ref{th:intro-1-conn-contract}, \ref{th:intro-cr-extensions}, \ref{th:intro-weak-htpy-vfs-etc} for $\stab_0^r(\R^n)$ and $\stabc_0^r(\R^n)$.
Section~\ref{subsec:smoothing-to-gradient-htpy-groups} contains a ``smoothing-to-gradient'' lemma implying that the inclusions $\stab_0^r(\R^n)\hookrightarrow \stab_0^s(\R^n)$, $\stab^r(\R^n)\hookrightarrow \stab^s(\R^n)$ and  negative gradient embeddings $\fun_0^{r+1}(\R^n)\hookrightarrow \stab_0^r(\R^n)$, $\fun^{r+1}(\R^n) \hookrightarrow \stab^r(\R^n)$ are  weak homotopy equivalences for $r\geq s$.
These results finish the proofs of Theorems~\ref{th:intro-1-conn-contract}, \ref{th:intro-cr-extensions} for $\stab_0^r(\R^n)$, and the results of section~\ref{subsec:htpy-groups-complete-vf} finish the proofs for $\stabc_0^r(\R^n)$.
Section~\ref{subsec:htpy-groups-complete-vf} contains a simple rescaling lemma implying that the inclusions $\stabc_0^r(\R^n)\hookrightarrow \stab_0^r(\R^n)$, $\stabc^r(\R^n)\hookrightarrow \stab^r(\R^n)$ are weak homotopy equivalences.

\subsection{Smoothing-to-gradient lemma and homotopy groups}\label{subsec:smoothing-to-gradient-htpy-groups}

Approximation statements as in Lemma~\ref{lem:smoothing-funnels} could be added to the following lemma if its third conclusion is dropped, but the formulation below is adequate for our purposes.
The proof of the lemma relies on Lyapunov theory.

\begin{Lem}\label{lem:smoothing-vf}
Fix $r,u\in \N_{\geq 1}\cup\{\infty\}$ and $s,v,w\in \N_{\geq 0}\cup \{\infty\}$ satisfying $r\geq s \geq v$, $r\geq u \geq v$, and $s+1\geq w$. 
Let $\param$ be a compact $C^\infty$ manifold with boundary, $S\subset \param$ be a closed subset and $C^\infty$ neatly embedded submanifold of either $\param$ or $\partial \param$, and  $\vo\colon \param \to \stab^u(\R^n)$ be a continuous and $C^v$ map that restricts to a continuous and $C^s$ map $S\to \stab^r(\R^n)$ on $S$.
Then there is a  continuous and $C^v$ map $H\colon I\times \param \to \stab^u(\R^n)$ that is $\stab_0^u(\R^n)$-valued if $\vo$ is and satisfies
\begin{itemize}
	\item $H$ is a  homotopy $\rel{S}$ from $H_0 = \vo$ to a continuous and $C^s$ map $H_1\colon \param \to \stab^r(\R^n)$, and 
	\item  if there is a continuous and $C^w$ map $\lyap\colon S\to \fun^{r+1}(\R^n)$  such that $\vo(p) = -\nabla \lyap(p)$ for all $p\in S$,  then $H_1(p)=-\nabla \lyapt(p)$ for a continuous and $C^w$ map $\lyapt\colon \param\to \fun^{r+1}(\R^n)$ satisfying $\lyapt|_S = \lyap$.
\end{itemize}
\end{Lem}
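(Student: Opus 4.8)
The plan is to reduce $\vo$ to a gradient vector field in two stages, mirroring the structure of Lemma~\ref{lem:smoothing-funnels}: first produce a Lyapunov function for each $\vo(p)$, then deform $\vo$ toward the negative gradient of that function. First I would invoke Wilson's converse Lyapunov theorem \cite{wilson1969smooth} parametrically: since $\param$ is compact and $\vo$ is continuous, standard parametrized converse Lyapunov constructions yield a continuous (and $C^v$, with appropriate care on $S$) map $\lyap\colon \param \to \fun^u(\R^n)$ — or rather into a space of $C^{u}$ proper Lyapunov functions — with $\ip{\nabla \lyap(p)(x)}{\vo(p)(x)} < 0$ for all $x \neq 0$. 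On $S$, using that $\vo|_S$ is $C^s$ into $\stab^r(\R^n)$, one arranges $\lyap|_S$ to be correspondingly $C^s$ into $\fun^r(\R^n)$; and if $\vo(p) = -\nabla\lyap(p)$ already holds on $S$ for a given $C^w$ family, one simply takes that given $\lyap$ on $S$ and extends. Here I would lean on Lemma~\ref{lem:smoothing-funnels} (applied to the constructed Lyapunov function) to upgrade the regularity of $\lyap$ on $\param$ from $C^u$-valued to $C^r$-valued via a homotopy $\rel S$, so that at the end of stage one we have a continuous $C^v$ family $\lyapt\colon \param \to \fun^{r+1}(\R^n)$ (the extra degree of smoothness coming from the mollification in Lemma~\ref{lem:smoothing-funnels}, since gradients lose one derivative) with $\ip{\nabla \lyapt(p)}{\vo(p)} < 0$ off the origin, restricting to $\lyap$ on $S$.

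For stage two, I would use the straight-line homotopy in the space of vector fields,
\begin{equation*}
H_t(p) = (1-t)\vo(p) + t\bigl(-\nabla \lyapt(p)\bigr),
\end{equation*}
and check it stays in $\stab^u(\R^n)$: for each $t \in [0,1]$ and $x \neq 0$ we have $\ip{\nabla \lyapt(p)(x)}{H_t(p)(x)} = (1-t)\ip{\nabla \lyapt(p)(x)}{\vo(p)(x)} - t\norm{\nabla \lyapt(p)(x)}^2 < 0$, so $\lyapt(p)$ remains a strict Lyapunov function for $H_t(p)$, which (being proper with unique critical point at $0$) forces $H_t(p)\in \stab^u(\R^n)$ — and $\stab_0^u(\R^n)$ if $\vo$ is $\stab_0^u$-valued, since then $\lyapt$ is $\fun_0$-valued and the equilibrium is pinned at the origin. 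The homotopy is $\rel S$ because on $S$ we already have $\vo(p) = -\nabla\lyap(p) = -\nabla\lyapt(p)$ in the gradient case, so $H_t(p) = \vo(p)$ there; in the general case one must instead route through the stage-one homotopy of $\lyapt$ so that nothing moves over $S$, then run stage two, then concatenate smoothly as in \eqref{eq:gamma-def}. The endpoint $H_1(p) = -\nabla\lyapt(p)$ is then continuous and $C^s$ into $\stab^r(\R^n)$ because $\lyapt$ is continuous and $C^{\min(s,?)}$ into $\fun^{r+1}(\R^n)$; one sets $\lyapt$ itself as the witness in the second bullet, noting $\lyapt|_S = \lyap$ and that the regularity bookkeeping $s+1 \geq w$ is exactly what lets a $C^w$ Lyapunov family survive (one only ever needs $\lyapt$ to be $C^{\max(v,w)}$, and $w \leq s+1$ guarantees the gradient is at worst $C^s$).

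The main obstacle I expect is stage one: constructing a Lyapunov function that depends continuously (and $C^v$) on the parameter $p$ while simultaneously matching a prescribed $C^s$ (or $C^w$ gradient) family on the closed submanifold $S$. Wilson's theorem is not literally stated parametrically, so one must either cite a parametrized version or re-run the proof — smoothing the ``time-to-enter-a-small-ball'' function — tracking parameter dependence throughout, and then patch it against the given data on $S$ using a tubular-neighborhood cutoff (as in the $\rho$, $\psi$ construction in the proof of Lemma~\ref{lem:smoothing-funnels}) to get a homotopy $\rel S$ rather than just an unconstrained family. Everything after that — the linear interpolation, the sign computation, the concatenation — is routine. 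A secondary subtlety is keeping the equilibrium at the origin throughout when $\vo$ is $\stab_0^u$-valued, which is automatic as long as every intermediate Lyapunov function is $\fun_0$-valued, so one must ensure the converse-Lyapunov construction and the smoothing of Lemma~\ref{lem:smoothing-funnels} both preserve that property (they do, since $\fun_0$-valuedness is explicitly propagated by Lemma~\ref{lem:smoothing-funnels}).
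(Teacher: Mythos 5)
Your two-stage plan (converse Lyapunov function, then interpolate toward the negative gradient) is the same as the paper's, but two specific steps in your execution contain gaps that the paper's proof repairs with devices you mention but do not quite deploy correctly.

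First, your stage-two formula $H_t(p) = (1-t)\vo(p) + t\bigl(-\nabla\lyapt(p)\bigr)$ is not a homotopy $\rel S$ in the general (non-gradient) case: when $\vo(p) \neq -\nabla\lyapt(p)$ for some $p\in S$, the value at $p$ moves as $t$ varies, and ``routing through the stage-one homotopy'' cannot repair this, because the defect is spatial, not temporal. The correct fix is a cutoff $\psi\colon\param\to I$ with $\psi|_S = 0$ and $\psi\equiv 1$ off a tubular neighborhood $U$ of $S$, and the interpolation must be modulated by it, as in $h_t(p) = -t\psi(p)\nabla\lyapth(p) + (1-t\psi(p))\tilde\vo(p)$, where $\tilde\vo = \vo\circ\rho_1$ is the pullback by a retraction $\rho_1\colon U\to S$. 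This is genuinely constant on $S$, and its endpoint $h_1$ near $S$ equals $\tilde\vo|_U$, which is continuous and $C^s$ into $\stab^r(\R^n)$ precisely because it factors through $\vo|_S$. Away from $U$, $h_1 = -\nabla\lyapth$ is $C^\infty$. This cutoff is what makes the two bullets compatible; in the gradient case one further observes $h_1 = -\nabla\bigl(\psi\lyapth + (1-\psi)\lyap\circ\rho_1\bigr)$, which gives $\lyapt$ with $\lyapt|_S = \lyap$ automatically.

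Second, your worry about a parametric Wilson's theorem is warranted if you try to run the converse Lyapunov construction separately for each $p$ and patch, but the paper avoids the issue entirely by a change of viewpoint: treat the family as the single vector field $(p,x)\mapsto (0,\tilde\vo(p)(x))$ on $\param\times\R^n$. Since $\param$ is compact, the zero set $\tilde{Z}$ is a compact globally asymptotically stable invariant set for this one vector field, and a single application of Wilson's theorem (\cite{wilson1969smooth}, smoothed via \cite{fathi2019smoothing}) gives a $C^\infty$ Lyapunov function $\lyapth$ on $\param\times\R^n$ directly. No parametric reformulation, no re-running of Wilson's proof, and no invocation of Lemma~\ref{lem:smoothing-funnels} to upgrade regularity is needed, because $\lyapth$ is already $C^\infty$ in $(p,x)$ jointly. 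Your plan to use Lemma~\ref{lem:smoothing-funnels} to gain one derivative is a detour: the extra derivative for the gradient comes for free since $\lyapth$ is $C^\infty$, and the $C^s$/$C^w$ bookkeeping near $S$ is handled by the retraction and cutoff, not by mollification.
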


\begin{proof}
Using a closed tubular neighborhood $U\subset \param$ of $S$ \cite[p.~53]{kosinski1993differential},  a $C^\infty$ homotopy $\rho\colon I\times \param \to \param$ can be constructed so that  $\rho_0=\id_\param$, $\rho_1(U)=S$, and $\rho_t|_S = \id_S$ for all $t\in I$.
The continuous and $C^v$ map  $g\colon I\times \param \to \stab^u(\R^n)$ defined by $g_t=\vo\circ \rho_t$ is a homotopy $\rel{S}$ from $\vo$ to $\tilde{\vo}\coloneqq \vo\circ \rho_1$, and $g$ is $\stab_0^r(\R^n)$-valued if $\vo$ is.
Note that the restriction of $g_1$ to $U$ is a continuous and $C^s$ map $U\to \stab^r(\R^n)$.

For any compact neighborhood $L$ of $$\tilde{Z}\coloneqq \{(p,x)\in \param \times \R^n \colon \tilde{\vo}(p)(x)=0\},$$ any backward trajectory of the (uniquely integrable) vector field $(p,x)\mapsto \tilde{\vo}(p)(x)$ initialized in $L\setminus \tilde{Z}$ must eventually leave $L$ since $\tilde{\vo}$ is $\stab^u(\R^n)$-valued.
Thus, $\tilde{Z}$ is a (globally) asymptotically stable compact invariant set for the same vector field \cite[Lem.~3.1]{salamon1985connected}.
Wilson's converse Lyapunov function theorem then readily furnishes a $C^\infty$ map $\lyapth\colon \param\to \fun^\infty(\R^n)$ satisfying  $\lyapth(p)(x)=0$ for all $(p,x)\in \tilde{Z}$ and 
\begin{equation}\label{eq:inner-product-condition-vf-1}
	\ip{\nabla \lyapth(p)(x)}{\tilde{\vo}(p)(x)} < 0
\end{equation}
for all $(p,x)\in (\param\times \R^n)\setminus \tilde{Z}$ (\cite[Thm~3.2]{wilson1969smooth},  \cite[Sec.~6]{fathi2019smoothing}).
  
Let $\psi\colon \param\to I$ be a $C^\infty$ function equal to $0$ on $S$ and $1$ on a neighborhood of $P\setminus U$.
Define a continuous and $C^v$ homotopy  $h\colon I \times \param \to \vf^u(\R^n)$ by
\begin{equation}\label{eq:final-vf-smoothing-htpy}
	h_t(p)= -t\psi(p)\nabla \lyapth(p) + (1-t\psi(p))\tilde{\vo}(p).
\end{equation}
Note that $h_1$ is a continuous and $C^s$ map $h_1\colon \param \to \vf^r(\R^n)$ since $\lyapth$ is $C^\infty$ and $\psi$ is $1$ on a neighborhood of $\param \setminus U$.
Moreover, linearity and \eqref{eq:inner-product-condition-vf-1} imply that $$\ip{\nabla \lyapth(p)(x)}{h_t(p)(x)} < 0$$
for all $t\in I$ and $(p,x)\in (\param \times \R^n)\setminus \tilde{Z}$. 
Hence $\lyapth(p)$ is a proper $C^\infty$ Lyapunov function for $h_t(p)$ for each $(t,p)\in I\times \param$, so $h$ is in fact an $\stab^u(\R^n)$-valued map $h\colon I\times \param \to \stab^u(\R^n)$.
Thus, $h_1$ is a continuous and $C^s$ map $h_1\colon \param \to \stab^r(\R^n)$.
Note that $h$ is a homotopy $\rel{S}$ since $\psi$ is $0$ on $S$, and $h_t(p)(x)=0$ for all $t\in I$ and $(p,x)\in \tilde{Z}$ by linearity and the corresponding property of $\lyapth$ and of $\tilde{\vo}$.
And $h$ is $\stab_0^u(\R^n)$-valued if $\vo$ is, since then $\tilde{Z}=\param \times (\R^n\setminus \{0\})$.

Thus, smoothly concatenating (as in \eqref{eq:gamma-def}) the homotopies $g$ from $\vo$ to $\tilde{\vo}$ and $h$ from $\tilde{\vo}$ to $h_1$  yields a continuous and $C^v$ homotopy $H\colon I\times \param \to \stab^u(\R^n)$ having the desired  properties from $\vo$ to the continuous and $C^s$ map $H_1 = h_1\colon \param \to \stab^r(\R^n)$.

Finally, suppose that $\vo(p)=-\nabla \lyap(p)$ for all $p\in S$ and a continuous and $C^w$ map $\lyap\colon S\to \fun^{r+1}(\R^n)$.
Then $\tilde{F}(p)=\vo(\rho_1(p))=-\nabla \lyap (\rho_1(p))$ for all $p\in U$, so 
\begin{align*}
h_1(p)&=-\psi(p)\nabla \lyapth(p) - (1-\psi(p))\nabla \lyap(\rho_1(p))\\
&= -\nabla \Big(\psi(p)\lyapth(p)+(1-\psi(p))\lyap(\rho_1(p))\Big)
\end{align*}
for all $p\in \param$.
Defining the continuous and  $C^w$ map $\lyapt\colon \param \to \fun^{r+1}(\R^n)$ by $$\lyapt\coloneqq \Big(\psi \lyapth + (1-\psi)\lyap\circ \rho_1 \Big)$$ finishes the proof.
\end{proof}

\begin{Co}\label{co:whe-vfs-funnels}
	Fix $r,s\in \N_{\geq 1}\cup\{\infty\}$ with $r\geq s$.
	The inclusion maps $$\stab_0^r(\R^n)\hookrightarrow \stab_0^s(\R^n), \quad \stab^r(\R^n)\hookrightarrow \stab^s(\R^n)$$ and negative gradient embeddings $$\fun_0^{r+1}(\R^n)\hookrightarrow \stab_0^{r}(\R^n), \quad \fun^{r+1}(\R^n)\hookrightarrow \stab^{r}(\R^n)$$ defined by $\lyap \mapsto (p\mapsto -\nabla \lyap(p))$ are all weak homotopy equivalences.
\end{Co}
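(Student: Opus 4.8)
The plan is to show that each of the maps in the statement induces, for every $k\in\N_{\geq 0}$, a surjection and an injection on $\pi_k$, arguing exactly as in the proof of Corollary~\ref{co:whe-funnels} but invoking Lemma~\ref{lem:smoothing-vf} in place of Lemma~\ref{lem:smoothing-funnels}. Surjectivity will come from Lemma~\ref{lem:smoothing-vf} applied with $(\param,S)=(\sph^k,\{*\})$ and injectivity from the case $(\param,S)=(D^{k+1},\sph^k)$. Throughout I will take the basepoint of every space to be the linear vector field $x\mapsto -2x = -\nabla\norm{x}^2$, which lies in $\stabc_0^r(\R^n)$ for all $r$ and is the negative gradient of $\norm{x}^2\in\fun_0^{r+1}(\R^n)$; this compatibility is essentially the only thing that needs checking, and it is what allows the ``gradient'' clause of Lemma~\ref{lem:smoothing-vf} to be applied on $S$.

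For the inclusions $\stab_0^r(\R^n)\hookrightarrow\stab_0^s(\R^n)$ and $\stab^r(\R^n)\hookrightarrow\stab^s(\R^n)$ I will use only the first conclusion of Lemma~\ref{lem:smoothing-vf}, with $(r,u,s,v,w)$ in the lemma replaced by $(r,s,0,0,0)$ (so that the homotopy it produces stays in $\stab^s(\R^n)$ and terminates in $\stab^r(\R^n)$; the hypothesis $r\geq s$ supplies the required inequalities). For surjectivity: a continuous $\vo\colon\sph^k\to\stab^s(\R^n)$, normalized so that $\vo(*)=-2x$, restricts on $S=\{*\}$ to a constant, hence $C^0$, map into $\stab^r(\R^n)$, so the lemma gives a homotopy $\rel{\{*\}}$ inside $\stab^s(\R^n)$ from $\vo$ to a map into $\stab^r(\R^n)$, whence $[\vo]$ lies in the image. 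For injectivity: if $g\colon\sph^k\to\stab^r(\R^n)$ becomes nullhomotopic in $\stab^s(\R^n)$, extend it to $G\colon D^{k+1}\to\stab^s(\R^n)$ with $G|_{\sph^k}=g$; applying the lemma with $S=\sph^k$ yields a homotopy $\rel{\sph^k}$ to a map $D^{k+1}\to\stab^r(\R^n)$, i.e.\ a nullhomotopy of $g$ inside $\stab^r(\R^n)$. The $\stab_0^r$ version is identical, using the clause that the output is $\stab_0^u(\R^n)$-valued whenever the input is.

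For the negative gradient embeddings $\fun_0^{r+1}(\R^n)\hookrightarrow\stab_0^r(\R^n)$ and $\fun^{r+1}(\R^n)\hookrightarrow\stab^r(\R^n)$ I will additionally use the second conclusion of Lemma~\ref{lem:smoothing-vf}, now with $(r,u,s,v,w)$ replaced by $(r,r,0,0,0)$ so that the homotopy stays in $\stab^r(\R^n)$ and carries negative gradients to negative gradients. For surjectivity on $\pi_k$: given $\vo\colon\sph^k\to\stab^r(\R^n)$ with $\vo(*)=-2x=-\nabla\norm{x}^2$, take $\lyap\colon\{*\}\to\fun^{r+1}(\R^n)$ to be the constant map $*\mapsto\norm{x}^2$; the lemma produces a homotopy $\rel{\{*\}}$ in $\stab^r(\R^n)$ from $\vo$ to $-\nabla\lyapt$ with $\lyapt\colon\sph^k\to\fun^{r+1}(\R^n)$, so $[\vo]$ is in the image of the gradient map. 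For injectivity: if $\lyap\colon\sph^k\to\fun^{r+1}(\R^n)$ has $-\nabla\lyap$ nullhomotopic in $\stab^r(\R^n)$, extend to $G\colon D^{k+1}\to\stab^r(\R^n)$ with $G|_{\sph^k}=-\nabla\lyap$; the lemma with $S=\sph^k$ then produces a homotopy $\rel{\sph^k}$ to $-\nabla\lyapt$ with $\lyapt\colon D^{k+1}\to\fun^{r+1}(\R^n)$ restricting to $\lyap$ on $\sph^k$, that is, a nullhomotopy of $\lyap$ inside $\fun^{r+1}(\R^n)$. The $\fun_0^{r+1}/\stab_0^r$ variant follows because, when the input is $\stab_0^u(\R^n)$-valued, $H_1=-\nabla\lyapt$ is too, which forces each $\lyapt(p)$ to have its unique critical point, of value $0$, at the origin, so $\lyapt$ is $\fun_0^{r+1}(\R^n)$-valued.

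The substantive work has already been done in Lemma~\ref{lem:smoothing-vf}, which is where Lyapunov theory (Wilson's converse theorem and the Fathi--Pageault smoothing) enters; the only genuine tasks left here are the bookkeeping of the five regularity parameters against the inequalities $r\geq s\geq v$, $r\geq u\geq v$, $s+1\geq w$, and the verification that the common basepoint $-2x$ is simultaneously $C^\infty$, globally asymptotically stable, and an honest negative gradient of an element of $\fun_0^{r+1}(\R^n)$, so that the gradient hypothesis of the lemma is met on $S$ in every case. I therefore do not anticipate a real obstacle, only careful matching of hypotheses.
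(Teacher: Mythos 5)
Your proposal is correct and takes essentially the same route as the paper's one-sentence proof: surjectivity on $\pi_k$ from Lemma~\ref{lem:smoothing-vf} with $(\param,S)=(\sph^k,\{*\})$, injectivity with $(\param,S)=(D^{k+1},\sph^k)$, invoking the gradient clause of the lemma for the $-\nabla$ embeddings. You are more explicit than the paper in two small but helpful ways — you correctly take the lemma's $u$ to be $r$ (rather than the corollary's $s$) for the gradient embeddings, and you spell out why the output $\lyapt$ lands in $\fun_0^{r+1}(\R^n)$ when the input is $\stab_0$-valued — but neither constitutes a different approach.
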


\begin{proof}
	For each $k\in \N_{\geq 0}$,  all homomorphisms of $k$-th homotopy groups induced by the stated maps are surjective by Lemma~\ref{lem:smoothing-vf} with $(\param,S)=(\sph^k, \{*\})$ and $(r,s,u,v)$ replaced by $(r,0,s,0)$, and injective by Lemma~\ref{lem:smoothing-vf} with $(\param, S)=(D^{k+1}, \sph^k)$ and $(r,s,u,v)$ replaced by $(r,0,s,0)$.
\end{proof}

The following pair of corollaries are two of our main results.
They complete the proof of Theorems~\ref{th:intro-1-conn-contract}, \ref{th:intro-cr-extensions}, \ref{th:intro-weak-htpy-vfs-etc} for $\stab_0^r(\R^n)$.

\begin{Co}\label{co:vf-homotopy-groups}
	For any $r\in \N_{\geq 1}\cup \{\infty\}$ and $n\neq 4,5$, $\stab_0^r(\R^n)$ has the weak homotopy type of $\U(D^n,\R^n)$. In particular, for each $k\in \N_{\geq 0}$ there is an isomorphism $$\pi_k \stab_0^r(\R^n)\cong \pi_k \U(D^n,\R^n).$$ 	
\end{Co}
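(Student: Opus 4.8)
The plan is to obtain this as a formal consequence of results already in hand, in exact parallel with the proof of Corollary~\ref{co:funnels-homotopy-groups}. First I would apply Corollary~\ref{co:funnels-homotopy-groups} with $r$ replaced by $r+1$; this is legitimate because $r+1\in \N_{\geq 1}\cup\{\infty\}$ whenever $r\in \N_{\geq 1}\cup\{\infty\}$ (with the convention $\infty+1=\infty$), and it yields that $\fun_0^{r+1}(\R^n)$ has the weak homotopy type of $\U(D^n,\R^n)$ for $n\neq 4,5$. Second, I would invoke Corollary~\ref{co:whe-vfs-funnels}, according to which the negative gradient embedding $\fun_0^{r+1}(\R^n)\hookrightarrow \stab_0^r(\R^n)$, $\lyap\mapsto -\nabla\lyap$, is a weak homotopy equivalence. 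Since having the same weak homotopy type is by definition the equivalence relation generated by weak homotopy equivalence, concatenating the chain of weak homotopy equivalences witnessing that $\fun_0^{r+1}(\R^n)$ has the weak homotopy type of $\U(D^n,\R^n)$ with the negative gradient map shows that $\stab_0^r(\R^n)$ has the weak homotopy type of $\U(D^n,\R^n)$.

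The asserted isomorphism $\pi_k\stab_0^r(\R^n)\cong \pi_k\U(D^n,\R^n)$ then follows: a weak homotopy equivalence induces a bijection on path components and isomorphisms on all higher homotopy groups, so composing the isomorphisms along the chain of weak homotopy equivalences above produces the desired isomorphism for every $k\in \N_{\geq 0}$ (a basepoint being available by path-connectedness of all the spaces involved, which holds by Corollary~\ref{co:funnels-trivial-homotopy-groups} and Proposition~\ref{prop:nonlinear-grass-trivial-htpy-groups}).

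There is no real obstacle here, since all the substance has been absorbed into the earlier results: the fiber bundle structure of the sublevel set map (Theorem~\ref{th:funnel-space-fiber-bundle-over-U}) with its weakly contractible fibers (Lemma~\ref{lem:contractible-fibers}), the smoothing-to-gradient lemma (Lemma~\ref{lem:smoothing-vf}, resting on Wilson's converse Lyapunov theorem) underlying Corollary~\ref{co:whe-vfs-funnels}, and the computation of the topology of $\U(D^n,\R^n)$ via the work of Smale, Cerf, and Hatcher. The only point requiring a moment's care is the regularity bookkeeping: one must feed in $C^{r+1}$ Lyapunov functions to extract $C^r$ vector fields, and one must confirm that Corollary~\ref{co:funnels-homotopy-groups} indeed covers the case $r+1=\infty$, which it does.
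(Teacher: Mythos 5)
Your proof is correct and takes essentially the same approach as the paper's: both obtain the result as a formal consequence of the fiber bundle result (Theorem~\ref{th:funnel-space-fiber-bundle-over-U}), the comparison Lemma~\ref{lem:gr-0-gr-w-h-e}, and the smoothing corollaries by chaining weak homotopy equivalences. The only cosmetic difference is that the paper bridges from $\stab_0^r(\R^n)$ to $\fun_0^\infty(\R^n)$ via $\stab_0^\infty(\R^n)$ (using both the inclusion and the negative-gradient map from Corollary~\ref{co:whe-vfs-funnels} at $r=\infty$), whereas you bridge via $\fun_0^{r+1}(\R^n)$ (using the negative-gradient map at the given $r$ together with Corollary~\ref{co:funnels-homotopy-groups}, which internally invokes Corollary~\ref{co:whe-funnels}); both are valid rearrangements of the same ingredients.
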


\begin{proof}
This is immediate from Theorem~\ref{th:funnel-space-fiber-bundle-over-U}, Lemma~\ref{lem:gr-0-gr-w-h-e}, and Corollary~\ref{co:whe-vfs-funnels}.
\end{proof}

\begin{Co}\label{co:vf-trivial-homotopy-groups}
	For any $r\in \N_{\geq 1} \cup \{\infty\}$,  $\stab_0^r(\R^n)$  is path-connected and simply connected if $n\neq 4,5$ and weakly contractible if $n\leq 3$.
	In fact, if $\param$ is a compact $C^\infty$ manifold with boundary, $S\subset \param$ is a closed subset and $C^\infty$ neatly embedded submanifold of either $\param$ or $\partial \param$, and $\vo_S\colon S \to \stab_0^r(\R^n)$ is continuous and $C^s$ with $0 \leq s\leq r$, then there is a continuous and $C^s$ map $\vo\colon \param \to \stab_0^r(\R^n)$ extending $\vo_S$ if either (i) $n\neq 4,5$ and $\dim \param \leq 2$ or (ii) $n\leq 3$.
\end{Co}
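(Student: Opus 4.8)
The plan is to mirror, almost verbatim, the two-step argument used for Corollary~\ref{co:funnels-trivial-homotopy-groups}, substituting the vector-field results of this section for the Lyapunov-function results of section~\ref{sec:fiber-bundle-weak-htpy-type}. First I would dispatch the opening sentence --- path-connectedness, simple connectedness for $n\neq 4,5$, and weak contractibility for $n\leq 3$ --- by combining Corollary~\ref{co:vf-homotopy-groups}, which identifies the weak homotopy type of $\stab_0^r(\R^n)$ with that of $\U(D^n,\R^n)$, with Proposition~\ref{prop:nonlinear-grass-trivial-htpy-groups}, which records exactly these three properties for $\U(D^n,\R^n)$.

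For the extension statement I would first produce a merely continuous extension and then smooth it rel $S$. Regarding $\vo_S\colon S\to \stab_0^r(\R^n)$ as a continuous map into a topological space, I would apply obstruction theory to the relative CW pair $(\param,S)$ (a smooth compact manifold pair admits such a structure), extending skeleton by skeleton. The obstruction to extending over the $(k+1)$-cells, given an extension over the $k$-skeleton, lies in $H^{k+1}(\param,S;\pi_k\stab_0^r(\R^n))$: in case (i), where $n\neq 4,5$, path-connectedness handles the $1$-cells and $\pi_1\stab_0^r(\R^n)=\{*\}$ handles the $2$-cells, which suffices since $\dim\param\leq 2$; in case (ii), weak contractibility of $\stab_0^r(\R^n)$ annihilates all obstructions regardless of $\dim\param$. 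This yields a continuous map $\tilde\vo\colon \param\to \stab_0^r(\R^n)$ with $\tilde\vo|_S=\vo_S$ (cf.\ \cite[Cor.~VII.13.13]{bredon1993topology}).

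It then remains to upgrade $\tilde\vo$ to a continuous and $C^s$ map without disturbing it on $S$, and for this I would invoke Lemma~\ref{lem:smoothing-vf} with $(r,s,u,v)$ replaced by $(r,s,r,0)$, applied to the continuous (hence continuous and $C^0$) map $\tilde\vo\colon \param\to \stab_0^r(\R^n)\subset \stab^r(\R^n)$, whose restriction to $S$ is the continuous and $C^s$ map $\vo_S\colon S\to \stab^r(\R^n)$. The numerical hypotheses $r\geq s\geq 0$ and $r\geq r\geq 0$ hold, so the lemma furnishes a homotopy $\rel{S}$ from $\tilde\vo$ to a continuous and $C^s$ map $H_1\colon \param\to \stab^r(\R^n)$ that is $\stab_0^r(\R^n)$-valued because $\tilde\vo$ is; since the homotopy is $\rel{S}$ we have $H_1|_S=\vo_S$, so $\vo\coloneqq H_1$ is the desired extension. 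Only the first conclusion of Lemma~\ref{lem:smoothing-vf} is used; its gradient conclusion is irrelevant here.

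The one genuinely delicate point is the bookkeeping in the obstruction-theory step: verifying that the dimension and connectivity hypotheses in cases (i) and (ii) are precisely what forces the relevant obstruction groups to vanish, and that simple connectedness of $\stab_0^r(\R^n)$ in case (i) makes the target ``simple'' so that the elementary form of obstruction theory applies. Everything else is a direct appeal to the machinery already assembled --- Corollary~\ref{co:vf-homotopy-groups}, Proposition~\ref{prop:nonlinear-grass-trivial-htpy-groups}, and Lemma~\ref{lem:smoothing-vf} --- so I expect no further difficulty.
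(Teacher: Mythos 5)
Your proposal is correct and follows essentially the same route as the paper: derive the first sentence from the weak-homotopy identification of $\stab_0^r(\R^n)$ with $\U(D^n,\R^n)$ (the paper cites Corollaries~\ref{co:funnels-trivial-homotopy-groups} and \ref{co:whe-vfs-funnels} rather than Corollary~\ref{co:vf-homotopy-groups} and Proposition~\ref{prop:nonlinear-grass-trivial-htpy-groups}, but these are logically equivalent), then obtain a continuous extension by obstruction theory via \cite[Cor.~VII.13.13]{bredon1993topology}, and finally smooth it $\rel{S}$ using Lemma~\ref{lem:smoothing-vf} with $(r,s,u,v)$ replaced by $(r,s,r,0)$.
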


\begin{proof}
Corollaries~\ref{co:funnels-trivial-homotopy-groups}, \ref{co:whe-vfs-funnels} directly imply the first statement.
The existence of a continuous map $\tilde{\vo}\colon \param \to \stab_0^r(\R^n)$ equal to $\vo_S$ on $S$ then follows from the first statement and obstruction theory \cite[Cor.~VII.13.13]{bredon1993topology}.
Lemma~\ref{lem:smoothing-vf} with $(r,s,u,v)$ replaced by $(r,s,r,0)$  then furnishes a continuous and $C^s$ map $\vo\colon \param \to \stab_0^r(\R^n)$ equal to $\vo_S$ on $S$.	
\end{proof}

\subsection{Homotopy groups for complete vector fields}\label{subsec:htpy-groups-complete-vf}

The following lemma is based on the simple observation that any vector field can be made complete after multiplying it by a suitable positive function.
\begin{Lem}\label{lem:completing-vfs}
	Fix  $r\in \N_{\geq 1}\cup \{\infty\}$ and $s\in \N_{\geq 0}\cup \{\infty\}$ with $s\leq r$.
	Let $\param$ be a $C^\infty$ manifold with corners, $S\subset \param$ be a closed subset, and  $\vo\colon \param\to \vf^r(\R^n)$ be a continuous and $C^s$ map.
    Then there is a continuous and $C^s$ map  $H\colon I\times \param \to \vf^r(\R^n)$ such that
	\begin{itemize}
	\item $H$ is a  homotopy $\rel{S}$ from $H_0 = \vo$ to a continuous and  $C^s$ map $H_1\colon \param \to \vf^r(\R^n)$ such that $H_1|_{\param \setminus S}$ is $\vfc^r(\R^n)$-valued, 
	\item $\{(p,x)\colon H_t(p)(x)=0\}=\{(p,x)\colon \vo(p)(x)=0\}$ for all $t\in I$, and
	\item if $\vo|_{\param \setminus S}$ is $\stab^r(\R^n)$-valued,  then $H|_{I\times (\param \setminus S)}$ is $\stab^r(\R^n)$-valued and hence $H_1|_{\param \setminus S}$ is $\stabc^r(\R^n)$-valued.
	\end{itemize}
\end{Lem}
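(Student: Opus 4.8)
The plan is to implement the standard trick of dividing a vector field by a positive function large enough to tame its growth, carried out parametrically and in a way that is stationary on $S$. First I would fix a $C^\infty$ function $\mu\colon \R^n \to [1,\infty)$ that is radial, increasing in $\norm{x}$, and grows fast enough (e.g. $\mu(x)=1+\norm{x}^2$ will do once we confine attention to compact parameter sets; for general $\param$ we instead build $\mu$ depending on $p$). More precisely, since $(p,x)\mapsto \vo(p)(x)$ is $C^s$ and $\param$ is a manifold (possibly noncompact), I would use a locally finite cover and a partition of unity to construct a $C^s$ function $\lambda\colon \param \times \R^n \to (0,1]$, depending only on $p$ and $\norm{x}$ in each chart and bounded by a reciprocal of a proper function dominating $\norm{\vo(p)(x)}$, so that the rescaled vector field $\vot(p)(x) \coloneqq \lambda(p,x)\,\vo(p)(x)$ is complete for every $p\in \param$. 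Completeness follows from the comparison argument: along any trajectory, $\frac{d}{dt}\norm{x}^2 \leq 2\norm{x}\,\norm{\vot(p)(x)} \leq 2\norm{x}$, so $\norm{x}$ grows at most linearly and no trajectory escapes in finite time — hence $\vot(p)\in \vfc^r(\R^n)$.

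Next I would address the rel~$S$ requirement. Let $\varphi\colon \param \to I$ be a $C^\infty$ function that is $0$ on a neighborhood of $S$ and $1$ outside a slightly larger neighborhood (using that $S$ is closed; such a $\varphi$ exists by a smooth Urysohn argument, noting the conclusion only demands $H_1|_{\param\setminus S}$ be complete, so we need no control off a neighborhood of $S$). Then define
\begin{equation*}
    H_t(p)(x) \coloneqq \big(1 - t\varphi(p)(1-\lambda(p,x))\big)\,\vo(p)(x).
\end{equation*}
This is manifestly continuous and $C^s$ in $(t,p,x)$, equals $\vo(p)$ when $t=0$ or $p\in S$ (since $\varphi=0$ there), and at $t=1$ and $p\notin \supp \varphi$ it equals $\lambda(p,x)\vo(p)(x)=\vot(p)(x)$, which is complete. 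Since the scalar factor $1-t\varphi(p)(1-\lambda(p,x))$ lies in $(0,1]$ — because $\lambda\in(0,1]$, $\varphi\in I$, $t\in I$ — the zero set is unchanged: $H_t(p)(x)=0$ iff $\vo(p)(x)=0$, giving the second bullet. For the third bullet, if $\vo(p)\in\stab^r(\R^n)$ for $p\notin S$, then a proper Lyapunov function $\lyap$ for $\vo(p)$ (existing by Wilson's converse theorem, and here we only need its existence pointwise) satisfies $\ip{\nabla\lyap(x)}{H_t(p)(x)} = \big(1-t\varphi(p)(1-\lambda(p,x))\big)\ip{\nabla\lyap(x)}{\vo(p)(x)} < 0$ off the equilibrium, because the positive scalar factor preserves the sign; hence each $H_t(p)$ is globally asymptotically stable for $p\notin S$, and $H_1(p)$ is moreover complete, hence in $\stabc^r(\R^n)$.

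The main obstacle I anticipate is the construction of the rescaling function $\lambda$ with simultaneously the right regularity ($C^s$ jointly in $(p,x)$, continuous as a map $\param\to\vf^r$), the right growth control (uniform enough on compacta of $\param$ to force completeness of every $\vot(p)$), and compatibility with being $1$ near $S$ is \emph{not} required — that simplifies matters, but the joint smoothness and the local-finiteness bookkeeping for noncompact $\param$ still need care. Everything else — the homotopy formula, the invariance of the zero set, and the preservation of asymptotic stability under multiplication by a positive function — is routine once $\lambda$ is in hand. One should also note the elementary but essential point, which the lemma's preamble emphasizes, that multiplying a vector field by a positive scalar function reparametrizes trajectories without changing their images, so asymptotic stability and the equilibrium set are untouched; only the time-parametrization, and hence completeness, is affected.
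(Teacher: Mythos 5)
Your overall strategy (multiply by a positive scalar that tames growth, cut off near $S$) is on the right track, and the third bullet reasoning (positivity preserves the zero set and Lyapunov decrease) is fine. But there is a genuine gap in the rel~$S$ mechanism, and a secondary incompleteness in the construction of $\lambda$.

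\textbf{The gap.} You take $\varphi$ to vanish on an open \emph{neighborhood} $N$ of $S$ and to equal $1$ only outside a larger neighborhood $N'$. Then $H_1(p)=\vo(p)$ for $p\in N$, and for $p\in N'\setminus N$ the scalar factor is $(1-\varphi(p))+\varphi(p)\lambda(p,x)$, which tends to the \emph{positive constant} $1-\varphi(p)$ as $\norm{x}\to\infty$. So on all of $N\setminus S$, and on the transition zone $N'\setminus N$, $H_1(p)$ is no more complete than $\vo(p)$ was, yet these sets lie in $\param\setminus S$. The lemma requires completeness on \emph{all} of $\param\setminus S$, so your parenthetical remark that ``we need no control off a neighborhood of $S$'' is exactly backwards: you need control everywhere off $S$ itself. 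The paper fixes this by taking $\varphi\in C^\infty(\param,[0,\infty))$ with $\varphi^{-1}(0)=S$ \emph{exactly} (not vanishing on a neighborhood) and using the formula
\begin{equation*}
    H_t(p)(x)=\frac{1}{1+t\,\varphi(p)\,\norm{\vo(p)(x)}^2}\,\vo(p)(x),
\end{equation*}
so that for every $p\notin S$ the denominator grows quadratically in $\norm{\vo(p)(x)}$, giving the uniform bound $\norm{H_1(p)(x)}\leq \tfrac{1}{2\sqrt{\varphi(p)}}$ and hence completeness, even when $\varphi(p)$ is arbitrarily small.

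\textbf{The secondary issue.} You leave the construction of $\lambda$ as ``the main obstacle,'' and rightly so: to get $\lambda(p,x)\norm{\vo(p)(x)}\leq 1$ everywhere while keeping $\lambda$ jointly $C^s$, $(0,1]$-valued, continuous into $\vf^r$ in $p$, and doing local-finiteness bookkeeping over a possibly noncompact $\param$ is real work. The paper's scalar $\tfrac{1}{1+t\varphi(p)\norm{\vo(p)(x)}^2}$ discharges all of this in a single explicit formula: it is as smooth as $\vo$, automatically in $(0,1]$, and its product with $\vo(p)(x)$ is bounded without any auxiliary partition-of-unity construction. In short, the paper's $\lambda$ is \emph{adapted to $\vo$ itself}, which is both simpler and what makes the rel~$S$ issue fixable.
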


\begin{proof}
Let $\varphi\colon \param \to [0,\infty)$ be $C^\infty$ and satisfy $\varphi^{-1}(0)=S$.
Define the continuous and $C^s$ map $H\colon I\times \param\to \vf^r(\R^n)$ by 
\begin{equation*}
	H_t(p)(x) = \frac{1}{1+t\varphi(p)\norm{\vo(p)(x)}^2}\vo(p)(x)
\end{equation*}
and note that $H$ is a homotopy $\rel{S}$ satisfying $$\{(p,x)\colon H_t(p)(x)=0\}=\{(p,x)\colon \vo(p)(x)=0\}$$ for all $t\in I$.
Moreover, for each fixed $p\in \param\setminus S$ there is $K_p > 0$ such that $\norm{H_1(p)(x)}<K_p$ for all $x\in \R^n$.
Hence $H_1(p)$ is complete for each $p\in \param\setminus  S$.
If $\vo|_{\param \setminus S}$ is $\stab^r(\R^n)$-valued, then so is $H|_{I\times (\param \setminus S)}$ since $H_t(p)$ is the product of $\vo(p)$ with a positive function for each $t\in I$, $p\in \param$.   
\end{proof}

\begin{Co}\label{co:whe-vfs-complete}
	For any $r\in \N_{\geq 1}\cup\{\infty\}$ the inclusion maps $\stabc_0^r(\R^n)\hookrightarrow \stab_0^r(\R^n)$ and $\stabc^r(\R^n)\hookrightarrow \stab^r(\R^n)$ are weak homotopy equivalences.
\end{Co}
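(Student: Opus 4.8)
The plan is to mirror the proofs of Corollaries~\ref{co:whe-funnels} and~\ref{co:whe-vfs-funnels}: feed spheres and discs into the homotopy-rel-$S$ statement of Lemma~\ref{lem:completing-vfs}, taking $(\param,S)=(\sph^k,\{*\})$ to obtain surjectivity on $\pi_k$ and $(\param,S)=(D^{k+1},\sph^k)$ to obtain injectivity, in each case with the regularity parameter $s$ replaced by $0$. Both inclusions $\stabc_0^r(\R^n)\hookrightarrow\stab_0^r(\R^n)$ and $\stabc^r(\R^n)\hookrightarrow\stab^r(\R^n)$ are handled by the very same argument, since Lemma~\ref{lem:completing-vfs} preserves the zero set of the vector field (so the origin constraint, when imposed, is maintained) and carries $\stab^r(\R^n)$-valued families to ones that are $\stabc^r(\R^n)$-valued off $S$.

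First I would fix a basepoint $\vo_\ast\in\stabc_0^r(\R^n)$, say $x\mapsto -x$, which lies in all four spaces and with respect to which we base every homotopy group in sight. For surjectivity, represent a class in $\pi_k\stab_0^r(\R^n)$ by a based map $f\colon\sph^k\to\stab_0^r(\R^n)$; such an $f$ is in particular continuous and $C^0$ in the sense of \S\ref{subsection:maps-into-function-spaces}, and $f(\ast)=\vo_\ast$. Applying Lemma~\ref{lem:completing-vfs} with $(\param,S)=(\sph^k,\{\ast\})$ produces a homotopy $H$ rel $\{\ast\}$ from $f=H_0$ to $H_1$ with $H_1|_{\sph^k\setminus\{\ast\}}$ valued in $\stabc_0^r(\R^n)$; since $H_1(\ast)=f(\ast)=\vo_\ast$ is itself complete, $H_1$ is $\stabc_0^r(\R^n)$-valued everywhere. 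The second and third clauses of Lemma~\ref{lem:completing-vfs} make $H$ a based homotopy through $\stab_0^r(\R^n)$, so $[f]$ lies in the image of $\pi_k\stabc_0^r(\R^n)$.

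For injectivity, let $\beta\in\pi_k\stabc_0^r(\R^n)$ map to $0$ in $\pi_k\stab_0^r(\R^n)$, and choose a based nullhomotopy $g\colon D^{k+1}\to\stab_0^r(\R^n)$ whose restriction $g|_{\sph^k}$ represents $\beta$, hence is $\stabc_0^r(\R^n)$-valued. Lemma~\ref{lem:completing-vfs} with $(\param,S)=(D^{k+1},\sph^k)$ yields a homotopy $H$ rel $\sph^k$ from $g$ to a map $g_1$ that is complete on $D^{k+1}\setminus\sph^k$ and coincides with $g$ --- hence is already complete --- on $\sph^k$; thus $g_1\colon D^{k+1}\to\stabc_0^r(\R^n)$ is a nullhomotopy of $\beta$ inside $\stabc_0^r(\R^n)$, so $\beta=0$. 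Running both paragraphs with the origin constraint deleted proves the claim for $\stabc^r(\R^n)\hookrightarrow\stab^r(\R^n)$.

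There is essentially no obstacle here beyond bookkeeping: the one point needing attention is that Lemma~\ref{lem:completing-vfs} only asserts completeness on $\param\setminus S$, so one must know the restriction to $S$ (the basepoint $\vo_\ast$, or $g|_{\sph^k}$) is already complete --- which it is, by construction in the surjectivity case and because $g|_{\sph^k}$ represents a class of $\pi_k\stabc_0^r(\R^n)$ in the injectivity case. All the real content sits in Lemma~\ref{lem:completing-vfs}, namely the observation that rescaling $\vo$ by a positive factor of the form $1/(1+t\varphi\norm{\vo}^2)$ forces completeness without changing the equilibrium set or destroying any Lyapunov function; so no genuinely new idea is required.
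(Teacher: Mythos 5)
Your proof is correct and follows essentially the same route as the paper: apply Lemma~\ref{lem:completing-vfs} with $(\param,S)=(\sph^k,\{*\})$ and $s=0$ for surjectivity and with $(\param,S)=(D^{k+1},\sph^k)$ and $s=0$ for injectivity. The extra bookkeeping you supply about basepoints and why completeness already holds on $S$ is a sound elaboration of what the paper leaves implicit.
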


\begin{proof}
	For each $k\in \N_{\geq 0}$,  both homomorphisms of $k$-th homotopy groups induced by the stated maps are surjective by Lemma~\ref{lem:completing-vfs} with $(\param,S)=(\sph^k, \{*\})$ and $s=0$, and injective by Lemma~\ref{lem:completing-vfs} with $(\param, S)=(D^{k+1}, \sph^k)$ and $s=0$.
\end{proof}

The following pair of corollaries complete the proof of Theorems~\ref{th:intro-1-conn-contract}, \ref{th:intro-cr-extensions}, \ref{th:intro-weak-htpy-vfs-etc} for $\stabc_0^r(\R^n)$.

\begin{Co}\label{co:complete-vf-homotopy-groups}
	For any $r\in \N_{\geq 1}\cup \{\infty\}$ and $n\neq 4,5$, $\stabc_0^r(\R^n)$ has the weak homotopy type of $\U(D^n,\R^n)$. 
	In particular, for each $k\in \N_{\geq 0}$ there is an isomorphism $$\pi_k \stabc_0^r(\R^n)\cong \pi_k \U(D^n,\R^n).$$ 	
\end{Co}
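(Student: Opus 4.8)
The plan is to deduce the statement formally from weak homotopy equivalences already in hand, with no new geometric input. Concretely, Corollary~\ref{co:vf-homotopy-groups} already exhibits a finite zig-zag of weak homotopy equivalences connecting $\stab_0^r(\R^n)$ to $\U(D^n,\R^n)$, built from the sublevel set fibration of Theorem~\ref{th:funnel-space-fiber-bundle-over-U}, the inclusion $\U_0(D^n,\R^n)\hookrightarrow\U(D^n,\R^n)$ of Lemma~\ref{lem:gr-0-gr-w-h-e}, and the negative-gradient, smoothing, and regularity-comparison maps of Corollary~\ref{co:whe-vfs-funnels}. So the only thing left to supply is a link from $\stabc_0^r(\R^n)$ into this zig-zag.

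That link is exactly Corollary~\ref{co:whe-vfs-complete}, which says the subspace inclusion $\stabc_0^r(\R^n)\hookrightarrow\stab_0^r(\R^n)$ is a weak homotopy equivalence. Prepending this inclusion to the zig-zag above connects $\stabc_0^r(\R^n)$ to $\U(D^n,\R^n)$ by a finite chain of weak homotopy equivalences; since having the same weak homotopy type means precisely being equivalent under the equivalence relation generated by the relation of weak homotopy equivalence, this proves the first assertion. For the displayed isomorphism I would simply note that each weak homotopy equivalence in the chain induces a bijection on path components and isomorphisms on all higher homotopy groups, and compose these (taking inverses for the arrows pointing the wrong way) to get $\pi_k\stabc_0^r(\R^n)\cong\pi_k\U(D^n,\R^n)$ for every $k\in\N_{\geq 0}$.

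I do not expect any real obstacle, since all of the substantive work is already done: the fiber bundle structure of Theorem~\ref{th:funnel-space-fiber-bundle-over-U}, the diffeomorphism group computations behind Proposition~\ref{prop:nonlinear-grass-trivial-htpy-groups}, and the elementary rescaling argument of Lemma~\ref{lem:completing-vfs} underlying Corollary~\ref{co:whe-vfs-complete}. The only minor point requiring care is that the conclusion should be stated and verified through a zig-zag rather than through a single map in a fixed direction, because the connection to the nonlinear Grassmannian runs through the sublevel set fibration of Theorem~\ref{th:funnel-space-fiber-bundle-over-U} and there is no obvious direct map $\stabc_0^r(\R^n)\to\U(D^n,\R^n)$.
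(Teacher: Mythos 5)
Your proposal is correct and matches the paper's own argument: the paper likewise deduces the corollary immediately from Corollary~\ref{co:vf-homotopy-groups} together with the weak homotopy equivalence $\stabc_0^r(\R^n)\hookrightarrow\stab_0^r(\R^n)$ of Corollary~\ref{co:whe-vfs-complete}. Your added care about phrasing the conclusion via a zig-zag rather than a single map is a reasonable clarification but does not change the substance.
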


\begin{proof}
	This is immediate from Corollaries~\ref{co:vf-homotopy-groups}, \ref{co:whe-vfs-complete}.
\end{proof}

\begin{Co}\label{co:complete-vf-trivial-homotopy-groups}
	For any $r\in \N_{\geq 1} \cup \{\infty\}$,  $\stabc_0^r(\R^n)$  is path-connected and simply connected if $n\neq 4,5$ and weakly contractible if $n\leq 3$.
	In fact, if $\param$ is a compact $C^\infty$ manifold with boundary, $S\subset \param$ is a closed subset and $C^\infty$ neatly embedded submanifold of either $\param$ or $\partial \param$, and $\vo_S\colon S \to \stabc_0^r(\R^n)$ is continuous and $C^s$ with $0 \leq s\leq r$, then there is a continuous and $C^s$ map $\vo\colon \param \to \stabc_0^r(\R^n)$ extending $\vo_S$ 
	if either (i) $n\neq 4,5$ and $\dim \param \leq 2$ or (ii) $n\leq 3$.
\end{Co}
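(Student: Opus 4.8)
The plan is to follow the templates of Corollaries~\ref{co:funnels-trivial-homotopy-groups} and \ref{co:vf-trivial-homotopy-groups}, chaining the smoothing lemma of the previous subsection with the completing lemma of this one.

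For the first sentence I would invoke Corollary~\ref{co:vf-trivial-homotopy-groups} together with Corollary~\ref{co:whe-vfs-complete}: the inclusion $\stabc_0^r(\R^n)\hookrightarrow \stab_0^r(\R^n)$ is a weak homotopy equivalence, hence induces a bijection on path components and isomorphisms on all homotopy groups, so $\stabc_0^r(\R^n)$ inherits from $\stab_0^r(\R^n)$ the properties of being path-connected and simply connected when $n\neq 4,5$ and weakly contractible when $n\leq 3$. (Alternatively one can combine Corollary~\ref{co:complete-vf-homotopy-groups} with Proposition~\ref{prop:nonlinear-grass-trivial-htpy-groups}.)

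For the parametric extension I would argue in three stages. First, by the sentence just proved, under hypothesis (i) the space $\stabc_0^r(\R^n)$ is path-connected and simply connected while $\dim \param \leq 2$, and under hypothesis (ii) it is weakly contractible; in either case obstruction theory \cite[Cor.~VII.13.13]{bredon1993topology} provides a continuous map $\tilde{\vo}\colon \param \to \stabc_0^r(\R^n)$ with $\tilde{\vo}|_S = \vo_S$. Second, regard $\tilde{\vo}$ as a continuous map into the larger space $\stab_0^r(\R^n)$; its restriction to $S$ is then the continuous and $C^s$ map $\vo_S$, so Lemma~\ref{lem:smoothing-vf} with $(r,s,u,v)$ replaced by $(r,s,r,0)$ yields a homotopy $\rel{S}$ from $\tilde{\vo}$ to a continuous and $C^s$ map $\vo'\colon \param \to \stab_0^r(\R^n)$ with $\vo'|_S = \vo_S$; since $\tilde{\vo}$ is $\stab_0^r(\R^n)$-valued, so is $\vo'$, and hence the zero set of $(p,x)\mapsto \vo'(p)(x)$ is exactly $\param \times \{0\}$. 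Third, apply Lemma~\ref{lem:completing-vfs} to $\vo'$ with the same pair $(\param, S)$: because $\vo'|_{\param\setminus S}$ is $\stab^r(\R^n)$-valued, the lemma produces a homotopy $\rel{S}$ ending at a continuous and $C^s$ map $H_1$ with $H_1|_{\param\setminus S}$ being $\stabc^r(\R^n)$-valued and with zero set still equal to $\param \times \{0\}$; thus for $p\in\param\setminus S$ the field $H_1(p)$ is complete, globally asymptotically stable, and vanishes only at the origin, i.e. lies in $\stabc_0^r(\R^n)$, while for $p\in S$ we have $H_1(p)=\vo'(p)=\vo_S(p)\in\stabc_0^r(\R^n)$ because the homotopy is $\rel{S}$. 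Hence $H_1\colon \param \to \stabc_0^r(\R^n)$ is the sought continuous and $C^s$ extension of $\vo_S$.

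The only place requiring care is the bookkeeping of codomains: one must verify at each stage that the map lands where intended, that the intermediate homotopies are $\rel{S}$ so the extension of $\vo_S$ is preserved, and, crucially, that although Lemma~\ref{lem:completing-vfs} a priori only controls the map off $S$ and only guarantees $\stabc^r(\R^n)$-valuedness there, its zero-set-preserving property keeps the unique equilibrium pinned at the origin so that the final map is $\stabc_0^r(\R^n)$-valued. No genuinely new ingredient beyond Lemmas~\ref{lem:smoothing-vf} and \ref{lem:completing-vfs} is required.
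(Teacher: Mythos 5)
Your proposal is correct and follows essentially the same route as the paper: the first sentence from Corollaries~\ref{co:vf-trivial-homotopy-groups} and \ref{co:whe-vfs-complete}, and the extension statement by producing a continuous and $C^s$ map $\param\to\stab_0^r(\R^n)$ extending $\vo_S$ and then applying Lemma~\ref{lem:completing-vfs}, using the zero-set preservation to keep the equilibrium at the origin. The paper simply cites Corollary~\ref{co:vf-trivial-homotopy-groups} for the middle step rather than re-running obstruction theory plus Lemma~\ref{lem:smoothing-vf} as you do; your choice to aim the obstruction-theory step at $\stabc_0^r(\R^n)$ rather than $\stab_0^r(\R^n)$ is harmless but redundant, since the completeness is immediately discarded when you pass to the larger space for smoothing.
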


\begin{proof}
     Corollaries~\ref{co:vf-trivial-homotopy-groups}, \ref{co:whe-vfs-complete} directly imply the first statement.	
	The existence of a continuous and $C^s$ map $\tilde{\vo}\colon \param \to \stabc_0^r(\R^n)$ equal to $\vo_S$ on $S$ then follows from Corollary~\ref{co:vf-trivial-homotopy-groups} and Lemma~\ref{lem:completing-vfs}.
\end{proof}

\section{Path-connectedness and the $4$-dimensional smooth Poincar\'{e} conjecture}\label{sec:path-conn-poincare}

As mentioned in the introduction, we show that validity of the path-connected portion of Theorem~\ref{th:intro-1-conn-contract} for $n=5$ would imply that the $4$-dimensional smooth Poincar\'{e} conjecture is true.
This conjecture asserts that every $4$-dimensional  $C^\infty$ manifold homotopy equivalent to the $4$-sphere is diffeomorphic to the $4$-sphere.  

\begin{Prop}\label{prop:path-conn-poincare}
The $4$-dimensional smooth Poincar\'{e} conjecture is true if any of $\fun_0^r(\R^5)$, $\stab_0^r(\R^5)$, or $\stabc_0^r(\R^5)$ are path-connected for some $r\in \N_{\geq 1}\cup \{\infty\}$.
\end{Prop}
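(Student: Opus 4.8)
The plan is to argue by contradiction: if $\fun_0^r(\R^5)$, $\stab_0^r(\R^5)$, or $\stabc_0^r(\R^5)$ is path-connected, then combining the weak homotopy equivalences established earlier with path-connectedness of $\U(D^5,\R^5)$ forces every homotopy $5$-sphere to be standard, which is equivalent to the $4$-dimensional smooth Poincar\'{e} conjecture. More precisely, I would first reduce all three cases to a single one: by Corollary~\ref{co:whe-funnels}, Corollary~\ref{co:whe-vfs-funnels}, and Corollary~\ref{co:whe-vfs-complete}, each of these spaces is weakly homotopy equivalent to $\fun_0^\infty(\R^5)$, and by Theorem~\ref{th:funnel-space-fiber-bundle-over-U} the sublevel set map $p\colon \fun_0^\infty(\R^5)\to \U_0(D^5,\R^5)$ is a weak homotopy equivalence — but note that Theorem~\ref{th:funnel-space-fiber-bundle-over-U} requires $n\neq 4,5$, so \textbf{this reduction is exactly what breaks for $n=5$} and cannot be used. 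Instead I would work directly with the (still well-defined, for $n=5$, by Smale's $h$-cobordism theorem applied in dimension $5\geq 5$? — no, $5$ is excluded) sublevel set construction, so the argument must be organized around homotopy $5$-spheres rather than around $\U_0$.

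Here is the cleaner line I would actually pursue. Given $\lyap\in \fun_0^\infty(\R^5)$, the sublevel set $\Sigma_\lyap\coloneqq \lyap^{-1}([0,1])$ is a compact contractible smooth $5$-manifold with boundary a homotopy $4$-sphere (the boundary is homotopy equivalent to $\R^5\setminus\{0\}\simeq S^4$ by flowing along $\nabla\lyap$, as in the proof of Lemma~\ref{lem:h-cobordism}); conversely, the quadratic function $\lyap_0(x)=\norm{x}^2$ lies in $\fun_0^\infty(\R^5)$ with $\Sigma_{\lyap_0}=D^5$ and $\partial\Sigma_{\lyap_0}=S^4$. The first key step is to check that the diffeomorphism type of the pair $(\Sigma_\lyap,\partial\Sigma_\lyap)$, and in particular the diffeomorphism type of $\partial\Sigma_\lyap$ as a homotopy $4$-sphere, is a homotopy invariant of $\lyap$ in $\fun_0^\infty(\R^5)$: a smooth path $\lyap_t$ in $\fun_0^\infty(\R^5)$ yields, via an ambient isotopy extension / tubular neighborhood argument exactly like the one in the proof of Theorem~\ref{th:funnel-space-fiber-bundle-over-U} and Lemma~\ref{lem:contractible-fibers}, a smooth isotopy carrying $\Sigma_{\lyap_0}$ to $\Sigma_{\lyap_1}$, hence a diffeomorphism of boundaries. (One should first upgrade a continuous path to a smooth one using Lemma~\ref{lem:smoothing-funnels}, or just invoke local path-connectedness consequences; alternatively phrase the invariant in terms of $\pi_0$.) The second key step is to produce, from a hypothetical exotic homotopy $4$-sphere $\Sigma$, an element $\lyap_\Sigma\in\fun_0^\infty(\R^5)$ with $\partial\Sigma_{\lyap_\Sigma}$ diffeomorphic to $\Sigma$: take a smooth contractible $5$-manifold $W$ with $\partial W=\Sigma$ (it exists and is unique up to diffeomorphism by Kervaire/Wall-type arguments, or simply take $W$ to be $\Sigma\times[0,1]$-capped appropriately — in fact $W$ is automatically the disc-bundle-free contractible filling), embed $\interior W$ smoothly in $\R^5$ as an open set containing $0$ in a way that is "collared at infinity" so that $\R^5\setminus\interior W\cong \Sigma\times[1,\infty)$, and build $\lyap_\Sigma$ to equal the collar coordinate near infinity, to be a Morse function with unique minimum $0$ inside $W$ with exactly one critical point (this uses that $W$ is contractible, hence admits such a function by Morse theory — this is the content of Lemma~\ref{lem:h-cobordism}'s philosophy run in reverse). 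Then $\partial\Sigma_{\lyap_\Sigma}\cong\Sigma$ is not diffeomorphic to $S^4=\partial\Sigma_{\lyap_0}$, so by Step~1, $\lyap_\Sigma$ and $\lyap_0$ lie in different path components of $\fun_0^\infty(\R^5)$ — contradicting path-connectedness. Finally, transport this conclusion to $\fun_0^r(\R^5)$ via Corollary~\ref{co:whe-funnels} and to $\stab_0^r(\R^5)$, $\stabc_0^r(\R^5)$ via Corollaries~\ref{co:whe-vfs-funnels} and \ref{co:whe-vfs-complete} (all of which hold with no restriction on $n$); and recall that existence of an exotic homotopy $4$-sphere is precisely the failure of the $4$-dimensional smooth Poincar\'{e} conjecture.

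The main obstacle I expect is Step~2: constructing the smooth embedding of a contractible filling $W$ of an arbitrary homotopy $4$-sphere $\Sigma$ into $\R^5$ with the right behavior at infinity, together with the Morse function having a \emph{unique} critical point. The existence of a contractible $5$-manifold bounding $\Sigma$ is standard (and in dimension $5$ one can run the $h$-cobordism theorem since $5\geq 5$, or cite that homotopy $4$-spheres bound contractible manifolds — indeed $\Sigma$ bounds a contractible $W$ because $\Sigma\times[0,1]$ with a $1$-handle... or simpler, $W = $ the trace of a handle decomposition), but getting from "admits a handle decomposition with handles in cancelling pairs" to "admits a proper Morse function on $\interior W$, pushed out to $\R^5$, with a single minimum and no other critical points" requires care: one cancels the non-minimal handles using Whitney-trick-style moves, which in dimension $5$ is legitimate (the relevant linking data lives in dimension $\geq 3$). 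I would also need to make sure the embedding $\interior W\hookrightarrow\R^5$ is smoothly standard at infinity, i.e. the end is diffeomorphic to $\Sigma\times[1,\infty)$; this is where a homotopy $4$-sphere could in principle cause trouble, but since $W$ is a smooth compact manifold with boundary, it has a smooth collar $\Sigma\times[0,1)\hookrightarrow W$, and one glues $\Sigma\times[1,\infty)$ on — the only question is realizing the result as an open subset of $\R^5$, which follows because the double of $W$ is a homotopy $5$-sphere, hence by $h$-cobordism it is $S^5$, and $W\hookrightarrow S^5\setminus\{pt\}=\R^5$. With that in hand the rest is the routine isotopy-extension bookkeeping already exemplified in Section~\ref{sec:fiber-bundle-weak-htpy-type}.
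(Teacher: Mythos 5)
Your overall strategy matches the paper's: reduce all three spaces to $\fun_0^\infty(\R^5)$ via Corollaries~\ref{co:whe-funnels}, \ref{co:whe-vfs-funnels}, \ref{co:whe-vfs-complete}; realize an arbitrary homotopy $4$-sphere as a level set of some $\lyap\in\fun_0^\infty(\R^5)$; and show that path-connectedness would force all such level sets to be diffeomorphic, contradicting the existence of an exotic $\Sigma$. Your Step~1 (homotopy invariance of the diffeomorphism type of the level set) is essentially correct and is handled in the paper somewhat more efficiently by first applying Lemma~\ref{lem:smoothing-funnels} to smooth the path, then invoking the implicit function theorem plus Ehresmann's lemma on $\{(t,x)\colon J(t)(x)=1\}\to I$ — no tubular-neighborhood bookkeeping needed.

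However, your Step~2 has a genuine gap that, as stated, is fatal. You propose to build $\lyap_\Sigma$ as a Morse function on $\R^5$ with a \emph{unique} critical point whose sublevel set is $W$, justified by claiming that the contractible $5$-manifold $W$ admits such a Morse function via Whitney-trick handle cancellation (``which in dimension $5$ is legitimate''). This is false. A compact manifold with boundary admitting a Morse function with a single critical point is necessarily a disc, so if $W$ admitted such a function then $W\cong D^5$ and hence $\partial W=\Sigma\cong S^4$ — i.e.\ your construction would only work for standard $\Sigma$. More pointedly: the handle-cancellation argument you invoke is exactly the $h$-cobordism theorem, whose dimension hypothesis ($\geq 6$) fails here; if it held in dimension $5$, every compact contractible $5$-manifold with simply connected boundary would be $D^5$, which would itself prove the $4$-dimensional smooth Poincar\'e conjecture with no reference to $\fun_0^\infty(\R^5)$ at all. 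The functions in $\fun_0^\infty(\R^5)$ are required to have a unique critical point but are \emph{not} required to be Morse; the whole point for $n=5$ is that the unique critical point may be degenerate. Producing a proper function on $\R^5$ with a single (degenerate) critical point and a given exotic level set is nontrivial, and is precisely the content of the theorem of Hirsch \cite[Thm~2]{hirsch1965homotopy} that the paper cites. Replacing your Morse-function construction with that citation repairs Step~2 and aligns your argument with the paper's proof.
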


\begin{Rem}\label{rem:poincare}
	By Lemma~\ref{lem:varying-minima-equilibria} of section~\ref{sec:varying-min-eq}, Proposition~\ref{prop:path-conn-poincare} also holds for the spaces $\fun^r(\R^5)$, $\stab^r(\R^5)$, $\stabc^r(\R^5)$ with unconstrained locations of minima and equilibria.
\end{Rem}

\begin{proof}
By Corollaries~\ref{co:whe-funnels}, \ref{co:whe-vfs-funnels}, \ref{co:whe-vfs-complete} it suffices to show that path-connectedness of $\fun_0^\infty(\R^5)$ implies that the $4$-dimensional smooth Poincar\'{e} conjecture is true.
So, assume $\fun_0^\infty(\R^5)$ is path-connected and let $M$ be a $4$-dimensional $C^\infty$ manifold homotopy equivalent to $\sph^4$.
By a theorem of Hirsch, there exists $\lyap\in \fun_0^\infty(\R^n)$ such that $M$ is diffeomorphic to $\lyap^{-1}(1)\subset \R^5$ \cite[Thm~2]{hirsch1965homotopy}.
Define $\lyapt\in \fun_0^\infty(\R^n)$ by $\lyapt(x)=\norm{x}^2$.
The path-connectedness assumption and Lemma~\ref{lem:smoothing-funnels} furnish a $C^\infty$ map $J\colon I\to \fun_0^\infty(\R^n)$ satisfying $J(0)=\lyap$ and $J(1)=\lyapt$.
The domain of the map
\begin{equation}\label{eq:prop:path-conn-poincare}
\pr_1 \colon \{(t,x)\in I\times \R^n \colon J(t)(x)=1\}	\subset I\times \R^5\to I
\end{equation}
is a $C^\infty$ manifold transverse to all slices $\{t\}\times \R^n$ by the implicit function theorem.
Thus, \eqref{eq:prop:path-conn-poincare} is a submersion and hence also a $C^\infty$ fiber bundle by Ehresmann's lemma.
In particular, the fibers $\pr_1^{-1}(0)\approx M$ and $\pr_1^{-1}(1)=\sph^4$ are diffeomorphic.
\end{proof}

\section{Almost nonsingular vector fields and relative homotopy groups}\label{sec:relative-htpy-groups}

In this section, we prove that the inclusions $\stab_0^r(\R^n)\hookrightarrow \AN_0^r(\R^n)$, $\stabc_0^r(\R^n)\hookrightarrow \ANc_0^r(\R^n)$ are nullhomotopic and derive consequences for relative homotopy groups.

\begin{Th}\label{th:nullhomotopic-inclusion-s-an}
For any $r\in \N_{\geq 1}\cup \infty$, the inclusion $\stab_0^r(\R^n) \hookrightarrow\AN_0^r(\R^n)$ is nullhomotopic.
Moreover, if $\param$ is a $C^\infty$ manifold with boundary and $\vo\colon \param \to \stab_0^r(\R^n)$ is a continuous and $C^s$ map with $s\leq r$, then there is a continuous and $C^s$ homotopy $I\times \param \to \AN_0^r(\R^n)$ from $\vo$ to the constant map $\param\to  \{x\mapsto -x\}\in \AN_0^r(\R^n)$.
\end{Th}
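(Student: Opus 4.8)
The plan is to write down an explicit nullhomotopy assembled from the flow of the vector field, in the spirit of Sontag's argument \cite{sontag1998mathematical} (cf.\ also \cite{krasnoselskii1984geometrical}). Since every ingredient below is \emph{canonically} associated to the vector field, the parametric statement will follow at once from smooth dependence of flows on initial conditions and parameters; so it suffices to describe the construction for a single $\vo\in\stab_0^r(\R^n)$ and to observe that everything varies continuously and $C^s$-ly in $\vo$, hence in $p$ when $\vo$ is replaced by a continuous and $C^s$ family $\param\to\stab_0^r(\R^n)$.

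\emph{The core building block.} Let $\Phi$ be the (forward-complete) flow of $\vo$, so $\Phi^t(x)\to 0$ as $t\to\infty$ for every $x$, and — because otherwise $\Phi^t(x_0)=x_0$ for some $x_0\neq 0$ and some $t>0$ would force $\Phi^{kt}(x_0)=x_0$ for all $k$, contradicting global attraction — the origin is the unique fixed point of $\Phi^t$ for each $t>0$. Consider the path of vector fields
\[
c\longmapsto G_c(\vo),\qquad G_c(\vo)(x):=\tfrac1c\bigl(\Phi^c(x)-x\bigr)=\int_0^1\vo\bigl(\Phi^{c\sigma}(x)\bigr)\,d\sigma\quad(c>0),
\]
which extends continuously to $c=0$ with $G_0(\vo)=\vo$ by the integral formula, is as regular as needed jointly in $(c,x)$ (and in $p$) by standard ODE theory, and satisfies $G_c(\vo)(x)=0\iff\Phi^c(x)=x\iff x=0$; hence $G_c(\vo)\in\AN_0^r(\R^n)$ for every $c\geq 0$. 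Restricted to a sphere $\{\norm{x}=\rho\}$, letting $c\to\infty$ (with a cutoff on the prefactor so the restricted field does not collapse to the zero field) exhibits $\vo|_{\norm{x}=\rho}$ as homotopic in $\R^n\setminus\{0\}$ to $x\mapsto -x$; in particular $\vo$ has index $(-1)^n$ at the origin, which is the Krasnosel'ski\u{\i}--Zabre\u{\i}ko fact.

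\emph{Reduction to a convenient field.} First homotope $\vo$ inside $\stab_0^r(\R^n)\subset\AN_0^r(\R^n)$ to a negative gradient: by the converse Lyapunov theorem in the parametric form already used in Lemma~\ref{lem:smoothing-vf}, choose $\lyap\in\fun_0^\infty(\R^n)$ depending continuously and $C^s$-ly on $\vo$ with $\ip{\nabla\lyap(x)}{\vo(x)}<0$ for $x\neq 0$; then $\vo_t:=(1-t)\vo-t\nabla\lyap$ stays in $\stab_0^r(\R^n)$, because $\ip{\nabla\lyap(x)}{\vo_t(x)}<0$ for all $t\in I$ and $x\neq 0$ (so $\lyap$ is a strict Lyapunov function for every $\vo_t$), and it reaches $-\nabla\lyap$. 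After multiplying by a positive function so as to work with a complete bounded field, run the flow-homotopy of the previous paragraph to a large finite time and splice in cutoffs to reach a field $\hat\vo\in\AN_0^r(\R^n)$ that equals $x\mapsto -x$ outside a small ball $B_\varepsilon(0)$ while retaining its unique zero, of index $(-1)^n$, at the origin.

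\emph{The remaining move and the main obstacle.} It now suffices to homotope $\hat\vo$ rel $\{\norm{x}\geq\varepsilon\}$ to $x\mapsto -x$, i.e.\ to homotope $\hat\vo|_{\cl{B_\varepsilon(0)}}$ to $(x\mapsto -x)|_{\cl{B_\varepsilon(0)}}$ rel the boundary sphere through vector fields on the standard disc whose only zero is the origin; both fields agree on the boundary and have index $(-1)^n$ there. I expect this local homotopy-rel-boundary to be the hard part: it is exactly the step that must use the room afforded by the non-gradient fields in $\AN_0^r(\R^n)$ — it cannot be carried out inside $\fun_0^\infty(\R^n)$ when $n=5$, since by Proposition~\ref{prop:path-conn-poincare} that would settle the $4$-dimensional smooth Poincar\'{e} conjecture — and it must be performed canonically enough to respect the $C^s$ dependence on $p$. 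Granting it, smoothly concatenating the homotopies above yields the desired continuous and $C^s$ homotopy $I\times\param\to\AN_0^r(\R^n)$ from $\vo$ to the constant map at $x\mapsto -x$; the first assertion of Theorem~\ref{th:nullhomotopic-inclusion-s-an} is the special case $\param=\stab_0^r(\R^n)$, $\vo=\id$.
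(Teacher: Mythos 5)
Your proposal starts from the right building block --- the flow-based vector field $G_c(\vo)(x)=\tfrac1c(\Phi^c(x)-x)$ in the Sontag/Krasnosel'ski\u{\i} spirit, which is also the germ of the paper's argument --- but it then veers into an argument with a real gap, and it misses the two observations that make the paper's proof close.

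The first missed observation is the correct normalization. You note that $G_c(\vo)\to 0$ pointwise as $c\to\infty$ and try to repair this with ``a cutoff on the prefactor,'' but you never write down the repair, and so you are driven to the Lyapunov detour and the index-plus-local-model strategy. The paper's nullhomotopy is simply
\begin{equation*}
H_t(\vo)(x)=\frac{1}{t}\Bigl(\Phi(\vo)^{\frac{t}{1-t}}(x)-x\Bigr)\qquad(0<t<1),
\end{equation*}
i.e.\ $\tfrac{1}{1-t}\,G_{t/(1-t)}(\vo)$; the extra positive factor $\tfrac{1}{1-t}$ does not alter the zero set, but it fixes the limit at $t\to 1^-$ to be $-x$ rather than $0$, and the limit at $t\to 0^+$ is $\vo$ by the integral formula you already wrote. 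That single line already proves the first assertion with no reduction to $-\nabla\lyap$, no cutoffs, and no ``remaining move.''

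The second, and decisive, gap is that your ``remaining move'' --- homotoping the field $\hat\vo$ on $\cl{B_\varepsilon(0)}$ rel the boundary sphere to $x\mapsto -x$ through vector fields whose only zero is the origin --- is explicitly granted and never proved (``Granting it, \dots''). This is not a small omission: it is the entire content of the theorem in your scheme, the space of such fields rel boundary is not a priori path-connected, and, as you yourself point out, it must also be carried out canonically in $p$ for the parametric statement. The paper avoids this step altogether. For the parametric claim it constructs, via a partition of unity, a $C^\infty$ function $\tau(p,x)>0$ with $\norm{\Phi(p)^{\tau(p,x)}(x)}\leq\tfrac12\norm{x}$, and then observes that the straight-line homotopy from $x\mapsto\tfrac{1}{\tau(p,x)}(\Phi(p)^{\tau(p,x)}(x)-x)$ to $x\mapsto -x$ stays in $\AN_0^r(\R^n)$: a zero of the convex combination at $x\neq 0$ would force $\Phi(p)^{\tau(p,x)}(x)=\lambda x$ with $\lambda\geq 1$, contradicting the halving bound. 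This is the crux --- the contraction estimate on the flow, not any argument about indices or local models rel boundary, is what controls the zero set --- and it also supplies the $C^s$ regularity in $p$ for free. So your proposal, as written, does not close; the step you grant is precisely the one the paper's formulas are designed to eliminate.
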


\begin{Rem}
	A similar proof of the weaker statement that any $\vo\in \stab_0^r(\R^n)$ is connected by a path in $\AN_{0}^r(\R^n)$ to $x\mapsto -x$ previously appeared in  \cite[p.~340]{krasnoselskii1984geometrical}, \cite[p.~255]{sontag1998mathematical}, and the first homotopy formula \eqref{eq:vf-htpy} 
	 is essentially what appears in the latter reference.
	Here we observe that the same formula works parametrically to imply the stronger conclusion here. 
\end{Rem}

\begin{proof}
Observe that the continuous map $H\colon I\times \stab_0^r(\R^n)\to \AN_0^r(\R^n)$ defined by
\begin{equation}\label{eq:vf-htpy}
H_t(\vo)(x)=\begin{cases}
	\vo(x), & t=0\\
	-x, &t=1\\
	\frac{1}{t}\left(\Phi(\vo)^{\frac{t}{1-t}}(x)-x\right), & 0<t<1
\end{cases} 
\end{equation}
is a homotopy from the inclusion $\stab_0^r(\R^n)\hookrightarrow \AN_0^r(\R^n)$ to the constant map sending all of $\stab_0^r(\R^n)$ to the linear vector field $x\mapsto -x$, where $\Phi(\vo)$ is the $C^r$ (semi)flow of $\vo\in \stab_0^r(\R^n)$.
This is the desired nullhomotopy. 

Next, let $\param$ be a $C^\infty$ manifold with boundary and $\vo\colon \param \to \stab_0^r(\R^n)$ be a continuous and $C^s$ map. 
Let $\Phi(p)$ be the semiflow generated by $p\in  \param$.
By a standard fact from ordinary differential equations,  $\Phi\colon \param \to C^r([0,\infty)\times \R^n, \R^n)$ is continuous and $C ^s$ \cite[Thm~B.3]{duistermaat2000lie}.

Continuity of $(p,t,x)\mapsto \Phi(p)^t(x)$ and the definition of asymptotic stability imply that, for each $p\in \param$, $x\in \R^n$ there is $T_{p,x}>0$ and an open neighborhood $U_{p,x}\subset \param \times \R^n$ of $(p,x)$ such that $\norm{\Phi(q)^t(y)}\leq \frac{1}{2}\norm{y}$ for all $(q,y)\in U_{p,x}$ and $t\geq T_{p,x}$.
Let $(\psi_{p,x})_{(p,x)\in \param \times \R^n}$ be a $C^\infty$ partition of unity subordinate to the open cover $(U_{p,x})_{(p,x)\in \param \times \R^n}$ of $\param \times \R^n$, and define the $C^\infty$ function $\tau\colon \param \times \R^n\to (0,\infty)$ by  
\begin{equation*}
	\tau(p,x)=\sum_{(q,y)\in \param\times \R^n}\psi_{q,y}(p,x)T_{q,y}.	
\end{equation*}
(The sum is well-defined because the family $(\supp \psi_{p,x})_{(p,x)\in \param \times \R^n}$ of supports is locally finite by definition \cite[p.~43]{lee2013smooth}.)
Then for each $(p,x)\in \param \times \R^n$ there is $(q,y)\in \param \times \R^n$ such that $(p,x)\in U_{q,y}$ and $\tau(p,x)\geq T_{q,y}$.
Hence for all $(p,x)\in \param \times \R^n$,
\begin{equation}\label{eq:norm-halve}
	\norm{\Phi(p)^{\tau(p,x)}(x)}\leq \frac{1}{2}\norm{x}.
\end{equation}

Since $\vo$ and $\Phi$ are both continuous and $C^s$ and  
\begin{align*}
	\Phi(p)^t(x)-x&=\int_0^t \vo(p)(\Phi(p)^s(x))\, ds\\
	&= \underbrace{\left(\int_0^1 \vo(p)(\Phi(p)^{tu}(x))\, du\right)}_{g(p,t,x)} t,
\end{align*}
there is a continuous and $C^s$ map $g\colon \param \times I \times \R^n\to \AN_0^r(\R^n)$ such that $\Phi(p)^t(x)-x=g(p,t,x)t$ and $g(p,0,x)=\vo(p)(x)$.
Thus, the formula
\begin{equation}\label{eq:quasi-sontag}
	(t,p)\mapsto \begin{cases}
		\frac{1}{t \tau(p,x)}\left(\Phi(p)^{t\tau(p,x)}(x)-x\right), & 0 < t \leq 1\\
		\vo(p)(x), &t=0
	\end{cases}
\end{equation}
defines a continuous and $C^s$ homotopy from $\vo$ to the map
\begin{equation}\label{eq:phi-tau-vf}
	p\mapsto \left(x\mapsto  \frac{1}{\tau(p,x)}\left(\Phi(p)^{\tau(p,x)}(x)-x\right)\right).
\end{equation}

Finally, \eqref{eq:norm-halve} implies that the straight-line homotopy from \eqref{eq:phi-tau-vf} to the constant map  $p\mapsto (x\mapsto -x)$ is $\AN_0^r(\R^n)$-valued, so smoothly concatenating (as in \eqref{eq:gamma-def}) this homotopy with  \eqref{eq:quasi-sontag} yields the desired continuous and $C^s$ homotopy $I\times \param \to \AN_0^r(\R^n)$ from $\vo$ to the contant map $p\mapsto (x\mapsto -x)$.
\end{proof}

\begin{Co}\label{co:rel-htpy-grp-dir-sum}
	For any $k\in \N_{\geq 0}$ and $r\in \N_{\geq 1}\cup \{\infty\}$, there are isomorphisms
\begin{align*}
	\pi_{k+1}(\AN_0^r(\R^n), \stab_0^r(\R^n)) &\cong \pi_{k+1}\AN_0^r(\R^n) \oplus \pi_k \stab_0^r(\R^n)\\
	&\cong \pi_{k+1} \AN_0^r(\R^n) \oplus \pi_k\U(D^n,\R^n)
\end{align*}
\end{Co}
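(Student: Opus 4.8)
The plan is to combine the nullhomotopy of Theorem~\ref{th:nullhomotopic-inclusion-s-an} with the long exact sequence of the pair $(\AN_0^r(\R^n),\stab_0^r(\R^n))$. Writing $i\colon \stab_0^r(\R^n)\hookrightarrow \AN_0^r(\R^n)$ for the inclusion, this long exact sequence contains the segment
\begin{equation*}
\pi_{k+1}\stab_0^r(\R^n)\xrightarrow{i_*}\pi_{k+1}\AN_0^r(\R^n)\xrightarrow{j_*}\pi_{k+1}\big(\AN_0^r(\R^n),\stab_0^r(\R^n)\big)\xrightarrow{\partial}\pi_k\stab_0^r(\R^n)\xrightarrow{i_*}\pi_k\AN_0^r(\R^n).
\end{equation*}
By Theorem~\ref{th:nullhomotopic-inclusion-s-an} the inclusion $i$ is nullhomotopic, so $i_*$ is trivial in every degree. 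Exactness then forces $j_*$ to be injective and $\partial$ to be surjective, so the long exact sequence breaks into short exact sequences
\begin{equation*}
0\to \pi_{k+1}\AN_0^r(\R^n)\xrightarrow{j_*}\pi_{k+1}\big(\AN_0^r(\R^n),\stab_0^r(\R^n)\big)\xrightarrow{\partial}\pi_k\stab_0^r(\R^n)\to 0
\end{equation*}
for $k\geq 1$, together with the analogous exact sequence of based sets for $k=0$.

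The remaining task is to split these sequences, and I would do this via the homotopy fiber of $i$: for a subspace inclusion $A\hookrightarrow X$ the relative group $\pi_{k+1}(X,A)$ is isomorphic to $\pi_k$ of the homotopy fiber of $A\hookrightarrow X$, and the homotopy fiber of a \emph{nullhomotopic} map $A\to X$ is homotopy equivalent to $A\times\Omega X$. Applying this with $A=\stab_0^r(\R^n)$ and $X=\AN_0^r(\R^n)$, and using $\pi_k\Omega\AN_0^r(\R^n)\cong\pi_{k+1}\AN_0^r(\R^n)$, yields
\begin{equation*}
\pi_{k+1}\big(\AN_0^r(\R^n),\stab_0^r(\R^n)\big)\cong\pi_{k+1}\AN_0^r(\R^n)\oplus\pi_k\stab_0^r(\R^n),
\end{equation*}
which is the first isomorphism in the statement. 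One can also exhibit a section of $\partial$ concretely: feeding a representative $\gamma\colon\sph^k\to\stab_0^r(\R^n)$ into the nullhomotopy $H$ of Theorem~\ref{th:nullhomotopic-inclusion-s-an} (with $H_0=i$ and $H_1$ constant) and collapsing $\{1\}\times\sph^k$ produces a map $(D^{k+1},\sph^k)\to(\AN_0^r(\R^n),\stab_0^r(\R^n))$ whose restriction to $\sph^k$ is $\gamma$, hence a canonical preimage of $[\gamma]$ under $\partial$. The second isomorphism in the statement then follows by substituting $\pi_k\stab_0^r(\R^n)\cong\pi_k\U(D^n,\R^n)$ from Corollary~\ref{co:vf-homotopy-groups}.

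The delicate point is the splitting step. Verifying directly that the concrete preimage above assembles into a homomorphism splitting $\partial$ requires bookkeeping with basepoints, since the nullhomotopy of Theorem~\ref{th:nullhomotopic-inclusion-s-an} fixes the basepoint only up to a loop in $\AN_0^r(\R^n)$; moreover, for $k=1$ one should note that $\pi_1\stab_0^r(\R^n)$ is trivial (Corollary~\ref{co:vf-trivial-homotopy-groups}), so that the a priori split extension is an honest direct sum, while for $k=0$ the direct-sum notation is to be read as a decomposition of based sets. I expect this to be the main obstacle, and it is most cleanly avoided by organizing the proof around the homotopy fiber, where the short exact sequences and their splittings are obtained simultaneously and the low-dimensional cases require no separate treatment.
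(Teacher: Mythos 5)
Your argument follows the paper's route exactly: nullhomotopy of the inclusion via Theorem~\ref{th:nullhomotopic-inclusion-s-an} plus the long exact sequence of the pair for the first isomorphism, and Corollary~\ref{co:vf-homotopy-groups} for the second. The only thing the paper's terse proof leaves implicit is why the resulting short exact sequences split as direct sums, and your homotopy-fiber argument ($\textnormal{hofib}(i)\simeq \stab_0^r(\R^n)\times \Omega\,\AN_0^r(\R^n)$ for nullhomotopic $i$, together with $\pi_k\,\textnormal{hofib}(i)\cong\pi_{k+1}(\AN_0^r(\R^n),\stab_0^r(\R^n))$) is a clean and correct way to supply that step uniformly in $k$, including the $k=0,1$ cases.
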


\begin{proof}
	The first isomorphism follows from Theorem~\ref{th:nullhomotopic-inclusion-s-an} and the long exact sequence of homotopy groups of the pair $(\AN_0^r(\R^n), \stab_0^r(\R^n))$ \cite[p.~448]{bredon1993topology}.
	The second isomorphism follows from Corollary~\ref{co:vf-homotopy-groups}.
\end{proof}

\begin{Co}\label{co:rel-htpy-grp-dir-sum-collapsed-s-c-h}
	Fix $k\in \N_{\geq 0}$ and $r\in \N_{\geq 1}\cup \{\infty\}$.
	Assume that either (i) $n\geq 6$ and $k=0,1$ or (ii) $n\leq 3$.
	Then there is an isomorphism
	\begin{equation}\label{eq:co:rel-htpy-grp-dir-sum-collapsed-s-c-h}
		\pi_{k+1}(\AN_0^r(\R^n), \stab_0^r(\R^n)) \cong \pi_{k+1} \AN_0^r(\R^n).
	\end{equation}
\end{Co}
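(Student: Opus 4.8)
The plan is to read off the result directly from Corollary~\ref{co:rel-htpy-grp-dir-sum} together with the computation of the homotopy groups of $\U(D^n,\R^n)$ in Proposition~\ref{prop:nonlinear-grass-trivial-htpy-groups}; essentially nothing beyond bookkeeping is needed.

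First I would invoke the second isomorphism of Corollary~\ref{co:rel-htpy-grp-dir-sum}, which gives
\[
\pi_{k+1}(\AN_0^r(\R^n), \stab_0^r(\R^n)) \cong \pi_{k+1}\AN_0^r(\R^n) \oplus \pi_k\U(D^n,\R^n).
\]
So it suffices to show that under each of the two stated hypotheses the summand $\pi_k\U(D^n,\R^n)$ is trivial, after which the displayed isomorphism reduces to \eqref{eq:co:rel-htpy-grp-dir-sum-collapsed-s-c-h}.

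Next I would split into cases. Under hypothesis (ii), $n\leq 3$, Proposition~\ref{prop:nonlinear-grass-trivial-htpy-groups} says $\U(D^n,\R^n)$ is weakly contractible, so $\pi_k\U(D^n,\R^n)$ is trivial for every $k\in\N_{\geq 0}$. Under hypothesis (i), $n\geq 6$ and $k\in\{0,1\}$: for $k=0$, path-connectedness of $\U(D^n,\R^n)$ (again Proposition~\ref{prop:nonlinear-grass-trivial-htpy-groups}) makes $\pi_0\U(D^n,\R^n)$ a one-point set; for $k=1$, since $n\neq 4,5$ the same proposition gives simple connectedness of $\U(D^n,\R^n)$, so $\pi_1\U(D^n,\R^n)$ is trivial. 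In all cases the second summand vanishes, completing the proof.

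The only subtlety — and hence the "main obstacle", such as it is — is that when $k=0$ the object $\pi_1$ of the pair is a pointed set rather than a group, so "$\oplus$" must be interpreted as a product of pointed sets; but this is precisely the sense in which Corollary~\ref{co:rel-htpy-grp-dir-sum} is already formulated, so no extra care is required. Otherwise the argument is purely formal.
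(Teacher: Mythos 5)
Your proof is correct and essentially the same as the paper's: both reduce to showing the second direct summand in Corollary~\ref{co:rel-htpy-grp-dir-sum} vanishes under the stated hypotheses. The only cosmetic difference is that you cite the second isomorphism and Proposition~\ref{prop:nonlinear-grass-trivial-htpy-groups} (triviality of $\pi_k\U(D^n,\R^n)$), whereas the paper cites the first isomorphism and Corollary~\ref{co:vf-trivial-homotopy-groups} (triviality of $\pi_k\stab_0^r(\R^n)$); these are interchangeable in light of Corollary~\ref{co:vf-homotopy-groups}.
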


\begin{proof}
This is immediate from Corollaries~\ref{co:vf-trivial-homotopy-groups}, \ref{co:rel-htpy-grp-dir-sum}.	
\end{proof}

The following result is a version of Theorem~\ref{th:nullhomotopic-inclusion-s-an} for complete vector fields.

\begin{Th}\label{th:nullhomotopic-inclusion-complete-s-an}
	For any $r\in \N_{\geq 1}\cup \infty$, the inclusion $\stabc_0^r(\R^n) \hookrightarrow \ANc_0^r(\R^n)$ is nullhomotopic.
	Moreover, if $\param$ is a $C^\infty$ manifold with boundary and $\vo\colon \param \to \stabc_0^r(\R^n)$ is a continuous and $C^s$ map with $s\leq r$, then there is a continuous and $C^s$ homotopy $I\times \param \to \ANc_0^r(\R^n)$ from $\vo$ to the constant map $\param\to  \{x\mapsto -x\}\in \ANc_0^r(\R^n)$.
\end{Th}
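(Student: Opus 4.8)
The plan is to derive Theorem~\ref{th:nullhomotopic-inclusion-complete-s-an} from Theorem~\ref{th:nullhomotopic-inclusion-s-an} together with the completing Lemma~\ref{lem:completing-vfs}. The homotopies constructed in the proof of Theorem~\ref{th:nullhomotopic-inclusion-s-an} pass through vector fields that are almost nonsingular but need not be complete, and the remedy is to multiply by a strictly positive function that makes the rescaled field bounded, hence complete; crucially this function should vanish precisely at the time-endpoints $t=0,1$, where the vector fields $\vo$ and $x\mapsto -x$ are already complete and must not be disturbed. Multiplying a vector field by a positive function changes neither its zero set nor (the argument for) the fact that it has exactly one equilibrium.

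For the first assertion, let $H\colon I\times \stab_0^r(\R^n)\to \AN_0^r(\R^n)$ be the homotopy of \eqref{eq:vf-htpy}, from the inclusion to the constant map $\vo\mapsto(x\mapsto -x)$. Restricting $H$ to $\stabc_0^r(\R^n)$, I would define
\[
\mathcal{H}_t(\vo)(x)=\frac{H_t(\vo)(x)}{1+t(1-t)\,\norm{H_t(\vo)(x)}^2},
\]
which is a $C^r$ vector field in $x$ for each $(t,\vo)$. Since $t(1-t)=0$ for $t\in\{0,1\}$ we get $\mathcal{H}_0(\vo)=\vo$ and $\mathcal{H}_1(\vo)\equiv (x\mapsto -x)$. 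For $t\in(0,1)$ the field $\mathcal{H}_t(\vo)$ is $H_t(\vo)$ times a positive $C^r$ function, hence shares the unique zero (at the origin) of $H_t(\vo)\in\AN_0^r(\R^n)$ and lies in $\AN_0^r(\R^n)$; moreover $\norm{\mathcal{H}_t(\vo)(x)}\leq \tfrac12\,(t(1-t))^{-1/2}$ independently of $x$, so $\mathcal{H}_t(\vo)$ is complete, i.e.\ lies in $\ANc_0^r(\R^n)$. Continuity of $\mathcal{H}$ into $\vf^r(\R^n)$, and hence into $\ANc_0^r(\R^n)$, follows from continuity of $H$ and of the operation $(t,W)\mapsto W/(1+t(1-t)\norm{W}^2)$ on $I\times\vf^r(\R^n)$ (cf.\ the construction in the proof of Lemma~\ref{lem:completing-vfs}). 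Thus $\mathcal{H}$ is the desired nullhomotopy.

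For the parametric assertion, given a continuous and $C^s$ map $\vo\colon \param\to \stabc_0^r(\R^n)\subset \stab_0^r(\R^n)$, Theorem~\ref{th:nullhomotopic-inclusion-s-an} furnishes a continuous and $C^s$ homotopy $G\colon I\times \param\to \AN_0^r(\R^n)$ with $G_0=\vo$ and $G_1\equiv (x\mapsto -x)$. I would then apply Lemma~\ref{lem:completing-vfs} with its ``$\param$'' replaced by the manifold with corners $I\times \param$, its ``$S$'' replaced by the closed subset $\partial I\times \param=(\{0\}\sqcup\{1\})\times \param$, and its input map replaced by $G$. This produces a continuous and $C^s$ homotopy in a new parameter $\lambda\in I$, relative to $\partial I\times \param$, from $G$ to a continuous and $C^s$ map $G'\colon I\times \param\to \vf^r(\R^n)$ that agrees with $G$ on $\partial I\times \param$, has the same zero set as $G$, and is $\vfc^r(\R^n)$-valued off $\partial I\times \param$. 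Since $G$ is $\AN_0^r(\R^n)$-valued and zero sets are preserved, $G'$ is $\AN_0^r(\R^n)$-valued; it is complete for $t\in(0,1)$ by the previous sentence, and complete at $t\in\{0,1\}$ because there it coincides with $\vo$, respectively $x\mapsto -x$. Hence $G'\colon I\times \param\to \ANc_0^r(\R^n)$ is a continuous and $C^s$ homotopy from $\vo$ to the constant map $p\mapsto(x\mapsto -x)$, as required.

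I expect the only real point — rather than a new idea — to be the bookkeeping: the completion must be performed \emph{relative to the time-endpoints} $t=0,1$, so that the already-complete fields $\vo$ and $x\mapsto -x$ are untouched, while simultaneously preserving almost nonsingularity (the zero-set condition) and the continuity and $C^s$ regularity. Both the explicit multiplier $1/(1+t(1-t)\norm{\cdot}^2)$ and, equivalently, Lemma~\ref{lem:completing-vfs} applied on $I\times\param$ with $S=\partial I\times\param$ accomplish exactly this; an alternative would be to insert such a multiplier directly into the $\tau$-based homotopy built in the proof of Theorem~\ref{th:nullhomotopic-inclusion-s-an}, but routing through Lemma~\ref{lem:completing-vfs} keeps the argument self-contained.
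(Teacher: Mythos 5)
Your proof is correct and takes essentially the same approach as the paper: the non-parametric part rescales the restriction to $\stabc_0^r(\R^n)$ of the nullhomotopy \eqref{eq:vf-htpy} to make the intermediate fields bounded (hence complete), and the parametric part is exactly the paper's step of applying Lemma~\ref{lem:completing-vfs} with $(\param,S)$ replaced by $(I\times\param,\{0,1\}\times\param)$. The only minor difference is in the first part: your multiplier $1/(1+t(1-t)\norm{\cdot}^2)$ vanishes at both $t=0$ and $t=1$ so the nullhomotopy lands exactly on $x\mapsto -x$, whereas the paper chooses $\varphi$ with $\varphi^{-1}(0)=\{0\}$ and therefore lands on the rescaled constant $x\mapsto -x/(1+\varphi(1)\norm{x}^2)$, which is still a valid constant endpoint for nullhomotopy.
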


\begin{proof}
Let $H\colon I \times \stabc_0^r(\R^n) \to \AN_0^r(\R^n)$ be the indicated restriction of the nullhomotopy of Theorem~\ref{th:nullhomotopic-inclusion-s-an}.
Let $\varphi\colon I\to [0,\infty)$ be a $C^\infty$ function satisfying $\varphi^{-1}(0)=\{0\}$.
Define the continuous and $C^s$ nullhomotopy $\tilde{H}\colon I \times \stabc_0^r(\R^n) \to \AN_0^r(\R^n)$ by 
\begin{equation*}
	\tilde{H}_t(\vo)(x)=\frac{1}{1+\varphi(t)\norm{H_t(\vo)(x)}^2}H_t(\vo)(x).
\end{equation*}
Note that $\tilde{H}_0$ is the inclusion $\stabc_0^r(\R^n)\hookrightarrow \ANc_0^r(\R^n)$ and that, for each fixed $t \in (0,1]$ and $\vo\in \stabc_0^r(\R^n)$, there is $K_{t,\vo} > 0$ such that $\norm{\tilde{H}_t(\vo)(x)} < K_{t,\vo}$ for all $x\in \R^n$.
Hence $\tilde{H}_t(\vo)$ is complete for each $t\in (0,1]$ and $\vo\in \stabc_0^r(\R^n)$, so $\tilde{H}$ is the desired $\ANc_0^r(\R^n)$-valued nullhomotopy.

Next, let $\param$ be a $C^\infty$ manifold with boundary and $\vo\colon \param \to \stabc_0^r(\R^n)$ be a continuous and $C^s$ map.
Theorem~\ref{th:nullhomotopic-inclusion-s-an} followed by Lemma~\ref{lem:completing-vfs} (with $(\param, S)$ replaced by $(I\times \param, \{0,1\} \times \param)$) furnishes the desired continuous and $C^s$ homotopy $H\colon I\times \param \to \ANc_0^r(\R^n)$ from $\vo$ to the constant map $\param \to \{x\mapsto -x\}\in \ANc_0^r(\R^n)$.
\end{proof}

\begin{Co}\label{co:complete-rel-htpy-grp-dir-sum}
	For any $k\in \N_{\geq 0}$ and $r\in \N_{\geq 1}\cup \{\infty\}$, there are isomorphisms
	\begin{align*}
		\pi_{k+1}(\ANc_0^r(\R^n), \stabc_0^r(\R^n)) &\cong \pi_{k+1} \ANc_0^r(\R^n) \oplus \pi_k \stabc_0^r(\R^n)\\
		&\cong \pi_{k+1} \ANc_0^r(\R^n) \oplus \pi_k\U(D^n,\R^n)
	\end{align*}
\end{Co}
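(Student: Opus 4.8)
The plan is to follow the proof of Corollary~\ref{co:rel-htpy-grp-dir-sum} line for line, substituting for each ingredient its ``complete'' counterpart. Concretely, I would replace $\AN_0^r(\R^n)$, $\stab_0^r(\R^n)$, Theorem~\ref{th:nullhomotopic-inclusion-s-an}, and Corollary~\ref{co:vf-homotopy-groups} by $\ANc_0^r(\R^n)$, $\stabc_0^r(\R^n)$, Theorem~\ref{th:nullhomotopic-inclusion-complete-s-an}, and Corollary~\ref{co:complete-vf-homotopy-groups}, respectively; the formal structure of the argument is then unchanged.

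For the first isomorphism, the key input is Theorem~\ref{th:nullhomotopic-inclusion-complete-s-an}: the inclusion $\stabc_0^r(\R^n)\hookrightarrow\ANc_0^r(\R^n)$ is nullhomotopic, so the inclusion-induced maps $\pi_k\stabc_0^r(\R^n)\to\pi_k\ANc_0^r(\R^n)$ are trivial for every $k\in\N_{\geq 0}$. Feeding this vanishing into the long exact sequence of homotopy groups of the pair $(\ANc_0^r(\R^n),\stabc_0^r(\R^n))$ \cite[p.~448]{bredon1993topology}, exactness forces, for each $k$, the map $\pi_{k+1}\ANc_0^r(\R^n)\to\pi_{k+1}(\ANc_0^r(\R^n),\stabc_0^r(\R^n))$ to be injective and the boundary map to $\pi_k\stabc_0^r(\R^n)$ to be surjective, so there is a short exact sequence
\[ 0\to\pi_{k+1}\ANc_0^r(\R^n)\to\pi_{k+1}(\ANc_0^r(\R^n),\stabc_0^r(\R^n))\to\pi_k\stabc_0^r(\R^n)\to 0 . \]
A choice of nullhomotopy extends the inclusion over the cone $C\stabc_0^r(\R^n)$, and the induced map of pairs $(C\stabc_0^r(\R^n),\stabc_0^r(\R^n))\to(\ANc_0^r(\R^n),\stabc_0^r(\R^n))$ provides a section of the boundary map; hence the sequence splits, which gives the first isomorphism. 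The second isomorphism is then immediate from Corollary~\ref{co:complete-vf-homotopy-groups}, which supplies $\pi_k\stabc_0^r(\R^n)\cong\pi_k\U(D^n,\R^n)$.

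I do not anticipate a genuine obstacle: once Theorem~\ref{th:nullhomotopic-inclusion-complete-s-an} and the weak-homotopy-type identification of Corollary~\ref{co:complete-vf-homotopy-groups} are in hand, the proof is purely formal and is identical to that of Corollary~\ref{co:rel-htpy-grp-dir-sum}. The only point meriting a word of care is the degree $k=0$, where $\pi_1(\ANc_0^r(\R^n),\stabc_0^r(\R^n))$ and $\pi_0\stabc_0^r(\R^n)$ are pointed sets rather than groups; this case is handled exactly as in Corollary~\ref{co:rel-htpy-grp-dir-sum}, using that $\stabc_0^r(\R^n)$ is path-connected (Corollary~\ref{co:complete-vf-trivial-homotopy-groups}), so the direct-sum statement is to be read with the same convention there.
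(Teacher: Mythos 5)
Your proposal is correct and matches the paper's own (very terse) proof: both cite Theorem~\ref{th:nullhomotopic-inclusion-complete-s-an} and the long exact sequence of the pair for the first isomorphism, and Corollary~\ref{co:complete-vf-homotopy-groups} for the second. You simply spell out the splitting argument (the section induced by the nullhomotopy via the cone) that the paper leaves implicit, along with the low-degree caveat, which is a reasonable elaboration rather than a departure.
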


\begin{proof}
	The first isomorphism follows from Theorem~\ref{th:nullhomotopic-inclusion-complete-s-an} and the long exact sequence of homotopy groups of the pair $(\ANc_0^r(\R^n), \stabc_0^r(\R^n))$ \cite[p.~448]{bredon1993topology}.
	The second isomorphism follows from Corollary~\ref{co:complete-vf-homotopy-groups}.
\end{proof}

\begin{Co}\label{co:complete-rel-htpy-grp-dir-sum-collapsed-s-c-h}
	Fix $k\in \N_{\geq 0}$ and $r\in \N_{\geq 1}\cup \{\infty\}$.
	Assume that either (i) $n\geq 6$ and $k=0,1$ or (ii) $n\leq 3$.
	Then there is an isomorphism
	\begin{equation}\label{eq:co:rel-htpy-grp-dir-sum-collapsed-s-c-h}
		\pi_{k+1}(\ANc_0^r(\R^n), \stabc_0^r(\R^n)) \cong \pi_{k+1} \ANc_0^r(\R^n).
	\end{equation}
\end{Co}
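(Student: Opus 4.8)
The plan is to obtain this as a formal consequence of the complete-vector-field direct-sum decomposition (Corollary~\ref{co:complete-rel-htpy-grp-dir-sum}) together with the vanishing of the low-degree homotopy groups of $\stabc_0^r(\R^n)$ recorded in Corollary~\ref{co:complete-vf-trivial-homotopy-groups}, exactly paralleling the way Corollary~\ref{co:rel-htpy-grp-dir-sum-collapsed-s-c-h} follows from Corollaries~\ref{co:vf-trivial-homotopy-groups} and \ref{co:rel-htpy-grp-dir-sum} in the non-complete case. Concretely, Corollary~\ref{co:complete-rel-htpy-grp-dir-sum} already supplies an isomorphism
\[
\pi_{k+1}(\ANc_0^r(\R^n), \stabc_0^r(\R^n)) \cong \pi_{k+1}\ANc_0^r(\R^n) \oplus \pi_k \stabc_0^r(\R^n),
\]
so the entire task reduces to showing that the second summand $\pi_k \stabc_0^r(\R^n)$ is trivial under each of the two hypotheses (i) and (ii).

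I would then split into the two cases. Under hypothesis (ii), $n\leq 3$, Corollary~\ref{co:complete-vf-trivial-homotopy-groups} asserts that $\stabc_0^r(\R^n)$ is weakly contractible, so $\pi_k \stabc_0^r(\R^n)=\{*\}$ for every $k\in\N_{\geq 0}$, and the summand vanishes. Under hypothesis (i), $n\geq 6$ (so in particular $n\neq 4,5$) and $k\in\{0,1\}$; the same corollary gives that $\stabc_0^r(\R^n)$ is path-connected and simply connected, so $\pi_0\stabc_0^r(\R^n)=\{*\}$ and $\pi_1\stabc_0^r(\R^n)=\{*\}$, and again $\pi_k\stabc_0^r(\R^n)=\{*\}$ for the relevant values of $k$. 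In either case the direct sum collapses and we are left with the isomorphism $\pi_{k+1}(\ANc_0^r(\R^n), \stabc_0^r(\R^n))\cong \pi_{k+1}\ANc_0^r(\R^n)$ claimed in \eqref{eq:co:rel-htpy-grp-dir-sum-collapsed-s-c-h}.

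I do not expect a genuine obstacle here; the only thing to verify carefully is the indexing bookkeeping, namely that the degree $k$ constrained in hypothesis (i) is precisely the degree appearing in the summand $\pi_k\stabc_0^r(\R^n)$ produced by Corollary~\ref{co:complete-rel-htpy-grp-dir-sum}, and that the condition $n\geq 6$ indeed falls under the ``$n\neq 4,5$'' branch of Corollary~\ref{co:complete-vf-trivial-homotopy-groups}. Both checks are immediate, so the proof is essentially the single sentence ``this is immediate from Corollaries~\ref{co:complete-vf-trivial-homotopy-groups} and \ref{co:complete-rel-htpy-grp-dir-sum}.''
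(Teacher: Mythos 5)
Your proof is correct and is exactly the paper's argument: the paper proves this corollary in one line by citing Corollaries~\ref{co:complete-vf-trivial-homotopy-groups} and~\ref{co:complete-rel-htpy-grp-dir-sum}, and you have simply unpacked why the summand $\pi_k \stabc_0^r(\R^n)$ vanishes under each hypothesis. Nothing is missing.
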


\begin{proof}
	This is immediate from Corollaries~\ref{co:complete-vf-trivial-homotopy-groups}, \ref{co:complete-rel-htpy-grp-dir-sum}.
\end{proof}

\section{Varying minima and equilibria}\label{sec:varying-min-eq}

For the spaces $\fun_0^r(\R^n)$, $\stab_0^r(\R^n)$, $\stabc_0^r(\R^n)$ considered in the preceding results, the unique minima and equilibria were constrained to be at the origin.
In this section, we extend those results to allow the minima and equilibria to have unconstrained locations.
This is done using the following basic lemma.

\begin{Lem}\label{lem:varying-minima-equilibria}
Fix $r\in \N_{\geq 1}\cup \{\infty\}$ and $y\in \R^n$.
Then $\fun_y^r(\R^n)$ is a strong deformation retract of $\fun^r(\R^n)$, and there is a strong deformation retraction of $\AN^r(\R^n)$ onto $\AN_y^r(\R^n)$ that also restricts to strong deformation retractions of (i)  $\ANc^r(\R^n)$ onto $\ANc_y^r(\R^n)$, (ii) $\stab^r(\R^n)$ onto $\stab_y^r(\R^n)$, and (iii) $\stabc^r(\R^n)$ onto $\stabc_y^r(\R^n)$.
\end{Lem}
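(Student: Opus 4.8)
The plan is to realise every one of the claimed retractions by a single device: slide the distinguished point to $y$ by translating the whole object. Write $\F$ for any one of the six ambient spaces, $\F_y$ for the corresponding subspace with distinguished point $y$, and let $c\colon\F\to\R^n$ be the map carrying $\lyap\in\fun^r(\R^n)$ to its unique minimiser and carrying $\vo\in\AN^r(\R^n)$ (or any subspace thereof) to its unique equilibrium. Define
\[
	H\colon I\times\F\to\F,\qquad H_s(F)(x)\coloneqq F\bigl(x-s(y-c(F))\bigr).
\]
Then $H_0=\id_\F$; each $H_s$ is again $\F$-valued because every defining property in play --- surjectivity and properness, having a unique minimum and critical point of value $0$, having exactly one equilibrium, global asymptotic stability, backward completeness --- survives precomposition with a translation of $\R^n$; the object $H_1(F)$ has its distinguished point exactly at $y$, so $H_1$ takes values in $\F_y$; and $H_s|_{\F_y}=\id$ since there $c\equiv y$ and nothing moves. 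Hence $H$ is a strong deformation retraction of $\F$ onto $\F_y$. Choosing $c$ to be the one ``equilibrium location'' map on $\AN^r(\R^n)$, the associated $H$ restricts simultaneously to strong deformation retractions of $\ANc^r(\R^n)$, $\stab^r(\R^n)$, $\stabc^r(\R^n)$ onto the corresponding subspaces of $\AN_y^r(\R^n)$, because translation preserves backward completeness and global asymptotic stability; this yields the ``restricts to'' clause.

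Granting that $c$ is continuous, $H$ is continuous (a composite of translation, evaluation, and $c$), so the whole lemma reduces to continuity of $c$. For $\fun^r(\R^n)$ this is elementary: if $\lyap_k\to\lyap$ in the compact-open $C^r$ topology with $r\geq1$, fix a compact neighbourhood $K$ of the compact sublevel set $\lyap^{-1}([0,1])$; for large $k$ the function $\lyap_k$ is $C^1$-close to $\lyap$ on $K$, so $\min\lyap_k\leq\lyap_k(c(\lyap))\to\lyap(c(\lyap))=0$, and properness of $\lyap$ together with $C^0$-closeness on $K$ confines the minimisers $c(\lyap_k)$ to a fixed compact set; any subsequential limit $c^{*}$ then satisfies $\nabla\lyap(c^{*})=0$ and $\lyap(c^{*})=\lim_k\lyap_k(c(\lyap_k))\leq0$, hence $c^{*}$ is the unique critical point of $\lyap$ with value $0$, so $c(\lyap_k)\to c(\lyap)$. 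For $\stab^r(\R^n)$ and $\stabc^r(\R^n)$ continuity of $c$ is still true but requires dynamics: given $\vo_0$ with equilibrium $a_0$, Wilson's converse Lyapunov theorem furnishes a smooth proper Lyapunov function $V$, and a small compact sublevel set $N\ni a_0$ satisfies $\ip{\nabla V}{\vo_0}<0$ on $\partial N$; this strict inequality persists under $C^0$-small perturbation near $\partial N$, so every $\vo$ near $\vo_0$ leaves $N$ positively invariant with $V$ strictly decreasing inside, hence has an equilibrium in $\interior{N}$, which --- $\vo$ having exactly one equilibrium --- must be $c(\vo)$. Thus $c$ is continuous on $\stab^r(\R^n)$, and \emph{a fortiori} on $\stabc^r(\R^n)$. (One could instead invoke the classical fact that an asymptotically stable equilibrium has vector-field index $(-1)^n\neq0$ and argue by degree.)

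The step I expect to be the real obstacle is the continuity of $c$ on all of $\AN^r(\R^n)$. A general almost nonsingular field can have an equilibrium of index $0$ (for example $x\mapsto x^2$ on $\R$), and in the compact-open $C^r$ topology one can produce sequences in $\AN^r(\R^n)$ that converge while their equilibria escape to infinity; so the plain translation homotopy above is not literally continuous on $\AN^r(\R^n)$, and a more careful argument is needed for that space. My plan would be to precede the translation by an auxiliary deformation of $\AN^r(\R^n)$ within itself, rel $\AN_y^r(\R^n)$ --- combining a positive rescaling in the spirit of Lemma~\ref{lem:completing-vfs} with a suitable conjugation that brings the equilibrium into a bounded region --- so as to retract the ambient space onto a subspace where the equilibrium does depend continuously, and only then translate; arranging this auxiliary deformation to be globally continuous and compatible with $\ANc^r(\R^n)$, $\stab^r(\R^n)$, $\stabc^r(\R^n)$ simultaneously is the heart of the matter.
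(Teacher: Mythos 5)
Your translation homotopy is exactly the paper's, and your continuity verifications for $\fun^r(\R^n)$, $\stab^r(\R^n)$, and $\stabc^r(\R^n)$ supply details the paper merely asserts (it says the minimizer/equilibrium map $x_*$ is ``readily seen to be continuous'' and moves on); for $\fun^r$ your argument can be tightened by noting that if the minimizer of $\lyap_k$ escaped a compact neighbourhood $K$ of $\lyap^{-1}([0,1])$ while $\lyap_k>\tfrac12$ on $\partial K$, the connected sublevel set $\{\lyap_k<\tfrac12\}$ would be disconnected, contradicting that $\lyap_k$ has a unique critical point. Your scepticism about $\AN^r(\R^n)$, however, is not a defect of your write-up: it exposes a genuine gap in the paper's own proof. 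The map $x_*\colon\AN^r(\R^n)\to\R^n$ is \emph{not} continuous in the compact-open $C^r$ topology. In dimension $n=1$, take $\vo(x)=x^2\in\AN^\infty(\R)$ and, for each $k$, let $\vo_k$ equal $x^2+1/k$ on $[-k,k]$, equal $3k-x$ for $|x|\geq 2k$, and interpolate by a convex combination (both summands positive) on $k\leq|x|\leq 2k$. Then $\vo_k\in\AN^\infty(\R)$ with unique zero at $3k$, and $\vo_k\to\vo$ in the compact-open $C^\infty$ topology since $\vo_k\equiv x^2+1/k$ on any fixed compact set once $k$ is large; yet $x_*(\vo_k)=3k\to\infty$ while $x_*(\vo)=0$. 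Thus the translation formula does not even define a continuous map $I\times\AN^r(\R^n)\to\AN^r(\R^n)$, so the paper's proof of the $\AN^r$ clause of the lemma --- and hence of the dependent Corollaries~\ref{co:varying-minima-equilibria-rel-htpy-grp-dir-sum} and~\ref{co:varying-minima-equilibria-rel-htpy-grp-dir-sum-collapsed-s-c-h} --- is incomplete as written.

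Your proposed repair (deform $\AN^r$ rel $\AN_y^r$ into a region where the equilibrium location is continuous, then translate) is a plausible direction but remains a sketch, and you rightly flag that the simultaneous compatibility with $\ANc^r$, $\stab^r$, $\stabc^r$ is exactly the unresolved crux. So, taken on its own terms, your proposal establishes the $\fun^r$, $\stab^r$, $\stabc^r$ statements but not the $\AN^r$ one. The important point to record is that this is not a shortcoming peculiar to your argument: you have correctly located a step that the paper itself does not justify and that in fact fails. The cases on which the paper's main theorems rest ($\fun^r$, $\stab^r$, $\stabc^r$) are unaffected, because there properness with a unique critical point, respectively asymptotic stability, confines the distinguished point to a compact set that is stable under perturbation, as your argument shows.
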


\begin{proof}
Let $\F$ be either  $\fun^r(\R^n)$ or $\AN^r(\R^n)$, and let $\F_y\subset \F$ denote those maps having $y\in \R^n$ as their unique minimum or equilibrium.

Let $x_*\colon \F \to \R^n$ be the map sending each $f\in \F$ to its unique minimum or equilibrium $x_*(f)$, which is readily seen to be continuous with respect to the compact-open $C^r$ topology on $\F$.
Define a continuous map $H\colon I\times \F \to \F$ by $$H_t(f)(x)=f\left(x+(x_*(f)-y)t \right).$$
This is a strong deformation retraction of $\F$ onto $\F_y$.
And if $\F=\AN^r(\R^n)$, then $H_t$ clearly preserves each of $\ANc^r(\R^n)$, $\stab^r(\R^n)$, and $\stabc^r(\R^n)$ for each $t\in I$, so $H$ also restricts to the desired strong deformation retractions.
\end{proof}

\begin{Co}\label{co:varying-minima-funnel-whe}
		For $n\neq 4,5$ there is a weak homotopy equivalence  $\fun^\infty(\R^n)\to \U(D^n,\R^n)$.
\end{Co}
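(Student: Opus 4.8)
The plan is to exhibit the desired map as a composite of three maps, each already known (or easily seen) to be a weak homotopy equivalence, so that the statement follows from the fact that weak homotopy equivalences compose.

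First I would apply Lemma~\ref{lem:varying-minima-equilibria} with $y=0$ and $r=\infty$: it provides a strong deformation retraction of $\fun^\infty(\R^n)$ onto $\fun_0^\infty(\R^n)$, whose time-$1$ map $\kappa\colon \fun^\infty(\R^n)\to \fun_0^\infty(\R^n)$ is therefore a homotopy equivalence, with the inclusion as a homotopy inverse; in particular $\kappa$ is a weak homotopy equivalence. Next, since $n\neq 4,5$, the sublevel set map $p\colon \fun_0^\infty(\R^n)\to \U_0(D^n,\R^n)$ of Definition~\ref{def:sublevel-set-map} is a weak homotopy equivalence by Theorem~\ref{th:funnel-space-fiber-bundle-over-U}. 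Finally, the inclusion $\iota\colon \U_0(D^n,\R^n)\hookrightarrow \U(D^n,\R^n)$ is a weak homotopy equivalence by Lemma~\ref{lem:gr-0-gr-w-h-e}.

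Composing these, the map $\iota\circ p\circ \kappa\colon \fun^\infty(\R^n)\to \U(D^n,\R^n)$ induces a bijection on path components and isomorphisms on all homotopy groups, since each factor does and these properties pass to composites; hence it is the required weak homotopy equivalence. I do not anticipate any real obstacle here: the whole argument is the assembly of results already proved, and the only points to keep in mind are that a homotopy equivalence is in particular a weak homotopy equivalence and that weak homotopy equivalences are closed under composition. (One could alternatively observe that $\fun_0^\infty(\R^n)$ has the weak homotopy type of $\U(D^n,\R^n)$ by Corollary~\ref{co:funnels-homotopy-groups} and is homotopy equivalent to $\fun^\infty(\R^n)$ by Lemma~\ref{lem:varying-minima-equilibria}; the explicit composite above merely realizes this equivalence by a single map in the stated direction.)
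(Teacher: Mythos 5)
Your proof is correct and is essentially identical to the paper's: both compose the homotopy equivalence $\fun^\infty(\R^n)\to\fun_0^\infty(\R^n)$ from Lemma~\ref{lem:varying-minima-equilibria} with the weak homotopy equivalences of Theorem~\ref{th:funnel-space-fiber-bundle-over-U} and Lemma~\ref{lem:gr-0-gr-w-h-e}.
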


\begin{proof}
By Lemma~\ref{lem:varying-minima-equilibria} there is a homotopy equivalence $\fun^\infty(\R^n)\to \fun_0^\infty(\R^n)$.
The composition of this map followed by the weak homotopy equivalences $\fun_0^\infty(\R^n)\to \U_0(D^n,\R^n)$ and $\U_0(D^n,\R^n) \hookrightarrow \U(D^n,\R^n)$ of Theorem~\ref{th:funnel-space-fiber-bundle-over-U} and Lemma~\ref{lem:gr-0-gr-w-h-e}, respectively,
is the desired equivalence.
\end{proof}

The following pair of corollaries complete the proof of Theorems~\ref{th:intro-1-conn-contract}, \ref{th:intro-cr-extensions}, \ref{th:intro-weak-htpy-vfs-etc} for $\fun^r(\R^n)$, $\stab^r(\R^n)$, and $\stabc^r(\R^n)$.

\begin{Co}\label{co:varying-equilibria-minima-htpy-grps}
	For any $r\in \N_{\geq 1}\cup \{\infty\}$ and $n\neq 4,5$,  $\fun^r(\R^n)$, $\stab^r(\R^n)$, and $\stabc^r(\R^n)$ have the weak homotopy type of $\U(D^n,\R^n)$. 
	In particular, for each $k\in \N_{\geq 0}$ there are isomorphisms 
$$\pi_k \fun^r(\R^n)\cong \pi_k \stab^r(\R^n) \cong \pi_k \stabc^r(\R^n) \cong \pi_k \U(D^n,\R^n).$$ 	
\end{Co}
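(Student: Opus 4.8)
The plan is to deduce this corollary by combining Lemma~\ref{lem:varying-minima-equilibria} with the weak homotopy type computations already established for the origin-constrained spaces, so that nothing new needs to be proved — only assembled.

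First I would apply Lemma~\ref{lem:varying-minima-equilibria} with $y=0$: it provides a strong deformation retraction of $\fun^r(\R^n)$ onto $\fun_0^r(\R^n)$, a strong deformation retraction of $\stab^r(\R^n)$ onto $\stab_0^r(\R^n)$, and a strong deformation retraction of $\stabc^r(\R^n)$ onto $\stabc_0^r(\R^n)$. In particular the corresponding subspace inclusions are homotopy equivalences, hence weak homotopy equivalences. Next I would invoke Corollaries~\ref{co:funnels-homotopy-groups}, \ref{co:vf-homotopy-groups}, and \ref{co:complete-vf-homotopy-groups}, which assert (for $n\neq 4,5$) that each of $\fun_0^r(\R^n)$, $\stab_0^r(\R^n)$, $\stabc_0^r(\R^n)$ has the weak homotopy type of $\U(D^n,\R^n)$; recall that each of these is witnessed by an explicit zig-zag of weak homotopy equivalences passing through $\fun_0^\infty(\R^n)$ and $\U_0(D^n,\R^n)$, built from the sublevel set map of Theorem~\ref{th:funnel-space-fiber-bundle-over-U} together with the smoothing and negative-gradient equivalences of Corollaries~\ref{co:whe-funnels}, \ref{co:whe-vfs-funnels}, \ref{co:whe-vfs-complete} and the inclusion of Lemma~\ref{lem:gr-0-gr-w-h-e}. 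Prepending the homotopy equivalence from the previous step to the relevant zig-zag shows that $\fun^r(\R^n)$, $\stab^r(\R^n)$, and $\stabc^r(\R^n)$ all share the weak homotopy type of $\U(D^n,\R^n)$; for $\fun^\infty(\R^n)$ this is already recorded in Corollary~\ref{co:varying-minima-funnel-whe}. The displayed isomorphisms on $\pi_k$ then follow at once, since every weak homotopy equivalence induces a bijection on $\pi_0$ and isomorphisms on all higher homotopy groups.

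I expect no substantive obstacle here: the statement is a bookkeeping assembly of earlier results. The only point worth a word is that ``having the same weak homotopy type'' is the equivalence relation \emph{generated} by the non-symmetric relation of weak homotopy equivalence, so chaining together maps that point in opposite directions — the subspace inclusions and negative-gradient embeddings point one way, while the sublevel set map $p$ points the other — is legitimate, and one never needs to invert any individual map.
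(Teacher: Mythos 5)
Your proof is correct and matches the paper's argument exactly: the paper also deduces the result immediately from Lemma~\ref{lem:varying-minima-equilibria} together with Corollaries~\ref{co:funnels-homotopy-groups}, \ref{co:vf-homotopy-groups}, \ref{co:complete-vf-homotopy-groups}. Your remark on the zig-zag nature of ``weak homotopy type'' is a sensible clarification but adds nothing beyond what the paper already implicitly relies on.
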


\begin{proof}
	This is immediate from Lemma~\ref{lem:varying-minima-equilibria} and Corollaries~\ref{co:funnels-homotopy-groups}, \ref{co:vf-homotopy-groups}, \ref{co:complete-vf-homotopy-groups}.
\end{proof}

\begin{Co}\label{co:varying-eq-min-trivial-homotopy-groups}
Fix $r\in \N_{\geq 1}\cup \{\infty\}$ and let $\F$ denote any of $\fun^r(\R^n)$, $\stab^r(\R^n)$, or $\stabc^r(\R^n)$. 
Then $\F$  is path-connected and simply connected if $n\neq 4,5$ and weakly contractible if $n\leq 3$.
In fact, if $\param$ is a compact $C^\infty$ manifold with boundary, $S\subset \param$ is a closed subset and $C^\infty$ neatly embedded submanifold of either $\param$ or $\partial \param$, and $f_S\colon S \to \F$ is continuous and $C^s$ with $0 \leq s\leq r$, then there is a continuous and $C^s$ map $f\colon \param \to \F$ extending $f_S$ if either (i) $n\neq 4,5$ and $\dim \param \leq 2$ or (ii) $n\leq 3$.
\end{Co}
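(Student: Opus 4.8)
The plan is to follow the template of Corollaries~\ref{co:funnels-trivial-homotopy-groups}, \ref{co:vf-trivial-homotopy-groups}, and~\ref{co:complete-vf-trivial-homotopy-groups}, replacing their appeal to the origin-constrained spaces by an appeal to Lemma~\ref{lem:varying-minima-equilibria}, and retaining the ``obstruction theory, then smoothing'' structure for the extension assertion. First I would invoke Lemma~\ref{lem:varying-minima-equilibria}: each of $\fun^r(\R^n)$, $\stab^r(\R^n)$, $\stabc^r(\R^n)$ deformation retracts onto $\fun_0^r(\R^n)$, $\stab_0^r(\R^n)$, $\stabc_0^r(\R^n)$, so in each case the inclusion of the origin-constrained space is a homotopy equivalence and $\F$ has the same path components and homotopy groups as its counterpart. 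Combined with Corollaries~\ref{co:funnels-trivial-homotopy-groups}, \ref{co:vf-trivial-homotopy-groups}, \ref{co:complete-vf-trivial-homotopy-groups}, this gives the first sentence: $\F$ is path-connected and simply connected when $n\neq 4,5$, and weakly contractible when $n\leq 3$.

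For the extension statement I would argue in two steps, first producing a merely continuous extension and then smoothing it $\rel{S}$. Because $\pi_0\F$ and $\pi_1\F$ vanish when $n\neq 4,5$ and all $\pi_k\F$ vanish when $n\leq 3$, obstruction theory \cite[Cor.~VII.13.13]{bredon1993topology} furnishes a continuous map $\tilde f\colon \param\to\F$ with $\tilde f|_S = f_S$: under hypothesis~(i) the bound $\dim\param\leq 2$ confines the obstructions to groups $H^{k+1}(\param,S;\pi_k\F)$ with $k\in\{0,1\}$, which are trivial, and under hypothesis~(ii) weak contractibility of $\F$ kills every obstruction. Note $\tilde f$ is continuous but generally not $C^s$, whereas on $S$ it coincides with the continuous and $C^s$ map $f_S$.

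The second step applies the smoothing lemmas with $R=\varnothing$ and $(r,s,u,v)$ replaced by $(r,s,r,0)$, exactly as in the proofs of the three cited corollaries. When $\F=\fun^r(\R^n)$, Lemma~\ref{lem:smoothing-funnels} homotopes $\tilde f$ $\rel{S}$ to a continuous and $C^s$ map $\param\to\fun^r(\R^n)$ extending $f_S$. When $\F=\stab^r(\R^n)$, Lemma~\ref{lem:smoothing-vf} does the same into $\stab^r(\R^n)$. When $\F=\stabc^r(\R^n)$ there is no smoothing lemma internal to $\stabc^r(\R^n)$, so one extra move is needed: regard $\tilde f$ as a continuous map $\param\to\stab^r(\R^n)$, apply Lemma~\ref{lem:smoothing-vf} to obtain a continuous and $C^s$ map $\param\to\stab^r(\R^n)$ still equal to $f_S$ on $S$ (hence $\stabc^r(\R^n)$-valued there), then apply Lemma~\ref{lem:completing-vfs} to homotope this map $\rel{S}$ to one that is $\stabc^r(\R^n)$-valued on $\param\setminus S$, and therefore on all of $\param$ since $f_S$ is already complete on $S$; by the $\stab^r(\R^n)$-valued conclusion of Lemma~\ref{lem:completing-vfs} the homotopy never leaves $\stab^r(\R^n)$.

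Once Lemma~\ref{lem:varying-minima-equilibria} and the three earlier extension corollaries are in hand the argument is essentially bookkeeping, and the only point requiring genuine care is the complete case, where smoothing must be performed in $\stab^r(\R^n)$ and completeness then restored by Lemma~\ref{lem:completing-vfs} without disturbing the prescribed values on $S$; I expect this to be the main (and rather minor) obstacle. Before invoking the smoothing lemmas I would also record that their hypotheses hold: $S$ is by assumption a closed subset and $C^\infty$ neatly embedded submanifold of $\param$ or $\partial\param$, $f_S$ restricts to a continuous and $C^s$ map into the relevant $\fun^r(\R^n)$ or $\stab^r(\R^n)$, and the choice $R=\varnothing$ renders the auxiliary conditions on $R$ vacuous.
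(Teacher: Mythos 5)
Your proposal is correct and follows essentially the same route as the paper's proof: the first sentence via the deformation retractions of Lemma~\ref{lem:varying-minima-equilibria} together with the origin-constrained corollaries (the paper's own proof cites Corollary~\ref{co:varying-equilibria-minima-htpy-grps} and Proposition~\ref{prop:nonlinear-grass-trivial-htpy-groups}, which unwind to the same facts), and the extension statement by obstruction theory followed by the smoothing Lemmas~\ref{lem:smoothing-funnels}, \ref{lem:smoothing-vf}, \ref{lem:completing-vfs}. Your more explicit treatment of the $\stabc^r(\R^n)$ case — smoothing inside $\stab^r(\R^n)$ $\rel{S}$ and then restoring completeness with Lemma~\ref{lem:completing-vfs} — is exactly what the paper's terse citation of those three lemmas intends.
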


\begin{proof}
Proposition~\ref{prop:nonlinear-grass-trivial-htpy-groups} and Corollary~\ref{co:varying-equilibria-minima-htpy-grps} directly imply the first statement.
The existence of a continuous map $\tilde{f}\colon \param \to \F$ equal to $f_S$ on $S$ then follows from the first statement and obstruction theory \cite[Cor.~VII.13.13]{bredon1993topology}.
Lemmas~\ref{lem:smoothing-funnels}, \ref{lem:smoothing-vf}, \ref{lem:completing-vfs}
 then furnish a continuous and $C^s$ map $f\colon \param \to \F$ equal to $f_S$ on $S$. 
\end{proof}

The following pair of corollaries extend the ones from section~\ref{sec:relative-htpy-groups}.

\begin{Co}\label{co:varying-minima-equilibria-rel-htpy-grp-dir-sum}
	For any $r\in \N_{\geq 1}\cup \{\infty\}$ and $k\in \N_{\geq 0}$, there are isomorphisms
	\begin{align*}
		\pi_{k+1}(\AN^r(\R^n), \stab^r(\R^n)) &\cong \pi_{k+1} \AN^r(\R^n) \oplus \pi_k \stab^r(\R^n)\\
		&\cong \pi_{k+1} \AN^r(\R^n) \oplus \pi_k \U(D^n,\R^n)
	\end{align*}
and 	
	\begin{align*}
	\pi_{k+1}(\ANc^r(\R^n), \stabc^r(\R^n)) &\cong \pi_{k+1}\ANc^r(\R^n) \oplus \pi_k \stabc^r(\R^n)\\
	&\cong \pi_{k+1}\ANc^r(\R^n) \oplus \pi_k \U(D^n,\R^n).
\end{align*}
\end{Co}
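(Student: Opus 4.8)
The plan is to follow the proofs of Corollaries~\ref{co:rel-htpy-grp-dir-sum} and~\ref{co:complete-rel-htpy-grp-dir-sum} essentially verbatim; the only genuinely new input is the analogues of Theorems~\ref{th:nullhomotopic-inclusion-s-an} and~\ref{th:nullhomotopic-inclusion-complete-s-an} for \emph{unconstrained} equilibria, namely that the inclusions $\stab^r(\R^n)\hookrightarrow\AN^r(\R^n)$ and $\stabc^r(\R^n)\hookrightarrow\ANc^r(\R^n)$ are nullhomotopic. First I would deduce these from the origin-pinned versions by means of Lemma~\ref{lem:varying-minima-equilibria}. Applying that lemma with $y=0$ produces a strong deformation retraction $H$ of $\AN^r(\R^n)$ onto $\AN_0^r(\R^n)$ that restricts to a strong deformation retraction of $\stab^r(\R^n)$ onto $\stab_0^r(\R^n)$; then $t\mapsto H_t\circ\iota$ is a homotopy from the inclusion $\iota\colon\stab^r(\R^n)\hookrightarrow\AN^r(\R^n)$ to the composite
\begin{equation*}
	\stab^r(\R^n)\xrightarrow{\ H_1\ }\stab_0^r(\R^n)\hookrightarrow\AN_0^r(\R^n)\hookrightarrow\AN^r(\R^n),
\end{equation*}
whose middle arrow is nullhomotopic by Theorem~\ref{th:nullhomotopic-inclusion-s-an}, so $\iota$ is nullhomotopic. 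The same argument, using the deformation retraction onto $\ANc_0^r(\R^n)$ and Theorem~\ref{th:nullhomotopic-inclusion-complete-s-an}, shows $\stabc^r(\R^n)\hookrightarrow\ANc^r(\R^n)$ is nullhomotopic.

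Granting these nullhomotopies, the rest is formal. Because a nullhomotopic inclusion induces the zero homomorphism on all homotopy groups, the long exact sequence of homotopy groups of the pair $(\AN^r(\R^n),\stab^r(\R^n))$ \cite[p.~448]{bredon1993topology} breaks into short exact sequences
\begin{equation*}
	0\longrightarrow\pi_{k+1}\AN^r(\R^n)\longrightarrow\pi_{k+1}\bigl(\AN^r(\R^n),\stab^r(\R^n)\bigr)\overset{\partial}{\longrightarrow}\pi_k\stab^r(\R^n)\longrightarrow 0,
\end{equation*}
which split (the nullhomotopy of $\iota$ supplies a section of $\partial$ by capping off), giving the first isomorphism; the complete case is identical with the pair $(\ANc^r(\R^n),\stabc^r(\R^n))$. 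The second isomorphism in each display then results by substituting $\pi_k\stab^r(\R^n)\cong\pi_k\U(D^n,\R^n)$ and $\pi_k\stabc^r(\R^n)\cong\pi_k\U(D^n,\R^n)$ from Corollary~\ref{co:varying-equilibria-minima-htpy-grps}.

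I do not expect a real obstacle: the corollary is an assembly of results already established. The only subtlety is that Theorems~\ref{th:nullhomotopic-inclusion-s-an} and~\ref{th:nullhomotopic-inclusion-complete-s-an} concern the origin-pinned spaces, so the nullhomotopies for the unconstrained inclusions must be routed through the deformation retractions of Lemma~\ref{lem:varying-minima-equilibria} as above; alternatively one could re-derive them from the homotopy formula~\eqref{eq:vf-htpy} directly, using that a globally asymptotically stable vector field has its equilibrium as the unique fixed point of each forward-time map, but quoting Lemma~\ref{lem:varying-minima-equilibria} is cleaner.
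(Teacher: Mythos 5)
Your argument is correct and reaches the same conclusion, but it takes a genuinely different (and slightly longer) route than the paper. The paper does not re-derive the nullhomotopy of $\stab^r(\R^n)\hookrightarrow\AN^r(\R^n)$ at all: instead it observes that Lemma~\ref{lem:varying-minima-equilibria} gives a strong deformation retraction of the \emph{pair} $(\AN^r(\R^n),\stab^r(\R^n))$ onto $(\AN^r_0(\R^n),\stab^r_0(\R^n))$ (and similarly for the complete spaces), which induces isomorphisms $\pi_{k+1}(\AN^r(\R^n),\stab^r(\R^n))\cong\pi_{k+1}(\AN^r_0(\R^n),\stab^r_0(\R^n))$ on all relative homotopy groups, and then simply quotes Corollaries~\ref{co:rel-htpy-grp-dir-sum}, \ref{co:complete-rel-htpy-grp-dir-sum} as black boxes, substituting $\pi_{k+1}\AN_0^r\cong\pi_{k+1}\AN^r$, $\pi_k\stab_0^r\cong\pi_k\stab^r$, and Corollary~\ref{co:varying-equilibria-minima-htpy-grps} at the end. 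You instead first establish the nullhomotopy of the unconstrained inclusion (by routing it through $H_1$ and the origin-pinned nullhomotopy of Theorem~\ref{th:nullhomotopic-inclusion-s-an}) and then re-run the long-exact-sequence-and-splitting argument directly on $(\AN^r(\R^n),\stab^r(\R^n))$. Both arguments are sound and rest on the same two ingredients — the origin-pinned nullhomotopy and the translation deformation retraction — but the paper's version avoids re-proving the splitting. Your ``capping off'' description of the section of $\partial$ is the standard one and is fine; as in the paper, the $k=0$ case carries the usual caveat that $\pi_1$ of a pair is only a pointed set, so ``$\oplus$'' there is to be read as a product of pointed sets.
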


\begin{proof}
This follows from Corollaries~\ref{co:rel-htpy-grp-dir-sum}, \ref{co:complete-rel-htpy-grp-dir-sum}, \ref{co:varying-equilibria-minima-htpy-grps} and the fact that Lemma~\ref{lem:varying-minima-equilibria} supplies deformation retractions of the pairs $(\AN^r(\R^n), \stab^r(\R^n))$ and $(\ANc^r(\R^n), \stabc^r(\R^n))$ onto $(\AN^r_0(\R^n), \stab^r_0(\R^n))$ and $(\ANc^r_0(\R^n), \stabc^r_0(\R^n))$, respectively, which induce isomorphisms \cite[p.~344]{hatcher2001algebraic} $$\pi_{k+1}(\AN^r(\R^n), \stab^r(\R^n))\cong \pi_{k+1}(\AN_0^r(\R^n), \stab_0^r(\R^n))$$ and $$\pi_{k+1}(\ANc^r(\R^n), \stabc^r(\R^n))\cong \pi_{k+1}(\ANc_0^r(\R^n), \stabc_0^r(\R^n)).$$ 
\end{proof}

\begin{Co}\label{co:varying-minima-equilibria-rel-htpy-grp-dir-sum-collapsed-s-c-h}
	Fix $k\in \N_{\geq 0}$ and $r\in \N_{\geq 1}\cup \{\infty\}$.
Assume that either (i) $n\geq 6$ and $k=0,1$ or (ii) $n\leq 3$.
Then there are isomorphisms
	\begin{equation*}
		\pi_{k+1}(\AN^r(\R^n), \stab^r(\R^n)) \cong \pi_{k+1}\AN^r(\R^n)
	\end{equation*}
and	
	\begin{equation*}
	\pi_{k+1}(\ANc^r(\R^n), \stabc^r(\R^n)) \cong \pi_{k+1}\ANc^r(\R^n).
\end{equation*}
\end{Co}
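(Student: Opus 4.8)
The plan is to combine Corollary~\ref{co:varying-minima-equilibria-rel-htpy-grp-dir-sum} with the vanishing results of Corollary~\ref{co:varying-eq-min-trivial-homotopy-groups}, exactly in the spirit of the proof of Corollary~\ref{co:rel-htpy-grp-dir-sum-collapsed-s-c-h}. Corollary~\ref{co:varying-minima-equilibria-rel-htpy-grp-dir-sum} already supplies the isomorphisms
\[
\pi_{k+1}(\AN^r(\R^n), \stab^r(\R^n)) \cong \pi_{k+1}\AN^r(\R^n) \oplus \pi_k\U(D^n,\R^n)
\]
and
\[
\pi_{k+1}(\ANc^r(\R^n), \stabc^r(\R^n)) \cong \pi_{k+1}\ANc^r(\R^n) \oplus \pi_k\U(D^n,\R^n),
\]
so it suffices to show that the second summand $\pi_k\U(D^n,\R^n)$ is trivial under either hypothesis (i) or (ii).

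Under hypothesis (i), $n\geq 6$ in particular gives $n\neq 4,5$, so Proposition~\ref{prop:nonlinear-grass-trivial-htpy-groups} (equivalently the relevant clause of Corollary~\ref{co:varying-eq-min-trivial-homotopy-groups}) says $\U(D^n,\R^n)$ is path-connected and simply connected; hence $\pi_0\U(D^n,\R^n)$ and $\pi_1\U(D^n,\R^n)$ are both trivial, which is precisely the range $k=0,1$. Under hypothesis (ii), $n\leq 3$, the same results give that $\U(D^n,\R^n)$ is weakly contractible, so $\pi_k\U(D^n,\R^n)$ is trivial for every $k$. In either case the extra summand vanishes and the two claimed isomorphisms drop out; the argument is literally identical for the complete spaces $\ANc^r(\R^n)$, $\stabc^r(\R^n)$.

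I do not anticipate any genuine obstacle — this is a routine assembly of already-established facts. The only point meriting a word of care is the case $k=0$, where $\pi_0$ is a pointed set rather than a group; but since $n\neq 4,5$ under both hypotheses, $\stab^r(\R^n)$ (resp.\ $\stabc^r(\R^n)$) is path-connected by Corollary~\ref{co:varying-eq-min-trivial-homotopy-groups}, so the relevant $\pi_0$ is a singleton and the ``direct sum'' with it is vacuous, making the isomorphism of Corollary~\ref{co:varying-minima-equilibria-rel-htpy-grp-dir-sum} reduce to the identity on $\pi_1\AN^r(\R^n)$ (resp.\ $\pi_1\ANc^r(\R^n)$).
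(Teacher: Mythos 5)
Your proposal is correct and takes essentially the same route as the paper: the paper's proof simply cites Corollary~\ref{co:varying-minima-equilibria-rel-htpy-grp-dir-sum} (the direct-sum decomposition) together with Corollary~\ref{co:varying-eq-min-trivial-homotopy-groups} (vanishing of $\pi_k\stab^r$, equivalently of $\pi_k\U(D^n,\R^n)$, in the stated ranges). Your extra remark about $\pi_0$ being a pointed set is a reasonable clarification of the notation but not a substantive departure.
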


\begin{proof}
This is immediate from Corollaries~\ref{co:varying-eq-min-trivial-homotopy-groups}, \ref{co:varying-minima-equilibria-rel-htpy-grp-dir-sum}.
\end{proof}

\section{Applications}\label{sec:applications}

\subsection{A question of Conley}\label{subsec:conley}

In his monograph \cite{conley1978isolated}, Conley defined what is now called the Conley index \cite[Def.~III.5.2]{conley1978isolated} and showed that a pair $A$, $B$ of compact isolated invariant  sets  for continuous flows $\Phi$, $\Psi$ on a metrizable space $M$ have isomorphic  Conley indices if they are related by \emph{continuation} \cite[p.~65]{conley1978isolated}.
In particular, this is the case if there is a continuous family $(\Theta_s)_{s\in I}$ of flows $\Theta_s$ such that $\Theta_0=\Phi$, $\Theta_1=\Psi$, and the flow $(t,s,x)\mapsto (s,\Theta_s^t(x))$ on $I\times M$ has an isolated invariant set $C\subset I\times M$ with $C\cap (\{0\}\times M) = A$ and $C\cap (\{1\}\times M) = B$ (cf. \cite[p.~186]{reineck1992continuation}).

Conley asked to what extent the converse is true \cite[Sec,~IV.8.1.A]{conley1978isolated}.
That is, given a pair of compact isolated invariant sets with isomorphic Conley indices, when are they related by continuation?
The converse is not completely true, as known counterexamples show \cite[pp.~1--2]{mrozek2000conley}, but one can ask whether a converse holds under additional assumptions.

Reineck proved a fairly general partial converse for smooth flows on manifolds \cite{reineck1992continuation}.
Using a Lyapunov function theorem of Robbin and Salamon \cite{robbin1988dynamical}, Reineck first continues an arbitrary (compact) isolated invariant set to one of a Morse-Smale gradient flow \cite{reineck1991continuation}.
Under additional topological assumptions, he then uses critical point cancellation techniques from  the Morse-theoretic proof \cite{milnor1965hcobordism} of Smale's h-cobordism theorem \cite{smale1962structure} to continue the isolated invariant set through gradient flows to the ``simplest possible'' such set containing the fewest number of critical points.
Concatenating one of these continuations with the reversal of another yields a continuation between a given pair of isolated invariant sets.

Now suppose that the pair of isolated invariant sets are asymptotically stable equilibria $x_\vo$, $x_\vt$ for a pair of smooth vector fields $\vo$, $\vt$ on a manifold $M$.
The nicest possible continuation result would assert existence of a smooth homotopy $(H_t)_{t\in I}$ of vector fields from $\vo$ to $\vt$ and a smooth family $(x_t)_{t\in I}$ of equilibria $x_t$ for $H_t$ such that the set $$Z=\bigcup_{t\in I}\{(t,x_t)\}$$ is asymptotically stable for the vector field  $(t,x)\mapsto (0,H_t(x))$ on $I\times M$ (cf. Remark~\ref{rem:weak-vs-strong-family-asy-stab-eq} below).
Moreover, if $x_\vo=x_\vt$, then one would like $x_t=x_\vo$ for all $t\in I$.
However, Reineck's techniques do not seem to yield either conclusion. 

Recently, Jongeneel proved that the latter conclusion does hold in a weak sense: if $y\in \R^n$ is globally asymptotically stable for a pair of locally Lipschitz vector fields on $\R^n$ with $n\neq 5$, then the \emph{semiflows} generated by these vector fields are homotopic through continuous semiflows (perhaps not generated by vector fields) globally asymptotically stabilizing $y$. 
The proof relies in part on the stable homeomorphism theorem (see e.g. \cite[p.~247]{edwards1984solution}).
On the other hand, our smooth techniques yield the stronger conclusion given by the following theorem, which also  solves Jongeneel's ``main open problem'' \cite[sec.~5]{jongeneel2024while} for $n\neq 4,5$ (see also \cite[sec.~5]{kvalheim2023obstructions}). 

\begin{Th}\label{th:local-path-conn}
	Let $M$ be a connected $C^\infty$ manifold of dimension $n\neq 4,5$ and $\vo$, $\vt$ be locally Lipschitz vector fields on $M$ with locally asymptotically stable equilibria $x_\vo, x_\vt \in M$.
	Then there is a locally Lipschitz homotopy $H\colon I\times M \to TM$ from $\vo$ to $\vt$ and a $C^\infty$ function $I\to M$, $t\mapsto x_t$ such that $H_t$ is a complete vector field for all $t\in (0,1)$, $x_0=x_\vo$, $x_1=x_\vt$, and $Z\coloneqq \bigcup_{t\in I}\{(t,x_t)\}$ is a locally asymptotically stable compact set of equilibria for the vector field 
   	 \begin{equation*}
   	(t,x)\mapsto (0,H_t(x))
     \end{equation*}
     on $I\times M$.
     Moreover, we can further arrange that
\begin{itemize}
	\item $x_t=x_\vo$ for all $t\in I$ if $x_\vo=x_\vt$, 
	\item $Z$ is globally asymptotically stable  if $x_\vo, x_\vt$ are,  and
	\item $H$ is $C^r$ with $r\in \N_{\geq 1}\cup \{\infty\}$ if  $\vo$, $\vt$ are $C^r$. 
\end{itemize}	
\end{Th}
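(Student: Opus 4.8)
The plan is to reduce, via Lyapunov theory, to a path of vector fields on $\R^n$---where path-connectedness is already established---and then to transport such a path into $M$ while carrying along a continuous family of Lyapunov functions; the latter is what will certify that $Z$ is asymptotically stable for the suspended vector field $(t,x)\mapsto(0,H_t(x))$. (Throughout, $C^r$ should be read as ``locally Lipschitz'' in the case that $\vo,\vt$ are merely locally Lipschitz.) The first step is to localize. By the converse Lyapunov theorem \cite{wilson1969smooth} (which applies to locally Lipschitz vector fields, since these have unique trajectories), there is a $C^\infty$ function $L_\vo$ defined near $x_\vo$ with $L_\vo^{-1}(0)=\{x_\vo\}$, compact sublevel sets, and $\ip{dL_\vo}{\vo}<0$ off $x_\vo$. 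For small $c>0$ the open sublevel set $V_0\coloneqq\{L_\vo<c\}$ is positively invariant, and since $\{L_\vo\le c\}$ is a compact contractible manifold with boundary homotopy equivalent to $\sph^{n-1}$ (retract along $-\nabla L_\vo$), the $h$-cobordism/Perelman argument of Lemma~\ref{lem:h-cobordism} gives $\{L_\vo\le c\}\approx D^n$ and hence $V_0\approx\R^n$; this is the step that requires $n\neq4,5$. Fixing a diffeomorphism $\iota_0\colon\R^n\to V_0$ with $\iota_0(0)=x_\vo$, the pullback $X_0\coloneqq\iota_0^*(\vo|_{V_0})$ is a $C^r$ vector field on $\R^n$ with $0$ globally asymptotically stable (with $L_\vo\circ\iota_0$ a proper Lyapunov function), so $X_0\in\stab_0^r(\R^n)$; likewise one obtains $V_1\approx\R^n$, $\iota_1\colon\R^n\to V_1$, and $X_1\in\stab_0^r(\R^n)$.

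Next, connect $X_0$ to $X_1$. For $r\ge1$ this is the second (parametric) part of Corollary~\ref{co:vf-trivial-homotopy-groups} with $\param=I$, $S=\partial I$ (so $\dim\param=1\le2$ and $n\neq4,5$): a continuous and $C^r$ map $t\mapsto X_t$ into $\stab_0^r(\R^n)$ with the prescribed endpoints. In the locally Lipschitz case one first homotopes $X_0,X_1$ through $\stab_0$ to the negative gradients of $C^\infty$ Lyapunov functions supplied by \cite{wilson1969smooth} (the straight-line homotopies, which stay Lyapunov-compatible, as in the proof of Lemma~\ref{lem:smoothing-vf}) and then connects these smooth gradients by path-connectedness of $\fun_0^\infty(\R^n)$. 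Since the suspension over the compact base $I$ of a path in $\stab_0^r(\R^n)$ has $I\times\{0\}$ as an asymptotically stable compact set \cite[Lem.~3.1]{salamon1985connected}, the argument of Lemma~\ref{lem:smoothing-vf}---invoking \cite{wilson1969smooth} once more---produces a $C^\infty$ function $\mathcal{V}$ on $I\times\R^n$ with $\mathcal{V}^{-1}(0)=I\times\{0\}$, each $\mathcal{V}(t,\cdot)$ proper, and $\ip{\partial_x\mathcal{V}(t,x)}{X_t(x)}<0$ off $I\times\{0\}$. Finally, choosing a $C^\infty$ path $x_t$ in $M$ from $x_\vo$ to $x_\vt$ (constant when $x_\vo=x_\vt$), a compactly supported ambient isotopy of $M$ carrying $x_\vo$ along $x_t$, and the disk theorem in the form of Lemma~\ref{lem:emb-isotopy-pres-0} (applied in a chart, after shrinking) to splice the two endpoint embeddings, I obtain a $C^\infty$ family of open embeddings $\iota_t\colon\R^n\hookrightarrow M$ with $\iota_t(0)=x_t$ that interpolates the already-fixed $\iota_0$ and $\iota_1$.

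Now assemble the homotopy. On the tube $\mathcal{T}=\{(t,x):x\in\iota_t(\R^n)\}$ about $Z=\bigcup_t\{(t,x_t)\}$ put the vector field $(\iota_t)_*X_t$; then $\mathcal{W}(t,x)\coloneqq\mathcal{V}(t,\iota_t^{-1}(x))$ satisfies $\mathcal{W}^{-1}(0)=Z$ and $\ip{\partial_x\mathcal{W}}{(\iota_t)_*X_t}<0$ off $Z$. Off a smaller tube, interpolate by a cutoff $\mu_t$ between $(\iota_t)_*X_t$ and $(1-\sigma(t))\vo+\sigma(t)\vt$, where $\sigma(0)=0$ and $\sigma(1)=1$; because $(\iota_i)_*X_i$ equals $\vo$, resp.\ $\vt$, on the support of the cutoff at $i=0,1$, this yields a homotopy $H\colon I\times M\to TM$ with $H_0=\vo$, $H_1=\vt$, coinciding with $(\iota_t)_*X_t$ near $Z$. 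Then for small $a$ the set $\{\mathcal{W}\le a\}$ is a compact positively invariant neighborhood of $Z$ shrinking to $Z$ under the forward flow of $(t,x)\mapsto(0,H_t(x))$, so $Z$ is locally asymptotically stable. Completeness of $H_t$ for $t\in(0,1)$ is arranged exactly as in Lemma~\ref{lem:completing-vfs}, by multiplying $H_t$ by $(1+\varphi(t)\norm{H_t(\cdot)}_g^2)^{-1}$ for a complete Riemannian metric $g$ and a $C^\infty$ function $\varphi\ge0$ with $\varphi^{-1}(0)=\{0,1\}$; this changes nothing at $t\in\{0,1\}$ and preserves equilibria and the sign of $\ip{\partial_x\mathcal{W}}{\cdot}$. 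If $x_\vo=x_\vt$, the constant path gives $x_t\equiv x_\vo$; if $x_\vo,x_\vt$ are globally asymptotically stable, then $M\approx\R^n$ (its basin being all of $M$), and taking $V_0=V_1=M$ with $\mathcal{W}$ proper on $I\times M$ and no cutoff makes $Z$ globally asymptotically stable; and $H$ inherits the regularity of $\vo,\vt$ since the only ingredients that involve them---the cutoff interpolation and the final rescaling---have that regularity, while everything in between is $C^\infty$.

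The main obstacle is the localization step: identifying a small sublevel set of a local Lyapunov function (equivalently, the basin of attraction) with $\R^n$. This is not a formal manipulation---it rests on Smale's $h$-cobordism theorem and Perelman's theorem exactly as in Lemma~\ref{lem:h-cobordism}, and the failure of $\{L_\vo\le c\}\approx D^n$ in dimensions $4$ and $5$ is precisely why those dimensions are excluded. Once this is in hand, the rest is careful bookkeeping---concatenating homotopies while propagating a single continuous Lyapunov function---for which the templates are already available in Corollary~\ref{co:vf-trivial-homotopy-groups} and Lemmas~\ref{lem:smoothing-vf} and \ref{lem:completing-vfs}.
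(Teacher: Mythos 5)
Your proposal is correct in substance and uses exactly the same core ingredients as the paper's proof: Wilson's converse Lyapunov theorem to localize, Lemma~\ref{lem:h-cobordism} (hence the $n\neq4,5$ restriction) to recognize sublevel sets as discs and basins as copies of $\R^n$, Corollary~\ref{co:vf-trivial-homotopy-groups} to connect the pulled-back fields in $\stab_0^r(\R^n)$, a suspension Lyapunov function to certify asymptotic stability of $Z$, and the rescaling trick of Lemma~\ref{lem:completing-vfs} for completeness on $(0,1)$.

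The one place where you diverge organizationally is the splicing step, and it is worth noting that the paper's version is a bit leaner. The paper first normalizes by a \emph{single} ambient diffeotopy of $M$ (via isotopy uniqueness of discs, \cite[Thm~III.3.6]{kosinski1993differential}) so that $x_\vo=x_\vt$ and the two sublevel discs coincide; it then works entirely on a \emph{fixed} copy $S\subset M$ of $\R^n$ with a three-stage cutoff (shrink $\vo$ into $S$, homotope through GAS fields on $S$, unshrink to $\vt$). You instead keep the two endpoint tubes $V_0,V_1$ separate, build a \emph{time-varying} family of open embeddings $\iota_t\colon\R^n\hookrightarrow M$ with $\iota_t(0)=x_t$, push forward a path $X_t\in\stab_0^r(\R^n)$, and then cut off. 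This works, but the construction of $\iota_t$ is fiddlier than you acknowledge: after transporting $\iota_0$ along the ambient isotopy you must splice the resulting embedding to $\iota_1$, which forces you to address (i) a possible orientation mismatch at $x_\vt$ (remedied by precomposing $\iota_1$ with an orientation-reversing diffeomorphism of $\R^n$ fixing $0$ and replacing $X_1$ by the conjugate), and (ii) the fact that Lemma~\ref{lem:emb-isotopy-pres-0} is stated for $D^n\hookrightarrow\R^n$, so you must shrink, isotope, and unshrink, adjusting $X_0,X_1$ by the corresponding dilations along the way. None of this is fatal, but the paper's choice of making the tube time-independent sidesteps it entirely, which is exactly why the cutoff there can be written down in closed form. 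One small compensating advantage of your version: by always working in $\stab_0^r(\R^n)$ and moving the equilibrium via $\iota_t$, you avoid the case split the paper makes between Corollary~\ref{co:vf-trivial-homotopy-groups} and Corollary~\ref{co:varying-eq-min-trivial-homotopy-groups} in the global case with $x_\vo\neq x_\vt$.
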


\begin{Rem}\label{rem:weak-vs-strong-family-asy-stab-eq}
		 In particular, $x_t$ is locally asymptotically stable for  $H_t$ for each $t\in I$.
		 However, this property is strictly weaker than the conclusion of Theorem~\ref{th:local-path-conn}, as the example $H_t(x)=t^4x-x^3$ of vector fields on $\R$  demonstrates.
		 Here we can take $x_t=t^2$, $t\in I$.
		 Then each $x_t$ is asymptotically stable for $H_t$, but $Z=\bigcup_{t\in I}\{(t,x_t)\}$ is not asymptotically stable for the vector field $(t,x)\mapsto (0,H_t(x))$ since every neighborhood of $(0,0)\in Z$ in $I\times \R^n$ contains trajectories not converging to $Z$.
\end{Rem}

\begin{proof}
Let $\bas_\vo$, $\bas_\vt$ be the basins of attraction of $x_\vo$ for $\vo$ and $x_\vt$ for $\bas_\vt$.
Let $\lyap_\vo\colon \bas_\vo\to [0,\infty)$, $\lyap_\vt\colon \bas_\vt \to [0,\infty)$ be corresponding proper $C^\infty$ Lyapunov functions (\cite[Thm~3.2]{wilson1969smooth},  \cite[Sec.~6]{fathi2019smoothing}).
Note that $\bas_\vo$, $\bas_\vt$ are both diffeomorphic to $\R^n$ \cite[Thm~3.4]{wilson1967structure}.

First suppose that $\bas_\vo = \bas_\vt = M$.
Then the proof is completed by smoothly concatenating (as in \eqref{eq:gamma-def}) the straight-line homotopy from $\vo$ to $-\nabla \lyap_\vo$ followed by the $C^\infty$ homotopy from $-\nabla \lyap_\vo$ to $-\nabla \lyap_\vt$ through globally asymptotically stable vector fields given by Corollary~\ref{co:vf-trivial-homotopy-groups} (if $x_\vo = x_\vt$) or Corollary~\ref{co:varying-eq-min-trivial-homotopy-groups} (if $x_\vo\neq x_\vt$)  followed by the straight-line homotopy from $-\nabla \lyap_\vt$ to $\vt$.

We now consider the remaining case.	
By Lemma~\ref{lem:h-cobordism}, $\lyap_\vo^{-1}([0,1])$ and $\lyap_\vt^{-1}([0,1])$ are both diffeomorphic to $D^n$.
Hence by the isotopy uniqueness of $C^\infty$ embeddings of discs \cite[Thm~III.3.6]{kosinski1993differential}, we can assume without loss of generality that $x_\vo = x_\vt\eqqcolon y$ and $\lyap_\vo^{-1}([0,1])=\lyap_\vt^{-1}([0,1])$.
Define $S= \lyap_\vo^{-1}((0,1))=\lyap_\vt^{-1}((0,1))$ and note that $S$ is diffeomorphic to $\R^n$.

Since $y$ is globally asymptotically stable for the restricted vector fields $\vo|_S$ and $\vt|_S$, we may define a locally Lipschitz (and $C^r$ if $\vo$, $\vt$ are $C^r$) homotopy $g\colon I\times S\to TS$ from $g_0 = \vt|_S$ to $g_1 = \vo|_S$ through vector fields as follows.
Similarly to above, let $g$ be the smooth concatenation of the straight-line homotopy from $\vt|_S$ to $-\nabla \lyap_\vt|_S$ followed by the $C^\infty$ homotopy through globally asymptotically stable vector fields from $-\nabla \lyap_\vt|_S$ to $-\nabla \lyap_\vo|_S$ provided by Corollary~\ref{co:vf-trivial-homotopy-groups}  followed by the straight-line homotopy from $-\nabla \lyap_\vo|_S$ to $\vo|_S$.
Hence $Z=I\times \{y\}$ is globally asymptotically stable for the vector field $(t,x)\mapsto (0,g_t(x))$ on $I\times S$ \cite[Lem.~3.1]{salamon1985connected}.
Thus, there exists a neighborhood $N\subset S$ of $y$ such that, for each $t\in I$ and $x\in N\setminus \{y\}$, the backward trajectory of $g_t$ through $x$ is not contained in $N$.

Let $\varphi \in C^\infty(M, [0,\infty))$ be $0$ on a neighborhood of  $M\setminus S$ and $1$ on $N$.
Let $\psi\in C^\infty(I,I)$ be $0$ on a neighborhood of $\{0,\frac{2}{3}\}$ and $1$ on a neighborhood of $\{\frac{1}{3},1\}$.
Define a homotopy $h\colon I\times M\to TM$ from $\vo$ to $\vt$ through vector fields by
\begin{equation*}
		h_t(x)=\begin{cases}
			(1-\psi(t))\vo(x)+\psi(t)\varphi(x)\vo(x), & 0\leq t \leq \frac{1}{3}\\
			\varphi(x)g_{\psi(t)}(x), & \frac{1}{3} \leq t \leq \frac{2}{3}\\
			(1-\psi(t))\varphi(x)\vt(x) + \psi(t)\vt(x), & \frac{2}{3}\leq t \leq 1
		\end{cases}.
\end{equation*}
Note that $h$ is locally Lipschitz and $C^r$ if $\vo$, $\vt$ are $C^r$.
Since each $h_t$ coincides with some $g_s$ on $N$, the backward trajectory of each $h_t$ through any $x\in N\setminus \{y\}$ is not contained in $N$.
Thus, $Z = I\times \{y\}$ is asymptotically stable for the vector field $(t,x)\mapsto (0,h_t(x))$ on $I\times M$ \cite[Lem.~3.1]{salamon1985connected}.

Finally, let $\eta\colon I\to [0,\infty)$ be a $C^\infty$ function satisfying $\eta^{-1}(0)=\{0,1\}$, and let $\norm{\cdot}$ be the norm of a complete Riemannian metric on $M$.
Define a homotopy $H\colon I \times M \to TM$ from $\vo$ to $\vt$ through vector fields by
\begin{equation*}
	H_t(x)=\frac{1}{1+\eta(t)\norm{h_t(x)}^2}h_t(x).
\end{equation*}
Note that $H$ is locally Lipschitz and $C^r$ if $\vo$, $\vt$ are $C^r$, and $H_t$ is complete for each $t\in (0,1)$ since $\norm{H_t}$ is bounded for each $t\in (0,1)$.
Since $H$ coincides with $h$ multiplied by a positive function, $Z$ is asymptotically stable for $(t,x)\mapsto (0,H_t(x))$.
\end{proof}

\subsection{Smoothly trivial bundles of Lyapunov sublevel sets}\label{subsec:trivial-lyap-bundles}

The next result is the main tool used in section~\ref{subsec:param-hartman-grobman-morse}.

\begin{Th}\label{th:global-trivial-sublevel-bundles}
	Let $\param$ be a compact $C^\infty$ manifold with boundary and $\lyap\colon \param \to \fun^\infty(\R^n)$ be $C^\infty$.
	Assume that either (i) $n\neq 4,5$ and $\dim \param \leq 1$ or (ii) $n\leq 3$.
	Then for any $c > 0$, 
	\begin{equation}\label{eq:th:global-trivial-sublevel-bundles}
		\pr_1\colon \{(p,x)\in \param \times \R^n\colon \lyap(p)(x)\leq c\}\to P
	\end{equation}
	is a $C^\infty$ fiber bundle isomorphic to the trivial disc bundle $\param \times D^n\to \param$. 
\end{Th}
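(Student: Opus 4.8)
The plan is to reduce to $c=1$, establish the fiber‑bundle structure by Ehresmann's theorem, and then prove triviality by connecting $\lyap$ to a standard map through a $C^\infty$ homotopy and flowing along it. First I would replace $\lyap$ by $\lyap/c$, which again takes values in $\fun^\infty(\R^n)$, so that $c=1$. The map $(p,x)\mapsto\lyap(p)(x)$ is $C^\infty$ and has $1$ as a regular value, since its partial $x$‑derivative is nonzero wherever its value equals $1>0$ (as $\lyap(p)$ has no critical point with positive value). Hence $\mathcal{E}\coloneqq\{(p,x)\in\param\times\R^n\colon\lyap(p)(x)\leq 1\}$ is a compact $C^\infty$ manifold with corners — compactness follows from properness of each $\lyap(p)$ together with compactness of $\param$ — and $\pr_1\colon\mathcal{E}\to\param$ is a proper $C^\infty$ submersion that restricts to a submersion on each boundary face. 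By Ehresmann's fibration theorem for manifolds with corners (cf.\ the use of Ehresmann's lemma in the proof of Proposition~\ref{prop:path-conn-poincare}), $\pr_1$ is a $C^\infty$ fiber bundle, and its fiber $\lyap(p)^{-1}([0,1])$ is diffeomorphic to $D^n$ by Lemma~\ref{lem:h-cobordism}. It then suffices to produce a $C^\infty$ fiber‑bundle isomorphism $\param\times D^n\xrightarrow{\sim}\mathcal{E}$ over $\param$.

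For this I would connect $\lyap$ to the constant map $\lyap_1\colon\param\to\fun^\infty(\R^n)$ with value $x\mapsto\norm{x}^2$ by a $C^\infty$ homotopy $J\colon I\times\param\to\fun^\infty(\R^n)$. Such a $J$ is furnished by Corollary~\ref{co:varying-eq-min-trivial-homotopy-groups} applied with base manifold $I\times\param$, neat boundary submanifold $S=\{0,1\}\times\param\subset\partial(I\times\param)$, space $\F=\fun^\infty(\R^n)$, and boundary data $\lyap$ on $\{0\}\times\param$ and $\lyap_1$ on $\{1\}\times\param$: that corollary's hypothesis — either $n\neq 4,5$ and $\dim(I\times\param)\leq 2$, or $n\leq 3$ — is precisely hypothesis (i) or (ii) of the theorem. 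When $\partial\param\neq\varnothing$ the product $I\times\param$ has corners, not merely a boundary, but the smoothing arguments behind that corollary, namely Lemmas~\ref{lem:smoothing-funnels}, \ref{lem:smoothing-vf}, \ref{lem:completing-vfs} together with obstruction theory, go through verbatim for compact manifolds with corners (alternatively one rounds the corners first).

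Finally I would repeat the construction of the first paragraph one dimension higher. The set $\mathcal{E}'\coloneqq\{(t,p,x)\in I\times\param\times\R^n\colon J(t)(p)(x)\leq 1\}$ is a compact $C^\infty$ manifold with corners and $\pr_{12}\colon\mathcal{E}'\to I\times\param$ is a $C^\infty$ disc bundle restricting to $\mathcal{E}$ over $\{0\}\times\param$ and to $\{(p,x)\colon\norm{x}^2\leq 1\}=\param\times D^n$ over $\{1\}\times\param$. Picking a $C^\infty$ Ehresmann connection for $\pr_{12}$ adapted to the boundary faces, the horizontal lift $\xi$ of the vector field $\partial_t$ on $I\times\param$ is a $C^\infty$ vector field on $\mathcal{E}'$ whose trajectories issuing from $\{0\}\times\param$ stay in the compact sets $\mathcal{E}'\cap(I\times\{p\}\times\R^n)$ and so are defined for all $t\in[0,1]$; its time‑$1$ flow is a $C^\infty$ bundle isomorphism $\mathcal{E}\xrightarrow{\sim}\param\times D^n$ over $\id_\param$ (the flow fixes the $\param$‑coordinate since $\partial_t$ has none), which is the desired trivialization.

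The substantive input — existence of the $C^\infty$ homotopy $J$, and with it all the constraints on $n$ and $\dim\param$ — is imported directly from Corollary~\ref{co:varying-eq-min-trivial-homotopy-groups}, so the hard part is already done. The points requiring genuine (but routine) care here are local: verifying that $\{0,1\}\times\param$ satisfies the neatness hypotheses inside the manifold‑with‑corners $I\times\param$ and that the earlier smoothing lemmas tolerate corners, and the final "concordance implies isomorphism" step, which is the smooth analogue of the fact that homotopic maps have isomorphic pullback bundles.
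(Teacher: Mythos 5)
Your proof follows the paper's strategy: produce a $C^\infty$ homotopy $J$ from $\lyap$ to $x\mapsto\norm{x}^2$ via Corollary~\ref{co:varying-eq-min-trivial-homotopy-groups} applied to $I\times\param$ rel $\{0,1\}\times\param$ (this is exactly where the dimensional hypotheses (i), (ii) enter), then conclude by applying Ehresmann to the one‑higher‑dimensional sublevel‑set family and flowing a horizontal lift of $\partial_t$. Your final ``concordance implies isomorphism'' paragraph makes explicit a step the paper treats tersely (the paper's display shows the level set $\{H_t(p)(x)=c\}$, though what is really used is the sublevel set), so your version is arguably cleaner on that point.

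The one genuine divergence is in the boundary case $\partial\param\neq\varnothing$, and there your treatment has a soft spot. The paper sidesteps corners entirely: it first \emph{doubles} $\param$ to the closed manifold $D\param$, extends $\lyap$ to a $C^\infty$ map $D\param\to\fun^\infty(\R^n)$ by a cutoff against a mirrored copy, proves the theorem over $D\param$ (now $I\times D\param$ is an honest manifold with boundary, so the corollary and Ehresmann apply as stated), and then restricts the resulting trivialization to $\param\subset D\param$. You instead propose to apply the corollary to the manifold with corners $I\times\param$ directly, asserting that the smoothing lemmas and obstruction theory ``go through verbatim'' for corners, with ``round the corners first'' as a fallback. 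The first assertion is plausible but is a real checklist (neat tubular neighborhoods in manifolds with corners, a corners version of Ehresmann's lemma, triangulability for obstruction theory) that the paper never takes on. The fallback does not quite work as stated: after rounding the corners of $I\times\param$ along $\{0,1\}\times\partial\param$, the faces $\{0\}\times\param$ and $\{1\}\times\param$ are no longer contained in the boundary of the rounded manifold, so $S$ fails the neatness hypothesis of Corollary~\ref{co:varying-eq-min-trivial-homotopy-groups}. The doubling argument avoids both issues at essentially no cost; I would recommend substituting it for your corners discussion.
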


\begin{proof}
	Define $\lyapt\colon \param \to \fun^\infty(\R^n)$ by $\lyapt(p)(x)=x^2$.
	Fix any pair of distinct points $\theta_1, \theta_2 \in \sph^1$.
	Corollary~\ref{co:varying-eq-min-trivial-homotopy-groups} applied to the manifold with boundary $\param'\coloneqq\sph^1 \times \param$ and neatly embedded submanifold $S\coloneqq\{\theta_1,\theta_2\}\times \param\subset \param'$ furnishes  a $C^\infty$ map $\lyapth\colon \sph^1\times \param\to \fun^\infty(\R^n)$ restricting to $\lyap$ on $\{\theta_1\}\times \param$ and $\lyapt$ on $\{\theta_2\}\times \param$.
    The implicit function theorem, Ehresmann's lemma, and Lemma~\ref{lem:h-cobordism} imply that  the projection 
    	\begin{equation}\label{eq:lyapunov-sublevel-bundle}
    	\{(\theta,p,x) \in \sph^1\times \param \times \R^n\colon \lyapth(\theta,p)(x)\leq c\} \to \sph^1\times \param
    \end{equation}
    onto the first two factors is a  $C^\infty$ fiber bundle with fibers diffeomorphic to $D^n$.

    Let $J\subset \sph^1$ be a closed interval with boundary $\partial J = \{\theta_1,\theta_2\}$.
    Since the restriction of the bundle \eqref{eq:lyapunov-sublevel-bundle} over $J\times \param$ in turn restricts over $\partial J \times \param$ to 
    	\begin{equation}\label{eq:two-lyapunov-sublevel-bundles}
    	\begin{tikzcd}
    		& \{\lyap(p)(x)\leq c\} \arrow{d}\\
    		& \param
    	\end{tikzcd}
    	\qquad		\bigcup
    	\begin{tikzcd}
    		& \{\lyapt(p)(x)\leq c\} \arrow{d}\\
    		& \param
    	\end{tikzcd},
    \end{equation}
    it follows that the two bundles in \eqref{eq:two-lyapunov-sublevel-bundles} are $C^\infty$ isomorphic (say, by smooth parallel transport in the total space  of the bundle \eqref{eq:lyapunov-sublevel-bundle} covering the curves $t\mapsto (t,p)$ in $J\times \param$ with respect to an Ehresmann connection). 
    Since the bundle on the right side of \eqref{eq:two-lyapunov-sublevel-bundles} is isomorphic to the trivial bundle $\param \times D^n\to \param$, we are done.
\end{proof}

\subsection{Parametric Hartman-Grobman theorem and degenerate Morse lemma}\label{subsec:param-hartman-grobman-morse}

We first prove a parametric version of the Hartman-Grobman theorem \cite{hartman1960lemma,grobman1959homeomorphism} for families of asymptotically stable equilbria that are not necessarily hyperbolic.
This result extends a non-parametric global result \cite[Thm~2]{kvalheim2025global} sketched by Gr\"{u}ne, Sontag, and Wirth \cite[p.~133]{gruene1999twist} that extends an earlier local result of Coleman \cite{coleman1965local,coleman1966addendum}.

\begin{Th}\label{th:parametric-global-linearization}
Let $\param$ be a compact $C^\infty$ manifold with boundary and $\Phi\colon \R\times \param\times \R^n\to \param\times\R^n$ be the flow of a complete uniquely integrable continuous vector field $(p,x)\mapsto (0,\vo(p,x))$ on $\param \times \R^n$ such that $\vo(p,\cdot)$ has a globally asymptotically stable equilibrium for each $p\in \param$.
If either (i) $n\neq 4,5$ and $\dim \param \leq 1$ or (ii) $n \leq 3$,  there is a fibered homeomorphism $$h\colon \param \times \R^n\to \param \times \R^n, \qquad (p,x)\mapsto (p,h_p(x))$$ such that
\begin{equation}\label{eq:th:parametric-global-linearization}
	h(\Phi^t(p,x)) = (p,e^{-t} h_p(x)) \quad \text{for all} \quad (t,p,x)\in \R\times \param \times \R^n.
\end{equation}
Moreover, if $\Phi\in C^k$ with $k\in \N_{\geq 1}\cup \{\infty\}$, then $h$ restricts to a $C^k$ diffeomorphism on the complement of the set of equilibria. 
\end{Th}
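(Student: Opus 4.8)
The strategy is a parametric version of the classical global Hartman--Grobman argument: manufacture a smooth family of Lyapunov functions, trivialize the bundle of unit sublevel sets by Theorem~\ref{th:global-trivial-sublevel-bundles}, and then straighten each fiber flow radially against that trivialization. \emph{Step 1 (a smooth parametric Lyapunov function).} Write $\vh(p,x)\coloneqq(0,\vo(p,x))$ for the product vector field and let $x_*(p)$ be the equilibrium of $\vo(p,\cdot)$, so that $\Phi^t(p,x)=(p,\Phi^t_p(x))$ for the fiber flows $\Phi^t_p$. Using compactness of $\param$, continuity of $\vo$, continuous dependence of solutions, and fiberwise \emph{stability}, one first checks that $p\mapsto x_*(p)$ is continuous, that $Z\coloneqq\{(p,x_*(p))\colon p\in\param\}$ is compact, and that $Z$ is a (globally, since every orbit converges to it) asymptotically stable compact invariant set for $\vh$. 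Wilson's converse Lyapunov theorem, applied parametrically exactly as in the proof of Lemma~\ref{lem:smoothing-vf}, then yields a $C^\infty$ map $V\colon\param\to\fun^\infty(\R^n)$ with $V(p)$ minimized at $x_*(p)$, proper, and with $\ip{\nabla V_p(x)}{\vo(p,x)}<0$ for $x\neq x_*(p)$. Hence $t\mapsto V_p(\Phi^t_p(x))$ is strictly decreasing, tends to $0$ as $t\to+\infty$, and (because $\{V_p\le1\}$ is compact and a nonequilibrium orbit cannot have its $\alpha$-limit set inside a level set of $V_p$) exceeds $1$ for $t$ sufficiently negative; so every nonequilibrium trajectory meets $\Sigma_p\coloneqq V_p^{-1}(1)$ at a unique time $\tau_p(x)\in\R$, with $\tau_p(x)\to-\infty$ as $x\to x_*(p)$, uniformly in $p$.

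\emph{Step 2 (trivialization and definition of $h$).} Since $V$ is $C^\infty$ and we are in case (i) or (ii), Theorem~\ref{th:global-trivial-sublevel-bundles} with $c=1$ supplies a fibered $C^\infty$ diffeomorphism $\Psi(p,y)=(p,\psi_p(y))$ from $\param\times D^n$ onto $\{(p,x)\colon V_p(x)\le1\}$; let $\psi_p^{-1}$ be the inverse, a $C^\infty$ family of diffeomorphisms restricting to diffeomorphisms $\Sigma_p\to\sph^{n-1}$. Setting $\sigma_p(x)\coloneqq\Phi_p^{\tau_p(x)}(x)\in\Sigma_p$, define
\begin{equation*}
	h_p(x)\coloneqq
	\begin{cases}
		e^{\tau_p(x)}\,\psi_p^{-1}(\sigma_p(x)), & x\neq x_*(p),\\
		0, & x=x_*(p),
	\end{cases}
	\qquad h(p,x)\coloneqq(p,h_p(x)).
\end{equation*}
By the implicit function theorem $(p,x)\mapsto(\tau_p(x),\sigma_p(x))$ is continuous on $(\param\times\R^n)\setminus Z$, and the uniform limit $\tau_p(x)\to-\infty$ makes $h$ continuous across $Z$. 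Since $\norm{h_p(x)}=e^{\tau_p(x)}$ and $\psi_p^{-1}|_{\Sigma_p}$ is injective, $h_p$ is injective; it is onto because $z=\rho\theta$ ($\rho=\norm{z}>0$, $\theta\in\sph^{n-1}$) equals $h_p(x)$ for $x=\Phi_p^{-\ln\rho}(\psi_p(\theta))$, and one verifies directly that $h_p^{-1}(z)=\Phi_p^{-\ln\norm{z}}(\psi_p(z/\norm{z}))$, $h_p^{-1}(0)=x_*(p)$, which is continuous (uniform global asymptotic stability handles $z\to0$). Thus $h$ is a fibered homeomorphism. Finally, the orbit through $\Phi_p^s(x)$ coincides with that through $x$ and meets $\Sigma_p$ at the same point but at signed time $\tau_p(x)-s$, so $h_p(\Phi_p^s(x))=e^{\tau_p(x)-s}\psi_p^{-1}(\sigma_p(x))=e^{-s}h_p(x)$, which is \eqref{eq:th:parametric-global-linearization}.

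\emph{Step 3 (smoothness off the equilibria).} If $\Phi\in C^k$, then $(p,x,\tau)\mapsto V_p(\Phi_p^\tau(x))$ is $C^k$ with nonvanishing $\tau$-derivative along $\{V_p(\Phi_p^\tau(x))=1\}$, so $\tau_p(x)$ is $C^k$ on $(\param\times\R^n)\setminus Z$, and hence so are $\sigma_p$ and $h_p$; likewise $h^{-1}$ is $C^k$ on $\param\times(\R^n\setminus\{0\})$ because $\Phi$, $\psi$, and $z\mapsto(\norm{z},z/\norm{z})$ are. Therefore $h$ restricts to a $C^k$ diffeomorphism on the complement of the set of equilibria, as asserted.

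\emph{Expected main obstacle.} The delicate point is Step 1: promoting \emph{fiberwise} global asymptotic stability to asymptotic stability of $Z$ for $\vh$, since global attraction alone does not force Lyapunov stability (cf.\ Remark~\ref{rem:weak-vs-strong-family-asy-stab-eq}); one must genuinely combine fiberwise stability with compactness of $\param$ and continuous dependence of solutions to build a uniform trapping neighborhood of $Z$, and this same uniformity is what gives joint continuity of $h$ and $h^{-1}$ across the equilibria. Once $Z$ is known to be a compact asymptotically stable set, the parametric converse Lyapunov theorem and Theorem~\ref{th:global-trivial-sublevel-bundles} do the work, and the rest is a routine parametrization of the non-parametric construction.
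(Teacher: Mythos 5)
Your proposal follows essentially the same route as the paper: build a joint $C^\infty$ Lyapunov function for the set $Z$ of equilibria, trivialize the unit level-set bundle via Theorem~\ref{th:global-trivial-sublevel-bundles}, then define $h$ by $e^{\tau}$ times the trivialized crossing point and check the conjugacy and smoothness off $Z$, exactly as in the paper's proof. Regarding the ``delicate point'' you flag in Step~1: the paper handles the upgrade from fiberwise global asymptotic stability to asymptotic stability of the compact set $Z$ by citing \cite[Lem.~3.1]{salamon1985connected}, using that every backward trajectory starting in a compact neighborhood of $Z$ but off $Z$ must leave it (this is where Lyapunov stability of the fiber equilibria, not mere attraction, is needed), so your concern is well-placed but resolvable by this standard criterion rather than a bespoke uniform-trapping construction.
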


\begin{proof}		
Define $Z\subset \param \times \R^n$ to be the set of equilibria.
Since $Z$ is a compact (globally) asymptotically stable set for $\Phi$ \cite[Lem.~3.1]{salamon1985connected}, there is a corresponding proper $C^\infty$ Lyapunov function $V\colon \param \times \R^n \to [0,\infty)$ (\cite[Thm~3.2]{wilson1969smooth}, \cite[Sec.~6]{fathi2019smoothing}).

Set $L\coloneqq \{(p,x)\colon \lyap(p)(x)=1\}$.
Theorem~\ref{th:global-trivial-sublevel-bundles} furnishes a $C^\infty$ diffeomorphism $$f\colon \param \times \sph^{n-1}\to L , \quad (p,x) \mapsto (p,f_p(x)).$$

Since for each $(p,x)\not \in Z$ the trajectory $t\mapsto \Phi^t(p,x)$  converges to $Z$ and crosses $L$ exactly once and transversely, the map $$\R\times L\to (\param \times \R^n)\setminus Z$$ given by the restriction of $\Phi$ to $\R\times L$ is a homeomorphism and $C^k$ diffeomorphism if $\Phi\in C^k$, with inverse $$g=(\tau,\rho)\colon (\param \times \R^n)\setminus Z\to \R\times L$$ satisfying $\tau(p,x)\to -\infty$ as $(p,x)\to Z$ (cf. \cite[p.~327]{wilson1967structure}).
Define $h\colon \param \times \R^n\to \param \times \R^n$  by		
       \begin{equation*}
			h(p,x)=(p,h_p(x))=(p,e^{\tau(p,x)}f_p( \rho(p,x)))
		\end{equation*}
for $(p,x)\not \in Z$ and $h_p(x)=0$ for $(p,x)\in Z$.
Then $h$ is a fibered homeomorphism that, if $\Phi\in C^k$, also restricts to a $C^k$ diffeomorphism $(P\times \R^n)\setminus Z\to \param \times (\R^n\setminus \{0\})$. 
Finally, $h$ satisfies \eqref{eq:th:parametric-global-linearization} since $$\rho\circ \Phi^t|_{(\param \times \R^n)\setminus Z}=\rho$$ and $$\tau \circ \Phi^t|_{(\param \times \R^n)\setminus Z}=\tau - t.$$
\end{proof}

The following result is a parametric version of a result of Gr\"{u}ne, Sontag, and Wirth \cite[Prop.~1]{gruene1999twist} (see also \cite[Prop.~2]{kvalheim2025global}), which can be viewed as a (global) extension of the Morse lemma \cite{morse1932calculus} 
 to minima of non-Morse functions.

\begin{Th}\label{th:parametric-global-morseification}
Let $\param$ be a compact $C^\infty$ manifold with boundary and $\lyap\colon \param \times \R^n\to [0,\infty)$ be a $C^\infty$ map such that $\lyap(p,\cdot)\in \fun^\infty(\R^n)$ for all $p\in \param$.
If either (i) $n\neq 4,5$ and $\dim \param \leq 1$ or (ii) $n \leq 3$, there is a fibered homeomorphism $$h\colon \param \times \R^n\to \param \times \R^n, \qquad (p,x)\mapsto (p,h_p(x))$$ that restricts to a $C^\infty$ diffeomorphism on the complement of the set of fiberwise critical points and satisfies 
\begin{equation}\label{eq:th:parametric-global-morseification}
\lyap(p,x)=\norm{h_p(x)}^2 \quad \text{for all} \quad (p,x)\in \param \times \R^n.
\end{equation}
\end{Th}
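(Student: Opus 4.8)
The plan is to argue exactly as in the proof of Theorem~\ref{th:parametric-global-linearization}, with the reparametrized fiberwise gradient flow of $\lyap$ playing the role of the given flow $\Phi$. Let $Z\subset\param\times\R^n$ denote the set of fiberwise critical points. Since each $\lyap(p,\cdot)\in\fun^\infty(\R^n)$ has a \emph{unique} critical point, equal to its minimizer, with value $0$, we have $Z=\{(p,x)\colon\lyap(p,x)=0\}=\{(p,x)\colon\nabla_x\lyap(p,x)=0\}$; moreover $Z$ is compact, being the graph of the continuous fiberwise minimizer $p\mapsto x_*(p)$ (continuity of $x_*$ is as in the proof of Lemma~\ref{lem:varying-minima-equilibria}) over the compact manifold $\param$. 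First I would invoke Theorem~\ref{th:global-trivial-sublevel-bundles} with $c=1$ to obtain a $C^\infty$ fibered diffeomorphism $\param\times D^n\to\{(p,x)\colon\lyap(p,x)\le1\}$, whose restriction to the boundary level sets is a $C^\infty$ fibered diffeomorphism $f\colon\param\times\sph^{n-1}\to L$, $(p,x)\mapsto(p,f_p(x))$, onto $L\coloneqq\{(p,x)\colon\lyap(p,x)=1\}$ (every $c>0$ is a fiberwise regular value, so $L$ is a smooth hypersurface bundle). Second, on $(\param\times\R^n)\setminus Z$ I would consider the $C^\infty$ vector field with vanishing $\param$-component and $\R^n$-component $\nabla_x\lyap(p,x)/\norm{\nabla_x\lyap(p,x)}^2$; exactly as in Lemma~\ref{lem:funnel-alexander} (see \eqref{eq:comp-shift}, \eqref{eq:Phi-Psi-domains}) its fiberwise flow $\Phi(p)$ satisfies $\lyap(p)\circ\Phi(p)^t=\lyap(p)+t$, so each point of $(\param\times\R^n)\setminus Z$ lies on a unique fiberwise trajectory meeting $L$ exactly once, and ``flowing to the unit level set'' defines a $C^\infty$ map $\rho\colon(\param\times\R^n)\setminus Z\to L$, $\rho(p,x)=\bigl(p,\Phi(p)^{1-\lyap(p)(x)}(x)\bigr)$.

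With these in hand I would define the fibered map $h\colon\param\times\R^n\to\param\times\R^n$, $(p,x)\mapsto(p,h_p(x))$, by
\[
h_p(x)=\sqrt{\lyap(p,x)}\; f_p^{-1}\!\bigl(\pr_2\rho(p,x)\bigr)\ \text{ for }(p,x)\notin Z,\qquad h_p\bigl(x_*(p)\bigr)=0,
\]
so that \eqref{eq:th:parametric-global-morseification} holds tautologically, the second factor lying in $\sph^{n-1}$. On $(\param\times\R^n)\setminus Z$ the map $h$ is visibly $C^\infty$ with $C^\infty$ inverse $(p,y)\mapsto\bigl(p,\Phi(p)^{\norm{y}^{2}-1}(f_p(y/\norm{y}))\bigr)$ for $y\ne0$ (the indicated point lies in $\dom\Phi(p)$ precisely because $\norm{y}^{2}>0$, and the composition unwinds to $y$); in particular $h$ carries $(\param\times\R^n)\setminus Z$ diffeomorphically onto $\param\times(\R^n\setminus\{0\})$. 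It then remains to check that $h$ and $h^{-1}$ are continuous across $Z$: as $(p,x)\to Z$ one has $\lyap(p,x)\to0$ while the unit-sphere factor stays bounded, so $h_p(x)\to0$; conversely, for $y\to0$ the point $h^{-1}(p,y)$ has $\lyap$-value $\norm{y}^{2}\to0$, so it enters every neighborhood of the compact set $Z$ and, being in the fiber over $p$, converges to $(p,x_*(p))$. Hence $h$ is the desired fibered homeomorphism, restricting to a $C^\infty$ diffeomorphism off $Z$ and satisfying \eqref{eq:th:parametric-global-morseification}.

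The main obstacle is the behavior at $Z$: because the minima of $\lyap(p,\cdot)$ may be degenerate and need not vary smoothly in $p$, the critical set $Z$ is only a compact subset (not a submanifold), so there is no local normal form available there and continuity of $h$ and $h^{-1}$ must instead be extracted from the compactness argument above --- this is the exact analogue of the role played by $\tau(p,x)\to-\infty$ near $Z$ in the proof of Theorem~\ref{th:parametric-global-linearization}. Everything else is routine: the ODE bookkeeping for $\Phi$ is contained in Lemma~\ref{lem:funnel-alexander}, and the fact that a fibered diffeomorphism of sublevel-set bundles restricts to one on the boundary level-set bundles is immediate. One could also reparametrize the $\lyap$-value $c$ by $\tfrac12\log c$ to display $h$ in the form $(p,x)\mapsto(p,e^{\sigma(p,x)}f_p(\pr_2\rho(p,x)))$, making the parallel with the Hartman--Grobman construction of Theorem~\ref{th:parametric-global-linearization} transparent.
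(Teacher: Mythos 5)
Your proposal is correct and takes essentially the same approach as the paper's proof: reparametrize the fiberwise gradient flow so level sets move at unit speed, use Theorem~\ref{th:global-trivial-sublevel-bundles} to trivialize the $\lyap=1$ level-set bundle over $\param$, and define $h$ by scaling the ``flow-to-the-unit-level-set'' map by $\sqrt{\lyap}$. The only differences are cosmetic (your $f$ runs in the opposite direction, so you write $f_p^{-1}$ where the paper writes $f$), and you spell out the compactness argument for continuity of $h$ and $h^{-1}$ across $Z$ that the paper leaves implicit.
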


\begin{proof}
Define $Z\subset \param \times \R^n$ to be the set of fiberwise critical points.
Let $\Phi$ be the $C^\infty$ maximal local flow of $\nabla_x \lyap/\norm{\nabla_x \lyap}^2$ on $(\param \times \R^n)\setminus Z$.
Then
\begin{equation}\label{eq:morse-comp-shift}
	\lyap \circ \Phi^t = \lyap + t,
\end{equation}
so the domain of $\Phi$ is
\begin{equation*}
	\begin{split}
\{(t,p,x)\colon \lyap(p,x) > - t\}.
	\end{split}
\end{equation*}

Set $L\coloneqq V^{-1}(1)$.
By Theorem~\ref{th:global-trivial-sublevel-bundles}, there is a $C^\infty$ fiber-preserving diffeomorphism $f\colon L\to \param \times \sph^{n-1}$  satisfying $\pr_1\circ f = \pr_1$.

By \eqref{eq:morse-comp-shift} we may define a fibered $C^\infty$ map $\rho\colon (\param \times \R^n)\setminus Z\to \param \times \sph^{n-1}$ by
\begin{equation*}
	\rho(p,x)=(p,\rho_p(x))=\Big(p,f\circ \Phi^{1-\lyap(p,x)}(x)\Big).
\end{equation*}
Define $h\colon \param \times \R^n \to \param \times \R^n$ by
\begin{equation*}
	h(p,x)=(p,h_p(x))=\left(p,\rho_p(x)\sqrt{\lyap(p,x)}\right)
\end{equation*}
for $(p,x)\not \in Z$ and $h_p(x)=0$ for $(p,x)\in Z$.
Then $h$ is a fibered homeomorphism that satisfies \eqref{eq:th:parametric-global-morseification} by construction and restricts to a $C^\infty$ diffeomorphism $(\param\times \R^n)\setminus Z\to \param \times (\R^n\setminus \{0\})$ since $\sqrt{\cdot}$ is $C^\infty$ on $(0,\infty)$.
\end{proof}

\subsection{Obstructions to parametric asymptotic stabilization}\label{subsec:obstructions-param-asymptotic-stab}

For motivation, first consider the vector fields $\vo \in \AN_0^\infty(\R^{2n})$, $\vt \in \stab_0^\infty(\R^{2n})$ given by $\vo(x)=x$ and $\vt(x)=-x$.
The Hopf indices $\text{ind}_\vo(0)$, $\text{ind}_\vt(0)$ of $0$ for these vector fields coincide, where for  $E\in \AN_0^0(\R^n)$ the index $\text{ind}_E(0)$ is the Brouwer degree of
\begin{equation*}
	\frac{E}{\norm{E}}|_{\sph^{n-1}} \colon \sph^{n-1}\to \sph^{n-1}.
\end{equation*}
Indeed, $$\text{ind}_F(0)=\deg(\id_{\sph^{n-1}})=1=(-1)^{2n}=\deg(x\mapsto -x)=\text{ind}_G(0).$$

Given a vector field $\vo\in \AN_0^1(\R^n)$, necessary conditions for asymptotic stability of $0$ that do not depend on more than the homotopy class of
\begin{equation*}
\vo|_{\R^n \setminus \{0\}}\colon \R^n \setminus \{0\} \to \R^n \setminus \{0\}	
\end{equation*}
have been studied in the dynamics and control literatures.
All such necessary conditions \cite{brockett1983asymptotic,krasnoselskii1984geometrical,coron1990necessary,kvalheim2022necessary} contain less or equal information to $\text{ind}_F(0)$, so  they cannot detect the instability of $0$ for $\vo(x)=x$ when $n$ is even (see \cite{jongeneel2023topological,kvalheim2023relationships} for recent discussions of the literature).

However, the results of \S \ref{sec:relative-htpy-groups} \emph{can} be used to detect instability within \emph{families} $H\colon \param \to \AN_0^1(\R^n)$ of vector fields containing $\vo(x)=x$ even when $n$ is an even number, without using information beyond the homotopy class of 
\begin{equation*}
	\param \times \R^n\setminus \{0\} \to \R^n\setminus \{0\}, \quad (p,x)\mapsto H(p)(x).
\end{equation*}

For example, consider the $C^\infty$ map $H\colon \sph^1\subset \mathbb{C}\to \AN_0^\infty(\R^2)\cong \AN_0^\infty(\mathbb{C})$ defined by $H(\theta)(z)=\theta z.$
Can we detect the presence of $\theta\in \sph^1$ such that $H(\theta) \not \in \stab_0^\infty(\mathbb{C})$ using only such homotopical information?
(Note that $\theta = 1$ is an example.)

The map 
\begin{equation}\label{eq:diag-h-map}
\sph^1 \to \sph^1, \quad \theta\mapsto \frac{H(\theta)(\theta)}{|H(\theta)(\theta)|}=\theta^2
\end{equation}
has winding number $2$.
On the other hand, if $H$ was $\stab_0^\infty(\mathbb{C})$-valued, then it would be nullhomotopic through $\AN_0^\infty(\mathbb{C})$-valued maps to the constant map $\sph^1\to \{z\mapsto -z\}$ by Theorem~\ref{th:nullhomotopic-inclusion-s-an}.
Such a nullhomotopy would induce a homotopy $I\times \sph^1\to \sph^1$ from \eqref{eq:diag-h-map} to the antipodal map $\theta\mapsto -\theta$, which has winding number $1$.
Since $1\neq 2$ and winding number is a homotopy invariant, it follows that $H$ cannot be $\stab_0^\infty(\mathbb{C})$-valued.
Thus, there indeed exists $\theta\in \sph^1$ such that $H(\theta) \not \in \stab_0^\infty(\mathbb{C})$.

\appendix

\section{Morse minima and hyperbolic equilibria}\label{sec:morse-hyp}
In this appendix we study for $r\in \N_{\geq 1}\cup \{\infty\}$ the subspaces 
\begin{equation*}
	\morsfun_0^{r+1}(\R^n)\subset \fun_0^{r+1}(\R^n), \quad \morsfun^{r+1}(\R^n)\subset \fun^{r+1}(\R^n)
\end{equation*}
consisting of Morse functions and the subspaces 
\begin{equation*}
\hypstab_0^r(\R^n) \subset \stab_0^r(\R^n), \quad \hypstab^r(\R^n) \subset \stab^r(\R^n), \quad \hypstabc_0^r(\R^n)\subset \stabc_0^r(\R^n), \quad \hypstabc^r(\R^n)\subset \stabc^r(\R^n)
\end{equation*}
consisting of vector fields with hyperbolic equilibria.
(See section~\ref{subsec:func-spaces-non-grass} for the definitions of the ambient spaces.)
The punchline is that all of these subspaces are weakly contractible for any $n$, in contrast with the main results of this paper.

A real symmetric matrix is positive definite if its eigenvalues are all positive, and a real square matrix is Hurwitz if its eigenvalues all have negative real part.
Let $\PDS(n)\subset \GL^+(n)$ and $\Hur(n)\subset \GL(n)$ denote the sets of positive definite symmetric matrices and Hurwitz matrices, which are both open subsets of $\GL(n)$.

Note that the Hessian matrix of any $\lyap \in \morsfun^2(\R^n)$ at its minimum belongs to $\PDS(n)$, and the derivative matrix of any $\vo\in \hypstab^1(\R^n)$ at its equilibrium belongs to $\Hur(n)$.
Additionally, $\PDS(n)$ and $\Hur(n)$ can be respectively identified with the subsets of quadratic forms in $\morsfun_0^{\infty}(\R^n)$ and linear vector fields in $\hypstab_0^\infty(\R^n)$.

\begin{Lem}\label{lem:matrices-contractible}
	$\PDS(n)$ and $\Hur(n)$ are both contractible via $C^\infty$ homotopies.
\end{Lem}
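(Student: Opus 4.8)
The plan is to write down explicit polynomial---hence $C^\infty$---straight-line homotopies contracting each set onto a single point, using only elementary linear algebra; there is really no hard step, the content being that the naive linear homotopies happen to stay inside the respective sets.

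For $\PDS(n)$ the key point is convexity: if $A,B$ are symmetric and positive definite and $v\in\R^n\setminus\{0\}$, then $v^\top(tA+(1-t)B)v = t\,v^\top A v + (1-t)\,v^\top B v > 0$ for every $t\in I$, so $tA+(1-t)B\in\PDS(n)$. I would therefore take $h\colon I\times\PDS(n)\to\PDS(n)$, $h(t,A)=tA+(1-t)I$, which is $C^\infty$, satisfies $h(1,\cdot)=\id$, and retracts $\PDS(n)$ onto $\{I\}$.

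For $\Hur(n)$ convexity fails, but the set is star-shaped with respect to $-I$, and verifying this is the only thing that needs an argument. Given $A\in\Hur(n)$ and $t\in I$, the matrix $tA-(1-t)I$ is a polynomial in $A$, so its eigenvalues are exactly the numbers $t\lambda-(1-t)$ as $\lambda$ ranges over the eigenvalues of $A$; each such number has real part $t\,\mathrm{Re}(\lambda)-(1-t)$, which is negative for every $t\in I$ (when $t>0$ the first term is negative and the second is $\le 0$, and when $t=0$ the value is $-1$). Hence $tA-(1-t)I\in\Hur(n)$ for all $t\in I$, so $h(t,A)=tA-(1-t)I$ is a $C^\infty$ contraction of $\Hur(n)$ onto $\{-I\}$. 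The only conceivable obstacle---checking that $\Hur(n)$ is star-shaped about $-I$---is dispatched by this eigenvalue computation.
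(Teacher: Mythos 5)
Your proof is correct and matches the paper's argument: convexity (openness plus the straight-line homotopy) handles $\PDS(n)$, and the spectral mapping observation $\textnormal{spec}(tA-(1-t)I)=t\,\textnormal{spec}(A)-(1-t)$ handles $\Hur(n)$. The only cosmetic difference is a $t\mapsto 1-t$ reparametrization of the homotopy.
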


\begin{proof}
Since $\PDS(n)$ is an open convex cone, the straight-line homotopy from $\id_{\PDS(n)}$ to the constant map $\PDS(n)\to \{I_{n\times n}\}$ is $C^\infty$ and $\PDS(n)$-valued.

Next, let $\textnormal{spec}(B)\subset \mathbb{C}$ denote the set of eigenvalues of a matrix $B\in \R^{n\times n}$.
Since 
\begin{equation*}
\textnormal{spec}((1-t)A -t I_{n\times n}) = (1-t)\textnormal{spec}(A)-t,
\end{equation*} 
the straight-line homotopy from $\id_{\Hur(n)}$ to the constant map $\Hur(n)\to \{-I_{n\times n}\}$ is $C^\infty$ and $\Hur(n)$-valued.
\end{proof}

We first consider the case $r=\infty$.
Lemma~\ref{lem:matrices-contractible} implies the second statement below.

\begin{Th}\label{th:morse-hyp-contractible}
There is a strong deformation retraction of $\morsfun^\infty(\R^n)$ onto $\PDS(n)$ preserving $\morsfun_0^\infty(\R^n)$, and there is a strong deformation retraction of $\hypstab^\infty(\R^n)$ onto $\Hur(n)$ preserving $\hypstabc^\infty(\R^n)$, $\hypstab_0^\infty(\R^n)$, and $\hypstabc_0^\infty(\R^n)$.
Thus $\morsfun_0^{\infty}(\R^n)$, $\hypstab_0^\infty(\R^n)$, $\hypstabc_0^\infty(\R^n)$, $\morsfun^{\infty}(\R^n)$, $\hypstab^\infty(\R^n)$, and $\hypstabc^\infty(\R^n)$ are all contractible. 
\end{Th}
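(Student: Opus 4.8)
The plan is to construct both strong deformation retractions by hand and then read off contractibility of all six spaces from Lemma~\ref{lem:matrices-contractible}. In each case the retraction will be a smooth concatenation (as in \eqref{eq:gamma-def}) of a homotopy that slides the unique minimum, resp.\ equilibrium, to the origin, followed by a ``zoom in at the origin'' homotopy that collapses the function, resp.\ vector field, onto its principal part at the origin (a positive-definite quadratic form, resp.\ a Hurwitz linear vector field).

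For $\morsfun^\infty(\R^n)$, I would first use the continuous map $V\mapsto x_*(V)$ taking $V$ to its unique minimum to form the strong deformation retraction $(s,V,x)\mapsto V\big(x+s\,x_*(V)\big)$ of $\morsfun^\infty(\R^n)$ onto $\morsfun_0^\infty(\R^n)$; composing with a translation diffeomorphism preserves properness, surjectivity onto $[0,\infty)$, and the value, uniqueness and nondegeneracy of the critical point, and it is stationary on $\morsfun_0^\infty(\R^n)$. Then, for $V\in\morsfun_0^\infty(\R^n)$ one has $V(0)=0$ and $\nabla V(0)=0$, so two applications of Hadamard's lemma give $V(x)=x^{\top}A_V(x)\,x$ with $A_V$ a $C^\infty$ symmetric-matrix-valued map depending continuously on $V$ and with $A_V(0)$ positive definite; I would then use the homotopy $(s,V,x)\mapsto x^{\top}A_V(sx)\,x$, which for $s\in(0,1]$ equals $s^{-2}V(sx)$ and at $s=0$ equals the positive-definite quadratic form $x\mapsto x^{\top}A_V(0)\,x$, i.e.\ an element of $\PDS(n)$. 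One checks $s^{-2}V(sx)$ is again a proper surjection onto $[0,\infty)$ with the origin as unique critical point of value $0$ and nondegenerate Hessian there, so this homotopy is $\morsfun_0^\infty(\R^n)$-valued, and it is stationary on quadratic forms. Concatenating the two homotopies gives the asserted strong deformation retraction onto $\PDS(n)$, and since both pieces map $\morsfun_0^\infty(\R^n)$ into itself it restricts to a strong deformation retraction of $\morsfun_0^\infty(\R^n)$ onto $\PDS(n)$.

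The vector field case is parallel. Writing $x_*(F)$ for the unique equilibrium of $F\in\hypstab^\infty(\R^n)$, the map $(s,F,x)\mapsto F\big(x+s\,x_*(F)\big)$ is a strong deformation retraction onto $\hypstab_0^\infty(\R^n)$, since translation conjugates the flow and so preserves global asymptotic stability, hyperbolicity of the equilibrium, and backward completeness. For $F\in\hypstab_0^\infty(\R^n)$, Hadamard's lemma gives $F(x)=A_F(x)\,x$ with $A_F$ continuous in $F$ and $A_F(0)=D_0F\in\Hur(n)$, and I would use $(s,F,x)\mapsto A_F(sx)\,x$, equal to $s^{-1}F(sx)$ for $s\in(0,1]$ and to the Hurwitz linear vector field $x\mapsto(D_0F)x$ at $s=0$. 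The time-$t$ map of $s^{-1}F(sx)$ is $x\mapsto s^{-1}\Phi_F^t(sx)$, so this homotopy keeps the origin as the unique equilibrium, carries over global asymptotic stability and backward completeness, and has linearization $D_0F$ at the origin for all $s$; hence after concatenation we get a strong deformation retraction of $\hypstab^\infty(\R^n)$ onto $\Hur(n)$ whose two pieces each preserve $\hypstabc^\infty(\R^n)$, $\hypstab_0^\infty(\R^n)$, and $\hypstabc_0^\infty(\R^n)$, and so restrict to strong deformation retractions of those three subspaces onto $\Hur(n)$. (Throughout, $\PDS(n)$ and $\Hur(n)$ are identified with the relevant quadratic forms and linear vector fields as in Section~\ref{subsec:func-spaces-non-grass}.) Finally, each of the six spaces in the theorem strong-deformation-retracts onto $\PDS(n)$ or $\Hur(n)$, both of which are contractible via $C^\infty$ homotopies by Lemma~\ref{lem:matrices-contractible}, so all six are contractible. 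I expect the main difficulty to be purely technical: checking that the zoom homotopies $s^{-2}V(sx)$ and $s^{-1}F(sx)$ genuinely stay in $\morsfun^\infty(\R^n)$ and $\hypstab^\infty(\R^n)$ for every parameter value, and --- where Hadamard's lemma enters --- that the expressions $x^{\top}A_V(sx)\,x$ and $A_F(sx)\,x$ depend continuously on $V$, $F$ in the compact-open $C^\infty$ topology and extend $C^\infty$-ly across $s=0$.
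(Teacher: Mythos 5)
Your proof is correct and follows essentially the same two-step strategy as the paper: first translate the minimum or equilibrium to the origin (as in Lemma~\ref{lem:varying-minima-equilibria}), then ``zoom in'' via $s^{-2}V(sx)$ or $s^{-1}F(sx)$ to strong-deformation-retract onto $\PDS(n)$ or $\Hur(n)$, and conclude with Lemma~\ref{lem:matrices-contractible}. The technical point you flag --- continuity of the Hadamard factor in the compact-open $C^\infty$ topology --- is exactly what the paper secures by exhibiting that factor as an explicit iterated integral of the derivatives of $V$ or $F$, after which nothing further remains to check.
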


\begin{proof}
First note that, by translation, there are strong deformation retractions of $\morsfun^\infty(\R^n)$ onto $\morsfun_0^\infty(\R^n)$ and $\hypstab^\infty(\R^n)$ onto $\hypstab_0^\infty(\R^n)$, with the latter preserving $\hypstabc^\infty(\R^n)$ (see the proof of Lemma~\ref{lem:varying-minima-equilibria}).
Thus, it suffices to show that $\PDS(n)$ is a strong deformation retract of $\morsfun_0^{\infty}(\R^n)$ and that there is a strong deformation retraction of $\hypstab_0^\infty(\R^n)$ onto $\Hur(n)$ that preserves $\hypstabc_0^\infty(\R^n)$.

Define $H\colon I\times \morsfun_0^\infty(\R^n)\to \morsfun_0^\infty(\R^n)$ by
\begin{equation*}
	H_t(\lyap)(x) = 
	\begin{cases}
		\frac{1}{(1-t)^2}\lyap((1-t)x), & 0 \leq t < 1\\
		\ip{x}{\frac{1}{2}D_0^2 \lyap x}, & t=1\\
	\end{cases}.
\end{equation*}
The chain rule and fundamental theorem of calculus imply that
\begin{align*}
	\lyap(x) &= \ip{x}{\int_0^1 s \int_0^1 D_{usx}^2 \lyap\, du \, ds \cdot x}\\
	&=  \ip{x}{\eta(\lyap)(x)x}
\end{align*}
for a map $\eta\colon \morsfun_0^\infty(\R^n) \to C^\infty(\R^n, \R^{n\times n})$ that is continuous with respect to the compact-open $C^\infty$ topology (see section~\ref{subsubsection:generalities}) and satisfies $\eta(\lyap)(0)=\frac{1}{2}D_0^2\lyap$, so
\begin{equation*}
	H_t(\lyap)(x) = \ip{x}{\eta(\lyap)((1-t)x)x}
\end{equation*}
for all $t\in I$, $\lyap\in \morsfun_0^\infty(\R^n)$, and $x\in \R^n$.
Moreover, $H_t(\lyap)=\lyap$ for all $\lyap \in \PDS(n)$ and $t\in I$, so $H$ is a strong deformation retraction of $\morsfun_0^\infty(\R^n)$ to $\PDS(n)$.

 Next, define $J\colon I\times \hypstab_0^\infty(\R^n) \to \hypstab_0^\infty(\R^n)$ by 
\begin{equation*}
	J_t(\vo)(x) = \begin{cases}
		\frac{1}{(1-t)}\vo((1-t)x), & 0 \leq t < 1\\
		D_0\vo x, & t=1
	\end{cases}.
\end{equation*}
The chain rule and fundamental theorem of calculus imply that 
\begin{align*}
	\vo(x) &= \int_0^1 D_{sx}\vo\, ds \cdot x\\
	&= \delta(\vo)(x)x
\end{align*}
for a map $\delta \colon \hypstab_0^\infty(\R^n) \to C^\infty(\R^n, \R^{n\times n})$ that is continuous with respect to the compact-open $C^\infty$ topology and satisfies $\delta(\vo)(0)=D_0 \vo$, so
\begin{equation*}
	J_t(\vo)(x) = \delta(\vo)((1-t)x)x
\end{equation*}
for all $t\in I$, $\vo\in \hypstab_0^\infty(\R^n)$, and $x\in \R^n$.
Moreover, $J_t(\vo)=\vo$ for all $\vo\in \Hur(n)$ and $t\in I$, so $J$ is a strong deformation retraction of $\hypstab_0^\infty(\R^n)$ onto $\Hur(n)$.
And $J_t$ clearly preserves $\hypstabc_0^\infty(\R^n)$ for all $t\in I$.
\end{proof}

For a refined smoothness statement and the case $r < \infty$, we have the following weak contractibility result.
The proof is similar to that of the preceding theorem, but more technical.
(See section~\ref{subsection:maps-into-function-spaces} for the definition of ``continuous and $C^s$''.)

\begin{Th}\label{th:morse-hyp-weakly-contractible}
	Fix $r\in \N_{\geq 1}\cup \{\infty\}$ and let $\F$ denote any of $\morsfun_0^{r+1}(\R^n)$, $\hypstab_0^r(\R^n)$, $\hypstabc_0^r(\R^n)$, $\morsfun^{r+1}(\R^n)$, $\hypstab^r(\R^n)$, or $\hypstabc^r(\R^n)$. 
	Then $\F$ is weakly contractible.
	In fact, if $\param$ is a $C^\infty$ manifold with boundary and $f\colon \param \to \F$ is a continuous and $C^s$ map, then there is a continuous and $C^s$ homotopy $I\times \param \to \F$ from $f$ to a constant map.
\end{Th}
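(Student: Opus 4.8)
The weak contractibility assertion follows from the second (take $\param$ to be a sphere and $s=0$, and recall that the produced homotopy is also \emph{continuous} into $\F$), so I would concentrate on constructing the continuous, $C^s$ nullhomotopy. My plan is to run the deformation retractions behind Theorem~\ref{th:morse-hyp-contractible} parametrically, in three stages, and then smoothly concatenate them (as in \eqref{eq:gamma-def}): (i) slide the unique minimum (resp. equilibrium) of $f(p)$ to the origin, landing in the subscripted-$0$ space; (ii) ``zoom in'' at the origin, deforming $f(p)$ to its Hessian quadratic form $x\mapsto\langle x,\tfrac12 D_0^2 f(p)\,x\rangle$ (resp. its linearization $x\mapsto D_0 f(p)\,x$); and (iii) contract inside $\PDS(n)$ (resp. $\Hur(n)$) by precomposing the $C^\infty$ straight-line homotopies of Lemma~\ref{lem:matrices-contractible} with the map $p\mapsto\tfrac12 D_0^2 f(p)$ (resp. $p\mapsto D_0 f(p)$), ending at the constant map $p\mapsto(x\mapsto\norm{x}^2)$ (resp. $p\mapsto(x\mapsto -x)$).

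For stage (i), the map $x_*\colon\F\to\R^n$ assigning to each function (resp. vector field) its unique minimum (resp. equilibrium) is continuous in the compact-open $C^{r+1}$ (resp. $C^r$) topology, and by the implicit function theorem with parameters — the Hessian at the minimum is positive definite and the derivative at the equilibrium is Hurwitz, hence both invertible — the composition $p\mapsto x_*(f(p))$ is $C^s$ whenever $f$ is continuous and $C^s$. Thus $(t,p)\mapsto f(p)(\,\cdot\,+\,t\,x_*(f(p)))$ is a continuous and $C^s$ homotopy from $f$ to a map into $\morsfun_0^{r+1}(\R^n)$ (resp. $\hypstab_0^r(\R^n)$, $\hypstabc_0^r(\R^n)$), and it visibly preserves the Morse / hyperbolicity / (backward) completeness properties. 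Stage (iii) only involves the $C^\infty$ matrix homotopies of Lemma~\ref{lem:matrices-contractible} composed with the $C^s$ map $p\mapsto\tfrac12 D_0^2 f(p)$ (resp. $p\mapsto D_0 f(p)$), so it introduces no new regularity difficulties and terminates at the required constant map.

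Stage (ii) is the crux, and the part I expect to be the main obstacle. The homotopy is $H_t(\lyap)(x)=\tfrac{1}{(1-t)^2}\lyap((1-t)x)$ for $0\le t<1$ with $H_1(\lyap)(x)=\langle x,\tfrac12 D_0^2\lyap\,x\rangle$ (and $J_t(\vo)(x)=\tfrac{1}{1-t}\vo((1-t)x)$ with $J_1(\vo)(x)=D_0\vo\,x$ for vector fields), exactly as in the proof of Theorem~\ref{th:morse-hyp-contractible}. For $t<1$ these are manifestly continuous and $C^s$, being compositions of the data with smooth maps, and one checks that the rescalings leave the Hessian (resp. linearization) at the origin unchanged — hence preserve positive definiteness (resp. the Hurwitz property) — merely time- and space-rescale the flow, so preserve global asymptotic stability, and do not affect (backward) completeness. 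The delicate point is the behavior across $t=1$: one must verify joint continuity in the compact-open $C^{r+1}$ (resp. $C^r$) topology together with $C^s$-ness of $(t,p,x)\mapsto H_t(f(p))(x)$ on a neighborhood of $\{t=1\}$, keeping careful track of how many derivatives each rescaling consumes. The tool for this, again following Theorem~\ref{th:morse-hyp-contractible}, is Taylor's formula with integral remainder: writing $\lyap(x)=\langle x,\eta(\lyap)(x)\,x\rangle$ with $\eta(\lyap)(x)=\int_0^1 s\int_0^1 D^2_{usx}\lyap\,du\,ds$, so that $\eta(\lyap)(0)=\tfrac12 D_0^2\lyap$, yields the identity $H_t(\lyap)(x)=\langle x,\eta(\lyap)((1-t)x)\,x\rangle$, valid up to and including $t=1$ and amenable to the necessary estimates; and similarly $\vo(x)=\delta(\vo)(x)\,x$ with $\delta(\vo)(0)=D_0\vo$ gives $J_t(\vo)(x)=\delta(\vo)((1-t)x)\,x$. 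Once this finite-regularity bookkeeping near $t=1$ is carried out, the smooth concatenation of the three stages yields the desired continuous and $C^s$ homotopy from $f$ to a constant map.
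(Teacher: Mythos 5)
Your three-stage plan (translate the critical point to the origin, zoom in to the quadratic/linear model, contract inside $\PDS(n)$ or $\Hur(n)$) is exactly the skeleton of the paper's argument, and you correctly flag stage~(ii) as the delicate point. But the ``finite-regularity bookkeeping'' you defer is not a cleanup task; it is where your argument breaks, and the paper inserts an extra step specifically to avoid it.

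The problem is that the Taylor-remainder identity $H_t(\lyap)(x)=\langle x,\eta(\lyap)((1-t)x)\,x\rangle$ with $\eta(\lyap)(y)=\int_0^1 s\int_0^1 D^2_{usy}\lyap\,du\,ds$ makes the zoom-in homotopy a function of $D^2_x f(p)(x)$. If $(p,x)\mapsto f(p)(x)$ is only jointly $C^s$, then $(p,x)\mapsto D^2_x f(p)(x)$ is only jointly $C^{\max(s-2,0)}$; it is jointly \emph{continuous} (because $f$ is continuous into the compact-open $C^{r+1}$ topology), but you cannot differentiate it $s$ times jointly. Hence the homotopy you construct across $t=1$ is only $C^{\max(s-2,0)}$, not $C^s$. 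The same phenomenon already bites stage~(i): solving $D_x f(p)(x)=0$ by the parametric implicit function theorem produces $x_*\circ f$ of class $C^{s-1}$ at best, since it is obtained from a function that is one $x$-derivative less regular than $f$ itself, so the translated family is only $C^{s-1}$. So without an extra idea, your nullhomotopy loses derivatives relative to $f$.

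The paper's proof avoids this by \emph{mollifying first}. Using convolution (and the Hildebrandt--Graves implicit function theorem to control the zeros of the interpolating family), it approximates $f$ in the strong $C^s$ and fiberwise strong $C^r$ topologies by a $C^\infty$ map $\tilde f\colon\param\to\F^\infty$ so close to $f$ that the straight-line homotopy $\tilde\alpha_t=(1-t)f+t\tilde f$ stays $\F$-valued (the Morse and hyperbolicity conditions are open, and positive combinations of Lyapunov/Morse data stay admissible). The translation to the origin is folded into this same stage via the auxiliary $C^s$ zero-locus map $z_t(p)$, and the cutoff $\varphi$ ensures the translation is turned on only as $t\to1$, where $z_1$ is $C^\infty$. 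The upshot is a continuous, $C^s$ homotopy $\alpha$ from $f$ to a genuinely $C^\infty$ map $g\colon\param\to\F_0^\infty$. Only then is the zoom-in $\beta$ applied — to $g$, not to $f$ — so $\eta$ (resp.\ $\delta$) is built from a $C^\infty$ family and $\beta$ is automatically $C^\infty$. Your argument is correct once you insert this mollification step before stage~(ii); without it, the stage~(ii) homotopy fails to be $C^s$ near $t=1$.
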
	

\begin{proof}
We first refine the notation from the theorem statement.	
Let $\F$ now denote any of $\morsfun^{r+1}(\R^n)$, $\hypstab^r(\R^n)$, or $\hypstabc^r(\R^n)$, and let $\F_0\subset \F$ denote the subspaces of functions that are $0$ at $0$.
Let	 $\F^\infty\subset \F$ and $\F_0^\infty\subset \F_0$ denote those functions that are additionally $C^\infty$.

Using the implicit function theorem of Hildebrandt and Graves \cite[Thm~4]{hildebrandt1927implicit} and the standard method of convolution with a mollifier \cite[sec.~2.2--2.3]{hirsch1976differential}, we approximate $f$ in the strong $C^s$ and fiberwise strong $C^r$ topologies (Remark~\ref{rem:fiberwise-compact-open-strong-topologies}) by a $C^\infty$ map $\tilde{f}\colon \param \to \F^\infty$ (section~\ref{subsection:maps-into-function-spaces}) such that the straight-line homotopy from $f$ to $\tilde{f}$ is an $\F$-valued map $\tilde{\alpha}\colon I\times \param \to \F$ and 
\begin{align*}
	Z &\coloneqq \{(t,p,x)\colon \tilde{\alpha}_t(p)(x)=0\}\\
	&= \{(t,p,z_t(p))\}
\end{align*} 
for a $C^s$ map $z\colon I\times \param \to \R^n$ such that $z_1\colon \param \to \R^n$ is $C^\infty$ \cite[Thm~10.3, Lem.~20.1]{abraham1967transversal}.
Define a continuous and $C^s$ map $\alpha\colon I\times \param \to \F$ by 
\begin{equation*}
	\alpha_t(p)(x)=\tilde{\alpha}_t(p)(x+\varphi(t)z_t(p)),
\end{equation*}
where we take $\varphi \equiv 1$ if $f$ is $\F_0$-valued, and otherwise take $\varphi\in C^\infty(I,\R)$ to be $0$ at $0$ and $1$ at $1$.
Then $\alpha$ is $\F_0$-valued in the case that $f$ is, and in either case $\alpha$ is a continuous and $C^s$ homotopy $I\times \param \to \F$ from $\alpha_0=f$ to a $C^\infty$ map $g\colon \param \to \F_0^\infty$.
Note also that $\alpha$ is $\hypstabc^r(\R^n)$-valued if $f$ is.

Next, if $\F=\morsfun^{r+1}(\R^n)$, then define $\beta\colon I\times \param \to \morsfun_0^\infty(\R^n)$ by
\begin{equation*}
	\beta_t(p)(x) = 
	\begin{cases}
		\frac{1}{(1-t)^2}g(p)((1-t)x), & 0 \leq t < 1\\		
		\ip{x}{\frac{1}{2}D_0^2 g(p)x}, & t=1
	\end{cases}.
\end{equation*}
The chain rule and fundamental theorem of calculus imply that 
\begin{align*}
	g(p)(x) &= \ip{x}{\int_0^1 s \int_0^1 D_{usx}^2g(p)\, du \, ds \cdot x}\\
	&=  \ip{x}{\eta(p)(x)x}
\end{align*}
for a $C^\infty$ map $\eta\colon \param\to C^\infty(\R^n, \R^{n\times n})$ satisfying $\eta(p)(0)=\frac{1}{2}D_0^2 g(p)$, so
\begin{equation*}
	\beta_t(p)(x) = \ip{x}{\eta(p)((1-t)x)x}
\end{equation*}
for all $t\in I$, $p\in \param$, and $x\in \R^n$.
Thus, $\beta$ is a $C^\infty$ homotopy $I\times \param\to \morsfun_0^\infty(\R^n)$ from $\beta_0 = g$ to a $C^\infty$ map $h\colon \param \to \PDS(n)\subset \morsfun_0^\infty(\R^n)$.

If instead $\F$ is  $\hypstab^r(\R^n)$ or $\hypstabc^r(\R^n)$, then define $\beta\colon I\times \param \to \hypstab_0^\infty(\R^n)$ by
\begin{equation*}
	\beta_t(p)(x) = \begin{cases}
		\frac{1}{(1-t)}g(p)((1-t)x), & 0 \leq t < 1\\
		D_0g(p)x, & t=1		
	\end{cases}.
\end{equation*}
The chain rule and fundamental theorem of calculus imply that 
\begin{align*}
	g(p)(x) &= \int_0^1 D_{sx}g(p)\, ds \cdot x\\
	&= \delta(p)(x)x
\end{align*}
for a $C^\infty$ map $\delta \colon \param \to C^\infty(\R^n, \R^{n\times n})$ satisfying $\delta(p)(0)=D_0 g(p)$, so
\begin{equation*}
	\beta_t(p)(x) = \delta(p)((1-t)x)x
\end{equation*}
for all $t\in I$, $p\in \param$, and $x\in \R^n$.
Thus, $\beta$ is a $C^\infty$ homotopy $I\times \param \to \hypstab_0^\infty(\R^n)$ from $\beta_0 = g$ to a $C^\infty$ map $h\colon \param \to \Hur(n)\subset \hypstab_0^\infty(\R^n)$.
Moreover, clearly $\beta$ is $\hypstabc_0^\infty(\R^n)$-valued if $g$ is, which is the case if $\F = \hypstabc^r(\R^n)$.

Smoothly concatenating (as in \eqref{eq:gamma-def}) the homotopies $\alpha$ followed by $\beta$ followed by the $C^\infty$ nullhomotopy $\gamma$ of $h$ provided by Lemma~\ref{lem:matrices-contractible}  yields the desired continuous and $C^s$ nullhomotopy of $f$.
\end{proof}

Unlike the similar Corollaries~\ref{co:funnels-trivial-homotopy-groups}, \ref{co:vf-trivial-homotopy-groups}, \ref{co:complete-vf-trivial-homotopy-groups}, \ref{co:varying-eq-min-trivial-homotopy-groups}, the following corollary makes no restrictions on $n$ or $\dim \param$.

\begin{Co}\label{co:morse-hyp-bvp}
Fix $r\in \N_{\geq 1}\cup \{\infty\}$ and let $\F$ denote any of $\morsfun_0^{r+1}(\R^n)$, $\hypstab_0^r(\R^n)$, $\hypstabc_0^r(\R^n)$, $\morsfun^{r+1}(\R^n)$, $\hypstab^r(\R^n)$, or $\hypstabc^r(\R^n)$. 
If $\param$ is a $C^\infty$ manifold with boundary, $S\subset \param$ is a closed subset and $C^\infty$ neatly embedded submanifold of either $\param$ or $\partial \param$, and $f_S\colon S \to \F$ is continuous and $C^s$, then there is a continuous and $C^s$ map $f\colon \param \to \F$ extending $f_S$.
\end{Co}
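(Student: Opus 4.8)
The plan is to follow the same two-step pattern as in Corollaries~\ref{co:funnels-trivial-homotopy-groups}, \ref{co:vf-trivial-homotopy-groups}: first produce a merely continuous extension by obstruction theory, then upgrade it to a continuous and $C^s$ map while leaving it untouched on $S$. By Theorem~\ref{th:morse-hyp-weakly-contractible}, $\F$ is weakly contractible, so $\pi_k\F=0$ for every $k\in\N_{\geq 0}$. Triangulating $\param$ so that the closed neat submanifold $S$ is a subcomplex, obstruction theory \cite[Cor.~VII.13.13]{bredon1993topology} (used exactly as in the proof of Corollary~\ref{co:funnels-trivial-homotopy-groups}) then furnishes a continuous map $\tilde f\colon\param\to\F$ with $\tilde f|_S=f_S$. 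Since a continuous map into $\F$ is automatically $C^0$ (section~\ref{subsection:maps-into-function-spaces}), this already settles the case $s=0$, so I assume $s\geq 1$; note that $(p,x)\mapsto\tilde f(p)(x)$ then has jointly continuous $x$-partials of order $r+1$ (resp. $r$) by section~\ref{subsubsection:generalities}, but need not be $C^s$ in $p$.

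To upgrade $\tilde f$ rel $S$ I would mimic the smoothing procedures in the proofs of Lemmas~\ref{lem:smoothing-funnels}, \ref{lem:smoothing-vf} and of Theorem~\ref{th:morse-hyp-weakly-contractible}. Using a closed tubular neighborhood $U\subset\param$ of $S$ \cite[p.~53]{kosinski1993differential}, construct a $C^\infty$ homotopy $\rho\colon I\times\param\to\param$ with $\rho_0=\id_\param$, $\rho_1(U)=S$, $\rho_t|_S=\id_S$ for all $t$, and such that $U_\delta\coloneqq\rho_\delta^{-1}(S)$ is an increasing family of neighborhoods of $S$. Then $g\coloneqq\tilde f\circ\rho_\delta\colon\param\to\F$ is continuous, agrees with $f_S$ on $S$, and on $U_\delta$ equals the restriction of the map $(p,x)\mapsto f_S(\rho_\delta(p))(x)$, which is $C^s$ because $f_S$ is continuous and $C^s$ and $\rho_\delta|_{U_\delta}\colon U_\delta\to S$ is $C^\infty$; hence $g$ restricts to a continuous and $C^s$ map $U_\delta\to\F$.

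Next I would smooth $g$ away from $S$ without leaving $\F$. Exactly as in the proof of Theorem~\ref{th:morse-hyp-weakly-contractible} --- using convolution with a mollifier \cite[sec.~2.2--2.3]{hirsch1976differential} together with the implicit function theorem of Hildebrandt and Graves \cite[Thm~4]{hildebrandt1927implicit} and Abraham's smoothness of evaluation maps \cite[Thm~10.3]{abraham1967transversal} to keep control of the fiberwise critical point or equilibrium and its value --- one obtains a $C^\infty$-in-$p$ map $g'\colon\param\to\F$ that is arbitrarily fiberwise $C^{r+1}$-close (resp. $C^r$-close) to $g$, with the straight-line homotopy $g+t(g'-g)$ again $\F$-valued. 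Fixing $\psi\in C^\infty(\param,I)$ that is $0$ on a neighborhood of $S$ and $1$ on $\param\setminus U_\delta$, I would set
\begin{equation*}
    f \coloneqq g + \psi\cdot(g'-g).
\end{equation*}
Then $f$ equals $g$ (hence $f_S$) on a neighborhood of $S$, equals $g'$ on $\param\setminus U_\delta$, and elsewhere is a small fiberwise perturbation of $g$, so $f$ is $\F$-valued; moreover $f$ is continuous and $C^s$ since $g'$ is $C^\infty$ in $p$ and $g|_{U_\delta}$ is continuous and $C^s$. Thus $f\colon\param\to\F$ extends $f_S$ as required. (Non-compactness of $\param$ causes no trouble: obstruction theory needs only a CW pair, and the mollification step is already carried out over a possibly non-compact base in Theorem~\ref{th:morse-hyp-weakly-contractible}.)

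The main obstacle will be the step of keeping the approximation inside $\F$: the perturbed function must remain Morse with a unique critical point of value $0$ (resp. retain a unique hyperbolic equilibrium), including the behavior at infinity controlled by properness and surjectivity onto $[0,\infty)$. This is precisely the difficulty resolved in the proof of Theorem~\ref{th:morse-hyp-weakly-contractible} via the Hildebrandt--Graves implicit function theorem and Abraham's evaluation-map smoothness, which allow one to follow and renormalize the fiberwise critical point; the only genuinely new ingredient here is the cutoff $\psi$ localizing the modification away from $S$, which is routine.
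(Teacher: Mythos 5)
Your approach is correct in outline but takes a genuinely different, and considerably heavier, route than the paper. You follow the obstruction-theory-then-smooth pattern of Corollaries~\ref{co:funnels-trivial-homotopy-groups} and \ref{co:vf-trivial-homotopy-groups}; the paper instead exploits the fact that Theorem~\ref{th:morse-hyp-weakly-contractible} delivers much more than weak contractibility, namely a \emph{continuous and $C^s$} nullhomotopy $H\colon I\times S\to\F$ from $f_S$ to a constant map. With that in hand there is no need for obstruction theory nor for any smoothing-rel-$S$ lemma: one simply chooses a closed tubular neighborhood $U$ of $S$ with a $C^\infty$ ``radial'' function $\tau\colon U\to I$ vanishing on $S$ and equal to $1$ near $\partial U$, sets $\tilde f(q)=H_{\tau(q)}(\pi(q))$ on $U$ (where $\pi\colon U\to S$ is the tubular projection), and extends by the constant value of $H_1$ over $\param\setminus U$; the resulting $f$ is continuous and $C^s$ and restricts to $f_S$ because $\tau$ and $\pi$ are $C^\infty$ and $H$ is continuous and $C^s$. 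Your route buys nothing here and pays twice: (i) it re-derives through \cite[Cor.~VII.13.13]{bredon1993topology} what the nullhomotopy already gives you for free, and (ii) it requires re-running the delicate mollification-and-implicit-function-theorem argument (keeping the perturbed maps inside $\F$, controlling the unique critical point/equilibrium, and doing so rel $S$) that you yourself flag as ``the main obstacle'' --- this step is real work, whereas the paper's cutoff-by-nullhomotopy avoids it altogether. Your argument is salvageable if you carry out that smoothing carefully, but you should recognize that the hypotheses of Theorem~\ref{th:morse-hyp-weakly-contractible} were stated in the parametric, $C^s$ form precisely to make this corollary a two-line deduction.
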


\begin{Rem}\label{rem:morse-hyp-bvp}
	Taking $S = \partial \param$ gives an existence result for a boundary value problem.
\end{Rem}

\begin{proof}
Using Theorem~\ref{th:morse-hyp-weakly-contractible} and a closed tubular neighborhood $U\subset \param$ of $S$, we may extend $f_S$ to a continuous and $C^s$ map $\tilde{f}\colon U\to \F$ that is constant on a neighborhood of $\partial U$.
Extending $\tilde{f}$ to be constant on $\param \setminus U$ yields the desired continuous and $C^s$ map $f\colon \param \to \F$ restricting to $f_S$ on $S$.
\end{proof}

\begin{Rem}\label{rem:mors-hyp-applications}
For Morse functions and vector fields with hyperbolic equilibria, using Corollary~\ref{co:morse-hyp-bvp} instead of Corollaries~\ref{co:vf-trivial-homotopy-groups}, \ref{co:varying-eq-min-trivial-homotopy-groups} in the proofs of section~\ref{subsec:param-hartman-grobman-morse} yields stronger versions of the results in that section, without any restrictions on $n$ or $\dim \param$.
\end{Rem}

	\bibliographystyle{plain}
	\bibliography{ref}
	
\end{document}